\documentclass[11pt,reqno]{amsart}

\usepackage{enumerate,amsmath,amsthm,amsfonts,amssymb,xy,combelow, mathrsfs,graphicx,paralist,fancyvrb,ytableau,braket,xcolor,verbatim,tikz,tikz-cd,mathtools,enumitem,comment,stmaryrd}
\usepackage{quiver,mathabx,adjustbox}

\usepackage[bookmarks, colorlinks=true, linkcolor=blue, citecolor=blue, urlcolor=blue]{hyperref}

\tikzset{
commutative diagrams/.cd,
arrow style=tikz,
diagrams={>=latex}}

\allowdisplaybreaks

\setlength{\textwidth}{6in}
\setlength{\textheight}{8.3in}
\setlength{\topmargin}{.2in}
\setlength{\oddsidemargin}{.2in}
\setlength{\evensidemargin}{.2in}

%remove subsection from content
\setcounter{tocdepth}{1}

\input xy
\xyoption{all}

\DeclareMathOperator{\mor}{\mathsf{Mor}}
\DeclareMathOperator{\quot}{\mathsf{Quot}}
\DeclareMathOperator{\Hquot}{\mathsf{HQ}}

\DeclarePairedDelimiter{\parens}{\lparen}{\rparen}

 %A tighter double bracket

\renewcommand{\tilde}[1]{\widetilde{#1}}
\renewcommand{\bar}[1]{\overline{#1}}

\newtheorem{theorem}{Theorem}[section]
\newtheorem{lemma}[theorem]{Lemma}
\newtheorem{corollary}[theorem]{Corollary}
\newtheorem{proposition}[theorem]{Proposition}

\newtheorem*{theorem*}{Theorem}
\usepackage{comment}

\theoremstyle{definition}
\newtheorem{definition}[theorem]{Definition}
\newtheorem{remark}[theorem]{Remark}

\newtheorem{problem}[theorem]{Problem}

\newtheoremstyle{TheoremNum}
        {7pt}{7pt}              %%% space between body and thm
        {\itshape}                      %%% Thm body font
        {}                              %%% Indent amount (empty = no indent)
        {\bfseries}                     %%% Thm head font
        {.}                             %%% Punctuation after thm head
        { }                             %%% Space after thm head
        {\thmname{#1}\thmnote{ \bfseries #3}}%%% Thm head spec
    \theoremstyle{TheoremNum}

\newcommand{\BA}{\mathbb{A}}
\newcommand{\BC}{\mathbb{C}}
\newcommand{\BE}{\mathbb{E}}
\newcommand{\BL}{\mathbb{L}}
\newcommand{\BN}{\mathbb{N}}
\newcommand{\BP}{\mathbb{P}}

\newcommand{\BZ}{\mathbb{Z}}
\newcommand{\Ba}{\mathbf{a}}
\newcommand{\Bd}{\mathbf{d}}
\newcommand{\Be}{\mathbf{e}}
\newcommand{\Bm}{\mathbf{m}}
\newcommand{\Bq}{\mathbf{q}}
\newcommand{\Br}{\mathbf{r}}
\newcommand{\Bt}{\mathbf{t}}
\newcommand{\Bw}{\mathbf{w}}
\newcommand{\Bx}{\mathbf{x}}
\newcommand{\Bz}{\mathbf{z}}

\newcommand{\rL}{\mathrm{L}}
\newcommand{\rR}{\mathrm{R}}

\newcommand{\CA}{\mathcal{A}}
\newcommand{\CV}{\mathcal{V}}
\newcommand{\CE}{\mathcal{E}}
\newcommand{\CF}{\mathcal{F}}

\newcommand{\CI}{\mathcal{I}}
\newcommand{\CK}{\mathcal{K}}
\newcommand{\CL}{\mathcal{L}}

\newcommand{\CO}{\mathcal{O}}

\newcommand{\CX}{\mathcal{X}}

\newcommand{\Bun}{\mathfrak{Bun}}
\newcommand{\uBun}{{\Bun}}
\newcommand{\pr}{\mathrm{pr}}
\newcommand{\fE}{\mathfrak{E}}
\newcommand{\SpecSym}{\mathrm{Spec\,Sym}}

%Expressions like Res etc
\newcommand{\Nor}{N^{\vir}_{\sigma,D}}
\DeclareMathOperator*{\Res}{Res}
\DeclareMathOperator{\Pic}{Pic}
\newcommand{\Tan}{T^{\vir}}

% \scriptscriptstyle

\newcommand{\fix}{\mathrm{X}}
\newcommand{\rank}{\mathrm{rank}}

\newcommand{\vir}{\mathrm{vir}}

\newcommand{\vdim}{\mathrm{vdim}}
\newcommand{\Hom}{\operatorname{\mathcal{H}\! \mathit{om}}}
\newcommand{\fund}{\mathrm{fund}}
\newcommand{\Bdelta}{\boldsymbol{\delta}}
\newcommand{\Bzero}{\boldsymbol{0}}
\newcommand{\Boldeta}{\boldsymbol{\eta}}

% Math fonts
\newcommand{\bb}{\mathbb}
\newcommand{\ca}{\mathcal}
\newcommand{\fr}{\mathfrak}

% Moduli space
\newcommand{\Gr}{\mathrm{Gr}}
\newcommand{\Fl}{\mathrm{Fl}}

\newcommand{\pt}{\mathrm{pt}}

\title{Intersection Theory of Hyperquot Schemes on curves}
    \author[R.~Ontani]{Riccardo Ontani}
	\address{Department of Mathematics, Imperial College London, London SW7 2AZ,
United Kingdom}
    \email{r.ontani@imperial.ac.uk}
\vskip.2in    
    \author[S.~Sinha]{Shubham Sinha}
	\address{International Centre for Theoretical Physics, Strada Costiera 11, 34151 Trieste, Italy}
     \email{ssinha1@ictp.it}
\vskip.2in    
	\author[W.~Xu]{Weihong Xu}
	\address{The Division of Physics,
Mathematics and Astronomy, California Institute of Technology, Pasadena, CA 91125, U.S.A.}
    \email{weihong@caltech.edu}

\begin{document}
\begin{abstract}
   We study the virtual intersection theory of Hyperquot schemes parameterizing sequences of quotient sheaves of a vector bundle on a smooth projective curve. Our results generalize the Vafa--Intriligator formula for Quot schemes and provide a closed formula for virtual counts of maps from the curve to a partial flag variety. 
\end{abstract}
\maketitle

\tableofcontents

\section{Introduction}
Let $C$ be a smooth projective curve and $V$ be a vector bundle of rank $n$ over $C$. The Quot scheme $\quot_d(C, r, V)$ is a projective scheme that parametrizes short exact sequences of sheaves on $C$:
\[
0 \to E \xhookrightarrow{} V \twoheadrightarrow F \to 0 \quad \text{such that } 
\begin{cases}
    \rank\, F = n - r, \\
    \deg F = d.
\end{cases}
\]
When \(V\) is trivial, the Quot scheme compactifies the space of degree $d$ maps from
$C$ to the Grassmannian $\Gr(r,n)$, and it was used to prove the Vafa--Intriligator formula for counts of such maps satisfying some incidence conditions; see \cite{stromme,bertram,bertram-deskalopoulos-wentworth,mo}.

The Hyperquot scheme is a natural generalization of the Quot scheme. For fixed tuples $$\Br = (r_1, r_2, \dots, r_k)\quad  \text{and}\quad \Bd = (d_1, d_2, \dots, d_k)$$ of integers, the Hyperquot scheme $\Hquot_\Bd(C, \Br, V)$, or \(\Hquot_\Bd\) for short, parametrizes sequences of sheaves of the form
\[
E_1\xhookrightarrow{}E_2\xhookrightarrow{}\cdots E_k\xhookrightarrow{}V \twoheadrightarrow F_1 \twoheadrightarrow F_2 \twoheadrightarrow \cdots \twoheadrightarrow F_k \quad \text{such that } 
\begin{cases}
    \rank\, F_j = n - r_j, \\
    \deg F_j = d_j,
\end{cases}
\]
where $E_j$ is the kernel of the surjection $V\twoheadrightarrow F_j$ for each $j$.

In this article, we establish a formula for virtual intersection numbers on the Hyperquot scheme, generalizing the work of Bertram~\cite{bertram} and Marian--Oprea~\cite{mo}. In the special case where $V$ is trivial and $r_1, r_2, \dots, r_k$ are distinct, the Hyperquot scheme compactifies the space of multidegree \(\Bd\) maps from $C$ to the partial flag variety $\Fl(\Br,n)$, and our formula provides a virtual count of such maps satisfying certain incidence conditions. Furthermore, when $C = \mathbb{P}^1$, the Hyperquot scheme is a smooth variety and the virtual counts in consideration are actual counts \cite{kim1996gromov,cf2} that played an important role in the study of the {small} quantum cohomology of $\Fl(\Br,n)$; see, for example, \cite{kim,bertram2,cf, cf2,chen}.

\subsection{Statement of the main formula} 
The Hyperquot scheme $\Hquot_\Bd$ is not smooth or irreducible in general. 
By \cite{behrend_fantechi}, one can nevertheless 
construct a virtual fundamental class, which is a well-behaved replacement for the fundamental class. This was constructed for the Quot scheme in \cite{mo} and for the Hyperquot scheme in \cite{ckm,mr}, yielding
\[
[\Hquot_\Bd]^{\vir} \in H_{2\vdim}(\Hquot_\Bd, \mathbb{Z})
\]
whose virtual dimension is
$
\vdim = \sum_{j=1}^{k} \chi(E_j, F_j) \;-\; \sum_{j=1}^{k-1} \chi(E_j, F_{j+1}).
$
Consider the universal sequence of sheaves 
\[
0\to \CE_{1}\xhookrightarrow{} \cdots \xhookrightarrow{} \CE_{{k}} \xhookrightarrow{} \pi_C^*V \twoheadrightarrow \CF_{1}\twoheadrightarrow \cdots \twoheadrightarrow \CF_{k}\to 0
\]
on $\Hquot_{\Bd}\times C$, where $\pi_C$ denotes the projection map to  $C$. We will compute the integrals over $[\Hquot_\Bd]^{\vir}$ of polynomials in the Chern classes 
 \begin{equation}\label{eq:a_i,j}
    c_i(\CE_{j|p}^\vee)\in H^{2i}(\Hquot_\Bd,\BZ),
\end{equation}
where $\CE_{j|p}$ is the rank $r_j$ locally free sheaf obtained by restricting $\CE_{j}$ to $\Hquot_\Bd\times \{p\}$ for a point $p\in C$.
Our main result expresses these intersection numbers as a sum over the solutions of an explicit system of polynomial equations, which we now describe in detail.

Consider $k$ tuples of variables $\Bz_1,\Bz_2,\dots,\Bz_k$ where 
    $\Bz_{j}\coloneqq (z_{1,j},z_{2,j},\dots,z_{r_j,j}),$ and set $\Bz_0=\{\},\ \Bz_{k+1} = (0, 0, \dots, 0)\in \BC^n,$ as illustrated below.
 \[
		\begin{tikzpicture}
		\draw (-0.5,0) -- (7,0) ;
		\draw (0,0.2) node {$\textbf{z}_0$};
		\draw (1,0.2) node {$\textbf{z}_1$};
		\draw (2,0.2) node {$\textbf{z}_2$};
		\draw (3,0.2) node {$\cdots$};
		\draw (4,0.2) node {$\cdots$};
		\draw (5,0.2) node {$\textbf{z}_k$};
		\draw (6,0.2) node {$\textbf{z}_{k+1}$};

		\draw (3,-0.4) node {$\cdots$};
		\draw (4,-0.4) node {$\cdots$};
		
		\draw (3,-2) node {$\ddots$};
		\draw (4,-2.5) node {$\ddots$};
		
		\draw (1,-0.4) node {\small $z_{1,1}$};
		\draw (1,-0.8) node {\small $\vdots$};
		\draw (1,-1.2) node {\small $z_{r_1,1}$};

		\draw (2,-0.4) node {\small $z_{1,2}$};
		\draw (2,-0.8) node {\small $\vdots$};
		\draw (2,-1.2) node {\small $\vdots$};
		\draw (2,-1.7) node {\small $z_{r_2,2}$};
		
		\draw (5,-0.4) node {\small $z_{1,k}$};
		\draw (5,-0.8) node {\small $\vdots$};
		\draw (5,-1.2) node {\small $\vdots$};
		\draw (5,-1.7) node {\small $\vdots$};
		\draw (5,-2.1) node {\small $\vdots$};
		\draw (5,-3) node {\small $z_{r_k,k}$};
		
		\draw (6,-0.4) node {\small $0$};
		\draw (6,-0.8) node {\small $\vdots$};
		\draw (6,-1.2) node {\small $\vdots$};
		\draw (6,-1.7) node {\small $\vdots$};
		\draw (6,-2.1) node {\small $\vdots$};
		\draw (6,-2.5) node {\small $\vdots$};
		\draw (6,-2.9) node {\small $\vdots$};
		\draw (6,-3.5) node {\small $0$};
	\end{tikzpicture}
	\]
 For each $1\le j\le k$, define the polynomial 
 \begin{align}\label{eq:Bethe_P(X)}
		P_j(X)\coloneqq \prod_{\alpha\in \Bz_{j+1}}(X-\alpha)+(-1)^{r_{j}-r_{j-1}}q_j\prod_{\alpha\in \Bz_{j-1}}(X-\alpha).
\end{align}
 \begin{definition}\label{def:Bethe_ansatz}
 Let $(q_1,q_2,\dots,q_k)$ be a $k$-tuple of complex numbers.
 We define the system of polynomial equations\footnote{These equations have appeared in several other contexts; see Section~\ref{sec:related} for more details. }
 \begin{align}\label{eq:Bethe_system_intro}
 P_j(z_{i,j})=0\quad \text{for each } 1\le j\le k, \text{ and } 1\le i\le r_j. 
 \end{align}
 We say that a solution $(\Bz_1,\Bz_2\dots,\Bz_k)=(\boldsymbol{\zeta}_1,\boldsymbol{\zeta}_2\dots,\boldsymbol{\zeta}_k)$ to the above system is \emph{non-degenerate} if each tuple $\boldsymbol{\zeta}_j = (\zeta_{1,j},\zeta_{2,j},\dots,\zeta_{r_j,j})$ has no repetition. 
\end{definition}
 We are now ready to state our main result; see Theorem~\ref{thm:VI_Hyper_quot_equivariant} for an equivariant version. 
\begin{theorem}\label{thm:intro_VI_formula}
	 Fix a tuple $\Br=(r_1,\dots,r_k)$ and a vector bundle $V$ on $C$ of rank $n$ with $\deg V= 0$. Then for natural numbers \(m_{i,j}\) and a generic $(q_1, \dots, q_k) \in (\BC^\ast)^k$, we have
	\begin{align}\label{eq:Intersection_Hquot(C,V)}
    \sum_{\Bd\in \BN^k} q_1^{d_1}\cdots q_k^{d_k}
\int_{[\Hquot_{\Bd}]^{\vir}}  
\prod_{j=1}^{k}\prod_{i=1}^{r_j} c_i(\CE_{j|p}^\vee)^{m_{i,j}}
    =\sum_{\boldsymbol{\zeta}_1,\dots,\boldsymbol{\zeta}_k}  \prod_{j=1}^{k}\frac{1}{r_j!}\prod_{i=1}^{r_j}e_i(\boldsymbol{\zeta}_j)^{m_{i,j}}\cdot J^{g-1},
	\end{align}
    where the sum is taken over all non-degenerate solutions $(\boldsymbol{\zeta}_1,\dots,\boldsymbol{\zeta}_k)$ of the equations in~\eqref{eq:Bethe_system_intro} and $e_i(\boldsymbol{\zeta}_j)$ denotes the $i$-th elementary symmetric polynomial in the $r_j$ variables \(\boldsymbol{\zeta}_j\). 
     The factor $J$ is given by
     \[ J\coloneqq J(\boldsymbol{\zeta}_1,\dots,\boldsymbol{\zeta}_k)\coloneqq \left( \prod_{\ell=1}^{k} {\frac{1}{\Delta(\boldsymbol{\zeta}_\ell)}}\right)\cdot
\det\left(\frac{\partial P_j(z_{i,j})}{\partial z_{i',j'}}\right)\Bigg|_{(\Bz_1,\Bz_2,\dots,\Bz_k) = (\boldsymbol{\zeta}_1,\boldsymbol{\zeta}_2,\dots,\boldsymbol{\zeta}_k)}
 ,\]
where $ \Delta(X_1,\dots,X_r)\coloneqq\prod_{a\ne b}(X_a-X_b)$ and the second factor is the Jacobian of \eqref{eq:Bethe_system_intro}. 
\end{theorem}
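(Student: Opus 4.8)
The plan is to reduce the statement to an equivariant localization computation on the Hyperquot scheme, and then identify the fixed-point contributions with the sum over non-degenerate Bethe solutions. Concretely, I would proceed as follows.

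First I would set up a torus action. Choosing $V = \mathcal{O}_C^{\oplus n}$ (or twisting by line bundles to reduce the general degree-zero case to the trivial one, using that the formula only involves the restrictions $\mathcal{E}_{j|p}$), let $T = (\mathbb{C}^\ast)^n$ act on $V$ by scaling the factors, inducing an action on each $\Hquot_{\mathbf{d}}$. I would then invoke the equivariant virtual localization theorem of Graber--Pandharipande applied to the virtual class $[\Hquot_{\mathbf{d}}]^{\vir}$ constructed in \cite{ckm,mr}: the $T$-fixed loci are indexed by ways of distributing the degree $\mathbf{d}$ among the $n$ coordinate directions compatibly with the flag ranks $\mathbf{r}$, and each fixed locus is (a product of) symmetric powers / smaller Quot-type schemes on $C$ whose virtual contributions can be written in terms of tautological integrals. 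This step is modeled on \cite{mo} for the Quot scheme; the main new bookkeeping is the nested structure of the $E_j$.

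Next I would assemble the generating function $\sum_{\mathbf{d}} q_1^{d_1}\cdots q_k^{d_k}\int_{[\Hquot_\mathbf{d}]^{\vir}}(\cdots)$ from these localization contributions and recognize it, after summing the geometric-type series in the $q_j$, as a sum of residues of an explicit rational function in auxiliary variables $z_{i,j}$. The key algebraic step is to show that this residue sum equals $\sum_{\boldsymbol\zeta}\prod_j \frac{1}{r_j!}\prod_i e_i(\boldsymbol\zeta_j)^{m_{i,j}} J^{g-1}$, where the sum is over non-degenerate solutions of \eqref{eq:Bethe_system_intro}. Here I would use that the critical-point equations of the relevant potential are exactly $P_j(z_{i,j}) = 0$ with $P_j$ as in \eqref{eq:Bethe_P(X)}: the first product $\prod_{\alpha \in \mathbf{z}_{j+1}}(X - \alpha)$ records the quotient $E_j/E_{j+1}$ and the term $q_j \prod_{\alpha \in \mathbf{z}_{j-1}}(X-\alpha)$ records the $q$-deformation coming from maps of positive degree, with the sign $(-1)^{r_j - r_{j-1}}$ tracking the rank jumps. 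The Jacobian factor $J$ and the genus exponent $g-1$ then arise in the standard way: $J$ is the Hessian determinant of the potential (normalized by the Vandermonde $\Delta(\boldsymbol\zeta_\ell)$ to account for the $S_{r_j}$-quotients, hence the $\frac{1}{r_j!}$), and the power $g-1$ comes from $\chi(\mathcal{O}_C) = 1 - g$ appearing in the Euler characteristics that control the virtual normal bundle contributions, exactly as the $J^{g-1}$ in the Vafa--Intriligator formula.

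The main obstacle I anticipate is the second step: passing from the raw localization sum over $T$-fixed loci (a sum over combinatorial degree distributions, each weighted by a tautological integral over a symmetric product of $C$) to the closed residue/Bethe expression, and in particular showing that the degenerate solutions and the boundary/excess contributions cancel or vanish for generic $q$. This requires (i) resumming the $q_j$-series, which converges only formally and must be interpreted via analytic continuation or via a clever contour argument, and (ii) a careful analysis showing that only non-degenerate solutions survive — presumably because at a degenerate solution the Vandermonde in the denominator of $J$ forces a pole that is cancelled by a vanishing numerator coming from the structure of the fixed locus, or else such loci simply do not contribute to the virtual class. I would handle (ii) by a dimension count on the corresponding fixed loci together with genericity of $(q_1,\dots,q_k)$, and (i) by first proving the identity as an equality of formal power series in the $q_j$ after expanding the right-hand side as a sum of residues at the roots of $P_j$, then matching coefficients of each monomial $q_1^{d_1}\cdots q_k^{d_k}$ with the localization output degree by degree.
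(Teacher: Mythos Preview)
Your high-level strategy matches the paper's: reduce to a split bundle, apply virtual localization for the $(\mathbb{C}^*)^n$-action, express the contributions of the fixed loci (products of Hilbert schemes of points on $C$) as residues, and identify the resulting generating series with a sum over non-degenerate solutions of the system~\eqref{eq:Bethe_system_intro}. The shape of your argument, including the origin of the $J^{g-1}$ factor from $\chi(\mathcal{O}_C)$, is correct.

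Where your proposal is underspecified is precisely at the two steps you flag as obstacles, and the paper's resolutions are more concrete than what you sketch.

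\textbf{Resummation.} You propose to ``recognize the residue sum'' as a sum over critical points of a potential and read off the Jacobian as a Hessian. The paper instead uses the multivariate Lagrange--B\"urmann formula (Theorem~\ref{thm:Lagrange_Burmann}) applied to an explicit change of variables $t_{s,j}=x_{s,j}/\Phi_{s,j}$, which converts the sum over splitting degrees $D$ directly into a single power series whose coefficients are extracted by $[\mathbf{q}^D]$; the Jacobian $J$ emerges as the Lagrange--B\"urmann Jacobian, not as a Hessian. This is the step that makes the resummation rigorous at the level of formal power series in $q_1,\dots,q_k$ (Theorem~\ref{thm:VI_Hyper_quot_equivariant_formal}), and it bypasses any convergence or contour issues. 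Hensel's lemma then identifies the formal non-degenerate solutions with the orbit $\mathfrak{S}_\Br\times\prod_j S_{r_j}$ of a single fundamental solution (Lemma~\ref{lem:counting_formal_solutions}).

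\textbf{Passing from formal to numerical solutions.} Your plan to rule out degenerate solutions by a dimension count is not what the paper does. Instead, Section~\ref{sec:Equations} runs a Gr\"obner degeneration of the system~\eqref{eq:Bethe_system_intro} to a triangular system whose solutions can be counted by hand (Lemma~\ref{structure_special_fiber}); smoothness of the degeneration family then shows that for generic $\mathbf{q}$ there are exactly $\prod_j r_{j+1}!/(r_{j+1}-r_j)!$ non-degenerate solutions, matching the count of formal solutions. A trace argument (Lemma~\ref{lem:trace}) lets one evaluate the generating polynomial at such generic $\mathbf{q}$.

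\textbf{Higher genus.} You do not address what is genuinely new beyond the Quot-scheme case: in genus $g>0$ the localization contributions from each fixed locus involve theta classes on products of symmetric powers of $C$, not just hyperplane classes. The paper devotes Section~\ref{sec:symmetric_product_of_curves} to substitution rules (Proposition~\ref{exponential_substitution_rule_n} and Corollary~\ref{jacobian_substitution_rule}) that replace products of exponentials $\exp(f_i\theta_{\mathbf{w}_i})$ by determinantal expressions in the $x$-classes; this is what produces the extra factor $J(\mathbf{x},\boldsymbol{\varepsilon})^g$ in Lemma~\ref{lem:J(x,theta)-substitution} and reduces the higher-genus residue to the genus-zero one. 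Without this, your proposal would stall at the point of evaluating the fixed-locus integrals for $g\geq 1$.
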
 
\begin{remark}
The expression in \eqref{eq:Intersection_Hquot(C,V)} is a polynomial in the variables $q_1, q_2, \dots, q_k$, depending only on the genus $g$, the ranks $\Br$ and the exponents $m_{i,j}$. In practice, this polynomiality allows one to substitute generic complex values for $q_1, q_2, \dots, q_k$ to solve \eqref{eq:Bethe_system_intro} numerically and then use interpolation to recover the full polynomial. 
\end{remark}

\begin{remark} 
    Theorem \ref{thm:intro_VI_formula} is stated for bundles $V$ of degree zero, but a completely analogous formula holds when the degree is nonzero. In that case, $V$ can be reduced to a degree zero bundle by twisting with some line bundle and applying a sequence of elementary modifications. In Proposition \ref{prop:elementary_mod}, we show a simple identity that relates the virtual class of $\Hquot_\Bd(C, \Br, V)$ to the one of $\Hquot_\Bd(C, \Br, \tilde V)$, where $\tilde V$ is an elementary modification of $V$, extending the analogous result proven in \cite[Section~2]{marian25} for Quot schemes. 
    \end{remark}
    \begin{remark}
    We also prove a compatibility result of virtual classes in Proposition \ref{prop:compatibility_twisting}, analogous to \cite[Theorem~2]{mo}, keeping $V$ fixed but changing the degree tuple $\Bd$. This is obtained by twisting the $\ell$-th subbundle by $\CO_C(-\pt)$ to obtain an embedding $\iota:\Hquot_{\Bd}\to \Hquot_{\Bd+\Boldeta_\ell}$ (where $\Boldeta_\ell=(r_1,\dots,r_\ell,0,\dots,0)$) and showing
  \begin{align*}
        \iota_* [\Hquot_{\Bd}]^{\vir} = e(\CE_{\ell}^\vee\otimes \CE_{\ell+1})_{\vert p}\cap [\Hquot_{\Bd+\Boldeta_\ell}]^{\vir}.
    \end{align*}
    The consequence of this compatibility at the level of the intersection numbers can be directly observed in formula \eqref{eq:Intersection_Hquot(C,V)}.
\end{remark}

\subsection{Specializations}\label{sec:spec} 
We now state the \(k=1\) case of our result in the equivariant setting; see Section~\ref{sec:equiv_localization} for more details.  
Let the torus $T = (\mathbb{C}^*)^n$ act on $\quot_d(C, r, \CO_C^{\oplus n})$ by scaling the fibers of $\CO_C^{\oplus n}$, and let \(\varepsilon_1,\dots, \varepsilon_n\in H_T^*(\pt)\) be the corresponding equivariant parameters.
\begin{corollary}
For non-negative integers \(m_1,\dots, m_r\) and generic values of \((\varepsilon_1,\dots,\varepsilon_n;q)\),
we have 
\begin{align}\label{eq:VI_equivariant}
    \sum_{d\ge 0}q^d\int_{[\quot_d(C, r, \mathcal{O}_C^{\oplus n})]^{\vir}} \prod_{i=1}^{r} c_i^{T}(\CE_{|p}^\vee)^{m_i}
= \sum_{\boldsymbol{\zeta}} \prod_{i=1}^{r}e_i(\boldsymbol{\zeta})^{m_{i}}\cdot J^{g-1},
	\end{align}
    where the sum is taken over all unordered \(r\)-tuples \(\boldsymbol{\zeta} = (\zeta_1, \dots, \zeta_r)\) of distinct roots of the equation 
    \[
(X-\varepsilon_1)\cdots(X-\varepsilon_n) + (-1)^{ r} q=0,
    \]
 and the factor $J$ is given by
     \[ J\coloneqq
     \prod_{i=1}^r\sum_{\ell=1}^n\frac{\prod_{s\leq n,\ s\neq \ell}\parens*{\zeta_i-\varepsilon_s}}{\prod_{t\leq r,\ t\neq i}\parens*{\zeta_i-\zeta_t}}.    
 \] 
\end{corollary}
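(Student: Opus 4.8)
The plan is to obtain the corollary as the $k=1$ case of the equivariant main theorem, Theorem~\ref{thm:VI_Hyper_quot_equivariant}, tracking how the auxiliary data of Definition~\ref{def:Bethe_ansatz} degenerates. For $k=1$ the only variable tuple is $\Bz_1=(z_{1,1},\dots,z_{r,1})$, the left boundary is empty ($\Bz_0=\{\}$, so $r_0=0$), and in the equivariant setting the terminal tuple $\Bz_{k+1}$ becomes $(\varepsilon_1,\dots,\varepsilon_n)$ in place of $(0,\dots,0)$. Substituting into \eqref{eq:Bethe_P(X)} and using that the empty product $\prod_{\alpha\in\Bz_0}(X-\alpha)$ equals $1$, the single polynomial becomes $P_1(X)=\prod_{s=1}^n(X-\varepsilon_s)+(-1)^{r}q$, so the system \eqref{eq:Bethe_system_intro} reduces to the requirement that each $z_{i,1}$ solve $(X-\varepsilon_1)\cdots(X-\varepsilon_n)+(-1)^{r}q=0$, and non-degeneracy says precisely that the $\zeta_i$ are distinct roots of this equation. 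The integrand factor $\prod_{j}\frac{1}{r_j!}\prod_i e_i(\boldsymbol{\zeta}_j)^{m_{i,j}}$ collapses to $\frac{1}{r!}\prod_{i=1}^r e_i(\boldsymbol{\zeta})^{m_i}$, which matches the right-hand side up to the $\tfrac1{r!}$ addressed at the end.

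The key computation is to simplify the Jacobian factor $J$ in this special case. Since for $k=1$ the polynomial $P_1(X)$ has constant coefficients, $P_1(z_{i,1})$ depends only on the single variable $z_{i,1}$; hence the matrix $\bigl(\partial P_1(z_{i,1})/\partial z_{i',1}\bigr)_{i,i'}$ is diagonal, and its determinant evaluated at a non-degenerate solution is $\prod_{i=1}^r P_1'(\zeta_i)$. The constant $(-1)^r q$ drops out under differentiation, so $P_1'(X)=\sum_{\ell=1}^n\prod_{s\ne\ell}(X-\varepsilon_s)$, while $\Delta(\boldsymbol{\zeta})=\prod_{a\ne b}(\zeta_a-\zeta_b)=\prod_{i=1}^r\prod_{t\ne i}(\zeta_i-\zeta_t)$. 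Therefore
\[
J=\frac{\prod_{i=1}^r P_1'(\zeta_i)}{\Delta(\boldsymbol{\zeta})}
=\prod_{i=1}^r\frac{\sum_{\ell=1}^n\prod_{s\le n,\ s\ne\ell}\parens*{\zeta_i-\varepsilon_s}}{\prod_{t\le r,\ t\ne i}\parens*{\zeta_i-\zeta_t}},
\]
which is exactly the factor appearing in the corollary; raising to the power $g-1$ then matches the stated exponent.

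It remains to match the two sides verbatim. On the left, $\Hquot_d(C,(r),\CO_C^{\oplus n})=\quot_d(C,r,\CO_C^{\oplus n})$, the sum over $\Bd\in\BN^1$ is the sum over $d\ge 0$, and the classes $c_i^T(\CE_{|p}^\vee)$ are the $k=1$ instances of $c_i(\CE_{j|p}^\vee)$, so the two generating series agree. On the right, Theorem~\ref{thm:VI_Hyper_quot_equivariant} sums over \emph{ordered} non-degenerate tuples weighted by $\frac{1}{r!}$, whereas the corollary sums over \emph{unordered} $r$-tuples of distinct roots without a weight; since $e_i(\boldsymbol{\zeta})$ and $J$ are symmetric in $\zeta_1,\dots,\zeta_r$ and a set of $r$ distinct roots admits exactly $r!$ orderings, the weighted ordered sum collapses to the unweighted unordered sum. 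Genericity of $(\varepsilon_1,\dots,\varepsilon_n;q)$ is what guarantees both that the degree-$n$ polynomial above has $n$ distinct roots (so the non-degenerate solutions are finite in number) and that the hypotheses of the equivariant theorem are in force. I do not expect a serious obstacle: the only steps requiring care are the Jacobian simplification and the ordered-versus-unordered bookkeeping, both elementary, with all the substantive content supplied by Theorem~\ref{thm:VI_Hyper_quot_equivariant}.
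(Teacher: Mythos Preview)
Your proposal is correct and follows exactly the approach intended by the paper: the corollary is stated there as the $k=1$ specialization of Theorem~\ref{thm:VI_Hyper_quot_equivariant}, with no separate proof given, and your computation of the diagonal Jacobian and the ordered-versus-unordered bookkeeping fills in precisely the details left implicit.
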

\begin{remark}\label{rem:ST_formula}
Specializing further to the non-equivariant setting (i.e., letting all \(\varepsilon_s\) go to \(0\)), we recover the Vafa--Intriligator formula 
% on the Quot scheme $\quot_d(C,r,\mathcal{O}_C^{\oplus n})$ 
of \cite[Theorem 3]{mo}.
    An analogous Vafa--Intriligator formula for fixed-domain Gromov--Witten invariants on the Grassmannian (which are defined using the moduli spaces of stable maps) was proven in \cite{siebert.tian:on} and the two formulas match. 
\end{remark}
\begin{remark}
   In \cite{mo}, the virtual intersections of more general classes arising from the Künneth decomposition of $c_i(\CE^\vee)$ on $\quot_d(C, r, \mathcal{O}_C^{\oplus n})\times C$ were also considered. One could study the analogous problem for Hyperquot schemes. We do not pursue this direction in the present paper; however, our methods may prove useful for such investigations.
\end{remark}

The \textit{Hyperquot schemes of points} $\Hquot_{\mathbf{d}}(C, V)$ parameterizing successive zero-dimensional quotients of $V$ of degrees $\mathbf{d} = (d_1, d_2, \dots, d_k)$, 
% The Hyperquot schemes of points on $C$ 
are smooth projective varieties; in our notation, they have ranks $\mathbf{r} = (n, n, \dots, n)$. They play an important role in the study of the cohomology \cite{marian_negut1} and the derived category \cite{Krug,marian_negut2} of the Quot schemes of points on \(C\). Theorem~\ref{thm:intro_VI_formula} implies the following simple formula (see Corollary~\ref{thm:punctual} for the Chern class version):
\begin{corollary}\label{cor:Segra_integral}
Given a rank $n$ vector bundle $V$ on $C$, we have
    \begin{align*}
    \sum_{\Bd\in \BN^k}
q_1^{d_1}\cdots q_k^{d_k} \int_{\Hquot_{\Bd}(C,V)}  \prod_{j=1}^{k}s_{t_j}(\CE_{j|p}) =\prod_{j=1}^{k}\frac{1}{1-t_j^n\alpha_j}, 
\end{align*}
where $s_{t_j}(\CE_{j|p})$ are Segre polynomials and \(\alpha_j{= \sum_{a = j}^k \prod_{b=1}^a q_b}\).
\end{corollary}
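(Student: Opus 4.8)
The plan is to deduce Corollary~\ref{cor:Segra_integral} from Theorem~\ref{thm:intro_VI_formula} by specializing to the ``Hyperquot scheme of points'' case $\Br=(n,n,\dots,n)$. Here $r_j=n$ for all $j$, so $\Bz_{j-1}$ and $\Bz_{j+1}$ each have the same number of entries as $\Bz_j$, and the exponents $r_j-r_{j-1}$ all vanish. Since the schemes $\Hquot_\Bd(C,V)$ are smooth of the expected dimension, the virtual class is the ordinary fundamental class, so the left-hand side becomes an honest intersection number; the only subtlety is that the statement is claimed for an arbitrary rank-$n$ bundle $V$ rather than one of degree zero, which is handled by the elementary-modification invariance recorded in the remark after Theorem~\ref{thm:intro_VI_formula} (or, more directly, because on a smooth Hyperquot scheme the Segre classes of $\CE_{j|p}$ do not depend on the chosen point $p$ and the generating series is deformation-invariant in $V$). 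I would first reduce to $\deg V=0$ this way, then convert the Segre polynomial identity into the Chern-monomial form governed by Theorem~\ref{thm:intro_VI_formula} via the standard relation $s_t(\CE_{j|p})=1/c_{1/t}(\CE_{j|p})$, equivalently expanding $\prod_j s_{t_j}(\CE_{j|p})$ as a power series whose coefficients are polynomials in the $c_i(\CE_{j|p})$ — precisely the classes appearing in \eqref{eq:Intersection_Hquot(C,V)} (note $c_i(\CE_{j|p})=(-1)^i c_i(\CE_{j|p}^\vee)$).

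Next I would analyze the Bethe system in this special case. With $r_{j-1}=r_j=r_{j+1}=n$, the polynomials are $P_j(X)=\prod_{\alpha\in\Bz_{j+1}}(X-\alpha)+q_j\prod_{\alpha\in\Bz_{j-1}}(X-\alpha)$, with $\Bz_{k+1}=(0,\dots,0)$ so $\prod_{\alpha\in\Bz_{k+1}}(X-\alpha)=X^n$, and $\Bz_0=\{\}$ so the empty product is $1$. Working down from $j=k$: the equation $P_k(z_{i,k})=0$ says $z_{i,k}^n=-q_k\prod_\alpha(z_{i,k}-\alpha)$ where $\alpha$ ranges over $\Bz_{k-1}$; continuing, one finds that for a non-degenerate solution each tuple $\Bz_j$ must, up to reordering, equal the tuple $\Bz_{j+1}$ scaled — more precisely I expect the key structural claim to be that in every non-degenerate solution the sets $\{z_{1,j},\dots,z_{n,j}\}$ are all determined by $\Bz_1$, and $\Bz_1$ itself consists of the $n$-th roots of a single explicit scalar. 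The cleanest route is probably to show that $P_j(X)$ must, for a non-degenerate solution, be proportional to $\prod_i(X-z_{i,j})$, giving polynomial identities $\prod_i(X-z_{i,j}) = \prod_i(X-z_{i,j+1}) + (\text{scalar})\prod_i(X-z_{i,j-1})$ that can be solved recursively; the upshot will be a finite list of solutions indexed combinatorially (likely by choices of $n$-th roots), each contributing the same value. Then I substitute $e_i(\boldsymbol\zeta_j)$ and the Jacobian factor $J$ into \eqref{eq:Intersection_Hquot(C,V)}, observing that at a point of $C$ on a smooth punctual Hyperquot scheme $g$ enters only through $J^{g-1}$, and the sum over the symmetric-group orbit cancels the $1/r_j!=1/n!$ factors.

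The main obstacle I anticipate is the bookkeeping of the non-degenerate solutions and the evaluation of $J^{g-1}$: one must show that the contributions organize into the clean product $\prod_j (1-t_j^n\alpha_j)^{-1}$ with $\alpha_j=\sum_{a=j}^k\prod_{b=1}^a q_b$. I would handle this by first treating $k=1$, where the system is $X^n+(-1)^n q=0$ — i.e. $X^n=-(-1)^nq$, so $\boldsymbol\zeta$ runs over the $n$-th roots of a fixed scalar — matching the known Quot-scheme computation and the Corollary in Section~\ref{sec:spec} specialized to $r=n$; there $\sum_{\boldsymbol\zeta}\prod_i e_i(\boldsymbol\zeta)^{m_i}J^{g-1}/n!$ must collapse to the geometric-series coefficient, because when $r=n$ the only nonzero Segre integrals come from powers of the top class. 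Then for general $k$ I would argue that the recursive structure of the $P_j$ decouples the $k$ factors: the scalar governing $\Bz_j$ is $\prod_{b=1}^{j}q_b$ summed appropriately, which is exactly the definition of $\alpha_j$, and the Jacobian $\det(\partial P_j(z_{i,j})/\partial z_{i',j'})$ is block-triangular in a suitable ordering so that $J$ factors as a product over $j$ of the $k=1$-type contributions. Assembling these, the generating function over $\Bd$ becomes a product of $k$ geometric series, each of the form $\sum_{d_j\ge 0}(t_j^n\alpha_j)^{d_j}$ after matching the Segre-polynomial variable $t_j$ with the relevant power of the roots, which yields the stated closed form. The genus appears only through $J^{g-1}$, and since each block contributes a factor equal to $1$ in the relevant normalization (a consequence of $r_j=n$ forcing all but the top Segre term to vanish), the final answer is genus-independent, consistent with the statement.
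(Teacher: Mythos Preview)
Your overall plan matches the paper's: reduce to $\deg V=0$, apply Theorem~\ref{thm:intro_VI_formula} with $\Br=(n,\dots,n)$, observe that for a non-degenerate solution the degree-$n$ polynomial $P_j(X)$ must be a scalar multiple of $R_j(X)=\prod_i(X-\zeta_{i,j})$, and solve the resulting linear recursion $(1+q_j)R_j=R_{j+1}+q_jR_{j-1}$ (with $R_0=1$, $R_{k+1}=X^n$) to get $R_j(X)=X^n+(-1)^n\alpha_j$, whence $e_i(\boldsymbol\zeta_j)=0$ for $i<n$ and $e_n(\boldsymbol\zeta_j)=\alpha_j$. One minor slip: $r_1-r_0=n$, not $0$, so the sign in $P_1$ is $(-1)^n$; this does not affect the structure of your argument.

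The real gap is your treatment of $J$. The Jacobian $\bigl(\partial P_j(z_{i,j})/\partial z_{i',j'}\bigr)$ is block-\emph{tridiagonal}, not block-triangular: $P_j(z_{i,j})$ depends on $\Bz_{j-1}$, $\Bz_j$, and $\Bz_{j+1}$. It therefore does not factor into $k$ independent $k{=}1$-type blocks, and a direct evaluation along the lines you sketch would be substantially harder than you indicate. The paper sidesteps this entirely with a one-line trick you are missing: set $\Bm=\mathbf 0$. Then the left-hand side of \eqref{eq:Intersection_Hquot(C,V)} equals $1$ for every $g$ (the only zero-dimensional punctual Hyperquot scheme is the single reduced point at $\Bd=\mathbf 0$), while by Proposition~\ref{number_of_solutions} the non-degenerate solutions form a single free $(S_n)^k$-orbit, so the right-hand side collapses to $J^{g-1}$. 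Since this holds for all $g\ge 0$, one gets $J=1$ immediately. With $J=1$ and $e_i(\boldsymbol\zeta_j)=0$ for $i<n$, the Chern-monomial formula becomes $\prod_j\alpha_j^{m_{n,j}}$ when only top Chern classes appear and $0$ otherwise (this is Corollary~\ref{thm:punctual}), and the Segre identity follows at once.
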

\begin{remark}
     The Segre integrals and Euler characteristics of more general tautological classes on the Quot schemes of points on~$C$ were computed in \cite{oprea_pandharipande,Oprea-Shubham}. Our methods may be useful for evaluating analogous invariants on Hyperquot schemes of points. 
\end{remark}

 We find another simple formula
 for ranks $\Br = (1,n-1)$. We explicitly solve the system of equations~\eqref{eq:Bethe_system_intro}, and Theorem~\ref{thm:intro_VI_formula} specializes to the following expression involving binomial coefficients.
\begin{corollary}\label{cor:intro_r=(1,n-1)}
    Fix $\mathbf{r}=(1,n-1)$ and a rank $n$ bundle $V$ on $C$ of degree zero. For a fixed tuple of natural numbers $\ell,m_1,m_2,\dots,m_{n-1}$, we have
   \begin{align*}
\sum_{\Bd\in\BN^{2}}q_1^{d_1}q_2^{d_2}\int_{[\Hquot_\Bd]^{\vir}}
c_1(\CE_{1|p}^\vee)^{\ell}\prod_{i=1}^{n-1} c_i(\CE_{2|p}^\vee)^{m_i}
= 
n^{g}(n-1)^{g} \sum_{j\in\mathbb{Z}}
\binom{d-\bar{g}-m_{n-1}}{jn-\ell-m_{n-1}+ \bar{g}} q_1^{\bar{g} + j} q_{2}^{d-\bar{g}-j},
   \end{align*}
   where \(\overline{g}\coloneqq g-1\)  and   \[
    d\coloneqq \frac{\ell+\sum_{i=1}^{n-1}im_i+(2n-3)\bar{g}}{n-1}.
    \]
    Here we set the binomial coefficients $\binom{p}{q}=0$ unless $p,q\in \BN$ and $q\le p$.
\end{corollary}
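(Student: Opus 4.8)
The plan is to apply Theorem~\ref{thm:intro_VI_formula} with $k=2$ and $\Br=(1,n-1)$, and to carry out the three tasks it leaves open in this case: (i) solve the Bethe system~\eqref{eq:Bethe_system_intro} explicitly; (ii) simplify the Jacobian factor $J$; and (iii) evaluate the resulting finite sum over non-degenerate solutions, extracting the $q_1^{d_1}q_2^{d_2}$-coefficient. Since $\deg V=0$, the only inputs are $g$, $\Br$, and the exponents.

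First I would write down the system. Here $\Bz_0=\{\}$, $\Bz_1=(z_{1,1})$ is a single variable which I will call $w$, $\Bz_2=(z_{1,2},\dots,z_{n-1,2})$ has $n-1$ variables, and $\Bz_3=(0,\dots,0)\in\BC^n$. From~\eqref{eq:Bethe_P(X)} the polynomial $P_1(X)=\prod_{\alpha\in\Bz_2}(X-\alpha)+(-1)^{r_1-r_0}q_1\cdot 1 = \prod_{i=1}^{n-1}(X-z_{i,2})+(-1)^{1}q_1$ (using $r_0=0$), and $P_2(X)=X^n + (-1)^{r_2-r_1}q_2(X-w)=X^n+(-1)^{n-2}q_2(X-w)$. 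The equation $P_1(w)=0$ says $\prod_i(w-z_{i,2})=q_1$, i.e. the degree-$(n-1)$ monic polynomial with roots $\Bz_2$ takes the value $q_1$ at $w$; equivalently $X^n+(-1)^{n-2}q_2(X-w)$, evaluated via $P_2(z_{i,2})=0$ for all $i$, forces $z_{i,2}^n = (-1)^{n-1}q_2(z_{i,2}-w)$ for each $i$. The clean way forward is to eliminate: the $z_{i,2}$ are the $n-1$ roots of $P_2(X)/(X-w)$ other than... actually $P_2$ has degree $n$, its roots are $w$ together with the $n-1$ values $z_{i,2}$ precisely when $P_2(w)=0$, i.e. $w^n+(-1)^{n-2}q_2\cdot 0 = w^n$, which is \emph{not} generally zero. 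So instead $\Bz_2$ are $n-1$ among the $n$ roots of $P_2$; the product relation $\prod_i(w-z_{i,2})=q_1$ then pins down which root is omitted. Concretely, if $\lambda_1,\dots,\lambda_n$ are the roots of $X^n+(-1)^{n-2}q_2(X-w)=0$, then $\prod_{s}(w-\lambda_s)=P_2(w)=w^n$, so omitting root $\lambda_m$ gives $\prod_{i}(w-z_{i,2}) = w^n/(w-\lambda_m)$, and setting this to $q_1$ determines a polynomial relation for $w$. Pushing this elimination through should produce a single univariate equation for $w$ of controlled degree, whose roots (together with the choice of omitted $\lambda_m$) enumerate the non-degenerate solutions; I expect the count to be governed by $n^g(n-1)^g$ after the Jacobian weighting, matching the prefactor.

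Next I would compute $J=\bigl(\Delta(\Bz_1)^{-1}\Delta(\Bz_2)^{-1}\bigr)\det(\partial P_j(z_{i,j})/\partial z_{i',j'})$. Since $\Bz_1$ is a single variable, $\Delta(\Bz_1)=1$ (empty product). The Jacobian matrix is block lower- or upper-triangular in the natural ordering because $P_1$ depends only on $(w,\Bz_2)$ and $P_2(z_{i,2})$ depends only on $(w,z_{i,2})$: the $\partial P_2(z_{i,2})/\partial z_{i',2}$ block is diagonal with entries $P_2'(z_{i,2})$, and $\partial P_2(z_{i,2})/\partial w = (-1)^{n-1}q_2$, while $\partial P_1(w)/\partial w = \sum_i\prod_{i'\neq i}(w-z_{i',2})$ and $\partial P_1(w)/\partial z_{i,2}=-\prod_{i'\neq i}(w-z_{i',2})$. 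Expanding the determinant along the $w$-row and combining with $\Delta(\Bz_2)^{-1}=\prod_{i\neq i'}(z_{i,2}-z_{i',2})^{-1}$, I expect massive cancellation leaving a compact rational expression in $w$ and $q_1,q_2$ — plausibly $J$ becomes, up to sign, a simple monomial times $(n-1)$ and $n$ factors, consistent with the $n^g(n-1)^g$ in the answer and with $J^{g-1}$ contributing the rest.

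Finally, with the solutions and $J$ in hand, the sum on the right of~\eqref{eq:Intersection_Hquot(C,V)} becomes $\sum_{(\zeta,\boldsymbol{\zeta}_2)} \frac{1}{1!}\frac{1}{(n-1)!}\, e_1(\zeta)^{\ell}\prod_{i=1}^{n-1}e_i(\boldsymbol{\zeta}_2)^{m_i}\cdot J^{g-1}$, where $e_1(\zeta)=w$ and the $e_i(\boldsymbol{\zeta}_2)$ are (up to sign) the coefficients of $P_2(X)/(X-\lambda_m)$, hence explicit functions of $w,q_2$ and the omitted root. The factor $\frac{1}{(n-1)!}$ cancels the sum over orderings of $\Bz_2$, so effectively we sum over $w$ and over the choice of omitted root $\lambda_m$. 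Reinterpreting the remaining sum over the $w$'s — which are roots of the univariate elimination polynomial — as a contour integral / residue sum (the standard "power sums of roots" trick, as in the proof of the Vafa–Intriligator formula), the $q$-series on the left should emerge term by term, and the $q_1^{d_1}q_2^{d_2}$-coefficient should be extracted by a residue at $w=0$ or $w=\infty$, producing the binomial coefficient $\binom{d-\bar g-m_{n-1}}{jn-\ell-m_{n-1}+\bar g}$ with the stated $d$ (forced by the requirement that only one multidegree $\Bd$ contributes to each monomial, i.e. a "dimension constraint" reading off $d$ from $\ell,\{m_i\},g,n$). The main obstacle, I expect, is step (i)–(ii): carrying out the elimination cleanly enough that the solution set and the Jacobian both come out in closed form, since the interplay between "which root $\lambda_m$ is omitted" and the value of $w$ is where all the combinatorial content (and the eventual binomial sum over $j$) is hidden; once that bookkeeping is organized, the residue extraction in step (iii) is routine.
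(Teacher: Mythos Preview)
Your outline is essentially the paper's approach: apply Theorem~\ref{thm:intro_VI_formula}, solve the Bethe system by viewing the $z_{i,2}$ as $n-1$ of the $n$ roots of $P_2(X)$ with the omitted root determined by $P_1(z_{1,1})=0$, compute $J$ from the block-structured Jacobian, and sum. Two points where the paper's execution differs from your sketch are worth flagging.

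First, the explicit parametrization of solutions is cleaner than a direct elimination in $z_{1,1}$. Writing $\zeta:=z_{1,1}$ and $\eta_n$ for the omitted root of $P_2$, the relation $\zeta^n=q_1(\zeta-\eta_n)$ together with $\eta_n^n=(-1)^{n-1}q_2(\eta_n-\zeta)$ forces $(\zeta/\eta_n)^n=-q_1/q_2$; introducing an auxiliary variable $w$ with $w^n=q_1/q_2$ gives $\eta_n=-\zeta/w$ and then $\zeta^{n-1}=q_1(1+w^{-1})$. So the solution set is parametrized by a pair $(w,\zeta)$ ranging over $n\cdot(n-1)$ values, and the elementary symmetric polynomials $e_i(\boldsymbol{\zeta}_2)$ become the explicit monomials $(\zeta/w)^i$ for $i<n-1$ and $q_2w$ for $i=n-1$. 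This is the bookkeeping you anticipated being the ``main obstacle''; the auxiliary $w$ is what dissolves it. With this parametrization the Jacobian comes out to $J=n(n-1)q_2w\zeta^{n-2}$.

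Second, for step (iii) no contour integral is needed: the sum over solutions becomes a double sum over $n$-th roots $w$ of $q_1/q_2$ and $(n-1)$-th roots $\zeta$ of $q_1(1+w^{-1})$. The inner sum over $\zeta$ is a power sum of $(n-1)$-th roots and collapses to $(n-1)$ times a power of $q_1(1+w^{-1})$; the outer sum over $w$ is handled by expanding $(1+w)^{d-m_{n-1}-\bar g}$ via the binomial theorem and using $\sum_{w^n=q_1/q_2}w^k=n(q_1/q_2)^{k/n}$ when $n\mid k$ and $0$ otherwise. This produces the binomial sum over $j$ directly.
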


\subsection{Related work and future directions}\label{sec:related}
When the ranks \(r_1,\dots,r_k\) are distinct, let $\mor_{\Bd}(C,\Fl(\Br,n))$ be the moduli space parametrizing morphisms $f$ from $C$ to the partial flag variety $\Fl(\Br,n)$ of multidegree $\Bd=(d_1,\dots,d_k)$, that is,
$$f_*[C] = \Bd\in H_2(\Fl(\Br,n), \BZ)\cong \BZ^{k}.$$ The basis for the homology group above is given, via the universal coefficient isomorphism $H_2(\Fl, \BZ) \cong H^2(\Fl, \BZ)$, by the first Chern classes of the dual bundles to the universal subbundles $\CA_1,\dots,\CA_k$ on $\Fl(\Br,n)$. The Hyperquot scheme $\Hquot_{\Bd}(C,\Br, \CO_C^{\oplus n})$ compactifies $\mor_{\Bd}(C,\Fl(\Br,n))$, which suggests the following:
\begin{problem}\label{ques:enumerativity}
    Find conditions under which the virtual intersection numbers on the Hyperquot scheme $\Hquot_\Bd(C,\Br,\CO_C^{\oplus n})$ in Theorem~\ref{thm:intro_VI_formula} are enumerative, that is,
\begin{equation}\label{eq:enum}
    \int_{[\Hquot_{\Bd}]^{\vir}}  
\prod_{\ell=1}^{t} c_{i_\ell}(\CE_{j_{\ell}|p}^\vee) = \#
\left\{
f:C\to \Fl(\Br,n) \mid f_*[C]  = 
\Bd,\ f(p_{\ell})\in Y_{i_\ell,j_\ell}\text{ for all }\ell
\right\}, 
\end{equation}
where $p_1,\dots,p_t\in C$ are distinct points, and 
$Y_{i_1,j_1}, \dots,Y_{i_t,j_t}$ are the special Schubert subvarieties of 
$\Fl(\Br,n)$ Poincar\'e dual to the classes 
$c_{i_1}(\CA_{j_1}^\vee), \dots,c_{i_t}(\CA_{j_t}^\vee)$, 
placed in general position so that the right-hand side is finite.
\end{problem}
If $C=\BP^1$, the Hyperquot scheme in Problem~\ref{ques:enumerativity} is smooth and irreducible, and the enumerativity of the intersection numbers was established in \cite{kim1996gromov,cf2} for all multidegrees~$\Bd$. The enumerativity of the virtual counts in the Grassmannian case (i.e., for $\quot_d(C,r,\CO_{C}^{\oplus n})$) was proved by Bertram~\cite{bertram} for sufficiently large degrees~$d$. More recently, this approach was also used in~\cite{marian2025short} to count maps to Fano hypersurfaces in Grassmannians. We plan to pursue Problem~\ref{ques:enumerativity} in our upcoming work.\\

Let us summarize, to the best of our knowledge, the occurrences of our system of equations~\eqref{eq:Bethe_system_intro} in the literature.
In physics, they arise as the vacuum equations of a two-dimensional gauged 
linear sigma model; equivalently, they define the critical locus of the 
twisted effective superpotential; see \cite[Section 4.6.2]{gu2023quantum} and \cite{kim_oh_ueda_yoshida}. Our equations also appear as relations describing the small quantum cohomology of the Grassmannian in \cite{siebert.tian:on} and of the partial flag variety in \cite{GuKa}.
In the Grassmannian case, these are also the Bethe ansatz equations of \cite[(4.17)]{Gorbounov:2014}, which arise naturally in integrable systems.

In particular, we can view formula \eqref{eq:Intersection_Hquot(C,V)} as a sum over the geometric generic fiber of the spectrum of the small quantum cohomology of the partial flag variety \(\Fl(\mathbf{r},n)\) over the affine space of quantum parameters \(
q_1,\dots,q_k\). Moreover, the properties of the solutions to the system of equations we derive in Section~\ref{sec:Equations} imply that the quantum cohomology of \(\Fl(\mathbf{r},n)\) is generically semisimple. Our results may have other applications in the contexts mentioned above.

A different approach to counting the maps in \eqref{eq:enum} is via fixed-domain Gromov--Witten invariants. When \(k=1\), the approach is known to yield the same intersection numbers as the Quot scheme approach in all degrees, as discussed in Remark \ref{rem:ST_formula}, regardless of their enumerativity. Fixed-domain Gromov--Witten invariants are studied for certain nonsingular projective varieties in 
\cite{chaput.manivel.perrin, buch2021tevelev,Cela_Lian,Cela}. It would be interesting to find an answer to the following:
\begin{problem}\label{prob:gw}
    When \(V\) is trivial and the ranks are distinct, does our virtual intersection number \(\int_{[\Hquot_{\Bd}]^{\vir}}  
\prod_{\ell=1}^{t} c_{i_\ell}(\CE_{j_{\ell}|p}^\vee)\) always agree with the multidegree \(\Bd\) fixed-domain Gromov--Witten invariant on $\Fl(\Br,n)$ associated to $c_{i_1}(\CA_{j_1}^\vee), \dots,c_{i_t}(\CA_{j_t}^\vee)$? 
\end{problem}

By \cite{Siebert} and the unpublished work \cite{bp2022vafa}, it is expected that fixed-domain Gromov--Witten invariants on smooth projective Fano varieties can be computed in the small quantum cohomology ring and that, for generalized flag varieties \(G/P\), they are always non-negative when the insertions are Schubert classes. This suggests that whenever these virtual intersection numbers \(\int_{[\Hquot_{\Bd}]^{\vir}}  
\prod_{\ell=1}^{t} c_{i_\ell}(\CE_{j_{\ell}|p}^\vee)\) agree with fixed-domain Gromov--Witten invariants, they must be non-negative. This positivity is also reflected in Corollaries~\ref{cor:Segra_integral} (where the ranks are not distinct and \(V\) is arbitrary) and~\ref{cor:intro_r=(1,n-1)}. We speculate that positivity holds in general and pose the following:

\begin{problem}
     Prove that each virtual integral in \eqref{eq:Intersection_Hquot(C,V)} is non-negative and find positive combinatorial formulas for these invariants.
\end{problem}

As mentioned earlier, when the ranks are distinct, the Hyperquot scheme $\Hquot_\Bd(C,\Br,\CO_C^{\oplus n})$ compactifies the space of maps $\mor_{\Bd}(C,\Fl(\Br,n))$. More precisely, $\Hquot_\Bd(C,\Br,\CO_C^{\oplus n})$ is known to be the moduli space of $0^+$-stable graph quasimaps of multidegree $\Bd$ from $C$ to the partial flag variety $\Fl(\Br,n)$ with no marked points \cite{ckm}, where the target $\Fl(\Br,n)$ is given in its standard presentation as a geometric invariant theory (GIT) quotient.

In this sense, our work fits into the literature computing quasimap invariants of Fano quotients of linear spaces.
When the target is a Fano toric variety $V/\!/T$ (presented as a quotient of a linear space $V$ by the action of a torus $T$), \cite{SzenesVergne} proved a Vafa--Intriligator formula for the generating series of genus zero quasimap invariants.
In the case of positive quotients of linear spaces by actions of (non-abelian) reductive groups, a similar formula was found by the first author \cite{ontani2025vafa} via abelianization, which relates the invariants of $V/\!/G$ to those of $V/\!/T$. When the target is \(\Fl(\Br,n)\), this formula agrees with the genus zero specialization of (\ref{eq:Intersection_Hquot(C,V)}), which we prove by very different methods.\\

A $K$-theoretic analog of the Vafa--Intriligator formula \eqref{eq:VI_equivariant} for Euler characteristics of products of exterior powers (more generally, Schur functors) of the tautological bundle $\CE_{p}$ on Quot schemes was obtained in \cite{SinhaZhang2} via torus localization. It was applied in \cite{SinhaZhang1} to study the quantum $K$-theory of Grassmannians, producing new proofs and formulas for genus-zero $K$-theoretic Gromov--Witten invariants. In the genus-zero setting, individual sheaf cohomology groups of certain Schur functors of $\CE_{p}$ were computed in \cite{Gautam_Lin_Sinha}. It would be interesting to prove a $K$-theoretic analog of Theorem~\ref{thm:intro_VI_formula} and relate it to the quantum $K$-theory of the partial flag variety. 

\subsection{Sketch of proof}
We use deformation invariance of virtual integrals (Proposition~\ref{prop:reduction_split_bundles}) to reduce to the case where \begin{equation*}
    V=M_1\oplus M_2\oplus\cdots\oplus M_{n}
\end{equation*}
and each  $M_i\to C$ is a line bundle of degree zero. We then consider the action of \(\parens*{\BC^*}^n\) scaling each \(M_j\) and apply torus localization to reduce the calculation to summing over contributions from the components of the fixed locus, which are products of Hilbert schemes on \(C\). We handle the summation carefully by utilizing the multivariate Lagrange--B\"urmann formula. This allows us to prove, in genus zero, that the generating polynomial of equivariant virtual integrals is a sum over formal power series solutions to the system of equations (see Theorem~\ref{thm:VI_Hyper_quot_equivariant_formal}). We prove in Proposition~\ref{prop:genus0eval} that this generating polynomial can be evaluated by first specializing \(\Bq\) and the equivariant parameters to generic values, and then summing over non-degenerate solutions; the proof applies Gr\"obner degeneration to the system of equations~\eqref{eq:Bethe_system_intro}. In higher genus, we need some intricate intersection-theoretic computations on products of Hilbert schemes on $C$, which we handle in Section~\ref{sec:symmetric_product_of_curves}.

\subsection*{Acknowledgments}
We are grateful to Anders Buch, Alessio Cela, Barbara Fantechi, Ajay Gautam, Thomas Graber, Christian Korff, Alina Marian, Leonardo Mihalcea, Dragos Oprea, Andrea Ricolfi and Richard Thomas for useful discussions.

\section{Perfect obstruction theory and virtual fundamental class}\label{sec:virtual_class}
In this section, we recall some foundational results about the virtual intersection theory on Hyperquot schemes.
Let $B$ be a scheme and consider a vector bundle $V$ on $B\times C$ over $B$. The relative Hyperquot scheme 
\begin{align}\label{eq:hquot_scheme}
    f:\Hquot_\Bd(C,\Br,V)\to B
\end{align} 
can be constructed as a tower of relative Quot schemes as we outline below in the two-step case. Let $\Br=(r_1,r_2)$ and $\Bd=(d_1,d_2)$. The first step is to consider the relative Quot scheme 
$\quot_{d_2}(C,r_2,V)\to B.
$ 
Let $\CE_2$ be the universal subsheaf over $\quot_{d_2}(C,r_2,V)\times C$, and consider the relative Quot scheme 
$
\quot_{d_1-d_2}(C,r_1,\CE_2)\to \quot_{d_2}(C,r_2,V). 
$
It is easy to check that the composition
\begin{equation}\label{eq:tower_construction}
    \quot_{d_1-d_2}(C,r_1,\CE_2)\to \quot_{d_2}(C,r_2,V)\to B
\end{equation}
is the relative Hyperquot scheme $\Hquot_\Bd(C,\Br,V)$ over $B$. This construction can be easily iterated for flags of any length.

\subsection{Obstruction theory}\label{sec:obstruction_theory}
On the product $\Hquot_\Bd(C, \Br, V) \times C$, we have the universal subsheaves
\begin{align*}
    0 =: \mathcal{E}_0 \hookrightarrow \mathcal{E}_1 \hookrightarrow \dots \hookrightarrow \mathcal{E}_k \hookrightarrow \mathcal{E}_{k+1}\coloneqq  V,
\end{align*}
where each $\mathcal{E}_i$ is a vector bundle, and the projection $\pi: \Hquot_\Bd(C, \Br, V) \times C \rightarrow \Hquot_\Bd(C, \Br, V)$. As described in \cite{ckm} and \cite[Theorem 3.2]{mr}, there is a $f$-relative perfect obstruction theory $\BE\to \BL_{f}$, where $\BL_f$ denotes the $f$-relative cotangent complex, for the Hyperquot scheme (\ref{eq:hquot_scheme}) with
\begin{align}\label{monavari_ricolfi_pot}
    \bb{E} \coloneqq  \mathrm{Cone} \left[ \bigoplus_{i=1}^k \Hom_\pi(\mathcal{E}_i, \mathcal{E}_i) \xrightarrow{\gamma} \bigoplus_{i=1}^k \Hom_\pi(\mathcal{E}_i, \mathcal{E}_{i+1}) \right]^\vee
\end{align}
where $\Hom_\pi$ is the composition of $\rR\pi_\ast$ and $\Hom$ in the derived category. The morphism $\gamma$ is described as follows. By post-composition, the subsheaves define morphisms of sheaves $\alpha_i: \Hom(\mathcal{E}_i, \mathcal{E}_i) \rightarrow \Hom(\mathcal{E}_i, \mathcal{E}_{i+1})$ for every $i \in [k]$. Analogously, there are morphisms $\beta_i: \Hom(\mathcal{E}_i, \mathcal{E}_i) \rightarrow \Hom(\mathcal{E}_{i-1}, \mathcal{E}_{i})$ given by pre-composition. 
The arrow is given by
\begin{align*}
    \Hom_\pi(\mathcal{E}_i, \mathcal{E}_i) \oplus \Hom_\pi(\mathcal{E}_{i+1}, \mathcal{E}_{i+1}) \xrightarrow{\alpha_i - \beta_{i+1}} \Hom_\pi(\mathcal{E}_i, \mathcal{E}_{i+1})
\end{align*}
for every $i$.

\begin{remark}
    Note that \cite[Theorem 3.2]{mr} is stated in the absolute setting, that is, when $B=\{\mathrm{pt}\}$, but the exact same argument gives a relative perfect obstruction theory when $B$ is an arbitrary scheme. In the relative setting, the perfect obstruction theory for the Quot scheme was constructed by \cite{gillam_deformation, kuhn_atiyah}. {One can also construct a perfect obstruction theory for $\Hquot_\Bd(C,\Br,V)$ by viewing it as an iterated Quot scheme and applying \cite[Remark~4.6]{manolache_virtual_pull}.}
\end{remark}
\subsubsection*{Sketch of construction (see \cite[Theorem 3.2]{mr} for details)}
For fixed $\Br = (r_1, \dots, r_k)$, $\Bd= (d_1, \dots, d_k)$, and a vector bundle $V$ on $B\times C$, we can consider the moduli Artin stack $\Bun_i$ of vector bundles of rank $r_i$ and relative degree $\deg V-d_i$ on $C$. Since these stacks are smooth, the projection
\begin{align*}
    \pr_B: B \times \prod_{i=1}^{k}\Bun_i\rightarrow B
\end{align*}
is smooth and the identity $\mathrm{Id}: \BL_{\pr_B}\rightarrow\BL_{\pr_B}$ of the relative cotangent complex of $\pr_B$ defines a relative perfect obstruction theory. From this, thanks to \cite[Remark 4.6]{manolache_virtual_pull}, we just have to describe a relative obstruction theory for the morphism
\begin{align*}
    \Hquot_\Bd(C,\Br,V) \rightarrow B \times \uBun \quad : \quad [E_1 \hookrightarrow \dots \hookrightarrow E_k \hookrightarrow V\vert_{\lbrace b\rbrace \times C}] \mapsto (b, [E_1], \dots, [E_k])
\end{align*}
where $\uBun \coloneqq  \prod_{i=1}^k\Bun_i$. This is easily achieved by factoring this map as a closed embedding followed by a smooth morphism, as we now recall. Let $\fE_i$ be the universal bundle on $\Bun_i \times C$ and consider the sheaf $F \coloneqq  \bigoplus_{i=1}^k F_i\coloneqq  \bigoplus_{i=1}^k\Hom(\fE_{i+1},\fE_i \otimes \omega_\pi)$ on $C \times B \times \uBun$, where
\begin{align*}
    \pi : C \times B \times \uBun \rightarrow B \times \uBun
\end{align*}
is the projection along the curve and $\omega_\pi$ is its relative cotangent bundle. These can be resolved as
\begin{align}\label{eq:resolution}
    0 \rightarrow K_i \rightarrow M_i \rightarrow F_i \rightarrow 0
\end{align}
 so that ${K_{i}}$ and ${M_{i}}$ are locally free, satisfy $\pi_\ast K_i = \pi_\ast M_i =0$ and both $\rR^1\pi_\ast K_i$ and $\rR^1\pi_\ast M_i$ are locally free sheaves (c.f. \cite[Lemma 2.8]{sinha} or \cite[Lemma 1.10]{scattareggia2018perfect}). Once we set $K\coloneqq  \bigoplus_{i=1}^k K_i$ and $M\coloneqq  \bigoplus_{i=1}^k M_i$, this implies that the structure morphism
 \begin{align*}
     p : \SpecSym(\rR^1\pi_\ast M) \rightarrow B \times \uBun
 \end{align*}
 defines the total space of a vector bundle on $B \times \uBun$, the inclusion $K \rightarrow M$ defines a section $\kappa$ of the locally free sheaf $(p^\ast \rR^1\pi_\ast K)^\vee$ and the corresponding zero locus
\begin{align}\label{eq:factorisation}
    Z(\kappa) \xhookrightarrow{i} \SpecSym(\rR^1\pi_\ast M) \xrightarrow{p} B \times \uBun  
\end{align}
Note that $Z(\kappa)$ parametrizes chains of maps $E_1 \rightarrow \dots \rightarrow E_k \rightarrow \CO_C^{\oplus n}$, and the Hyperquot scheme sits inside $Z(\kappa)$ as the open locus of injective maps. The factorization (\ref{eq:factorisation}) induces the relative perfect obstruction theory
\[\begin{tikzcd}
        \BE \arrow[d, "\phi"] & = & {[p^\ast \rR^1\pi_\ast K \vert_{Z(\kappa)}} \arrow[r, "\mathrm{d}\kappa^\vee"] \arrow[d, "\iota_\kappa"] & {\Omega_{\SpecSym(\rR^1\pi_\ast M)} \vert_{Z(\kappa)}{]}} \arrow[d, equal]\\
        \tau^{[-1,0]}\BL_{p\circ i} & = & {[}\CI/\CI^2 \arrow[r, "\mathrm{d}"] & {\Omega_{\SpecSym(\rR^1\pi_\ast M)} \vert_{Z(\kappa)}{],}}
    \end{tikzcd}\]
    where $\CI$ is the ideal sheaf of the closed embedding $i$.
\subsection{Virtual fundamental class} If $B = \pt$ is a point, via the construction of \cite{behrend_fantechi}, the perfect obstruction theory recalled above induces a \textit{virtual fundamental class} on the moduli space, namely a Chow class 
\begin{align*}
    [\Hquot_\Bd(C, \Br, V)]^\vir \in \mathrm{A}_{\mathrm{vdim}}(\Hquot_\Bd(C, \Br, V))
\end{align*}
of \textit{virtual dimension} equal to the expected dimension from deformation theory
\begin{align}\label{eq:virtual_dim}
    \mathrm{vdim} = \chi(\BE) = (1-g)\sum_{i=1}^k r_i\bigl(r_{i+1} - r_i\bigr)
+ \deg V(r_1 - n)
+ \sum_{i=1}^k d_i \bigl(r_{i+1} - r_{i-1}\bigr)
\end{align}
with $r_0\coloneqq 0$ and $r_{k+1}\coloneqq n$. We remark that $\sum_{i=1}^k r_i\bigl(r_{i+1} - r_i\bigr) = \dim(\Fl(r_1, \dots, r_k,n))$.

\begin{remark}
It is known that $\quot_d(C,r,V)$ is irreducible \cite{bertram-deskalopoulos-wentworth,popa_stable_maps} of the expected dimension for all sufficiently large $d$ relative to $g,r$ and $V$. For Hyperquot schemes, irreducibility has been studied in \cite{rasul:sabastian}. For example, when $k=2$, and $d_1 \gg d_2\gg0$ or $0\ll d_1\ll d_2$, the Hyperquot schemes are irreducible of the expected dimension. Consequently, $[\Hquot_\Bd(C, \Br, V)]^{\vir} = [\Hquot_\Bd(C, \Br, V)]$, and the virtual intersection numbers computed in Theorem~\ref{thm:intro_VI_formula} coincide with the classical intersection numbers.
\end{remark}

The presence of the relative perfect obstruction theory implies the deformation invariance of the virtual intersection numbers via the well-known result \cite[Proposition 3.9]{manolache_virtual_push}. In our case, it implies:
\begin{lemma}\label{lem:deformation_invariance}
Let $B$ be an irreducible projective scheme,
$V$ be a vector bundle on $B \times C$, and fix a point $p \in C$.
Let $f : \Hquot_\Bd(C,\Br,V) \rightarrow B$ be the relative Hyperquot scheme endowed with the $f$-relative perfect obstruction theory \eqref{monavari_ricolfi_pot}, and let $\CE_1, \dots, \CE_k$ be the corresponding universal subsheaves. For any point $b \in B$ denote by
\begin{align*}
    i_b : \Hquot_\Bd(C,\Br,V_b) \hookrightarrow \Hquot_\Bd(C,\Br,V)
\end{align*}
the inclusion of the fiber over $b$, where $V_b \coloneqq  V\vert_{\lbrace b\rbrace \times C}$. If $\alpha$ is any cohomology class on $\Hquot_\Bd(C,\Br,V)$
obtained from the Chern classes of the bundles $\CE_{i\vert p}^\vee$, the function
\begin{align*}
    B \longrightarrow \mathbb{Z}, \qquad
b \longmapsto \int_{[\Hquot_\Bd(C,\Br,V_b)]^{\vir}} i_b^*\alpha
\end{align*}
is constant on $B$.
\end{lemma}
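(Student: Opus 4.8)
The statement is an instance of deformation invariance of virtual intersection numbers, so the plan is to package the data into the form required by \cite[Proposition 3.9]{manolache_virtual_push} and check that the insertion class $\alpha$ is pulled back compatibly from the base. First I would observe that $f : \Hquot_\Bd(C,\Br,V) \to B$ comes equipped with the $f$-relative perfect obstruction theory $\BE \to \BL_f$ described in \eqref{monavari_ricolfi_pot}, built via the factorization \eqref{eq:factorisation} through the smooth Artin stack $B\times\uBun$; in particular $f$ is a proper morphism (the Hyperquot scheme is projective over its base, being a tower of relative Quot schemes as in \eqref{eq:tower_construction}) and the obstruction theory is of perfect amplitude $[-1,0]$. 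Since $B$ is irreducible (hence in particular connected), Manolache's result gives that the pushforward $f_*[\Hquot_\Bd(C,\Br,V)]^{\vir,f} \in \mathrm{A}_*(B)$ is, after capping, deformation invariant; concretely, for any two points $b,b' \in B$ the refined Gysin pullbacks along $\{b\}\hookrightarrow B$ and $\{b'\}\hookrightarrow B$ of the relative virtual class agree with the absolute virtual classes $[\Hquot_\Bd(C,\Br,V_b)]^{\vir}$ and $[\Hquot_\Bd(C,\Br,V_{b'})]^{\vir}$ respectively, and their degrees (against a class pulled back from $B$) coincide.

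Next I would handle the insertion class. The key point is that the universal subsheaves $\CE_1,\dots,\CE_k$ on $\Hquot_\Bd(C,\Br,V)\times C$ restrict, on the fiber over $b$, to the corresponding universal subsheaves on $\Hquot_\Bd(C,\Br,V_b)\times C$ — this is immediate from the modular description of the tower construction. Hence for the fixed point $p\in C$, the bundle $\CE_{i\vert p}^\vee$ restricts along $i_b$ to the analogous bundle on the fiber, and therefore any polynomial $\alpha$ in the Chern classes $c_*(\CE_{i\vert p}^\vee)$ satisfies $i_b^*\alpha = \alpha_b$, the corresponding class on $\Hquot_\Bd(C,\Br,V_b)$. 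Now I would use the compatibility of the $f$-relative obstruction theory with base change to points: capping $f_*\!\left(\alpha \cap [\Hquot_\Bd(C,\Br,V)]^{\vir,f}\right)$ against the point class $[b]\in \mathrm{A}_0(B)$ — which is independent of $b$ since $B$ is irreducible and projective, so $[b]=[b']$ in $\mathrm{A}_0(B)$ up to the degree map it carries — produces exactly $\int_{[\Hquot_\Bd(C,\Br,V_b)]^{\vir}} i_b^*\alpha$ by the projection formula and the base-change compatibility of virtual pullback \cite[Proposition 3.9]{manolache_virtual_push}. Since this expression equals $\deg\!\big( [b] \cap f_*(\alpha \cap [\Hquot_\Bd]^{\vir,f}) \big)$, which does not depend on the choice of $b$, the function is constant.

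The main obstacle — really the only substantive point — is verifying that the $f$-relative obstruction theory $\BE\to\BL_f$ base-changes correctly along the inclusion of a fiber $i_b$: one must check that $\BL i_b^*\BE$, together with the induced map to $\BL i_b^* \BL_f \simeq \BL_{f_b}$ (where $f_b$ is the structure map of the fiber, i.e. the map to $\{b\}=\pt$), is the \emph{absolute} perfect obstruction theory on $\Hquot_\Bd(C,\Br,V_b)$ used to define $[\Hquot_\Bd(C,\Br,V_b)]^{\vir}$. This follows because all the ingredients in \eqref{monavari_ricolfi_pot} — the complexes $\Hom_\pi(\CE_i,\CE_j)$ and the morphism $\gamma$ assembled from the pre- and post-composition maps $\alpha_i,\beta_i$ — commute with flat (indeed arbitrary, given the local-freeness in the resolutions \eqref{eq:resolution}) base change on $B$, since $\rR\pi_*$ and $\Hom$ of vector bundles do, and the resolution $0\to K_i\to M_i\to F_i\to 0$ with $\pi_*K_i=\pi_*M_i=0$ restricts to a resolution of the same shape over the fiber. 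Once this compatibility is in place, the constancy is a formal consequence, exactly as in the Quot-scheme case treated in \cite[Proposition 3.9]{manolache_virtual_push} and its applications; I would simply cite \cite{manolache_virtual_push} for the general principle and spell out only the identification of restricted universal sheaves and the restricted obstruction theory.
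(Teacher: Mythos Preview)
Your proposal is correct and follows exactly the paper's approach: the paper simply cites \cite[Proposition 3.9]{manolache_virtual_push} as the source of deformation invariance and states the lemma without further argument. Your write-up supplies the routine verifications (properness of $f$, base-change of the obstruction theory and universal sheaves, projection formula) that the paper leaves implicit, but the key input is the same.
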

From this we deduce the following useful fact, which shows that we may replace $V$
by a split vector bundle without changing the virtual intersection numbers considered
in this paper.
\begin{proposition}\label{prop:reduction_split_bundles}
Let $V$ be a vector bundle on $C$ of rank $n$, and fix
$\Br=(r_1,\dots,r_k)$ and $\Bd=(d_1,\dots,d_k)$.
Then there exists a split vector bundle $\tilde V \simeq \bigoplus_{i=1}^n M_i$ of the same degree and rank as $V$ such that
\[
\int_{[\Hquot_{\Bd}(C,\Br,V)]^{\vir}} \prod_{i,j} c_i(\CE_{j\vert p}^\vee)_{}^{m_{i,j}}
\;=\;
\int_{[\Hquot_{\Bd}(C,\Br,\tilde V)]^{\vir}} \prod_{i,j} c_i(\tilde\CE_{j\vert p}^\vee)^{m_{i,j}}
\]
for every $p \in C$ and $m_{i,j}\geq 0$, where
\begin{align*}
    0\hookrightarrow \CE_1 \hookrightarrow \dots \hookrightarrow \CE_k \hookrightarrow V \quad \text{and} \quad 0\hookrightarrow \tilde \CE_1 \hookrightarrow \dots \hookrightarrow \tilde \CE_k \hookrightarrow \tilde V
\end{align*}
are the universal subsheaves on $\Hquot_{\Bd}(C,\Br,V) \times C$ and $\Hquot_{\Bd}(C,\Br,\tilde V) \times C$ respectively. Moreover, we may choose the splitting such that \(0\leq \deg(M_i)-\deg(M_j)\leq 1\) for all \(i\leq j\).
\end{proposition}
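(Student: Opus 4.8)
The plan is to deform $V$ to a split bundle using Lemma~\ref{lem:deformation_invariance}, so the main task is to exhibit a connected family of vector bundles on $C$ linking $V$ to a split bundle $\tilde V$ of the prescribed form, and then apply deformation invariance of the virtual integrals. First I would reduce to the case $\deg V = 0$ by twisting: choose a line bundle $N$ on $C$ with $\deg N$ chosen so that $\deg(V\otimes N)$ lies in $[0,n)$, say equal to $e$ with $0\le e<n$; twisting by a line bundle induces an isomorphism of Hyperquot schemes $\Hquot_\Bd(C,\Br,V)\cong \Hquot_\Bd(C,\Br,V\otimes N)$ under which the universal subsheaves transform by $\CE_j\mapsto \CE_j\otimes\pi_C^*N$, and restricting to $\{p\}\times C$ this only twists $\CE_{j|p}$ by the fixed one-dimensional vector space $N_p$, hence does not change the Chern classes $c_i(\CE_{j|p}^\vee)\in H^{2i}(\Hquot_\Bd,\BZ)$ at all. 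So it suffices to prove the statement for bundles of degree $e$ with $0\le e<n$, and the target split bundle will be $\tilde V=\bigoplus_{i=1}^n M_i$ with $\deg M_i=1$ for $1\le i\le e$ and $\deg M_i=0$ for $e<i\le n$ (after reordering), which indeed satisfies $0\le \deg(M_i)-\deg(M_j)\le 1$ for $i\le j$.

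Next I would construct the family. Over $B:=C$ (or a $\BP^1$, whichever is cleaner), or more simply over an affine line or a suitable irreducible base, I want a vector bundle $\CV$ on $B\times C$ whose fiber over one point is $V$ and whose fiber over another point is the split bundle $\tilde V$. The standard device is to take a one-parameter subfamily of the $\operatorname{Ext}$-classes: if $\tilde V=\bigoplus M_i$, then any vector bundle of the same rank and degree with the same Harder--Narasimhan-type numerics can be obtained from $\tilde V$ by a sequence of extensions/elementary modifications, and one scales the extension class by $t\in\BA^1$, so that $t\ne 0$ gives a bundle abstractly isomorphic to $V$ and $t=0$ gives $\tilde V$. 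Concretely one proceeds one extension at a time: write $V$ as built by successive extensions $0\to L_i\to V^{(i)}\to V^{(i-1)}\to 0$ of line bundles $L_i$ of the appropriate degrees (possible after a preliminary twist and using that on a curve every bundle is a successive extension of line bundles); for each such extension, with class $\xi_i\in\operatorname{Ext}^1(V^{(i-1)},L_i)$, form the family over $\BA^1$ with class $t\cdot\xi_i$, which at $t=0$ splits off $L_i$. Iterating, after reordering the line bundles so degrees are weakly decreasing, we land at a split bundle $\tilde V=\bigoplus M_i$ with the degree-gap condition; since each step is a family over an irreducible base $\BA^1$ (which we may then compactify to $\BP^1$ to apply Lemma~\ref{lem:deformation_invariance} as stated for projective $B$), and the generic member of each family is the "less split" bundle while a special member is "more split," we get a chain of irreducible families connecting $V$ to $\tilde V$.

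Finally, I would apply Lemma~\ref{lem:deformation_invariance} to each family in the chain: on $B\times C$ with $B\cong\BP^1$ projective and irreducible, the relative Hyperquot scheme carries its relative perfect obstruction theory, and the universal subsheaves $\CE_j$ restrict on each fiber to the universal subsheaves of the corresponding absolute Hyperquot scheme; the class $\alpha=\prod_{i,j}c_i(\CE_{j|p}^\vee)^{m_{i,j}}$ is visibly of the type allowed in the lemma, so the integral $\int_{[\Hquot_\Bd(C,\Br,V_b)]^{\vir}} i_b^*\alpha$ is independent of $b\in B$. Composing the equalities along the chain from $V$ to $\tilde V$ yields the desired identity. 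The main obstacle is the middle step: constructing the explicit connecting family and verifying that it is a vector bundle flat over the base (so that $\Hquot$ is well-behaved relative) and that its fibers are exactly $V$ and the split $\tilde V$ — this is essentially the statement that on a smooth projective curve any vector bundle degenerates flatly to a split bundle with nearby degrees, which follows from the theory of extensions on curves but requires care in bookkeeping the degrees to land in the prescribed range $0\le\deg M_i-\deg M_j\le 1$; the reduction to $\deg V=0$ by twisting and the application of Lemma~\ref{lem:deformation_invariance} are then routine.
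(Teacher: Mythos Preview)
Your overall strategy---apply Lemma~\ref{lem:deformation_invariance} to a family connecting $V$ to a split bundle---is the same as the paper's. The difference is in how the connecting family is built, and here your construction has a genuine gap.

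You propose to write $V$ as a tower of line bundle extensions and scale each extension class to zero. This certainly degenerates $V$ to \emph{some} split bundle $\bigoplus L_i$, but the degrees $\deg L_i$ are forced by whatever filtration of $V$ you chose, and there is no reason they satisfy the balanced condition $0\le \deg M_i-\deg M_j\le 1$. (For instance, a stable rank-2 bundle of degree 1 on a high-genus curve need not have a degree-0 line subbundle, so the only available filtrations give unbalanced splittings.) You then need a further chain of families connecting this unbalanced split bundle to the balanced one; you note this as ``the main obstacle'' but do not actually carry it out, and it is not a triviality---on a positive-genus curve, moving from $L_1\oplus L_2$ to $L_1'\oplus L_2'$ with $\deg L_1+\deg L_2=\deg L_1'+\deg L_2'$ requires constructing intermediate non-split bundles and checking they assemble into flat projective families. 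Also a minor slip: twisting by $N$ gives an isomorphism $\Hquot_\Bd(C,\Br,V)\cong \Hquot_{\Bd'}(C,\Br,V\otimes N)$ with a shifted $\Bd'$, not the same $\Bd$ (though this is harmless for the reduction).

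The paper sidesteps all of this with a single stroke: choose $L$ of large degree and $N\gg 0$, embed $V\hookrightarrow H:=L^{\oplus N}$, and take $B:=\quot_{\deg H-\deg V}(C,n,H)$. This $B$ is projective, and for $\deg L$ large it is irreducible by \cite{popa_stable_maps}. Both $[V\hookrightarrow H]$ and a balanced $[\tilde V\hookrightarrow H]$ (obtained from $L^{\oplus n}\subset H$ by twisting down summands one point at a time in a balanced way) are points of $B$, and the universal subsheaf on $B\times C$ is the required family. One application of Lemma~\ref{lem:deformation_invariance} finishes. This avoids the chain-of-families bookkeeping entirely and gives the balanced condition for free.
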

\begin{proof}
First of all, note that there is a line bundle $L$ and a positive integer $N$ such that $V$ can be realized as a subsheaf of $H\coloneqq  L^{\oplus N}$. Consider the Quot scheme $B\coloneqq  \quot_{\deg H-\deg V}(C,n,H)$ which parametrizes subsheaves of $H$ that have rank $n$ and degree \(\deg V\), so that $b=[V\to H]\in B$. We may choose $L$ of sufficiently large degree, so that the Quot scheme $B$ is irreducible by \cite{popa_stable_maps} and $n\deg(L)>\deg(V)$. The universal subsheaf $\CV$ on $B \times C$ restricts to $\CV|_b\cong V$. On the other hand, we show that $B$ contains a point of the form $\tilde{b}\coloneqq  [\tilde{V}\hookrightarrow H]$ by starting from the subsheaf $L^{\oplus n} \hookrightarrow H$ and appropriately twisting the summands by $\CO_C(-p)$ in a balanced way, until the correct degree $\deg V$. Then the claim follows by Lemma \ref{lem:deformation_invariance}. 
\end{proof}

\subsection{Compatibility of virtual classes}
We now record how the virtual fundamental class behaves under two basic operations on the universal subsheaves:
\begin{itemize}
    \item twisting $\CE_i$ by $\CO_C(-p)$ for a point $p \in C$, and
    \item elementary modification of the ambient bundle $V$ at a point of $C$.
\end{itemize}
\subsubsection{Twisting by a point}
Fixed a point $p\in C$, there is an inclusion of Hyperquot schemes
\[
\iota: \Hquot_{\Bd}\coloneqq \Hquot_{\Bd}(C,\Br, V) \to \Hquot_{\Bd + \Boldeta_\ell }\coloneqq\Hquot_{\Bd + \Boldeta_\ell }(C,\Br, V),
\]
where $\Boldeta_\ell=(r_1,\dots,r_\ell,0,\dots,0)$ and the map is defined by the universal property, taking the universal flag of subsheaves $\mathcal{E}_1 \hookrightarrow \dots \hookrightarrow \mathcal{E}_k \hookrightarrow V$ on $\Hquot_{\Bd}\times C$ and constructing from it the different flag
\begin{align*}
    \mathcal{E}_1(-p) \hookrightarrow \dots \hookrightarrow \mathcal{E}_\ell(-p) \hookrightarrow \mathcal{E}_{\ell+1}\hookrightarrow \dots \hookrightarrow \mathcal{E}_k \hookrightarrow V.
\end{align*}
We have the following compatibility result among virtual classes of Hyperquot schemes:
\begin{proposition}\label{prop:compatibility_twisting}
    Let $\mathcal{E}_1 \hookrightarrow \dots \hookrightarrow \mathcal{E}_k \hookrightarrow V$ be the universal flag of subsheaves on $\Hquot_{\Bd+ \Boldeta_\ell}\times C$. Then
    \begin{align*}
        \iota_* [\Hquot_{\Bd}]^{\vir} = e(\CE_{\ell}^\vee\otimes \CE_{\ell+1})_{\vert p}\cap [\Hquot_{\Bd+\Boldeta_\ell}]^{\vir}.
    \end{align*}
    In particular, for any $Q=  
\prod_{j=1}^{k}\prod_{i=1}^{r_j} c_i(\CE_{j|p}^\vee)^{m_{i,j}}$, we have the identity
    	\begin{align*}
    \prod_{i=1}^{\ell}q_i^{r_i}  \sum_{\Bd\in \BZ^k}q_1^{d_1} \cdots q_k^{d_k}\int_{[\Hquot_{\Bd}]^{\vir}} Q =\sum_{\Bd\in \BZ^k}q_1^{d_1} \cdots q_k^{d_k} \int_{[\Hquot_{\mathbf{d}+\delta_l}]^{\vir}} Q\cdot e(\CE_{\ell\mid p}^\vee\otimes \CE_{\ell+1\mid p}).
		\end{align*}
\end{proposition}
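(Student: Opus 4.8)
The plan is to realize $\iota$ as a zero locus of a section of an explicit vector bundle over $\Hquot_{\Bd+\Boldeta_\ell}$, and then invoke functoriality of virtual classes under the resulting refined Gysin pullback. First I would identify the image of $\iota$ inside $\Hquot_{\Bd+\Boldeta_\ell}$. A point of $\Hquot_{\Bd+\Boldeta_\ell}$ is a flag $E_1'\hookrightarrow\cdots\hookrightarrow E_\ell'\hookrightarrow E_{\ell+1}\hookrightarrow\cdots\hookrightarrow E_k\hookrightarrow V$ with $\deg E_i' = \deg E_i - r_i$ for $i\le\ell$; such a flag lies in the image of $\iota$ precisely when the composite $E_\ell'\hookrightarrow E_{\ell+1}$ vanishes along $p$, i.e.\ factors through $E_{\ell+1}(-p)$, which (given the degrees match after twisting back up) is equivalent to saying the induced map $E_\ell'|_p \to E_{\ell+1}|_p$ is zero. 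Globally, over $\Hquot_{\Bd+\Boldeta_\ell}$ there is a tautological bundle map $\CE_\ell|_p \to \CE_{\ell+1}|_p$, equivalently a section $\sigma$ of $\Hom(\CE_\ell|_p,\CE_{\ell+1}|_p) = (\CE_\ell^\vee\otimes\CE_{\ell+1})|_p$, and $\iota$ identifies $\Hquot_{\Bd}$ scheme-theoretically with $Z(\sigma)$. This last point — that the scheme structure on $Z(\sigma)$ agrees with that of $\Hquot_{\Bd}$, not merely the underlying set — is the step I expect to be the main obstacle, and I would address it using the tower-of-Quot-schemes description from \eqref{eq:tower_construction}: the condition of factoring through $\CE_{\ell+1}(-p)$ cuts out a relative Quot subscheme, and one checks on the level of the defining equations (or deformation functors) that this is the vanishing of $\sigma$.

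Once the scheme-theoretic identification $\Hquot_\Bd = Z(\sigma)$ is in place, I would compare obstruction theories. The ambient space $\Hquot_{\Bd+\Boldeta_\ell}$ carries the perfect obstruction theory \eqref{monavari_ricolfi_pot}; restricting it to $Z(\sigma)$ and combining with the Koszul/section data of $\sigma$ produces an obstruction theory on $Z(\sigma)$ which I claim is compatible with the intrinsic one on $\Hquot_\Bd$ coming from \eqref{monavari_ricolfi_pot} for the twisted flag. The bookkeeping here is: twisting $\CE_i$ by $\CO_C(-p)$ for $i\le\ell$ changes the terms $\Hom_\pi(\CE_i,\CE_{i+1})$ and $\Hom_\pi(\CE_i,\CE_i)$ in a controlled way, and the difference between the restricted ambient obstruction theory and the intrinsic one is exactly accounted for by the conormal contribution $(\CE_\ell^\vee\otimes\CE_{\ell+1})|_p$. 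This is where one invokes the standard result (e.g.\ \cite[Remark 4.6]{manolache_virtual_pull} or the functoriality of virtual pullback in \cite{manolache_virtual_pull, manolache_virtual_push}): if $Z(\sigma)\hookrightarrow Y$ is the zero locus of a section of a vector bundle $W$ on a space $Y$ with perfect obstruction theory, and the induced obstruction theory on $Z(\sigma)$ matches the intrinsic one, then $\iota_*[Z(\sigma)]^{\vir} = e(W)\cap[Y]^{\vir}$. Applying this with $Y = \Hquot_{\Bd+\Boldeta_\ell}$ and $W = (\CE_\ell^\vee\otimes\CE_{\ell+1})|_p$ gives
\[
\iota_*[\Hquot_\Bd]^{\vir} = e\bigl((\CE_\ell^\vee\otimes\CE_{\ell+1})|_p\bigr)\cap[\Hquot_{\Bd+\Boldeta_\ell}]^{\vir},
\]
which is the first assertion.

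For the second assertion I would simply push the displayed identity to intersection numbers and sum over $\Bd$. Twisting $\CE_i$ by $\CO_C(-p)$ for $i\le\ell$ does not change $c_i(\CE_{j|p}^\vee)$ for $j>\ell$, and for $j\le\ell$ one notes that the classes $c_i(\CE_{j|p}^\vee)$ appearing in $Q$ are pulled back along $\iota$ in a way that only shifts the relevant degrees; more to the point, integrating $Q$ against $\iota_*[\Hquot_\Bd]^{\vir}$ equals, by the projection formula, integrating $\iota^*Q\cdot e((\CE_\ell^\vee\otimes\CE_{\ell+1})|_p)$ against $[\Hquot_{\Bd+\Boldeta_\ell}]^{\vir}$. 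Tracking the degree shift $\Bd\mapsto\Bd+\Boldeta_\ell$ in the generating series accounts for the prefactor $\prod_{i=1}^\ell q_i^{r_i}$ on the left-hand side, since $\Boldeta_\ell = (r_1,\dots,r_\ell,0,\dots,0)$ and $q_1^{d_1}\cdots q_k^{d_k}$ picks up exactly $\prod_{i=1}^\ell q_i^{r_i}$ under this shift. Reindexing the sum then yields the stated identity; the only care needed is that the sums run over $\Bd\in\BZ^k$ (with the convention that $\Hquot_\Bd$ is empty for inadmissible $\Bd$), so the reindexing is unambiguous.
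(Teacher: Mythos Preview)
Your proposal is correct and follows essentially the same approach as the paper: identify $\iota$ as the zero locus of the tautological section of $(\CE_\ell^\vee\otimes\CE_{\ell+1})_{|p}$, verify that the cone of the comparison map between obstruction theories is precisely this bundle (the paper does this via the explicit resolutions \eqref{eq:resolution} and the triangle computation, while you describe it as bookkeeping with the $\Hom_\pi$ terms---both amount to the same observation that only the $i=\ell$ term survives), then apply functoriality of virtual classes. The paper cites \cite[Prop.~5.10]{behrend_fantechi} together with \cite[Thm.~6.2]{fulton:IT} rather than Manolache, but these are interchangeable here.
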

\begin{proof}
First of all note that $\iota$ can be seen as the embedding of the zero locus of the section $s$ of the bundle $(\mathcal{E}_l^\vee \otimes \mathcal{E}_{l+1})_{\vert p}$ on $\Hquot_{\Bd+\Boldeta_\ell}$, obtained by restricting to $p$ the universal map $\mathcal{E}_i \rightarrow \mathcal{E}_{i+1}$. Thus we find the fibered diagram
\[
\begin{tikzcd}
    \Hquot_{\Bd} \arrow[r, "\iota"] \arrow[d, "\iota"] \arrow[rd, phantom, "\square"] & \Hquot_{\Bd+\Boldeta_\ell} \arrow[d, "s"]\\
    \Hquot_{\Bd+\Boldeta_\ell} \arrow[r, "0"] & \mathrm{Tot}((\mathcal{E}_l^\vee \otimes \mathcal{E}_{l+1})_{\vert p}).
\end{tikzcd}
\]
The resolutions (\ref{eq:resolution}) used to construct the relative perfect obstruction theory on $g_{\eta}: \Hquot_{\Bd + \Boldeta_\ell} \rightarrow \uBun$ can be obtained from the resolutions (\ref{eq:resolution}) used for $g: \Hquot_\Bd \rightarrow \Bun$ by twisting the summand corresponding to $i=\ell$ by $\CO_C(-p)$. After some computations this implies the commutativity of the diagram of perfect obstruction theories
\[\begin{tikzcd}
        \rL\iota^\ast \BE_{\Bd+\Boldeta_\ell}\arrow[r, "\alpha"]\arrow[d, "\iota^\ast \phi_{\Bd + \Boldeta_\ell}"] & \BE_\Bd \arrow[d, "\phi_\Bd"]\\
        \rL \iota^\ast \BL_{g_{\eta}} \arrow[r, ""] & \BL_{g},
\end{tikzcd}\]
where the top horizontal arrow $\alpha$ is induced by the morphism among the triangles (\ref{monavari_ricolfi_pot}) defined by the inclusion $\CE_\ell(- p) \hookrightarrow \CE_\ell$ 
\[\begin{tikzcd}
    \bigoplus_i \Hom_\pi(\CE_i, \CE_i) \arrow[r, "\iota^\ast \gamma_\delta"]& \bigoplus_{i} \Hom_\pi(\CE_i(-\delta^i_\ell \, p), \CE_{i+1}) \arrow[r] & \rL \iota^\ast \BE_{\Bd +\Boldeta_\ell}^\vee \arrow[r, "+1"] & \,\\
    \bigoplus_i \Hom_\pi(\CE_i, \CE_i) \arrow[r, "\gamma"]\arrow[u, "\sim"] & \bigoplus_{i} \Hom_\pi(\CE_i, \CE_{i+1}) \arrow[r] \arrow[u] & \BE_{\Bd}^\vee \arrow[u, "\alpha^\vee"] \arrow[r, "+1"] & \,
\end{tikzcd}\]
where $\CE_i$ is the $i$-th universal subsheaf on $\Hquot_{\Bd}$. By taking the cones of the vertical arrows in the diagram above we find $\mathrm{Cone}(\alpha^\vee) \simeq \Hom( \mathcal{E}_{\ell\vert p},\mathcal{E}_{\ell+1\vert p})$.
Dualizing we find the exact triangle
\begin{align*}
    \rL\iota^\ast \mathbb{E}_{\Bd + \Boldeta_\ell} \rightarrow \mathbb{E}_{\Bd} \rightarrow \Hom(\mathcal{E}_{\ell\vert p},\mathcal{E}_{\ell+1 \vert p})^\vee[1] \rightarrow +1
\end{align*}
which, after using \cite[\href{https://stacks.math.columbia.edu/tag/08SJ}{Tag 08SJ}]{StacksProj} to show $\Hom(\mathcal{E}_{\ell\vert p},\mathcal{E}_{\ell+1 \vert p})^\vee[1] \simeq \iota^\ast \mathbb{L}_{0}$, implies via \cite[Proposition 5.10]{behrend_fantechi} that $0^! [\Hquot_{\Bd+\Boldeta_\ell}]^{\vir} = [\Hquot_{\Bd}]^{\vir}$, where $0^!$ is the Gysin pullback along the zero section of $(\mathcal{E}_l^\vee \otimes \mathcal{E}_{l+1})_{\vert p}$. The claim then follows from \cite[Theorem 6.2]{fulton:IT}:
\begin{align*}
    \iota_\ast [\Hquot_{\Bd}]^{\vir} &= \iota_\ast 0^! [\Hquot_{\Bd+\Boldeta_\ell}]^{\vir} = 0^\ast s_\ast [\Hquot_{\Bd+\Boldeta_\ell}]^{\vir} = e(\CE_{\ell\vert p}^\vee\otimes \CE_{\ell+1\vert p})\cap [\Hquot_{\Bd+\Boldeta_\ell}]^{\vir},
\end{align*}
since $0^\ast s_\ast$ corresponds to capping with the Euler class of the bundle. 
\end{proof}
\subsubsection{Elementary Modifications}\label{sec:elementary_modification}
Let $V$ be a vector bundle on $C$ and fix a point $p \in C$. Consider a surjection of linear spaces $V_{\vert p} \twoheadrightarrow \BC$, inducing a short exact sequence
\begin{align}\label{modification_ses}
    0 \rightarrow \tilde{V} \xrightarrow{\alpha} V \rightarrow \CO_p \rightarrow 0,
\end{align}
where $\CO_p$ denotes the skyscraper sheaf at $p$. The mapping
\begin{align*}
    [S_1 \xrightarrow{f_1} \dots \xhookrightarrow{f_{k-1}} S_k \xhookrightarrow{f_k} \tilde{V}] \mapsto [S_1 \xhookrightarrow{f_1} \dots \xhookrightarrow{f_{k-1}} S_k \xhookrightarrow{\alpha \circ f_k} V]
\end{align*}
induces a morphism of Hyperquot schemes
\begin{align*}
    j : \Hquot_{\Bd-\bf1}(C, \Br, \tilde{V}) \rightarrow \Hquot_{\Bd}(C, \Br, V)
\end{align*}
where $\bf{1}$ denotes the vector $(1, \dots, 1) \in \BZ^k$.
This is a closed embedding, which can be realized as the zero locus of the section of $\CE_{k\vert p}^\vee$ given by the tautological morphism
\begin{align*}
    \CE_k \rightarrow \pi_C^\ast V \rightarrow \ \pi_C^\ast \CO_p
\end{align*}
restricted to $\Hquot_{\Bd}(C, \Br, V) \times \lbrace p\rbrace$, where $\pi_C$ denotes the projection to the curve $C$.
\begin{proposition}\label{prop:elementary_mod}
    The virtual fundamental classes of the two Hyperquot schemes satisfy the following compatibility condition:
    \begin{align*}
        j_\ast [\Hquot_{\Bd-\bf1}(C, \Br, \tilde{V})]^\vir = e(\CE^\vee_{k\vert p}) \cap [\Hquot_{\Bd}(C, \Br, V)]^\vir.
    \end{align*}
    In particular, for any $Q=  
\prod_{j=1}^{k}\prod_{i=1}^{r_j} c_i(\CE_{j|p}^\vee)^{m_{i,j}}$, we have the identity
    	\begin{align*}
     \sum_{\Bd\in \BZ^k}q_1^{d_1+1} \cdots q_k^{d_k+1}\int_{[\Hquot_{\Bd}(C, \Br, \tilde{V})]^{\vir}} Q =\sum_{\Bd\in \BZ^k}q_1^{d_1} \cdots q_k^{d_k}\int_{[\Hquot_{\Bd}(C, \Br, V)]^{\vir}} Q\cdot c_{r_k}(\CE_{k|p}^\vee).
		\end{align*}
\end{proposition}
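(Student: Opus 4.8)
The plan is to follow the proof of Proposition~\ref{prop:compatibility_twisting} essentially verbatim, with the twist by $\CO_C(-p)$ there replaced by the elementary modification \eqref{modification_ses}. First I would exhibit $j$ as the zero locus of the section $s$ of $\CE_{k|p}^\vee$ described before the statement: since $\tilde V=\ker(V\twoheadrightarrow\CO_p)$, a point of $\Hquot_{\Bd}(C,\Br,V)$ lies in the image of $j$ exactly when the universal subsheaf $\CE_k\subseteq\pi_C^*V$ is contained in $\pi_C^*\tilde V$, which is precisely the vanishing of $s$. This produces the cartesian square
\[
\begin{tikzcd}
\Hquot_{\Bd-\mathbf 1}(C,\Br,\tilde V) \arrow[r, "j"] \arrow[d, "j"'] \arrow[rd, phantom, "\square"] & \Hquot_{\Bd}(C,\Br,V) \arrow[d, "s"] \\
\Hquot_{\Bd}(C,\Br,V) \arrow[r, "0"'] & \mathrm{Tot}\bigl(\CE_{k|p}^\vee\bigr).
\end{tikzcd}
\]
Note also that $j^*\CE_i\cong\tilde\CE_i$ for $1\le i\le k$, and that---because $\deg\tilde V-(d_i-1)=\deg V-d_i$---both Hyperquot schemes sit over the \emph{same} Artin stack $\uBun=\prod_{i=1}^k\Bun_i$, with structure maps intertwined by $j$.

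Next I would compare the two relative perfect obstruction theories over $\uBun$. Passing from $\tilde V$ to $V$ changes only the $i=k$ summand of \eqref{monavari_ricolfi_pot}, through $\Hom_\pi(\CE_k,\tilde V)\hookrightarrow\Hom_\pi(\CE_k,V)$, and applying the derived functor $\Hom_\pi(\CE_k,-)$ to \eqref{modification_ses} identifies the cone of this inclusion with $\Hom_\pi(\CE_k,\CO_p)$, which is simply $\CE_{k|p}^\vee$ concentrated in degree zero since $\CE_k$ is locally free. Threading this through the triangles of \eqref{monavari_ricolfi_pot} exactly as in the proof of Proposition~\ref{prop:compatibility_twisting}, I expect to obtain a morphism of perfect obstruction theories $\rL j^*\mathbb{E}_{\Bd,V}\to\mathbb{E}_{\Bd-\mathbf 1,\tilde V}$ over $\uBun$ whose dual cone yields the exact triangle
\[
\rL j^*\mathbb{E}_{\Bd,V} \longrightarrow \mathbb{E}_{\Bd-\mathbf 1,\tilde V} \longrightarrow \CE_{k|p}[1] \xrightarrow{+1}.
\]
Since $\CE_{k|p}[1]\simeq j^*\mathbb{L}_0$ is the pullback of the cotangent complex of the zero section of $\mathrm{Tot}(\CE_{k|p}^\vee)$, this together with \cite[Proposition~5.10]{behrend_fantechi} gives the compatibility $0^![\Hquot_{\Bd}(C,\Br,V)]^{\vir}=[\Hquot_{\Bd-\mathbf 1}(C,\Br,\tilde V)]^{\vir}$, where $0^!$ is the Gysin pullback along the zero section.

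Finally I would conclude exactly as in Proposition~\ref{prop:compatibility_twisting}: from the cartesian square and commutativity of Gysin maps with proper pushforward \cite[Theorem~6.2]{fulton:IT},
\[
j_*[\Hquot_{\Bd-\mathbf 1}(C,\Br,\tilde V)]^{\vir}=j_*0^![\Hquot_{\Bd}(C,\Br,V)]^{\vir}=0^*s_*[\Hquot_{\Bd}(C,\Br,V)]^{\vir}=e(\CE_{k|p}^\vee)\cap[\Hquot_{\Bd}(C,\Br,V)]^{\vir},
\]
since $0^*s_*$ is cap product with the Euler class. For the generating-series identity I would observe that $j^*$ carries $\prod_{i,j}c_i(\CE_{j|p}^\vee)^{m_{i,j}}$ to the matching class $Q$ on $\Hquot_{\Bd-\mathbf 1}(C,\Br,\tilde V)$ (as $j^*\CE_{j|p}^\vee\cong\tilde\CE_{j|p}^\vee$), so the projection formula and $e(\CE_{k|p}^\vee)=c_{r_k}(\CE_{k|p}^\vee)$ (as $\rank\CE_k=r_k$) give $\int_{[\Hquot_{\Bd-\mathbf 1}(C,\Br,\tilde V)]^{\vir}}Q=\int_{[\Hquot_{\Bd}(C,\Br,V)]^{\vir}}Q\cdot c_{r_k}(\CE_{k|p}^\vee)$; multiplying by $q_1^{d_1}\cdots q_k^{d_k}$, summing over $\Bd$, and reindexing $\Bd\mapsto\Bd+\mathbf 1$ on the left produces the stated formula. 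I expect the main obstacle to be the middle step---just as the analogous point was dispatched with ``after some computations'' in Proposition~\ref{prop:compatibility_twisting}---namely verifying that $\tilde V\hookrightarrow V$ genuinely induces a morphism of perfect obstruction theories over $\uBun$ (compatible with the maps to the relative cotangent complexes and with the obstruction data from the resolutions \eqref{eq:resolution}), and that its dualized cone is the conormal complex of the zero section with the exact shift required by \cite[Proposition~5.10]{behrend_fantechi}; everything else is routine bookkeeping.
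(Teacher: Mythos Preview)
Your proposal is correct and follows essentially the same approach as the paper: identify $j$ as the zero locus of the tautological section of $\CE_{k|p}^\vee$, compute the cone of the map between obstruction complexes by applying $\Hom_\pi(\CE_k,-)$ to the elementary modification sequence~\eqref{modification_ses} to obtain $\CE_{k|p}^\vee$, invoke \cite[Proposition~5.10]{behrend_fantechi}, and finish with the Euler class identity via \cite[Theorem~6.2]{fulton:IT}. Your write-up in fact supplies slightly more detail than the paper's (e.g.\ the observation that both Hyperquot schemes map to the same $\uBun$, and the explicit derivation of the generating-series identity), and you correctly flag the one step the paper also treats by reference back to Proposition~\ref{prop:compatibility_twisting}.
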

\begin{proof}
    Let $\BE$ be the obstruction complex in the perfect obstruction theory of $\Hquot_{\Bd}(C, \Br, V)$ and denote by $\tilde{\BE}$ the one for $\Hquot_{\Bd-\bf1}(C, \Br, \tilde{V})$.
    By the same argument used in the proof of Proposition \ref{prop:compatibility_twisting} we see that the cone
    \begin{align*}
        \frak C\coloneqq  \mathrm{Cone}(\tilde{\BE}^\vee \rightarrow \rL j^\ast \BE^\vee)
    \end{align*}
    fits into the exact triangle
    \begin{align*}
        \Hom_\pi(\CE_k, \tilde{V}) \rightarrow \Hom_\pi(\CE_k, V) \rightarrow \frak C \rightarrow +1,
    \end{align*}
    hence by (\ref{modification_ses})
    \begin{align*}
        \frak C &\simeq \Hom_\pi(\CE_k, \pi_C^\ast \CO_p) \simeq \rR \pi_\ast (\CE_k^\vee \otimes \pi_C^\ast \CO_p) \simeq \rR \pi_\ast \rR j_\ast (j^\ast \CE_{k{\vert p}}^\vee) \simeq j^\ast \CE_{k{\vert p}}^\vee.
    \end{align*}
    After dualization, we find the exact triangle
    \begin{align*}
        \rL j^\ast \BE \rightarrow \tilde{\BE} \rightarrow j^\ast \CE_{k{\vert p}}[1] \rightarrow +1,
    \end{align*}
    which allows us to apply \cite[Proposition 5.10]{behrend_fantechi} to obtain $$0^! [\Hquot_{\Bd}(C, \Br, V)]^\vir = [\Hquot_{\Bd-\bf1}(C, \Br, \tilde{V})]^\vir,$$ where $0$ is the zero section of the vector bundle $(\CE_{k\vert p}^\vee)$ on $\Hquot_{\Bd}(C, \Br, V)$. The rest of the proof follows as in Proposition \ref{prop:compatibility_twisting}.
\end{proof}

\subsection{A vanishing criterion}
Recall that the Hyperquot scheme $\Hquot_\Bd(C,\Br,V)$ is constructed iteratively using relative Quot schemes
\[
\quot_{d_1-d_2}(C,r_1,\CE_2)\xrightarrow{\phi_1}\cdots\xrightarrow{\phi_{k-2}}
\quot_{d_{k-1}-d_k}(C,r_{k-1},\CE_k)\xrightarrow{\phi_{k-1}}
\quot_{d_k}(C,r_k,V)\xrightarrow{\phi_k} \{\pt\},
\]
with relative virtual dimensions
\begin{align*}
    \vdim \phi_j &= \chi(E_j,E_{j+1})-\chi(E_j,E_j)\\ 
    &= \bigr((1-g)\,r_j - e\bigl)(r_{j+1} - r_j) + r_{j+1} d_j - r_j d_{j+1}.
\end{align*}
We note the following vanishing result, not immediate from the formula in Theorem~\ref{thm:intro_VI_formula}, which follows from the properties of virtual pullbacks.
\begin{remark}
When the virtual dimension of an intermediate Quot scheme $\quot_{d_j-d_{j+1}}(C,\\r_{j},\CE_{j+1})$ is negative, i.e., when $\sum_{i=j}^k\vdim \phi_i<0$, the virtual class $[\Hquot_{\Bd}(C,\Br,V)]^{\vir}$ is zero and all virtual integrals on the Hyperquot scheme vanish:
\[
\int_{[\Hquot_{\Bd}(C,\Br,V)]^{\vir}} \prod_{j=1}^{k}\prod_{i=1}^{r_j} c_i(\CE_{j|p}^\vee)^{m_{i,j}} \;=\; 0.
\]
\end{remark}

\begin{remark}\label{rmk:dim_zero_polynomial}
Notice that similar vanishings need not hold when the virtual dimension of a single morphism $\phi_j$ is negative.
For instance, let $C$ be a curve of genus $13$, let $V$ be a rank-$3$ bundle of degree $0$, and take $\Br=(1,2)$. Then Corollary~\ref{cor:intro_r=(1,n-1)} implies
\[
\sum_{d_1,d_2\ge0} q_1^{d_1} q_2^{d_2}
\int_{[\Hquot_{\Bd}]^{\vir}} 1
\;=\;
6^{13}\!\left( q_1^{10} q_2^8 + 20\, q_1^9 q_2^9 + q_1^8 q_2^{10}\right).
\]
When $d_1=d_2=9$, the relative virtual dimensions are $\vdim\phi_2=3$ and $\vdim\phi_{1}=-3$, yet the virtual integral of $1$ equals $20 \cdot 6^{13} \neq 0$.
\end{remark}

\section{Torus action and fixed loci}\label{sec:equiv_localization}
By the deformation invariance property of virtual integrals proved in Proposition~\ref{prop:reduction_split_bundles}, 
the calculation reduces to the case of split vector bundles, so we can assume that
% . For the rest of the paper, we only consider split vector bundles $V$ on $C$, that is,
\begin{equation}\label{eq:V_split}
    V=M_1\oplus M_2\oplus\cdots\oplus M_{n}
\end{equation}
where each  $M_i$ is a line bundle of degree zero over $C$. 
\subsection{Equivariant formulation}
In this section we state the equivariant version of our main Theorem \ref{thm:intro_VI_formula}. Let the torus $T = (\mathbb{C}^*)^n$ act on $V$ by scaling the fibers of each line bundle summand $M_j$ by $t_j^{-1}$. This induces a $T$ action on the Hyperquot scheme $\Hquot_{\mathbf{d}}\coloneqq \Hquot_{\mathbf{d}}(C, \mathbf{r}, V)$, by sending a point $[E_1 \xhookrightarrow{} E_2 \xhookrightarrow{} \cdots \xhookrightarrow{} E_k \xhookrightarrow{} V]$ to the composition 
\[
E_1 \xhookrightarrow{} E_2 \xhookrightarrow{} \cdots \xhookrightarrow{} E_k \xhookrightarrow{} V \xrightarrow{t} V
\]
for every element $t \in T$. Let $\varepsilon_1,\varepsilon_2,\dots,\varepsilon_n$ be the weights of the action of $T$ on $V^\vee=M_1^\vee\oplus M_2^\vee\oplus\cdots\oplus M_{n}^\vee$ or, more precisely, set \[\varepsilon_i\coloneqq c_1^T(M_{i\mid \pt}^{\vee})\in H_T^*(\pt),\]
for any point $\pt\in C$.

We now define an equivariant analog of the system of equations \eqref{eq:Bethe_system_intro}, which is a key ingredient of the formula that we want to prove. Consider the tuples of variables $\Bz_1,
\Bz_2,\dots,\Bz_k$ as before with $\Bz_0=\{\}$ and 
$\Bz_{k+1}=(\varepsilon_1, \varepsilon_2, \dots, \varepsilon_n)$, as illustrated below.
\[
		\begin{tikzpicture}
		\draw (-0.5,0) -- (7,0) ;
		\draw (0,0.2) node {$\textbf{z}_0$};
		\draw (1,0.2) node {$\textbf{z}_1$};
		\draw (2,0.2) node {$\textbf{z}_2$};
		\draw (3,0.2) node {$\cdots$};
		\draw (4,0.2) node {$\cdots$};
		\draw (5,0.2) node {$\textbf{z}_k$};
		\draw (6,0.2) node {$\textbf{z}_{k+1}$};
		
		\draw (3,-0.4) node {$\cdots$};
		\draw (4,-0.4) node {$\cdots$};
		
		\draw (3,-2) node {$\ddots$};
		\draw (4,-2.5) node {$\ddots$};
		
		\draw (1,-0.4) node {\small $z_{1,1}$};
		\draw (1,-0.8) node {\small $\vdots$};
		\draw (1,-1.2) node {\small $z_{r_1,1}$};

		\draw (2,-0.4) node {\small $z_{1,2}$};
		\draw (2,-0.8) node {\small $\vdots$};
		\draw (2,-1.2) node {\small $\vdots$};
		\draw (2,-1.7) node {\small $z_{r_2,2}$};
		
		\draw (5,-0.4) node {\small $z_{1,k}$};
		\draw (5,-0.8) node {\small $\vdots$};
		\draw (5,-1.2) node {\small $\vdots$};
		\draw (5,-1.7) node {\small $\vdots$};
		\draw (5,-2.1) node {\small $\vdots$};
		\draw (5,-3) node {\small $z_{r_k,k}$};
		
		\draw (6,-0.4) node {\small $\varepsilon_1$};
		\draw (6,-0.8) node {\small $\vdots$};
		\draw (6,-1.2) node {\small $\vdots$};
		\draw (6,-1.7) node {\small $\vdots$};
		\draw (6,-2.1) node {\small $\vdots$};
		\draw (6,-2.5) node {\small $\vdots$};
		\draw (6,-2.9) node {\small $\vdots$};
		\draw (6,-3.5) node {\small $\varepsilon_n$};
	\end{tikzpicture}
	\]
Let $q_1,q_2 \dots, q_k$ be formal variables and define the following polynomials for each $1\le j\le k$:
      \begin{align}\label{eq:Bethe_equivariant}
		P_j^{T}(X)\coloneqq \prod_{\alpha\in \Bz_{j+1}}(X-\alpha)+(-1)^{r_{j}-r_{j-1}}q_j\prod_{\alpha\in \Bz_{j-1}}(X-\alpha).
        	\end{align}

\begin{definition}
 For fixed values of \((\varepsilon_1,\dots,\varepsilon_n;q_1,\dots,q_k)\in \BC^n\times\parens*{\BC^*}^k\), we define the system of polynomial equations in the variables $z_{s,j}$
 \begin{align}\label{eq:Bethe_system_equiv}
 P_j^T(z_{s,j})=0\quad \text{for each } 1\le j\le k, \text{ and } 1\le s\le r_j.
 \end{align}
 We say that a solution $(\Bz_1,\Bz_2\dots,\Bz_k)=(\boldsymbol{\zeta}_1,\boldsymbol{\zeta}_2\dots,\boldsymbol{\zeta}_k)$ to the above system is \emph{non-degenerate} if each tuple $\boldsymbol{\zeta}_j = (\zeta_{1,j},\zeta_{2,j},\dots,\zeta_{r_j,j})$ has no repetition. 
\end{definition}
The $T$-equivariant version of our main theorem is
\begin{theorem}\label{thm:VI_Hyper_quot_equivariant}
Let $V$ be as in \eqref{eq:V_split}. Then for natural numbers $m_{s,j}$ and values of \((\varepsilon_1,\dots,\varepsilon_n;\\q_1,\dots,q_k)\) in a Zariski open subset of \(\BC^n\times\parens*{\BC^*}^k\) intersecting \(0\times\parens*{\BC^*}^k\), we have 
\begin{align*}\label{eq:Intersection_Hquot(C,V)_equivariant}
    \sum_{\Bd\in \BN^k} q_1^{d_1}q_2^{d_2}\cdots q_k^{d_k}
\int_{[\Hquot_{\Bd}]^{\vir}}  
\prod_{j=1}^{k}\prod_{s=1}^{r_j} c_s^{T}(\CE_{j|p}^\vee)^{m_{s,j}}
    =\sum_{\boldsymbol{\zeta}_1,\dots,\boldsymbol{\zeta}_k}  \prod_{j=1}^{k}\frac{1}{r_j!}\prod_{s=1}^{r_j}e_s(\boldsymbol{\zeta}_j)^{m_{s,j}}\cdot J^{g-1}
	\end{align*}
    where the sum is taken over all non-degenerate solutions $(\boldsymbol{\zeta}_1,\dots,\boldsymbol{\zeta}_k)$ of the equations in \eqref{eq:Bethe_system_equiv} and
    $e_s(\boldsymbol{\zeta}_j)$ denotes the $s$-th elementary symmetric polynomial. 
     The factor $J$ is given by
     \[ J\coloneqq\parens*{\prod_{\ell=1}^{k} \frac{1}{\Delta(\boldsymbol{\zeta}_\ell)}}\cdot
\det\left(\frac{\partial P^T_j(z_{s,j})}{\partial z_{s',j'}}\right)\Bigg|_{(\Bz_1,\Bz_2,\dots,\Bz_k) = (\boldsymbol{\zeta}_1,\boldsymbol{\zeta}_2,\dots,\boldsymbol{\zeta}_k)}
 ,\]
where $ \Delta(X_1,\dots,X_r)\coloneqq\prod_{a\ne b}(X_a-X_b)$ and the second factor is the Jacobian of \eqref{eq:Bethe_equivariant}. 
\end{theorem}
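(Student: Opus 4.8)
The plan is to establish Theorem~\ref{thm:VI_Hyper_quot_equivariant} through a chain of reductions that culminates in torus localization, following the strategy outlined in the sketch. First I would invoke Proposition~\ref{prop:reduction_split_bundles} to reduce to the case where $V = M_1 \oplus \cdots \oplus M_n$ is split with each $M_i$ of degree zero; since the equivariant parameters are allowed to specialize to zero, the non-equivariant Theorem~\ref{thm:intro_VI_formula} follows as the specialization $\varepsilon_i \to 0$. The heart of the argument is to apply the virtual localization formula of Graber--Pandharipande to the $T = (\BC^*)^n$-action on $\Hquot_\Bd$. The fixed loci are indexed by the ways of distributing the flag structure among the line bundle summands: a $T$-fixed point corresponds to choosing, for each step $j$, a subset of coordinates along which the subsheaf $E_j$ sits, so that $E_j = \bigoplus_i (M_i \otimes I_{Z_{i,j}})$ for ideal sheaves of effective divisors, and the connected components of the fixed locus are products of symmetric products (Hilbert schemes of points) $\prod \mathrm{Sym}^{e}(C)$. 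One then writes each virtual integral as a sum over these components of $\int (\text{restriction of the integrand}) \cap e(N^{\vir})^{-1}$.

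The next step is to organize this sum. For each fixed component, the restriction of $c_s^T(\CE_{j|p}^\vee)$ to the component is an elementary symmetric function in the equivariant weights $\varepsilon_i$ shifted by the classes pulled back from the symmetric products (the divisor classes at $p$), and the virtual normal bundle contribution factors into a product over the $k$ steps and over the pairs of summands. I would package the generating series $\sum_\Bd \Bq^\Bd (\cdots)$ by summing over the degrees $e_{i,j}$ independently; each symmetric product $\mathrm{Sym}^e(C)$ contributes a standard generating function whose closed form is classical (this is where Section~\ref{sec:symmetric_product_of_curves} does the intersection-theoretic bookkeeping, producing the $J^{g-1}$ exponent from the genus-$g$ Jacobian factors). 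The multivariate Lagrange--B\"urmann inversion formula is then the tool that identifies the resulting sum of geometric-type series with a sum over formal power series solutions $(\Bz_1,\dots,\Bz_k)$ of the system $P_j^T(z_{i,j}) = 0$: the polynomials $P_j^T$ arise precisely because the localization weights assemble into $\prod_{\alpha \in \Bz_{j+1}}(X-\alpha) + (-1)^{r_j - r_{j-1}} q_j \prod_{\alpha \in \Bz_{j-1}}(X-\alpha)$, and the Jacobian $\det(\partial P_j^T / \partial z_{i',j'})$ together with the Vandermonde factors $\Delta(\boldsymbol{\zeta}_\ell)^{-1}$ is exactly what the Lagrange--B\"urmann change of variables produces. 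This yields the formal version, Theorem~\ref{thm:VI_Hyper_quot_equivariant_formal}.

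Finally, I would pass from the formal statement to the statement over a Zariski-open subset of $\BC^n \times (\BC^*)^k$ using Proposition~\ref{prop:genus0eval}: the generating series is a polynomial in the $q_i$ with coefficients that are rational in the $\varepsilon_i$, so one may specialize $\Bq$ and $\boldsymbol\varepsilon$ to generic values, at which point the formal power series solutions converge and are in bijection with the honestly non-degenerate solutions of~\eqref{eq:Bethe_system_equiv}; the Gr\"obner degeneration of the system is used to control the number and behavior of solutions as the parameters vary, ensuring the count is as expected and that no solutions are lost or gained. The higher-genus case requires no new conceptual input beyond genus zero once the symmetric-product computations of Section~\ref{sec:symmetric_product_of_curves} are in hand, since the genus enters only through the factor $J^{g-1}$.

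I expect the main obstacle to be the intersection-theoretic computation on the products of symmetric products of $C$ in higher genus: one must evaluate integrals of products of the universal divisor classes and the classes pulled back from the Jacobian against the (inverse) Euler class of the virtual normal bundle, and show that after summing the generating series the genus-dependence collapses cleanly into the single factor $J^{g-1}$. Controlling the combinatorics of the virtual normal bundle --- in particular disentangling the contributions of $\Hom_\pi(\CE_i,\CE_i)$ versus $\Hom_\pi(\CE_i,\CE_{i+1})$ across all fixed components and matching them term-by-term with the Jacobian of the Bethe system --- is the delicate bookkeeping step, and the multivariate Lagrange--B\"urmann formula is the key technical device that makes it tractable.
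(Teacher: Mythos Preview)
Your proposal is correct and follows essentially the same route as the paper: virtual localization on the split bundle, identification of the fixed loci as products of Hilbert schemes of points on $C$, application of the multivariate Lagrange--B\"urmann formula to obtain the formal version (Theorem~\ref{thm:VI_Hyper_quot_equivariant_formal}), and then the Gr\"obner-degeneration argument of Section~\ref{sec:Equations} (culminating in Proposition~\ref{prop:genus0eval}) to pass from formal power series solutions to honest non-degenerate solutions over a Zariski-open locus. One small remark: for the equivariant statement itself the bundle $V$ is already assumed split as in~\eqref{eq:V_split}, so Proposition~\ref{prop:reduction_split_bundles} is not needed there---it is used only afterward to deduce the non-equivariant Theorem~\ref{thm:intro_VI_formula} for arbitrary $V$; and the mechanism by which the genus enters is precisely the exponential/Jacobian substitution rules for $\theta$-classes (Corollary~\ref{exponential_substitution_rule_m} and Corollary~\ref{jacobian_substitution_rule}) that convert the higher-genus integrand into the genus-zero residue times $J(\Bx,\boldsymbol{\varepsilon})^g$, as in Lemma~\ref{lem:J(x,theta)-substitution}.
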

We will first prove a version of this theorem where the solutions are viewed as formal power series in the \(q\)-variables (see Theorem~\ref{thm:VI_Hyper_quot_equivariant_formal}). The version as stated above will be proved in Proposition~\ref{prop:genus0eval} when $C=\BP^1$ and in Section~\ref{Sec:proof_of_thm_1} for arbitrary genus $g$.

\subsection{Fixed loci}\label{subs:fixed_loci}
 We now describe the fixed loci for the torus action introduced in the previous section. A point $[E_1 \xhookrightarrow{} E_2 \xhookrightarrow{} \cdots \xhookrightarrow{} E_k \xhookrightarrow{} V]\in \Hquot_{\Bd}$ is fixed under the torus action if and only if each of the subsheaves splits as a direct sum of line bundles $$E_{j}=K_{1,j}\oplus K_{2,j}\oplus \cdots\oplus K_{r_j,j}\quad \text{for}\ 1\le j\le k;$$  and the inclusion $E_{j}\xhookrightarrow{} E_{j+1}$ respects the splitting. More explicitly, this means that there exists a permutation $\sigma\in S_n$ such that for all $1\le s\le r_j$,
\begin{align}\label{eq:line_bunlde_injection}
    K_{s,j}\xhookrightarrow{} K_{s,j+1}\quad  \forall\ 1\le j\le k-1
\end{align}
and each summand $K_{s,k}$ of $E_k$ maps into $M_{\sigma(s)} \subset V$.

To each torus-fixed point $[E_{\mathbf{r}} \xhookrightarrow{} V] \in \Hquot_{\mathbf{d}}$, we associate two pieces of data:

\begin{itemize}
    \item The \emph{splitting degree}, a tuple recording the degrees of the line bundles $K_{s,j}^\vee$:
    \[
    D = (d_{s,j}) \quad \text{for } 1 \le j \le k,\ 1 \le s \le r_j.
    \]
    Note that $d_j=\sum_{s=1}^{r_j}d_{s,j}$ for all $1\le j\le k$. The injection in \eqref{eq:line_bunlde_injection} implies $d_{s,j}\geq d_{s,j+1}$. 
    
    \item A permutation 
    \begin{equation}\label{eq:permutation_group_AtiyahBott}
        \sigma \in \mathfrak{S}_\Br \coloneqq  S_n / \left(S_{r_1} \times S_{r_2 - r_1} \times \cdots \times S_{r_k - r_{k-1}} \times S_{n - r_k} \right)
    \end{equation}
    that records the choice of summands in $V$ used to split the vector bundle $E_k$. The quotient by the subgroup $S_{r_{j+1} - r_j}$ accounts for the fact that reordering the line bundle summands in $E_{j+1}/E_j$ yields the same splitting data up to permutation of degrees (here, we set $r_0 = 0$ and $r_{k+1}=n$).
\end{itemize}

For every class $\sigma \in \mathfrak{S}_\Br$, fix once and for all a representative in $S_n$, which by an abuse of notation we will still denote by 
\begin{equation}\label{eq:Sn_representative}
    \sigma \in S_n.
\end{equation} 
This allows us to write $\sigma(s)$ for $1\leq s\leq n$.
For a pair $(\sigma,D)$, the corresponding fixed locus $\fix_{\sigma,D}$ is the product of Hilbert schemes of points of the curve
\begin{equation}\label{eq:product_hilb_sch}
    \fix_{\sigma,D} = \prod_{j=1}^k \prod_{s=1}^{r_j} C^{[d_{s,j} - d_{s,j+1}]}  \subset \Hquot_{\mathbf{d}}(C, \mathbf{r}, V)
\end{equation}

which can be described graphically by the diagram:
$$
\adjustbox{scale=0.85,center}{\begin{tikzcd}[cramped,column sep=large,row sep=tiny]
	C^{[d_{1,1}-d_{1,2}]} & {C^{[d_{1,2}-d_{1,3} ]}} & \cdots & C^{[d_{1,k-1}-d_{1,k}]} & {C^{[d_{1,k} ]}} \\
	\begin{array}{c} \times\\ \vdots\\ \times \end{array} & \begin{array}{c} \times\\ \vdots\\ \times \end{array} & \begin{array}{c} \cdots\\ \vdots\\ \cdots \end{array} & \begin{array}{c} \times\\ \vdots\\ \times \end{array} & \begin{array}{c} \times\\ \vdots\\ \times \end{array} \\
	C^{[d_{r_{1},1}-d_{r_{1},2}]} & {C^{[d_{r_1,2}-d_{r_1,3} ]}} & \cdots & C^{[d_{r_1,k-1}-d_{1,k}]} & {C^{[d_{r_1,k} ]}}\\
	& \begin{array}{c} \times\\ \vdots\\ \times \end{array} & \begin{array}{c} \cdots\\ \vdots\\ \cdots \end{array} & \begin{array}{c} \times\\ \vdots\\ \times \end{array} & \begin{array}{c} \times\\ \vdots\\ \times \end{array} \\
	& {C^{[d_{r_2,2}-d_{r_2,3} ]}} & \cdots & C^{[d_{r_2,k-1}-d_{r_2,k}]} & {C^{[d_{r_2,k} ]}} \\
	&& \ddots & \begin{array}{c} \times\\ \vdots\\ \times \end{array} & \begin{array}{c} \times\\ \vdots\\ \times \end{array} \\
	&&& C^{[d_{r_{k-1},k-1}-d_{r_{k-1},k}]} & C^{[d_{r_{k-1}{, k}}]} 
    \\
	&&&& \begin{array}{c} \times\\ \vdots\\ \times \end{array} \\
	&&&& C^{[d_{r_{k}{, k}}]} 
	\arrow[from=1-1, to=1-2, phantom, "\times"]
    \arrow[from=1-2, to=1-3, phantom, "\times"]
    \arrow[from=1-3, to=1-4, phantom, "\times"]
    \arrow[from=1-4, to=1-5, phantom, "\times"]
    \arrow[from=3-1, to=3-2, phantom, "\times"]
    \arrow[from=3-2, to=3-3, phantom, "\times"]
    \arrow[from=3-3, to=3-4, phantom, "\times"]
    \arrow[from=3-4, to=3-5, phantom, "\times"]
    \arrow[from=5-2, to=5-3, phantom, "\times"]
    \arrow[from=5-3, to=5-4, phantom, "\times"]
    \arrow[from=5-4, to=5-5, phantom, "\times"]
    \arrow[from=7-4, to=7-5, phantom, "\times"];
\end{tikzcd}}
$$
here, we use the convention that \(d_{s,k+1}=0\) for all \(s\).
For the $s$-th row, let $j_s$ be the smallest \(j\) such that $E_{j}$ has rank $r_j$ at least $s$.  The scheme 
$$ C^{[d_{s,{j_s}}-d_{s,{j_s+1}}]}\times \cdots \times C^{[d_{s,{k-1}}-d_{s,{k}}]}\times C^{[d_{s,{k}}]}$$
parametrizes sequences of injections of invertible sheaves $$K_{s,j_s}\xhookrightarrow{}K_{s,j_s+1}\xhookrightarrow{}\cdots\xhookrightarrow{}K_{s,k-1}\xhookrightarrow{}K_{s,k}\xhookrightarrow{} K_{s,k+1}\coloneqq M_{\sigma(s)}.$$ Note that the permutation $\sigma$ doesn't play a role in describing the geometry of the fixed locus $X_{\sigma,D}$, but it determines how $X_{\sigma,D}$ embeds in the $\Hquot_{\mathbf{d}}(C,\Br, V)$. It is crucial to keep track of $\sigma$, for example, in the calculation of virtual normal bundles in the next section.

\subsection{Virtual normal bundle}\label{subs:vir_norm_bundle}
Recall the universal subsheaves 
\begin{equation}\label{eq:universal_sequence_virtual_normal}
    \CE_{1}\xhookrightarrow{}\ \cdots\xhookrightarrow{} \ \CE_j\ \xhookrightarrow{}\cdots\xhookrightarrow{} \ \CE_{k-1}\xhookrightarrow{} \ \CE_{k}\ \xhookrightarrow{}\ \CE_{k+1}\coloneqq \pi_C^*V
\end{equation}
on $\Hquot_{\Bd}\times C$, and let $\pi$ and $\pi_C$ denote the projections to $\Hquot_{\Bd}$ and $C$, respectively. Consider the virtual tangent bundle of $\Hquot_\Bd$, namely the $K$-theory class of the dual of the obstruction complex in \eqref{monavari_ricolfi_pot}:
\begin{equation}\label{eq:Tangent_Bundle_QuotFlag}
	\Tan=\bigg[\bigoplus_{j=1}^{k}\Hom_{\pi} (\CE_{j},\CE_{{j+1}}) - \bigoplus_{j=1}^{k} \Hom_{\pi}(\CE_{j},\CE_{j}) \bigg],
		% \Tan=\bigg[\bigoplus_{j=1}^{k} \Hom_{\pi}(\CE_{j},\CF_{j})- \bigoplus_{j=1}^{k-1}\Hom_{\pi} (\CE_{j},\CF_{{j+1}})\bigg],
\end{equation}
where $\Hom_{\pi}$ is the composition of the functors $R\pi_*$ and $\Hom$. The restriction of the universal subsheaves \eqref{eq:universal_sequence_virtual_normal} to the fixed loci $\fix_{\sigma,D}\times C$ is best seen in the following diagram:
\[
\adjustbox{scale=0.8,center}{\begin{tikzcd}[cramped,column sep=large,row sep=tiny]
    {\mathcal{K}_{1,1}} & {\mathcal{K}_{1,2}} & \cdots & {\mathcal{K}_{1,k-1}} & {\mathcal{K}_{1,k}} & {\mathcal{K}_{1,k+1}} \\
    \begin{array}{c} \oplus\\ \vdots\\ \oplus \end{array} & \begin{array}{c} \oplus\\ \vdots\\ \oplus \end{array} & \begin{array}{c} \cdots\\ \vdots\\ \cdots \end{array} & \begin{array}{c} \oplus\\ \vdots\\ \oplus \end{array} & \begin{array}{c} \oplus\\ \vdots\\ \oplus \end{array} & \begin{array}{c} \oplus\\ \vdots\\ \oplus \end{array} \\
    {\mathcal{K}_{r_1,1}} & {\mathcal{K}_{r_1,2}} & \cdots & {\mathcal{K}_{r_1,k-1}} & {\mathcal{K}_{r_1,k}} & {\mathcal{K}_{r_1,k+1}} \\
    & \begin{array}{c} \oplus\\ \vdots\\ \oplus \end{array} & \begin{array}{c} \cdots\\ \vdots\\ \cdots \end{array} & \begin{array}{c} \oplus\\ \vdots\\ \oplus \end{array} & \begin{array}{c} \oplus\\ \vdots\\ \oplus \end{array} & \begin{array}{c} \oplus\\ \vdots\\ \oplus \end{array} \\
    & {\mathcal{K}_{r_2,2}} & \cdots & {\mathcal{K}_{r_2,k-1}} & {\mathcal{K}_{r_2,k}} & {\mathcal{K}_{r_2,k+1}} \\
    && \ddots & \begin{array}{c} \oplus\\ \vdots\\ \oplus \end{array} & \begin{array}{c} \oplus\\ \vdots\\ \oplus \end{array} & \begin{array}{c} \oplus\\ \vdots\\ \oplus \end{array} \\
    &&& {\mathcal{K}_{r_{k-1},k-1}} & {\mathcal{K}_{r_{k-1},k}} & {\mathcal{K}_{r_{k-1},k+1}} \\
    &&&& \begin{array}{c} \oplus\\ \vdots\\ \oplus \end{array} & \begin{array}{c} \oplus\\ \vdots\\ \oplus \end{array} \\
    &&&& {\mathcal{K}_{r_k,k}} & {\mathcal{K}_{r_{k},k+1}}
    \arrow[from=1-1, to=1-2]
    \arrow[from=1-2, to=1-3]
    \arrow[from=1-3, to=1-4]
    \arrow[from=1-4, to=1-5]
    \arrow[from=1-5, to=1-6]
    \arrow[from=3-1, to=3-2]
    \arrow[from=3-2, to=3-3]
    \arrow[from=3-3, to=3-4]
    \arrow[from=3-4, to=3-5]
    \arrow[from=3-5, to=3-6]
    \arrow[from=5-2, to=5-3]
    \arrow[from=5-3, to=5-4]
    \arrow[from=5-4, to=5-5]
    \arrow[from=5-5, to=5-6]
    \arrow[from=7-4, to=7-5]
    \arrow[from=7-5, to=7-6]
    \arrow[from=9-5, to=9-6]
\end{tikzcd}}
\]
where $\CE_j$ restricts to $j$-th column for all $1\le j\le k$, i.e.,  
\begin{equation}\label{eq:splitting_Ei}
    \CE_{j}\big|_{\fix_{\sigma,D}\times C} = \CK_{1,j}\oplus \cdots \oplus \CK_{r_j,j},
\quad \text{and} \quad
\CK_{s,{k+1}}\coloneqq \pi_C^{*}M_{\sigma(s)}\ \forall s\in\{1,2,\dots,n \}.
\end{equation}
The virtual tangent bundle of $\Hquot_{\Bd}$ restricts to the following $K$-theory class on $\fix_{\sigma,D}$:
\begin{align*}
\Tan\big|_{\fix_{\sigma,D}}&=
\pi_*\sum_{j=1}^{k}\left(\sum_{s\le r_{j}, u\le r_{j+1} }^{} \CK^{\vee}_{s,j}\otimes \CK_{u,{j+1}}
-\sum_{s,u\le r_j}^{} \CK^{\vee}_{s,j}\otimes \CK_{u,j}\right),
\end{align*}
where $\pi_*$ denotes the push forward in $K$-theory for the projection $\pi:\fix_{\sigma,D}\times C\to \fix_{\sigma,D}$. 

 Then the torus weights for each term in the above expression are given by $\varepsilon_{\sigma(u)}-\varepsilon_{\sigma(s)}$. The virtual normal bundle for $ \fix_{\sigma,D}\subset \Hquot_{\Bd}$, denoted $\Nor$, is the $K$-theory class on $\fix_{\sigma,D}$ given by moving part of the virtual tangent bundle, i.e., by imposing the condition $s\ne u$:
\begin{align}\label{eq:Normal_bundle_K-theory}
\Nor=\pi_*\sum_{j=1}^{k}\left(\sum_{\substack{\ s\le r_{j},  u\le r_{j+1}\\  s\ne u }}^{} \CK^{\vee}_{s,j}\otimes \CK_{u,{j+1}} -\sum_{\substack{ s, u\le r_{j}\\ s\ne u }}^{} \CK^{\vee}_{s,j}\otimes \CK_{u,j}  \right).
\end{align}
Note that the stationary part of the virtual tangent bundle, that is, when $s=u$, yields the classical tangent bundle on $\fix_{\sigma,D}$.

\section{Integrals over Hyperquot scheme on $\BP^1$}\label{sec:genus_zero_calculation}
In this section, we will illustrate the proof of Theorem~\ref{thm:intro_VI_formula} when $C=\BP^1$ and $V$ is a trivial bundle of rank $n$. The Hyperquot scheme $\Hquot_{\Bd}=\Hquot_{\Bd}(\BP^1,\Br,\CO_{\BP^1}^{\oplus n})$ is a smooth projective variety of the expected dimension, and the virtual class coincides with the usual fundamental class. We will return to the general case in Section~\ref{sec:higher_genus_case}, after introducing additional tools that are needed.

We apply the Atiyah--Bott localization formula to express the integrals over $\Hquot_\Bd$ in terms of integrals over the fixed loci $\fix_{\sigma, D}$, which in this genus-zero setting are simply products of projective spaces. We can easily express these integrals as residues, and we determine the generating series of these residues by invoking the multivariate Lagrange-B\"urmann formula, thus proving Theorem~\ref{thm:VI_Hyper_quot_equivariant} in this genus-zero setting.

Since we'll compute by localization, we use equivariant cohomology with complex coefficients and introduce the field of fractions of $H_T^\ast(\pt) \simeq \BC[\varepsilon_1, \dots, \varepsilon_n]$, which we denote by
     \begin{equation}\label{eq:Gamma}
        \Gamma\coloneqq \BC(\varepsilon_1,\dots,\varepsilon_n).
     \end{equation}
     Note that $S_n$ acts on $H^\ast_T(\pt)$ (and on $\Gamma$) by shuffling the variables
\begin{align*}
    S_n \curvearrowright H^\ast_T(\pt) \quad : \quad \sigma \cdot \varepsilon_i \coloneqq  \varepsilon_{\sigma(i)},
\end{align*}
and this action extends to the $T$-equivariant cohomology of any $T$-fixed scheme $F$. Via the choice of representatives \eqref{eq:Sn_representative}, for every $\sigma \in \mathfrak{S}_\Br$ and equivariant cohomology class $\alpha \in H^\ast_T(F)$, the expression $\sigma \cdot\alpha$ makes sense as an element of $H^\ast_T(F)$.
\subsection{Equivariant Euler class in genus zero}

 In $K$-theory, the class of the tangent bundle of $\Hquot_d$ equals the virtual tangent bundle given in \eqref{eq:Tangent_Bundle_QuotFlag}. Recall that the fixed loci $\fix_{\sigma,D}$ described in Section~\ref{subs:fixed_loci} are products of Hilbert schemes of points on $\BP^1$, and therefore isomorphic to products of projective spaces
\[
\fix_{\sigma,D}\cong \prod_{j=1}^{k}\prod_{s=1}^{ r_j}\BP^{d_{s,j}-d_{s,j+1}},
\]
with the convention that \(d_{s,k+1}=0\) for all \(s\).
Note that over \(\fix_{\sigma,D}\times \bb{P}^1 \), we have \begin{align}\label{eq:universal_line_bundle_P1}
	\CK_{s,j}
    &=\left(\bigboxtimes_{\ell\geq j}\ca{O}_{\bb{P}^{d_{s,\ell}-d_{s,\ell+1}}}(-1)\right)\boxtimes\CO_{\BP^1}(-d_{s,j}).
	\end{align}
Let $$x_{s,j}\in H^2(\BP^{d_{s,j}-d_{s,j+1}},\BZ) \quad \text{for all } 1\le j\le k,\ 1\le s\le r_j$$
be the hyperplane classes, and use the same notation for their pullbacks in $H^2\left(\fix_{\sigma,D},\BZ\right)$ under the projection map. 
Fixed the identity $\sigma = 1 \in \mathfrak{S}_\Br$, the equivariant first Chern class of the dual of $\CK_{s,j}$ on $\fix_{1,D}$ is given by 
$$c_1^T(\CK_{s,j}^\vee) =z_{s,j}\otimes 1+ 1\otimes d_{s,j}[p] \qquad\quad \text{in } H^2_T(\fix_{1,D}\times \BP^1)$$
under the K\"unneth isomorphism, where $[p]$ is the class of a point $p\in\BP^1$ and 
\begin{align}\label{eq:z_classes_genus0}
    z_{s,j} \coloneqq c_1^T(\CK_{s,j|p}^\vee)=x_{s,j}+x_{s,{j+1}}+\cdots+x_{s,k}+\varepsilon_{s},
\end{align}
with $\CK_{s,j|p}$ denoting the restriction  of $\CK_{s,j}$ to $\fix_{\sigma,D}\times \{p\}$. 
For an arbitrary element $\sigma \in \mathfrak{S}_\Br$, we can describe the K\"unneth decomposition of $c_1^T(\CK_{s,j}^\vee)$ on $\fix_{\sigma,D}\simeq X_{1,D}$ as
$$c_1^T(\CK_{s,j}^\vee) = (\sigma \cdot z_{s,j})\otimes 1+ d_{s,j}\otimes [p] \qquad\quad \text{in } H^2_T(\fix_{\sigma,D}\times \BP^1).$$

In what follows we will use bold symbols as a shorthand notation for tuples, for instance, we write \(\mathbf{x}\coloneqq(x_{s,j})_{j\le k,\ s\le r_j}\).

In our computations, we will use the polynomials     \begin{equation}\label{eq:def_of_R}
        \begin{aligned}
        R_{j}(z) &\coloneqq (z-z_{1,j})(z-z_{2,j})\cdots(z-z_{r_j,j})
         \qquad \text{for }1\le j\le k,\\
        R_{0}(z) &\coloneqq 1, \qquad
        R_{k+1}(z) \coloneqq (z-\varepsilon_1)\cdots(z-\varepsilon_n)
    \end{aligned}
    \end{equation}
    of degree $\deg(R_j)=r_j$, and denote by $R_j^\prime$ their derivative. 
\begin{proposition}\label{prop:Euler_class_normal_genus_0}
    The equivariant Euler class of the normal bundle is given by
    \begin{align}\label{eq:Euler_class_Normal_bundle_genus_0}
        \frac{1}{e_T(\Nor)} = \sigma \cdot (-1)^{\beta}\prod_{j=1}^{k}\prod_{s\le r_j}x_{s,j}^{d_{s,j}-d_{s,j+1}+1}
        \frac{R_j^\prime(z_{s,j})  R_{{j-1}}(z_{s,j})^{d_{s,j}} }{R_{{j+1}}(z_{s,j})^{d_{s,j}+1} },
    \end{align}
    where  the sign is given by 
    $$\beta=\sum_{j\leq k}d_j(r_j-r_{j-1}-1).$$
    The right-hand side should be interpreted as a rational function of the formal variables $\Bx$ and $\boldsymbol{\varepsilon}$, which induces a localized equivariant cohomology class in $H^\ast(\fix_{\sigma,D})\otimes_{H_T^\ast(\pt)}\Gamma$ only after clearing the common factors in the numerators and denominators.
\end{proposition}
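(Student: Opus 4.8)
The plan is to compute $e_T(\Nor)$ summand by summand from the $K$-theory description \eqref{eq:Normal_bundle_K-theory}, using one auxiliary fact about pushforwards of line bundles from $\fix_{\sigma,D}\times\BP^1$. By \eqref{eq:universal_line_bundle_P1} each $\CK_{s,j}$ — and, with the convention $\CK_{s,k+1}=\pi_C^*M_{\sigma(s)}$, each $\CK_{s,j}^\vee\otimes\CK_{u,j'}$ occurring in $\Nor$ — is an exterior product $N\boxtimes\CO_{\BP^1}(e)$ with $T$ acting trivially on the $\BP^1$-factor; here $e=d_{s,j}-d_{u,j'}$ is its relative degree over $\fix_{\sigma,D}$ and $N=\CK_{s,j|p}^\vee\otimes\CK_{u,j'|p}$, so $c_1^T(N)=z_{s,j}-z_{u,j'}$ (with $z_{s,k+1}:=\varepsilon_{\sigma(s)}$, $d_{s,k+1}:=0$). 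Cohomology and base change along $\BP^1$ give $\pi_!(N\boxtimes\CO_{\BP^1}(e))=N^{\oplus h^0(\CO_{\BP^1}(e))}-N^{\oplus h^1(\CO_{\BP^1}(e))}$ in $K$-theory, whose equivariant Euler class is $(z_{s,j}-z_{u,j'})^{\chi(\CO_{\BP^1}(e))}=(z_{s,j}-z_{u,j'})^{e+1}$ in the localized ring $H^*(\fix_{\sigma,D})\otimes_{H_T^*(\pt)}\Gamma$ (equivalently this follows from equivariant Grothendieck--Riemann--Roch, since $\mathrm{td}(T_{\BP^1})=1+[p]$). Multiplying over all summands of \eqref{eq:Normal_bundle_K-theory} yields
\[
e_T(\Nor)=\prod_{j=1}^{k}\frac{\displaystyle\prod_{\substack{s\le r_j,\ u\le r_{j+1}\\ s\ne u}}\bigl(z_{s,j}-z_{u,j+1}\bigr)^{d_{s,j}-d_{u,j+1}+1}}{\displaystyle\prod_{\substack{s,\,u\le r_j\\ s\ne u}}\bigl(z_{s,j}-z_{u,j}\bigr)^{d_{s,j}-d_{u,j}+1}}.
\]
It suffices to treat $\sigma=1$: the bundles $\CK_{s,j}$ on $\fix_{\sigma,D}$ and on $\fix_{1,D}$ agree under the canonical identification of the two products of projective spaces, except that their $T$-linearizations differ by $\varepsilon_s\mapsto\varepsilon_{\sigma(s)}$, so $e_T(\Nor)$ for $\fix_{\sigma,D}$ is the image of $e_T(\Nor)$ for $\fix_{1,D}$ under the $S_n$-action $\varepsilon_i\mapsto\varepsilon_{\sigma(i)}$, which is exactly the outer operation $\sigma\cdot(-)$ in the statement.

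It then remains to manipulate the displayed rational function into the claimed product. In the denominator I write $d_{s,j}-d_{u,j}+1=1+(d_{s,j}-d_{u,j})$: the exponent-$1$ part reassembles into $\prod_{j}\prod_{s\le r_j}R_j'(z_{s,j})$ since $R_j'(z_{s,j})=\prod_{u\ne s}(z_{s,j}-z_{u,j})$, while grouping the remaining factors $(z_{s,j}-z_{u,j})^{d_{s,j}-d_{u,j}}$ into unordered pairs $\{s,u\}$ and using $\sum_{\{s,u\}}(d_{s,j}+d_{u,j})=(r_j-1)d_j$ collapses that part to the scalar $(-1)^{(r_j-1)d_j}$. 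In the numerator I use $z_{s,j}-z_{s,j+1}=x_{s,j}$ to split off $R_{j+1}(z_{s,j})=x_{s,j}\prod_{u\ne s}(z_{s,j}-z_{u,j+1})$, reindex $\prod_{j,s}R_{j-1}(z_{s,j})^{d_{s,j}}$ by $j\mapsto j+1$ (using $R_0=1$ and $d_{s,k+1}=0$ to keep the range $1\le j\le k$), so that this product pairs node by node with $R_{j+1}(z_{s,j})^{-(d_{s,j}+1)}$ up to a sign $(-1)^{d_{u,j+1}}$ and the relabelling $(z_{v,j+1}-z_{u,j})=-(z_{u,j}-z_{v,j+1})$, and finally absorb the leftover powers of $x_{s,j}$ into $x_{s,j}^{d_{s,j}-d_{s,j+1}+1}$. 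After this, all the $z$-monomials on the two sides agree, and the product of the accumulated signs is $(-1)^{\beta}$ provided $\beta\equiv\sum_{j}(r_j-1)d_j-\sum_{j}r_{j-1}d_j\pmod 2$; since $r_0=0$ and $d_{k+1}=0$, the right-hand side telescopes to $\sum_{j}d_j(r_j-r_{j-1}-1)=\beta$, and the general $\sigma$ follows by applying $\sigma\cdot(-)$.

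The main obstacle is the sign bookkeeping in this second step: there are several independent sources of $-1$'s — the pairing of $\{s,u\}$ in the denominator, the reindexing $j\mapsto j+1$ combined with the antisymmetrization $(z_{v,j+1}-z_{u,j})=-(z_{u,j}-z_{v,j+1})$ in the numerator, and the factors $(-1)^{d_{u,j+1}}$ — and one must check that they combine to exactly $(-1)^{\beta}$; the congruence $\sum_j(r_j-1)d_j-\sum_j r_{j-1}d_j\equiv\beta$ is precisely what makes this work. Two smaller points need care. First, the negative exponents must be read in the localized ring $H^*(\fix_{\sigma,D})\otimes_{H_T^*(\pt)}\Gamma$; this is legitimate because every $z_{s,j}-z_{u,j'}$ with $s\ne u$ occurring in $\Nor$ has nonzero torus weight $\varepsilon_{\sigma(s)}-\varepsilon_{\sigma(u)}$ (as $\sigma$ is injective), hence is invertible there. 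Second, the resulting rational function in $\Bx$ and $\boldsymbol\varepsilon$ represents an honest localized cohomology class only after cancelling the common factors between its numerator and denominator, exactly as asserted in the statement.
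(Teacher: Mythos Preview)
Your proof is correct and follows essentially the same route as the paper: compute $e_T(\pi_*(\CK_{s,j}^\vee\otimes\CK_{u,j'}))=(z_{s,j}-z_{u,j'})^{d_{s,j}-d_{u,j'}+1}$ for each summand of \eqref{eq:Normal_bundle_K-theory}, multiply, then manipulate the resulting rational function using $R_j'(z_{s,j})=\prod_{u\ne s}(z_{s,j}-z_{u,j})$, $z_{s,j}-z_{s,j+1}=x_{s,j}$, and a $j\mapsto j+1$ reindexing. The paper organizes the manipulation by first grouping same-column and consecutive-column factors separately, which makes the per-$u$ sign $(-1)^{r_j}$ in the numerator explicit; your narrative ``up to a sign $(-1)^{d_{u,j+1}}$ and the relabelling'' is a bit imprecise at that spot (the correct contribution is $(-1)^{r_j d_{u,j+1}}$ per $u$, summing to $(-1)^{r_j d_{j+1}}$), but your final identity $\beta\equiv\sum_j(r_j-1)d_j-\sum_j r_{j-1}d_j\pmod 2$ is exactly the right one and the argument goes through.
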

\begin{proof}
Without loss of generality we can assume $\sigma=1$. Using the description in \eqref{eq:universal_line_bundle_P1} and the projection formula, we note that the equivariant Euler classes of the pushforwards appearing in \eqref{eq:Normal_bundle_K-theory} are given by
\begin{align*}
e_T\left(\pi_*\left(\CK_{s,j}^\vee\otimes\CK_{u,j}\right)\right)&=\left(z_{s,j}-z_{u,j}\right)^{d_{s,j}-d_{u,j}+1};
        \\ e_T\left(\pi_*\left(\CK_{s,j}^\vee\otimes\CK_{u,{j+1}}\right)\right)&=\left(z_{s,j} -z_{u,{j+1}} \right)^{d_{s,j}-d_{u,{j+1}}+1}.
\end{align*}

We now carefully substitute the equivariant Euler classes computed above into the formula \eqref{eq:Normal_bundle_K-theory} for the normal bundle $\Nor$, noting that the equivariant Euler class is multiplicative. We first consider the terms in the same column (i.e. keeping the subscript $j$ fixed):
\begin{align*}
    \prod_{\substack{s,u\le r_j\\ s\ne u}}^{} e_T\left(\pi_*\left(\CK^{\vee}_{s,j}\otimes \CK_{u,j}\right)\right)
 &=(-1)^{d_j(r_j-1)}
        \prod_{\substack{s,u\le r_{j}\\s\ne u }} \left(z_{s,j} -z_{u,{j}} \right)
        \\
        &=(-1)^{d_j(r_j-1)}\prod_{s\le r_j}R_{j}^\prime(z_{s,j}).
\end{align*}
In the first equality, we used that $\left(z_{s,j}-z_{u,j}\right)^{d_{s,j}-d_{u,j}}$ cancel each other by swapping the role of $s$ and $u$, leaving behind the sign $(-1)^{d_j(r_j-1)}$.

We then consider the terms involving two consecutive columns:
\begin{align*}
    \prod_{\substack{s\le r_{j}\\ u\le  r_{j+1}\\ s\ne u }}e_T\left(\pi_*\left(\CK^{\vee}_{s,j}\otimes \CK_{u,{j+1}}\right)\right)= & \prod_{\substack{s\le r_{j}\\ u\le r_{j+1}\\ s\ne u }} \left(z_{s,j} -z_{u,{j+1}} \right)^{d_{s,j}-d_{u,{j+1}}+1}
    \\
    =&\prod_{\substack{s\le r_j}}\left(\frac{R_{{j+1}}(z_{s,j}) }{x_{s,j}}\right)^{d_{s,j}+1}\cdot \prod_{u\le r_{j+1}}\left(\frac{(-1)^{r_j}x_{u,j}}{R_{j}(z_{u,{j+1}})}\right)^{d_{u,{j+1}}},
    \end{align*}
    where we set \(x_{u,j}\coloneqq 1\) for \(u> r_j\).     
    Substituting these two expressions in the formula for the Euler class of \eqref{eq:Normal_bundle_K-theory} we get the required identity.
    \end{proof}

\subsection{Atiyah--Bott localization}\label{sec:genus_zero_VI}
Recall that $\Hquot_d:=\Hquot_{\Bd}(\BP^1,\Br,V)$ for a split vector bundle $V=M_1\oplus\cdots\oplus M_n$, where $M_{i}^\vee \cong\CO_{\BP^1}$ endowed with equivariant weight $\varepsilon_{i}$ for each $1\le i\le n$. 

Let $Q \in H_T^\ast(\Hquot_\Bd)$ be a polynomial in the equivariant Chern classes of the bundles $\CE^\vee_{1\vert p}, \dots, \CE^\vee_{k\vert p}$, and let $\tilde{Q}(\Bz)$ be the corresponding symmetric polynomial such that $Q\vert_{\fix_{1,D}} = \tilde{Q}(\Bz)$, where $\Bz\coloneqq  (z_{s,j})$ are the Chern roots \eqref{eq:z_classes_genus0}. Note moreover that $Q_{\vert X_{\sigma, D}} = \sigma \cdot \tilde{Q}(\Bz)$.
By the Atiyah-Bott localization formula \cite{AB:moment} we find
\begin{align*}
    \int_{\Hquot_{\Bd}}  Q &= \sum_{\sigma,D}\int_{\fix_{\sigma,D}} \
        \frac{Q|_{\fix_{\sigma,D}}}{e_{T}(\Nor)}.
\end{align*}
Note that the equivariant Euler class in the denominator is invertible in the localized equivariant cohomology $H^\ast_T(X_{\sigma,D}) \otimes_{H_T^\ast(\pt)} \Gamma$.

To evaluate the integrals on the right-hand side, we just have to consider the top-power of the hyperplane class on each projective space $\mathbb{P}^{d_{s,j} - d_{s,j+1}}$, which amounts to extracting the coefficient of $x_{s,j}^{d_{s,j} - d_{s,j+1}}$ for all $1 \le j \le k$ and $1 \le s \le r_j$.
 Therefore, using \eqref{eq:Euler_class_Normal_bundle_genus_0}, we express the integral as the residue
\begin{align}\label{eq:int-genus0}
 \int_{[\Hquot_{\Bd}]}  Q=(-1)^{\beta}\sum_{\sigma,D}\,\sigma \cdot \Res_{\mathbf{x}=\mathbf{0}}\, \Bigl(\tilde{Q}(\Bz)\prod_{\substack{j\le k\\ s\le r_j}}\frac{ R_{j}^\prime(z_{s,j}) R_{{j-1}}(z_{s,j})^{d_{s,j}} }{R_{{j+1}}(z_{s,j})^{d_{s,j}+1}}\Bigr).
\end{align}
The argument of the residue should be understood as a rational function in the formal variables $x_{s,j}$ and $\varepsilon_j$, with $z_{s,j}$ defined as in (\ref{eq:z_classes_genus0}). This can be written uniquely, by expanding each linear factor in the denominator as 
\begin{align}\label{eq:pw_expansion}
    (f(\Bx) + g(\boldsymbol{\varepsilon}))^{-1} = \sum_{k \geq 0} (-1)^k f(\Bx)^{k}g(\boldsymbol{\varepsilon})^{-k-1},
\end{align} 
as a multivariate power series in the variables $x_{s,j}$ having coefficients in the field $\Gamma$. Then the residue $\Res_{\Bx=0}$ extracts the coefficient of $1/\prod_{j,s}x_{s,j}$. 

We now apply the following multivariate Lagrange--B\"urmann formula, see \cite[Theorem 2]{gessel} for instance, to simplify the above residue:
\begin{theorem}[Lagrange--B\"urmann formula]\label{thm:Lagrange_Burmann}
Consider multivariate formal power series $\Phi_1, \dots, \Phi_N$ in variables $x_1, \dots, x_N$, each with non-zero constant term. Then the following change of variables
\[
t_i = \frac{x_i}{\Phi_i} \quad \text{for } 1 \le i \le N,
\]
 has unique inverses $x_i(t_1,t_2,\dots,t_N)$, which are multivariate power series for each $1\le i\le N$. Moreover, for any Laurent series $\Psi$ in $x_1, \dots, x_N$,
\begin{align}
\sum_{m_1, \dots, m_N \ge 0} t_1^{m_1} \cdots t_N^{m_N} 
\operatorname{Res}_{\mathbf{x} = 0} \left( \Psi \cdot \prod_{i=1}^N \left( \frac{\Phi_i}{x_i} \right)^{m_i } \right) 
= \Psi \cdot \det\left(t_{i}^{-1} \frac{\partial t_i}{\partial x_j} \right)_{1 \le i,j \le N}^{-1},
\end{align}
where $\Psi$ on the right-hand side is understood to be expressed in terms of the variables $t_1, \dots, t_N$.
\end{theorem}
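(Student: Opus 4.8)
The plan is to derive the formula from two standard ingredients: the formal inverse function theorem, which supplies the power-series inverse $x_i=\psi_i(\mathbf{t})$ of the given substitution, and the change-of-variables rule for the iterated-residue functional on multivariate Laurent series. First I would set up the inversion. Rewriting $t_i=x_i/\Phi_i(\mathbf{x})$ as $x_i=t_i\Phi_i(\mathbf{x})$, successive approximation in the $(t_1,\dots,t_N)$-adic topology produces a unique solution $\psi_i\in\mathbb{C}[[t_1,\dots,t_N]]$ with $\psi_i(\mathbf{0})=0$; the recursion $\psi_i(\mathbf{t})=t_i\,\Phi_i(\psi(\mathbf{t}))$ then shows $\psi_i=t_i u_i$ with $u_i\coloneqq\Phi_i(\psi(\mathbf{t}))$ an invertible power series, so $\psi=(\psi_1,\dots,\psi_N)$ is a formal automorphism fixing the origin, with inverse $\phi:\mathbf{x}\mapsto(x_i/\Phi_i(\mathbf{x}))_i$, and each $\psi_i$ (resp. $\phi_i$) is divisible by $t_i$ (resp. $x_i$).

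The heart of the matter is the change-of-variables identity $\operatorname{Res}_{\mathbf{x}=\mathbf{0}}F(\mathbf{x})=\operatorname{Res}_{\mathbf{t}=\mathbf{0}}\bigl(F(\psi(\mathbf{t}))\,\det(\partial\psi_i/\partial t_j)\bigr)$, where $\operatorname{Res}_{\mathbf{x}=\mathbf{0}}$ denotes the coefficient of $x_1^{-1}\cdots x_N^{-1}$ and $F$ is a Laurent series. The cleanest proof is the ``exact forms carry no residue'' principle: any Laurent monomial $\mathbf{x}^{\mathbf{a}}$ with $\mathbf{a}\neq(-1,\dots,-1)$ equals, up to a nonzero scalar, $\partial_{x_i}\mathbf{x}^{\mathbf{a}+\mathbf{e}_i}$ for an index $i$ with $a_i\neq-1$, hence has vanishing residue, and this vanishing is preserved under pullback along $\psi$ by naturality of the de Rham differential; for the single remaining monomial $(x_1\cdots x_N)^{-1}$ one checks directly, using $\psi_i=t_iu_i$ to expand $\tfrac{d\psi_1\wedge\cdots\wedge d\psi_N}{\psi_1\cdots\psi_N}=\bigwedge_i\bigl(\tfrac{dt_i}{t_i}+d\log u_i\bigr)$, that the $\mathbf{t}$-residue is $1$ because every term containing a factor $d\log u_i$ contributes $0$. (Alternatively one simply cites this as classical.)

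Finally I would assemble the two pieces. Expanding the determinant and substituting $\mathbf{x}=\psi(\mathbf{t})$, using the chain-rule identity $\det(\partial t_i/\partial x_j)\big|_{\mathbf{x}=\psi(\mathbf{t})}=\det(\partial\psi_i/\partial t_j)^{-1}$, the claimed right-hand side reads $\Psi(\psi(\mathbf{t}))\cdot\mathbf{t}\cdot\det(\partial\psi_i/\partial t_j)$; extracting the coefficient of $\mathbf{t}^{\mathbf{m}}$ yields $\operatorname{Res}_{\mathbf{t}=\mathbf{0}}\bigl(\Psi(\psi(\mathbf{t}))\det(\partial\psi_i/\partial t_j)\prod_i t_i^{-m_i}\bigr)$, and applying the change-of-variables identity in the direction $\mathbf{t}\to\mathbf{x}$ — together with the chain rule and the tautology $\Phi_i(\psi(\phi(\mathbf{x})))/\psi_i(\phi(\mathbf{x}))=\Phi_i(\mathbf{x})/x_i$ — converts this into $\operatorname{Res}_{\mathbf{x}=\mathbf{0}}\bigl(\Psi(\mathbf{x})\prod_i(\Phi_i(\mathbf{x})/x_i)^{m_i}\bigr)$, exactly the coefficient summed on the left. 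It remains only to see that the left-hand sum, which by construction recovers just the monomials of $\Psi(\psi(\mathbf{t}))\cdot\mathbf{t}\cdot\det(\partial\psi_i/\partial t_j)$ with all exponents $\ge 0$, is the whole thing; since $\psi_i$ is divisible by $t_i$, this holds provided $\Psi$ has at most simple poles along the coordinate hyperplanes. I expect the main obstacle to be precisely this last point: pinning down the hypothesis on $\Psi$ for which the stated identity is literally true (poles of order at most one in each variable), which is also what one must arrange when the theorem is applied to the genus-zero residues in \eqref{eq:int-genus0}; the remaining work is careful bookkeeping — verifying that the nested formal substitutions of multivariate series are legitimate and keeping the sign and index conventions in the Jacobian straight.
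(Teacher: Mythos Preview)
The paper does not prove this theorem: immediately after the statement it simply cites \cite[Theorem~2]{gessel} and moves on. So there is no ``paper's own proof'' to compare against, and your self-contained argument goes well beyond what the paper supplies.

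Your sketch is the standard route (formal inversion plus Jacobi's residue-invariance under substitution) and is correct. Your closing caveat is also well placed: with the sum on the left restricted to $m_i\ge 0$, the identity as literally stated requires that $\Psi(\psi(\mathbf{t}))\cdot\prod_i t_i\cdot\det(\partial\psi_i/\partial t_j)$ have no negative powers of any $t_i$, which forces $\Psi$ to have at most a simple pole along each coordinate hyperplane. This is exactly the situation in the paper's application (Proposition~\ref{prop:Residue_LB_genus_0}): their $\Psi=\tilde Q(\Bz)\prod_{s,j}R_j'(z_{s,j})/R_{j+1}(z_{s,j})$ has, after the expansion~\eqref{eq:pw_expansion}, a simple pole in each $x_{s,j}$ coming from the factor $z_{s,j}-z_{s,j+1}=x_{s,j}$ in $R_{j+1}(z_{s,j})$, and no worse. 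So your concern is real but harmless for the intended use, and you have already identified the correct hypothesis under which the stated formula holds.
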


 Define the rational functions
\begin{align*}
    \Psi_{s,j}=\frac{x_{s,{j}}R_{{j-1}}(z_{s,j})}{x_{s,{j-1}}R_{{j+1}}(z_{s,j})}
\end{align*}
where we set \(x_{s,j}\coloneqq 1\) for \(s>r_j\). Expanding as in \eqref{eq:pw_expansion}, each $\Psi_{s,j}$ is a formal power
series in the $x$–variables whose constant term is nonzero in $\Gamma$.
For each $j\le k$ and $s\le r_j$ we define
\begin{align}\label{eq:Phi_class}
    \Phi_{s,j}\coloneqq 
    \Psi_{s,j}\Psi_{s,j-1}\cdots \Psi_{s,1},
    \qquad
    \text{with }\;\Psi_{s,j}\coloneqq 1\text{ if }s>r_j,
\end{align}
which is again a multivariate power series with non-zero constant terms. Consider the change of variables
\begin{align}\label{eq:tx_variables}
    t_{s,j}\coloneqq  \frac{x_{s,j}}{\Phi_{s,j}}\quad  \qquad \text{invertible by Theorem \ref{thm:Lagrange_Burmann}}.
\end{align}
\begin{lemma}\label{lem:fundamental_sol}
    Consider the system of equations
    \begin{equation}\label{eq:change_of_variable_q_a,r_j}
    R_{j+1}(z_{s,j}) = q_{s,j}R_{j-1}(z_{s,j}),
    \end{equation}
    in the variables $z_{s,j} \in \Gamma[\![q_{i,\ell}]\!]$ and consider the tuple $\boldsymbol{\zeta}^\fund = (\zeta^\fund_{s,j})$ of power series
    \begin{align}\label{eq:fundamental_solution_qsj}
        \zeta^\fund_{s,j} \coloneqq  \sum_{\ell\geq j} x_{s,\ell}(\Bt) + \varepsilon_s  \qquad \text{evaluated at} \qquad t_{s,j} = q_{s,j}q_{s,j-1}\cdots q_{s,1},
    \end{align}
    using the notation $q_{s,j} \coloneqq 1$ whenever $s>r_j$. Then $\boldsymbol{\zeta}^\fund$ is a solution of the system \eqref{eq:change_of_variable_q_a,r_j}, and we call it the \textit{fundamental solution}. 
\end{lemma}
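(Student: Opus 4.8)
The plan is to verify directly that the tuple $\boldsymbol{\zeta}^{\fund}$ defined in \eqref{eq:fundamental_solution_qsj} satisfies the system \eqref{eq:change_of_variable_q_a,r_j}, unwinding the definitions of $\Psi_{s,j}$, $\Phi_{s,j}$, and the change of variables \eqref{eq:tx_variables}. The starting point is the observation that, by \eqref{eq:z_classes_genus0}, the class $z_{s,j} = x_{s,j} + x_{s,j+1} + \cdots + x_{s,k} + \varepsilon_s$ is exactly the expression appearing in \eqref{eq:fundamental_solution_qsj}, so that after the substitution $t_{s,j} = q_{s,j}q_{s,j-1}\cdots q_{s,1}$ we have $\zeta^{\fund}_{s,j} = z_{s,j}$ as power series in the $q$-variables. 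Thus the content of the lemma is the identity
\[
\frac{R_{j+1}(z_{s,j})}{R_{j-1}(z_{s,j})} = q_{s,j}q_{s,j-1}\cdots q_{s,1}\cdot\frac{x_{s,j-1}\cdots\text{(something)}}{\cdots}
\]
reorganized into the clean form $R_{j+1}(z_{s,j}) = q_{s,j}R_{j-1}(z_{s,j})$.

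First I would rewrite the definition of $\Psi_{s,j}$ as $\Psi_{s,j} = \dfrac{x_{s,j}}{x_{s,j-1}}\cdot\dfrac{R_{j-1}(z_{s,j})}{R_{j+1}(z_{s,j})}$, and note that the product $\Phi_{s,j} = \Psi_{s,j}\Psi_{s,j-1}\cdots\Psi_{s,1}$ telescopes in its $x$-part: the $x_{s,\ell}/x_{s,\ell-1}$ factors collapse (using $x_{s,0}\coloneqq 1$, which is the convention implicit in \eqref{eq:Phi_class}) to give $\Phi_{s,j} = x_{s,j}\cdot\prod_{\ell=1}^{j}\dfrac{R_{\ell-1}(z_{s,\ell})}{R_{\ell+1}(z_{s,\ell})}$. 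Hence the change of variables \eqref{eq:tx_variables} reads
\[
t_{s,j} = \frac{x_{s,j}}{\Phi_{s,j}} = \prod_{\ell=1}^{j}\frac{R_{\ell+1}(z_{s,\ell})}{R_{\ell-1}(z_{s,\ell})}.
\]
Next, setting $t_{s,j} = q_{s,j}q_{s,j-1}\cdots q_{s,1}$ and taking the ratio of consecutive equations, $t_{s,j}/t_{s,j-1} = q_{s,j}$, immediately yields $\dfrac{R_{j+1}(z_{s,j})}{R_{j-1}(z_{s,j})} = q_{s,j}$, which is precisely \eqref{eq:change_of_variable_q_a,r_j} with $z_{s,j}$ replaced by $\zeta^{\fund}_{s,j}$. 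The case $s > r_j$ needs a brief separate check: there $\Psi_{s,j} = 1$, $x_{s,j}=1$, $q_{s,j}=1$, and the telescoping identity degenerates consistently, so the equation is vacuously the trivial identity $1 = 1$ in that range.

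The main technical point — which I would state carefully rather than brush aside — is that all of the above manipulations take place in the power-series ring $\Gamma[\![q_{i,\ell}]\!]$, and that the substitution $t_{s,j} \mapsto q_{s,j}\cdots q_{s,1}$ is legitimate because Theorem~\ref{thm:Lagrange_Burmann} guarantees that the $x_{s,j}(\mathbf t)$ are honest formal power series in the $t$-variables with no constant term (each $\Phi_{s,j}$ has nonzero constant term in $\Gamma$, so $t_{s,j} = x_{s,j}/\Phi_{s,j}$ is a power series vanishing to order one in $x_{s,j}$, and the inversion is valid). Composing with the monomial substitution $t_{s,j} = q_{s,j}\cdots q_{s,1}$ then produces well-defined elements $\zeta^{\fund}_{s,j}\in\Gamma[\![q_{i,\ell}]\!]$, and the identities established above are genuine equalities of such power series. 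I expect the only real obstacle to be bookkeeping the boundary conventions ($x_{s,0}=1$, $R_0 = 1$, $x_{u,j}=1$ and $q_{s,j}=1$ for indices out of range) consistently so that the telescoping is clean; once those are fixed, the verification is a direct computation.
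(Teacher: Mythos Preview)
Your proposal is correct and follows essentially the same approach as the paper: both arguments observe that the substitution $t_{s,j}=q_{s,j}q_{s,j-1}\cdots q_{s,1}$ amounts to $t_{s,j}/t_{s,j-1}=q_{s,j}$, which in the $x$-variables is exactly $R_{j+1}(z_{s,j})/R_{j-1}(z_{s,j})=q_{s,j}$. Your write-up is more explicit than the paper's (you spell out the telescoping $\Phi_{s,j}=x_{s,j}\prod_{\ell\le j}R_{\ell-1}(z_{s,\ell})/R_{\ell+1}(z_{s,\ell})$ that the paper leaves implicit), but the content is the same.
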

\begin{proof}
    The following system in the variables $t_{s,j} \in \Gamma[\![q_{i,\ell}]\!]$
    \begin{equation}\label{eq:qt_variables}
        t_{s,j-1} q_{s,j} \coloneqq  t_{s,j},
    \qquad
    \text{can be solved as }\qquad t_{s,j} = q_{s,j}q_{s,j-1}\cdots q_{s,1},
    \end{equation}
    using the notation $t_{s,j}\coloneqq  q_{s,j} \coloneqq 1$ whenever $s>r_j$. Expressed in the $x$-variables, these equations are equivalent to \eqref{eq:change_of_variable_q_a,r_j}. Since \eqref{eq:tx_variables} can be inverted and \eqref{eq:qt_variables} can be solved, we can express $z_{s,j}$ in terms of the $q$-variables, and we obtain the solution $\zeta^\fund_{s,j}$.
\end{proof}

\begin{proposition}\label{prop:Residue_LB_genus_0}
    For any fixed splitting degree $D$, we have 
\begin{align}\label{eq:residue_calculation_g=0}
\Res_{\mathbf{x}=\mathbf{0}}\,\tilde{Q}(\Bz)\prod_{\substack{j\le k\\ s\le r_j}}\frac{ R_{j}^\prime(z_{s,j}) R_{{j-1}}(z_{s,j})^{d_{s,j}} }{R_{{j+1}}(z_{s,j})^{d_{s,j}+1}}= [\mathbf{q}^D]\,\tilde{Q}(\Bz)\prod_{\substack{j\le k\\ s\le r_j}}\frac{ R_{j}^\prime(z_{s,j})  }{R_{j-1}(z_{s,j})} \det\left( \frac{\partial q_{s,j}}{\partial z_{u,i}}\right)^{-1} \big\vert_{\Bz = \boldsymbol{\zeta}^\fund},
\end{align}
where the determinant is the Jacobian of \eqref{eq:change_of_variable_q_a,r_j} and $[\Bq^D]$ denotes the operation of extracting the coefficient of $\mathbf{q}^D\coloneqq  \prod_{j=1}^k \prod_{s=1}^{r_j}q_{s,j}^{d_{s,j}}$ in a power series in the variables $q_{s,j}$. 
\end{proposition}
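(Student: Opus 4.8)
The plan is to recognize that the left-hand side of \eqref{eq:residue_calculation_g=0} is almost in the exact shape demanded by the multivariate Lagrange--B\"urmann formula (Theorem~\ref{thm:Lagrange_Burmann}), once we organize the dependence on $D$ correctly. First I would rewrite the product over the denominators so that the exponents $d_{s,j}$ are separated from the exponent-free part. Writing
\[
\prod_{\substack{j\le k\\ s\le r_j}}\frac{R_{j-1}(z_{s,j})^{d_{s,j}}}{R_{j+1}(z_{s,j})^{d_{s,j}+1}}
=\Biggl(\prod_{\substack{j\le k\\ s\le r_j}}\frac{1}{R_{j+1}(z_{s,j})}\Biggr)\cdot\prod_{\substack{j\le k\\ s\le r_j}}\Bigl(\frac{R_{j-1}(z_{s,j})}{R_{j+1}(z_{s,j})}\Bigr)^{d_{s,j}},
\]
and then multiplying and dividing by the appropriate powers of the $x$-variables to match the definition of $\Psi_{s,j}$ and $\Phi_{s,j}$, the $d_{s,j}$-dependent factor becomes $\prod_{j,s}(\Phi_{s,j}/x_{s,j})^{d_{s,j}}$ up to a bookkeeping shift (the telescoping product $\Phi_{s,j}=\Psi_{s,j}\Psi_{s,j-1}\cdots\Psi_{s,1}$ is designed precisely so that extracting the coefficient of $x_{s,j}^{d_{s,j}-d_{s,j+1}}$ in the original integrand corresponds to extracting the coefficient of $t_{s,j}^{d_{s,j}}$ after the change of variables). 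The residue of the remaining $x$-free Laurent part, call it $\Psi$, times these factors is then computed by summing over all $D$ against $\prod t_{s,j}^{d_{s,j}}$, i.e. by forming the generating series and applying Theorem~\ref{thm:Lagrange_Burmann} with $N=\sum_j r_j$ and the power series $\Phi_{s,j}$.

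Concretely, I would apply Theorem~\ref{thm:Lagrange_Burmann} with $\Psi\coloneqq \tilde Q(\Bz)\prod_{j\le k,\,s\le r_j}\frac{R_j'(z_{s,j})}{R_{j-1}(z_{s,j})}$ (this accounts for the $R_j'$ numerators, and for converting the plain $1/\prod R_{j+1}$ together with the $x$-power normalizations into a clean Laurent series — here one uses the identity $R_{j+1}(z_{s,j})=x_{s,j}\,\Psi_{s,j}^{-1}\cdot(\text{stuff})$ implicit in the definition of $\Psi_{s,j}$, and checks the residual $x$-powers cancel so that $\Psi$ is genuinely $x$-independent in the right sense). The formula then yields
\[
\sum_D \mathbf{t}^D\,\Res_{\Bx=0}\Bigl(\Psi\cdot\prod_{j,s}(\Phi_{s,j}/x_{s,j})^{d_{s,j}}\Bigr)
=\Psi\cdot\det\bigl(t_{s,j}^{-1}\,\partial t_{s,j}/\partial x_{u,i}\bigr)^{-1},
\]
with the right side re-expressed in the $t$-variables. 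Substituting the change of variables $t_{s,j}=q_{s,j}q_{s,j-1}\cdots q_{s,1}$ of \eqref{eq:qt_variables} — which by Lemma~\ref{lem:fundamental_sol} sends $z_{s,j}\mapsto \zeta^\fund_{s,j}$ and is exactly the substitution making \eqref{eq:change_of_variable_q_a,r_j} hold — converts $\mathbf{t}^D$ into $\mathbf{q}^D$ and, by the chain rule, converts the Jacobian $\det(t_{s,j}^{-1}\partial t_{s,j}/\partial x_{u,i})$ into the Jacobian $\det(\partial q_{s,j}/\partial z_{u,i})$ up to the extra diagonal factors from $t=t(q)$ and $q=q(z)$ which one checks cancel (the triangular system $t_{s,j-1}q_{s,j}=t_{s,j}$ has a transparent Jacobian). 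Extracting $[\mathbf{q}^D]$ from both sides gives \eqref{eq:residue_calculation_g=0}.

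The main obstacle I anticipate is the careful bookkeeping of the $x$-power prefactors and signs: one must verify that after pulling out $\prod(\Phi_{s,j}/x_{s,j})^{d_{s,j}}$ the leftover factor $\Psi$ is truly independent of the $x_{s,j}$ in the sense required (so that it passes through the residue unchanged and re-expresses cleanly in $t$), and that the telescoping indices — in particular the shift between "coefficient of $x_{s,j}^{d_{s,j}-d_{s,j+1}}$" on the integral side and "coefficient of $t_{s,j}^{d_{s,j}}$" on the generating-series side, together with the boundary conventions $x_{s,j}=q_{s,j}=1$ for $s>r_j$ and $d_{s,k+1}=0$ — are matched consistently. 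A secondary technical point is the chain-rule identification of the two Jacobians: one needs $\det(t^{-1}\partial t/\partial x)^{-1}=\det(\partial q/\partial z)^{-1}\cdot(\text{diagonal factors that cancel against }\Psi\text{'s normalization})$, which should follow from differentiating the triangular relations \eqref{eq:qt_variables} and \eqref{eq:change_of_variable_q_a,r_j} but requires some care to confirm no stray determinantal factor survives.
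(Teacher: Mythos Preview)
Your overall strategy matches the paper's: separate the $D$-dependent part into $\prod(\Phi_{s,j}/x_{s,j})^{?}$, apply Lagrange--B\"urmann, then pass from the $t$-variables to the $q$-variables. But several of your intermediate claims are wrong in ways that would derail an actual proof.

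First, the exponent. The telescoping of $\prod_{j,s}\bigl(\frac{\Phi_{s,j}}{\Phi_{s,j-1}}\cdot\frac{x_{s,j-1}}{x_{s,j}}\bigr)^{d_{s,j}}$ produces $\prod_{j,s}(\Phi_{s,j}/x_{s,j})^{d_{s,j}-d_{s,j+1}}$, not exponent $d_{s,j}$. Lagrange--B\"urmann then yields $[\prod t_{s,j}^{\,d_{s,j}-d_{s,j+1}}]$ on the right, and it is \emph{this} monomial that becomes $\mathbf{q}^D$ under $t_{s,j}=q_{s,1}\cdots q_{s,j}$, by a second telescoping $\sum_{j\ge i}(d_{s,j}-d_{s,j+1})=d_{s,i}$. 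With your exponent $d_{s,j}$, the substitution would instead give $\prod_i q_{s,i}^{\sum_{j\ge i}d_{s,j}}$, which is not $\mathbf{q}^D$.

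Second, your $\Psi$. After separating out the $D$-dependent part, what remains is $\Psi=\tilde Q(\Bz)\prod_{j,s}R_j'(z_{s,j})/R_{j+1}(z_{s,j})$ (denominator $R_{j+1}$, not $R_{j-1}$). This $\Psi$ is a genuine Laurent series in the $x$-variables---it is \emph{not} supposed to be $x$-independent, and there is nothing to ``cancel''. Theorem~\ref{thm:Lagrange_Burmann} allows $\Psi$ to be any Laurent series; the right-hand side is $\Psi$ re-expressed in the $t$-variables via $x=x(t)$. Your worry about $x$-independence is a misconception of how the formula works.

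Third, the $R_{j-1}$ in the target formula does not come from the definition of $\Psi$ but from the Jacobian conversion. The chain rule gives
\[
\det\bigl(t_{s,j}^{-1}\tfrac{\partial t_{s,j}}{\partial x_{u,i}}\bigr)^{-1}
=\det\bigl(t_{s,j}^{-1}\tfrac{\partial t_{s,j}}{\partial q_{b,m}}\bigr)^{-1}\cdot\det\bigl(\tfrac{\partial q_{s,j}}{\partial x_{u,i}}\bigr)^{-1},
\]
and the first factor equals $\prod_{j,s}q_{s,j}=\prod_{j,s}R_{j+1}(z_{s,j})/R_{j-1}(z_{s,j})$; multiplying this against $\Psi$ is precisely what swaps the $R_{j+1}$ in $\Psi$ for the $R_{j-1}$ in the final expression. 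So the ``diagonal factors that cancel'' do not simply cancel---they effect the $R_{j+1}\to R_{j-1}$ conversion. (The final passage from the $x$-Jacobian to the $z$-Jacobian is harmless since $\partial z_{s,j}/\partial x_{u,i}$ is unitriangular.)
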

\begin{proof}
A straightforward computation gives
\begin{align*}
\prod_{j=1}^{k}\prod_{s=1}^{r_j}
\biggl( \frac{R_{j-1}(z_{s,j})}{R_{j+1}(z_{s,j})} \biggr)^{d_{s,j}}
&=
\prod_{j=1}^{k}\prod_{s=1}^{r_j}
\biggl( \frac{\Phi_{s,j}}{\Phi_{s,j-1}}  \frac{x_{s,j-1}}{x_{s,j}} \biggr)^{d_{s,j}} =
\prod_{j=1}^{k}\prod_{s=1}^{r_j}
\biggl( \frac{\Phi_{s,j}}{x_{s,j}} \biggr)^{d_{s,j}-d_{s,j+1}},
\end{align*}
where in the last step we use the telescoping in $j$ and the convention $d_{s,k+1}=0$. Define the rational function
\begin{align*}
    \Psi=\tilde{Q}(\Bz)\prod_{s\le r_j,j\le k}\frac{R_{j}^\prime(z_{s,j})}{R_{{j+1}}(z_{s,j})},
\end{align*} 
which expanded as in \eqref{eq:pw_expansion} becomes a Laurent series in the $x$-variables. The left-hand side of \eqref{eq:residue_calculation_g=0} then becomes
\begin{equation}\label{eq:after_LB}
\Res_{x=0}
\Biggl(
\Psi\;
\prod_{j=1}^{k}\prod_{s=1}^{r_j}
\biggl( \frac{\Phi_{s,j}}{x_{s,j}} \biggr)^{d_{s,j}-d_{s,j+1}}
\Biggr) = \left[\prod_{\substack{j\le k \\s\le r_j}}t_{s,j}^{d_{s,j}-d_{s,{j+1}}}\right]\Psi\det\left(t_{s,j}^{-1}\frac{\partial t_{s,j}}{\partial x_{u,i}}\right)^{-1},
\end{equation}
where the equality follows from Theorem~\ref{thm:Lagrange_Burmann} applied to the change of variables \eqref{eq:tx_variables}. We now introduce the variables $q_{s,j}$ that we want to appear in the final formula. For every $1\le j\le k$ and $1\le s\le r_j$ we can specialize
\begin{equation}\label{eq:specialise_t}
    \qquad t_{s,j} = q_{s,j}q_{s,j-1}\cdots q_{s,1},
\end{equation}
using the notation $t_{s,j}\coloneqq  q_{s,j} \coloneqq 1$ whenever $s>r_j$. Then \eqref{eq:after_LB} equals
\begin{equation}\label{res:interm}
    \left[\mathbf{q}^D\right]\Psi\det\left(t_{i,j}^{-1}\frac{\partial t_{i,j}}{\partial x_{\ell,m}}\right)^{-1},
\end{equation}
once we express the argument of $[\Bq^D]$ as a multivariate Laurent series in the $q$-variables using \eqref{eq:specialise_t}. We can make this more explicit by
\[
\det\left(t_{s,j}^{-1}\frac{\partial t_{s,j}}{\partial q_{b,m}}\right)^{-1} 
=\prod_{ j\le k,\ s\le r_j}q_{s,j}=  \prod_{ j\le k,\ s\le r_j} \frac{x_{s,j}\Phi_{s,j-1}}{x_{s,j-1}\Phi_{s,j}}=\prod_{ j\le k,\ s\le r_j}\frac{R_{{j+1}}(\zeta^\fund_{s,j})}{R_{{j-1}}(\zeta^\fund_{s,{j}})},
\]
thus, by the chain rule, \eqref{res:interm} equals
% the residue in \eqref{eq:residue_calculation_g=0} is given by
\begin{align*}
&\left[\mathbf{q}^D\right]\Psi \prod_{ j\le k, s\le r_j}\frac{R_{j+1}(z_{s,j})}{R_{{j-1}}(z_{s,{j}})}\det\left(\frac{\partial q_{s,j}}{\partial z_{b,m}}\right)^{-1}\big\vert_{\Bz = \boldsymbol{\zeta}^\fund}.\qedhere
\end{align*}
 \end{proof}

\begin{definition}
    Consider a system of equations in the variables $z_{s,j}$ with coefficients in $\Gamma[q_1, \dots, q_\ell]$, where $q_j$ are formal variables.
    We call a \textit{formal solution} a tuple $\boldsymbol{\zeta} = (\zeta_{s,j})$
    of formal power series $\zeta_{s,j} \in \Gamma[\![q_1, \dots, q_\ell]\!]$ solving the system.
    We say that a formal solution is \emph{non-degenerate} if, for every $1 \le j \le k$, the power series $\zeta_{1,j},\dots,\zeta_{r_j,j}$ have pairwise distinct images in
    $\Gamma[\![q_1, \dots, q_k ]\!]/(q_1,\dots,q_k) \cong \Gamma$ (equivalently, their constant terms $\zeta_{1,j}(0),\dots,\zeta_{r_j,j}(0) \in \Gamma$ are
    pairwise distinct).
\end{definition}

Note that we can easily characterize the formal non-degenerate solutions of (\ref{eq:change_of_variable_q_a,r_j}):
\begin{lemma}\label{lem:counting_formal_solutions}
    Specialize \eqref{eq:change_of_variable_q_a,r_j} to $\Bq_j \coloneqq  (q_j, \dots, q_j)$, i.e., consider the system
    \begin{equation}\label{eq:specialised_equations}
        R_{j+1}(z_{s,j}) = q_j R_{j-1}(z_{s,j}) \qquad\qquad 1 \leq j \leq k, \quad 1\leq s\leq r_j
    \end{equation}
    in the variables $z_{s,j} \in \Gamma[\![q_1, \dots, q_k]\!]$. The specialization of the fundamental solution \eqref{eq:fundamental_solution_qsj}, which we still denote by $\boldsymbol{\zeta}_\mathrm{fund} = (\zeta^\mathrm{fund}_{s,j})$, is a solution to \eqref{eq:specialised_equations}. Moreover, the set of formal non-degenerate solutions to \eqref{eq:specialised_equations} is in bijection with $\mathfrak{S}_\Br \times \prod_{j=1}^k S_{r_j}$, where $(\sigma, \sigma_1, \dots, \sigma_k)$ corresponds to the solution
    \begin{align}\label{eq:non_deg_solution}
        \boldsymbol{\zeta}_{\sigma, \sigma_1, \dots, \sigma_k} = (\zeta_{s,j}) \qquad \text{with} \qquad \zeta_{s,j}\coloneqq  \sigma \cdot \zeta^\mathrm{fund}_{\sigma_j(s),j}.
    \end{align}
\end{lemma}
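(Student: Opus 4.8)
The plan is to prove Lemma~\ref{lem:counting_formal_solutions} in two stages: first verify that $\boldsymbol{\zeta}_{\mathrm{fund}}$ solves the specialized system, then set up a bijection between formal non-degenerate solutions and $\mathfrak{S}_\Br \times \prod_{j=1}^k S_{r_j}$. The first claim is immediate: specializing $q_{s,j} \mapsto q_j$ in Lemma~\ref{lem:fundamental_sol} turns the system \eqref{eq:change_of_variable_q_a,r_j} into \eqref{eq:specialised_equations}, and the fundamental solution of the former specializes to a solution of the latter, since substitution commutes with the power series operations involved.

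For the bijection, the key observation is that modulo $(q_1,\dots,q_k)$ the system \eqref{eq:specialised_equations} degenerates to $R_{j+1}(\zeta_{s,j}(0)) = 0$, i.e., each constant term $\zeta_{s,j}(0)$ is a root of $R_{j+1}(z) = (z - \varepsilon_{r_j+1})\cdots$ — wait, more precisely a root of the degree-$r_{j+1}$ polynomial $R_{j+1}$, whose roots at $\Bq = 0$ are (after unwinding the recursion down the columns) the $\varepsilon$-values $\varepsilon_1,\dots,\varepsilon_n$. Since we work over $\Gamma = \BC(\varepsilon_1,\dots,\varepsilon_n)$ these $n$ values are pairwise distinct, so $R_{k+1}$ has $n$ distinct roots, and inductively $R_j$ at $\Bq=0$ has $r_j$ distinct roots drawn from $\{\varepsilon_1,\dots,\varepsilon_n\}$; the injectivity condition \eqref{eq:line_bunlde_injection} forces the roots of $R_j(0)$ to be a subset of those of $R_{j+1}(0)$. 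Thus a non-degenerate formal solution is determined at $\Bq = 0$ by a chain of subsets $S_1 \subset S_2 \subset \cdots \subset S_k \subset \{1,\dots,n\}$ with $|S_j| = r_j$, together with an ordering of $S_j$ (the labeling of which root is $\zeta_{s,j}(0)$). A chain of nested subsets of the prescribed cardinalities is exactly the data of a coset in $\mathfrak{S}_\Br = S_n/(S_{r_1} \times S_{r_2-r_1} \times \cdots \times S_{n-r_k})$; fixing a representative $\sigma$ pins down $S_j = \{\sigma(1),\dots,\sigma(r_j)\}$, and then the ordering within each column is recorded by an element $\sigma_j \in S_{r_j}$. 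This recovers precisely the parametrization \eqref{eq:non_deg_solution}, namely $\zeta_{s,j}(0) = \varepsilon_{\sigma(\sigma_j(s))}$.

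It remains to promote this bijection on constant terms to a bijection on full formal solutions: I would show that each admissible constant-term datum lifts \emph{uniquely} to a formal power series solution. This is a Hensel/implicit-function-theorem argument — the system \eqref{eq:specialised_equations} is a system of $\sum_j r_j$ polynomial equations in the same number of unknowns $z_{s,j}$, with coefficients in $\Gamma[q_1,\dots,q_k]$, and the non-degeneracy hypothesis is exactly what makes the Jacobian $\det(\partial(\text{eqns})/\partial z_{s,j})$ nonvanishing at $\Bq = 0$: at the point $\Bz = \boldsymbol{\zeta}(0)$ the relevant derivative of the $(s,j)$ equation with respect to $z_{s,j}$ is $R_{j+1}'(\zeta_{s,j}(0))$, which is nonzero precisely because $R_{j+1}(0)$ has distinct roots, and the off-diagonal contributions vanish at $\Bq=0$ so the Jacobian is lower-triangular-ish and invertible over $\Gamma$. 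Hence by the formal implicit function theorem over the complete local ring $\Gamma[\![q_1,\dots,q_k]\!]$ each valid constant term extends uniquely to a formal solution, and the symmetry $\sigma\cdot(-)$ applied to $\boldsymbol{\zeta}_{\mathrm{fund}}$ realizes these lifts explicitly as in \eqref{eq:non_deg_solution}. The main obstacle is bookkeeping: carefully checking that the nested-subset structure of the constant terms is forced by \eqref{eq:line_bunlde_injection} (not just permitted), and that the Jacobian non-degeneracy at $\Bq = 0$ follows cleanly from distinctness of the $\varepsilon_s$ over $\Gamma$ — once these are in hand, the uniqueness of the formal lift and the counting are routine.
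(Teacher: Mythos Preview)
Your approach is essentially the same as the paper's: classify the non-degenerate solutions of the reduced system at $\Bq=0$, then apply Hensel's lemma over $\Gamma[\![q_1,\dots,q_k]\!]$ to lift each one uniquely, realizing the lifts explicitly via the $\mathfrak{S}_\Br \times \prod_j S_{r_j}$ action on $\boldsymbol{\zeta}_{\mathrm{fund}}$. The Jacobian argument (block-triangular with diagonal entries $R_{j+1}'(\zeta_{s,j}(0))\neq 0$) is exactly what the paper does.

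One genuine confusion to fix: your two references to \eqref{eq:line_bunlde_injection} are misplaced. That equation is about injections of line bundles on the torus-fixed loci and plays no role in this purely algebraic lemma. The nestedness of the constant terms is forced directly by the reduced equations themselves: at $\Bq=0$ the equation $R_{j+1}(z_{s,j})=\prod_{u\le r_{j+1}}(z_{s,j}-z_{u,j+1})=0$ says exactly that $z_{s,j}(0)\in\{z_{1,j+1}(0),\dots,z_{r_{j+1},j+1}(0)\}$, and combined with non-degeneracy (distinctness within each column) this gives $\{z_{s,j}(0)\}_s\subset\{z_{u,j+1}(0)\}_u$. Unwinding from $j=k$ down (where $R_{k+1}$ has roots $\varepsilon_1,\dots,\varepsilon_n$) yields the nested chain of subsets immediately, with no external input needed. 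Once you remove the appeals to \eqref{eq:line_bunlde_injection} and state this directly, the ``bookkeeping obstacle'' you flag evaporates.
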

    \begin{proof}
    By Lemma \ref{lem:fundamental_sol}, $\zeta^{\mathrm{fund}}_{s,j}$ is a formal solution to \eqref{eq:specialised_equations}. Its constant terms are
$\zeta^{\mathrm{fund}}_{s,j}(0)=\varepsilon_s$, so the solution is
non–degenerate. Clearly $\boldsymbol{\zeta}_{\sigma, \sigma_1, \dots, \sigma_k}$ is still a solution, since the system \eqref{eq:specialised_equations} is invariant under the action of $S_n$ shuffling $\varepsilon_1, \dots, \varepsilon_n$ and under the action of $S_{r_j}$ shuffling the variables $x_{1,j}, \dots, x_{r_j,j}$. All these solutions are distinct and non-degenerate as it can be seen from their constant terms $\boldsymbol{\zeta}_{\sigma, \sigma_1, \dots, \sigma_k}(0) = \varepsilon_{\sigma(\sigma_j(s))}$. It remains to prove that all the non-degenerate solutions are of this form.

To see this, we apply
Hensel’s Lemma \cite[III, §4.5, Corollary~2]{bourbaki} to the ring
$A\coloneqq \Gamma[\![q_1,\dots,q_k]\!]$ with maximal ideal $\mathfrak m\coloneqq (q_1,\dots,q_k)$, so that $A/\mathfrak{m} \simeq \Gamma$. Note that the reduction of \eqref{eq:specialised_equations} mod $\frak{m}$ is the system \begin{equation}\label{eq:reduced_system}
    R_{j+1}(z_{i,j})=0\quad \text{for all } 1\le i\le r_j, \ 1\le j\le k,
\end{equation}
and its only non-degenerate solutions $\Ba = (a_{s,j})$, with $a_{s,j} \in \Gamma$, are the elements of the free orbit of $\mathfrak{S}_\Br \times \prod_{j=1}^k S_{r_j}$ of the fundamental solution 
$a^\fund_{s,j} \coloneqq  \varepsilon_{s}$.

By Hensel's Lemma, for every solution ${\bf{a}}=(a_{s,j})$ to the reduced system \eqref{eq:reduced_system} such that 
\begin{align*}
    \det\left(\frac{R_j(z_{i,j})}{\partial {z_{k,l}}}\right)_{\vert_{\bf{z=a}}} \neq 0 \qquad\qquad \text{in } \Gamma,
\end{align*}
there exists a unique formal solution \(\boldsymbol{\zeta}=(\boldsymbol{\zeta}_1,\boldsymbol{\zeta}_2,\dots, \boldsymbol{\zeta}_k)\) to the system \eqref{eq:specialised_equations} such that $\boldsymbol{\zeta}\equiv\bf{a}$ modulo $\frak{m}$.
Note that \(\parens*{\partial R_{j+1}(z_{i,j})/\partial {z_{k,l}}}\mid_{z_{i,j}=\varepsilon_{i}}\) is triangular with diagonal entries being nonzero elements of \(\BC(\varepsilon_1,\dots,\varepsilon_n)\), thus its determinant is invertible in $\Gamma$. Since reduction modulo $\fr m$ defines a map 
\begin{align*}
    \left\lbrace \text{non-degenerate solutions of \eqref{eq:specialised_equations}}\right\rbrace \xrightarrow{\text{mod } \mathfrak{m}} \left\lbrace \text{non-degenerate solutions of \eqref{eq:reduced_system}}\right\rbrace,
\end{align*}
the uniqueness part of Hensel's lemma proves that all the formal non-degenerate solutions to \eqref{eq:specialised_equations} are of the form \eqref{eq:non_deg_solution}.
\end{proof}

\begin{theorem}\label{thm:VI_Hyper_quot_equivariant_formal}
Let $V$ be as in \eqref{eq:V_split}, then for any tuple $m_{s,j}$, for $1\le j\le k$ and $1\le s\le r_j$,
\begin{align}\label{eq:Intersection_Hquot(C,V)_equivariant_formal}
    \sum_{\Bd\in \BN^k} q_1^{d_1}q_2^{d_2}\cdots q_k^{d_k}
\int_{[\Hquot_{\Bd}]^{\vir}}  
\prod_{j=1}^{k}\prod_{s=1}^{r_j} c_s^{T}(\CE_{j|p}^\vee)^{m_{s,j}}
    =\sum_{\boldsymbol{\zeta}_1,\dots,\boldsymbol{\zeta}_k}  \prod_{j=1}^{k}\frac{1}{r_j!}\prod_{s=1}^{r_j}e_s(\boldsymbol{\zeta}_j)^{m_{s,j}}\cdot J^{g-1}
	\end{align}
    where the sum is taken over all formal non-degenerate solutions $(\boldsymbol{\zeta}_1,\dots,\boldsymbol{\zeta}_k)$ of the equations in \eqref{eq:Bethe_system_equiv} and
    $e_s(\boldsymbol{\zeta}_j)$ denote the $s$-th elementary symmetric polynomial. 
     The factor $J$ equals
     \[ J\coloneqq\parens*{\prod_{\ell=1}^{k} \frac{1}{\Delta(\boldsymbol{\zeta}_\ell)}}\cdot
\det\left(\frac{\partial P^T_j(z_{s,j})}{\partial z_{s',j'}}\right)\Bigg|_{(\Bz_1,\Bz_2,\dots,\Bz_k) = (\boldsymbol{\zeta}_1,\boldsymbol{\zeta}_2,\dots,\boldsymbol{\zeta}_k)}
 ,\]
where $ \Delta(X_1,\dots,X_r)\coloneqq\prod_{a\ne b}(X_a-X_b)$ and the second factor is the Jacobian of \eqref{eq:Bethe_system_equiv}. 
\end{theorem}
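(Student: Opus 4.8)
The plan is to prove the identity first for $C=\BP^1$, where $g=0$ and $J^{g-1}=J^{-1}$, by assembling the results of this section, and then to reduce the general-genus case to a parallel localization computation over products of symmetric products of $C$, carried out in Section~\ref{sec:symmetric_product_of_curves} and assembled in Section~\ref{Sec:proof_of_thm_1}. By Proposition~\ref{prop:reduction_split_bundles} we may assume $V$ is split as in \eqref{eq:V_split}, so that $T=(\BC^*)^n$ acts and we may apply the Atiyah--Bott formula to the class $Q=\prod_{j,s}c^T_s(\CE^\vee_{j|p})^{m_{s,j}}$. Since $\CE_j$ restricts to $\bigoplus_u\CK_{u,j}$ on $\fix_{1,D}\times C$, the class $Q$ restricts on $\fix_{1,D}$ to the symmetric polynomial $\tilde Q(\Bz)=\prod_{j,s}e_s(\Bz_j)^{m_{s,j}}$ in the Chern roots \eqref{eq:z_classes_genus0}.

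For $C=\BP^1$, inserting the Euler class of the virtual normal bundle from Proposition~\ref{prop:Euler_class_normal_genus_0} into the localization formula yields the iterated-residue expression \eqref{eq:int-genus0}. Summing over $\Bd\in\BN^k$ together with the associated splitting degrees $D$, and applying the multivariate Lagrange--B\"urmann formula through Proposition~\ref{prop:Residue_LB_genus_0} term by term, converts the generating series into $\sum_{\sigma\in\mathfrak{S}_\Br}\sigma\cdot G$, where $G$ is the power series obtained from
\[
\tilde Q(\Bz)\,\prod_{j,s}\frac{R'_j(z_{s,j})}{R_{j-1}(z_{s,j})}\,\det\!\left(\frac{\partial q_{s,j}}{\partial z_{u,i}}\right)^{-1}\Big|_{\Bz=\boldsymbol{\zeta}^{\fund}},\qquad q_{s,j}=\frac{R_{j+1}(z_{s,j})}{R_{j-1}(z_{s,j})},
\]
with $\boldsymbol{\zeta}^{\fund}$ the fundamental solution of Lemma~\ref{lem:fundamental_sol}, after the substitution $q_{s,j}\mapsto(-1)^{r_j-r_{j-1}-1}q_j$. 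This substitution absorbs the sign $(-1)^{\beta}$ of \eqref{eq:int-genus0}, since $\beta=\sum_j d_j(r_j-r_{j-1}-1)$, and it is precisely the reparametrization taking the auxiliary system \eqref{eq:change_of_variable_q_a,r_j} (specialized as in \eqref{eq:specialised_equations}) to the Bethe system \eqref{eq:Bethe_system_equiv}; after the substitution, $\boldsymbol{\zeta}^{\fund}$ is therefore a non-degenerate formal solution of \eqref{eq:Bethe_system_equiv}.

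It remains to recognize $G$ as $\tilde Q(\boldsymbol{\zeta})\,J^{-1}$ and to symmetrize. Differentiating $P^{T}_j(z)=R_{j+1}(z)+(-1)^{r_j-r_{j-1}}q_jR_{j-1}(z)$ and using $(-1)^{r_j-r_{j-1}}q_j=-R_{j+1}(z_{s,j})/R_{j-1}(z_{s,j})$, valid at a solution, gives
\[
\frac{\partial P^{T}_j(z_{s,j})}{\partial z_{s',j'}}=R_{j-1}(z_{s,j})\,\frac{\partial q_{s,j}}{\partial z_{s',j'}}\qquad\text{for every }(s',j').
\]
Taking determinants over the index set of size $\sum_j r_j$ cancels the $R_{j-1}$-factors in $G$, leaving $\tilde Q(\boldsymbol{\zeta})\prod_{j,s}R'_j(\zeta_{s,j})\det(\partial P^{T}_j(z_{s,j})/\partial z_{s',j'})^{-1}$; since $\prod_s R'_j(\zeta_{s,j})=\Delta(\boldsymbol{\zeta}_j)$, this is exactly $\tilde Q(\boldsymbol{\zeta})\,J^{-1}$ with $J$ as in the statement (and it is a well-defined power series because, as in the proof of Lemma~\ref{lem:counting_formal_solutions}, the Jacobian of the reduced system \eqref{eq:reduced_system} at the fundamental constant terms $\varepsilon_s$ is triangular with nonvanishing diagonal). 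As a function of $\Bz$, this expression is invariant under each $S_{r_j}$ permuting $z_{1,j},\dots,z_{r_j,j}$ --- the factors $\tilde Q$, $\Delta(\Bz_j)=\prod_s R'_j(z_{s,j})$ and $\prod_s R_{j-1}(z_{s,j})$ are symmetric, and a simultaneous permutation of rows and columns leaves the Jacobian unchanged --- and it is $S_n$-invariant in $\varepsilon_1,\dots,\varepsilon_n$, since $R_{k+1}$ is. Hence, using Lemma~\ref{lem:counting_formal_solutions}, which realizes the non-degenerate formal solutions of \eqref{eq:Bethe_system_equiv} as the free orbit $\{\boldsymbol{\zeta}_{\sigma,\sigma_1,\dots,\sigma_k}\}$ of $\mathfrak{S}_\Br\times\prod_j S_{r_j}$ on $\boldsymbol{\zeta}^{\fund}$, we conclude
\[
\sum_{\sigma\in\mathfrak{S}_\Br}\sigma\cdot G=\frac{1}{\prod_j r_j!}\sum_{\boldsymbol{\zeta}_1,\dots,\boldsymbol{\zeta}_k}\tilde Q(\boldsymbol{\zeta})\,J^{-1}=\sum_{\boldsymbol{\zeta}_1,\dots,\boldsymbol{\zeta}_k}\prod_{j}\frac{1}{r_j!}\prod_{s}e_s(\boldsymbol{\zeta}_j)^{m_{s,j}}\,J^{\,g-1},
\]
which is the desired identity when $C=\BP^1$.

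For a curve $C$ of arbitrary genus $g$ the same strategy applies, except that $\fix_{\sigma,D}=\prod_{j,s}C^{[d_{s,j}-d_{s,j+1}]}$ is now a product of symmetric products of $C$ rather than of projective spaces. The analogues of Proposition~\ref{prop:Euler_class_normal_genus_0} and of the coefficient extraction of Proposition~\ref{prop:Residue_LB_genus_0} then require the evaluation of tautological integrals over products of $\mathrm{Sym}^{\bullet}C$, and the contribution of the classes coming from $H^1(C)$ should produce exactly an extra factor $J^{g}$, upgrading $J^{-1}$ to $J^{g-1}$. I expect this last point --- performing the intersection theory on symmetric products of $C$ (Section~\ref{sec:symmetric_product_of_curves}) and assembling it with the localization sum (Section~\ref{Sec:proof_of_thm_1}) --- to be the main obstacle; by contrast, the entire combinatorial core, including the Lagrange--B\"urmann bookkeeping of the $q$-series and the identification with the Bethe solutions, is already contained in the genus-zero argument above.
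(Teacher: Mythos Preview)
Your genus-zero argument is correct and follows essentially the same route as the paper's proof: localization to \eqref{eq:int-genus0}, Lagrange--B\"urmann via Proposition~\ref{prop:Residue_LB_genus_0}, the sign substitution $q_{s,j}\mapsto(-1)^{r_j-r_{j-1}-1}q_j$, and the identification of the non-degenerate formal solutions with the $\mathfrak{S}_\Br\times\prod_j S_{r_j}$-orbit of $\boldsymbol{\zeta}^{\fund}$ from Lemma~\ref{lem:counting_formal_solutions}. Your explicit verification that $\partial P_j^T(z_{s,j})/\partial z_{s',j'}=R_{j-1}(z_{s,j})\,\partial q_{s,j}/\partial z_{s',j'}$ at a solution is exactly the ``straightforward change of variables'' the paper invokes without detail.

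For the higher-genus case your expectation is correct but the proposal is incomplete, and this is where the paper does real work. The Euler class of the virtual normal bundle (Proposition~\ref{prop:Euler_class_virtual_normal}) now contains a theta-class factor $J(\Bx,\boldsymbol{\theta},\boldsymbol{\varepsilon})$, and the key step is Lemma~\ref{lem:J(x,theta)-substitution}: using the Jacobian substitution rule (Corollary~\ref{jacobian_substitution_rule}, itself resting on the exponential substitution rule of Proposition~\ref{exponential_substitution_rule_n}) one shows that, under integration against any polynomial in the $x$-classes, $J(\Bx,\boldsymbol{\theta},\boldsymbol{\varepsilon})$ may be replaced by $J(\Bx,\boldsymbol{\varepsilon})^g$. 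After this replacement the integrand depends only on $x$-classes, the integration rule on $C^{[d]}$ coincides with that on $\BP^d$ (Lemma~\ref{x_y_intersections}), and the entire residue/Lagrange--B\"urmann argument from genus zero runs verbatim with the extra factor $J^g$, yielding $J^{g-1}$. So the ``main obstacle'' you flag is resolved precisely by the substitution-rule machinery of Section~\ref{sec:symmetric_product_of_curves}.
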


\begin{proof}[Proof of Theorem~\ref{thm:VI_Hyper_quot_equivariant_formal} in genus zero]
Define the variables $z_{s,j}$ as in \eqref{eq:z_classes_genus0} and consider the power series in the $q$-variables
\begin{align*}
    \Pi(\Bq_1, \dots, \Bq_k) \coloneqq  \tilde{Q}(\Bz)\prod_{\substack{j\le k\\ s\le r_j}}\frac{ R_{j}^\prime(z_{s,j})  }{R_{j-1}(z_{s,j})} \det\left( \frac{d q_{s,j}}{d x_{u,i}}\right)^{-1}\big\vert_{\Bz = \boldsymbol{\zeta}^\fund}
\end{align*}
with coefficients in $\Gamma$, defined by expressing the $x$-variables on the right-hand side in terms of the variables $\Bq_j = (q_{1,j}, \dots, q_{r_j,j})$ as in \eqref{eq:change_of_variable_q_a,r_j}.

Then the identity \eqref{eq:int-genus0} and Proposition \ref{prop:Residue_LB_genus_0} imply
\begin{equation}\label{eq:genus0-start}
           \int_{[\Hquot_{\Bd}]}  Q = (-1)^{\beta}\sum_{\sigma,D}[\mathbf{q}^D] \Bigl(\, \sigma \cdot \Pi(\Bq_1, \dots, \Bq_k)\,\Bigr),
\end{equation}
where $\sigma$ runs over all elements of $\mathfrak{S}_\Br$ and
$D=(d_{s,j})_{j\le k,\,s\le r_j}$ runs over all splitting degrees satisfying $d_{s,j}\geq d_{s,j+1}$ and
$\sum_{a=1}^{r_j}d_{a,j}=d_j$ for each $s$ and $j$.
In order to simplify \eqref{eq:genus0-start} it is convenient to specialize
this power series by setting the variables at the same level to be equal, thus obtaining a power series in just $k$-variables $q_1, \dots, q_k$ 
\begin{align*}
    \Pi_{\mathrm{spec}}(q_1, \dots, q_k) \coloneqq  \Pi(\Bq_1, \dots, \Bq_k) \qquad \text{where} \qquad \Bq_j \coloneqq  (q_j, \dots, q_j).
\end{align*}
Since $\Pi(\Bq_1, \dots, \Bq_k)$ does not depend on $D$ we can rewrite \eqref{eq:genus0-start} as
\begin{equation}\label{eq:genus0-qj}
\int_{[HQ_\Bd]} Q
=
(-1)^{\beta} [q_1^{d_1}\cdots q_k^{d_k}]
\sum_{\sigma \in \mathfrak{S}_\Br}
\sigma \cdot \Pi_{\mathrm{spec}}(q_1, \dots, q_k).
\end{equation}
By Lemma \ref{lem:counting_formal_solutions} we know that the formal non-degenerate solutions to \eqref{eq:Bethe_system_equiv} are in bijection with the elements of $\mathfrak{S}_\Br \times \prod_{j=1}^k S_{r_j}$, and for every $(\sigma, \sigma_1, \dots, \sigma_k)$ we have
\begin{align*}
    \sigma \cdot \Pi_{\mathrm{spec}}(q_1, \dots, q_k) = \tilde{Q}(\Bz)\prod_{\substack{j\le k\\ s\le r_j}}\frac{ R_{j}^\prime(z_{s,j})  }{R_{j-1}(z_{s,j})} \det\left( \frac{d q_{s,j}}{d x_{u,i}}\right)^{-1}\big\vert_{\Bz = \boldsymbol{\zeta}_{\sigma, \sigma_1, \dots, \sigma_k}}, 
\end{align*}
hence 
\begin{equation}
\int_{[HQ_d]} Q
=
(-1)^{\beta} [q_1^{d_1}\cdots q_k^{d_k}]
\sum_{\boldsymbol{\zeta}}
\tilde{Q}(\Bz)\prod_{\substack{j\le k\\ s\le r_j}}\frac{ R_{j}^\prime(z_{s,j})  }{R_{j-1}(z_{s,j})} \det\left( \frac{d q_{s,j}}{d x_{u,i}}\right)^{-1}\big\vert_{\Bz = \boldsymbol{\zeta}},
\end{equation}
where the sum is over the solutions of the system \eqref{eq:change_of_variable_q_a,r_j}.
Finally, a straightforward change of variables rewrites the Jacobian in the
desired form, and the substitution
\[
q_j \longmapsto (-1)^{r_j-r_{j+1}-1} q_j
\]
cancels the factor $(-1)^\beta$ in~\eqref{eq:genus0-qj}. This yields the
formula in Theorem~\ref{thm:VI_Hyper_quot_equivariant_formal} in genus zero.
\end{proof}

Note that the final expression must be polynomial in both \(\varepsilon\) and \(q\) variables. In the next section, we prove that this polynomial can be evaluated by first specializing the \(\varepsilon\) and \(q\) variables in \eqref{eq:specialised_equations} to values in an open subset of \(\BC^n\times\parens*{\BC^*}^k\) intersecting \(0\times\parens*{\BC^*}^k\), and then summing over its non-degenerate solutions, as stated in Theorem~\ref{thm:VI_Hyper_quot_equivariant}. In the process, we prove that we still obtain the correct number of non-degenerate solutions when we specialize the \(\varepsilon\) and \(q\) variables in this way.

\subsection{Equations}\label{sec:Equations}
Consider a chain of integers $0 = r_0 < r_1 \leq r_2 \leq \dots \leq r_k \leq r_{k+1}=n$ and set ${|\Br|} \coloneqq  \sum_{j=1}^k r_j$. Let $(z_{s,j}:1\le j\le k,1\le s\le r_j)$ be a tuple of ${|\Br|}$ variables and $\Bq \coloneqq  (q_1, \dots, q_k)$ be a fixed tuple of complex numbers, we consider the equations
\begin{align}\label{eq:system}
	\prod_{a=1}^{r_{j+1}}(z_{s, j}-z_{a,j+1}) - q_j \prod_{b=1}^{r_{j-1}}(z_{s,j}-z_{b, j-1}) = 0 \tag{$\mathrm{P}_{s,j}(\Bq)$}
\end{align}
for every $1\le j\le k$ and $1\le s\le r_j$, where we use the notation $z_{s,k+1}=0$ for all $s$. We denote by $X(\Bq)$ the closed subscheme of the affine space $\BA^{|\Br|}$ cut out by the equations (\ref{eq:system}) and by $N(\Bq)$ the cardinality of the set of closed points in $X(\Bq)$, counted without multiplicity. We are interested in proving the following result:
\begin{proposition}\label{number_of_solutions}
	For a generic choice of $\Bq \coloneqq  (q_1, \dots, q_k) \in (\BC^\ast)^k$, the system $($\ref{eq:system}$)$ admits the expected number of distinct solutions computed by B\'ezout's theorem, namely:
	\begin{align*}
		N(\Bq) = \prod_{j=1}^k r_{j+1}^{r_j}.
	\end{align*}
	Moreover, the number of solutions satisfying $z_{s,j} \neq z_{u,j}$ for all $1\le j \le k$ and all $s \neq u$ is $\prod_{j=1}^k \frac{r_{j+1}!}{(r_{j+1}-r_j)!}$.
\end{proposition}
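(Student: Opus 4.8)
I would compute the length of the coordinate ring of $X(\Bq)$ by degenerating the quantum parameters to zero (a Gr\"obner degeneration with respect to a weight that makes the $q_j$ heavy), and then upgrade the length to a count of reduced points. Write $I(\Bq)\subset\BC[z_{s,j}]$ for the ideal generated by the $\mathrm{P}_{s,j}(\Bq)$, and let $\mathcal{X}=V\!\big(\mathrm{P}_{s,j}(\Bq):s,j\big)\subset\BA^{|\Br|}_{\Bz}\times\BA^{k}_{\Bq}$. Each $\mathrm{P}_{s,j}(\Bq)$ has total $\Bz$-degree $r_{j+1}$, and at $\Bq=\Bzero$ it becomes the homogeneous polynomial $\prod_{a=1}^{r_{j+1}}(z_{s,j}-z_{a,j+1})$. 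A descending cascade from layer $k$ (where $z_{a,k+1}=0$ forces $z_{s,k}=0$, hence $z_{s,k-1}=0$, and so on) shows these homogeneous polynomials have only $\Bzero$ as common zero, so $I(\Bzero)$ is a homogeneous complete intersection and $\dim_{\BC}\BC[\Bz]/I(\Bzero)=\prod_{s,j}r_{j+1}=\prod_{j=1}^{k}r_{j+1}^{r_j}$. The same cascade applied to the top‑degree forms of the $\mathrm{P}_{s,j}(\Bq)$ shows $X(\Bq)$ is finite for $\Bq$ outside a proper closed subset of $\BA^k$: the incidence locus of the top‑degree forms inside $\BP^{|\Br|-1}_{\Bz}\times\BA^k_{\Bq}$ is closed, $\BP^{|\Br|-1}$ is complete so its image in $\BA^k$ is closed, and that image misses $\Bq=\Bzero$. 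Over this open locus $\mathcal{X}$ is a complete intersection of the expected dimension $k$, hence Cohen--Macaulay, hence flat over the base, so $\dim_{\BC}\BC[\Bz]/I(\Bq)$ is constant, equal to $\prod_j r_{j+1}^{r_j}$. This gives $N(\Bq)\le\prod_j r_{j+1}^{r_j}$, with equality exactly when $X(\Bq)$ is reduced.

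\textbf{Reducedness.} It remains to produce one $\Bq\in(\BC^\ast)^k$ with $X(\Bq)$ reduced, after which the ramification locus is a proper closed subset of $\mathcal{X}$, its image in $(\BC^\ast)^k$ is a proper closed subset (the projection being finite, hence closed), and generic $\Bq$ works. For this I would either run the degeneration in the equivariant setting with generic parameters $\varepsilon_1,\dots,\varepsilon_n$, where the fiber over $\Bq=\Bzero$ is already a finite reduced scheme of length $\prod_j r_{j+1}^{r_j}$ and then specialize, or analyze the solutions for $|\Bq|$ small directly: the fundamental solution of Lemma~\ref{lem:fundamental_sol} is a tuple of power series convergent near $\Bq=\Bzero$ whose Jacobian $\det\!\big(\partial\mathrm{P}_{s,j}/\partial z_{s',j'}\big)$ equals, up to the Vandermonde factor $\prod_\ell\Delta(\boldsymbol\zeta_\ell)$, the factor $J$ of Theorem~\ref{thm:VI_Hyper_quot_equivariant_formal}, which has nonzero constant term (by the triangularity computation in the proof of Lemma~\ref{lem:counting_formal_solutions}) and so is a unit; combined with the length count this forces all solutions to be simple for generic $\Bq$.

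\textbf{The refined count.} The group $G=\prod_{j=1}^k S_{r_j}$ acts on $\BA^{|\Br|}$ by permuting coordinates within each layer and preserves $X(\Bq)$; a solution has no repetition in any layer precisely when its $G$‑orbit is free, in which case the orbit has size $\prod_j r_j!$. So it suffices to show the number of free $G$‑orbits in $X(\Bq)$ is $\prod_{j=1}^k\binom{r_{j+1}}{r_j}$, which then yields $\prod_j r_j!\cdot\prod_j\binom{r_{j+1}}{r_j}=\prod_{j=1}^k\frac{r_{j+1}!}{(r_{j+1}-r_j)!}$ non‑degenerate solutions. A free orbit corresponds to a tuple of monic squarefree $f_1,\dots,f_k$ with $\deg f_j=r_j$ and $f_j\mid\big(f_{j+1}-q_jf_{j-1}\big)$ for all $j$, where $f_0=1$, $f_{k+1}=X^n$ (the set inclusion of roots forced by the equations promotes to divisibility since $f_j$, and for generic $\Bq$ also $f_{j+1}-q_jf_{j-1}$, is squarefree). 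Writing $f_{j+1}=f_jg_j+q_jf_{j-1}$ turns this into a recursion in which $f_1$ and the $g_j$ with $r_{j+1}>r_j$ are free (altogether $r_1+\sum_j(r_{j+1}-r_j)=n$ coefficients) while the remaining $g_j$ are pinned down by $\Bq$, the only constraint being $f_{k+1}=X^n$.

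\textbf{Counting recursion tuples.} The recursion cuts out $\mathcal{Y}\subset\BA^n\times\BA^k$ by $n$ equations. Grading the coefficient of $X^{d-i}$ by $i$, the top‑degree forms of these equations are independent of $\Bq$ and assert $f_1\prod g_j=X^n$, whose only monic solution is $f_1=X^{r_1}$, $g_j=X^{r_{j+1}-r_j}$; hence $\mathcal{Y}$ has finite fibers and is flat over $\BA^k$, and its fiber over $\Bq=\Bzero$ is that single fat point, of length $\frac{1\cdot2\cdots n}{r_1!\prod_j(r_{j+1}-r_j)!}=\frac{n!}{r_1!\prod_j(r_{j+1}-r_j)!}=\prod_{j=1}^k\binom{r_{j+1}}{r_j}=|\mathfrak{S}_{\Br}|$ by weighted B\'ezout. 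For generic $\Bq$ this length counts reduced points, each with all $f_j$ squarefree (the non‑squarefree locus is proper because the fundamental solution lands outside it), so free $G$‑orbits are in bijection with these tuples, completing the count. The step I expect to be most delicate is precisely this recursion bookkeeping: tracking which $g_j$ are free versus determined when consecutive ranks coincide (the Hyperquot‑of‑points case $\Br=(n,\dots,n)$ being the extreme instance, where $f_1$ alone is free and every $f_j$ is $X^n$ minus a constant), verifying the behavior of the top‑degree forms, and nailing the passage among non‑degenerate solutions, free orbits, and recursion tuples.
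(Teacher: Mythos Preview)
Your length computation via the complete–intersection/flatness argument is correct and efficient: since the top–degree (in $\Bz$) form of each $\mathrm{P}_{s,j}(\Bq)$ is independent of $\Bq$ and your cascade shows these have only the origin as common zero, the family is finite flat of the B\'ezout degree over \emph{all} of $\BA^k$, not just generically.

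The gap is reducedness. Neither of your two options actually produces a single $\Bq$ with $X(\Bq)$ reduced:
\begin{itemize}
\item In the equivariant setting at $\Bq=\Bzero$ the fiber is \emph{not} reduced. For instance, at a point with $z_{1,k}=z_{2,k}=\varepsilon_1$ the equation $\prod_a(z_{s,k-1}-z_{a,k})=0$ acquires a repeated factor $(z_{s,k-1}-\varepsilon_1)^2$. Only the non-degenerate points of that fiber are reduced.
\item The fundamental–solution/Jacobian argument certifies simplicity only at the $\prod_j r_{j+1}!/(r_{j+1}-r_j)!$ non-degenerate solutions, which is strictly smaller than the B\'ezout number whenever some $r_j\ge 2$. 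Knowing that part of the length is accounted for by simple points does not force the rest to be simple.
\end{itemize}
The paper's fix is a \emph{different} Gr\"obner degeneration: weight $z_{s,j}\mapsto t^jz_{s,j}$ (and adjust $q_j$) and let $t\to 0$ with $q_j\neq 0$. The initial system becomes the triangular
\[
z_{s,j}^{r_{j+1}}=(-1)^{r_{j-1}}q_j\prod_{b\le r_{j-1}}z_{b,j-1},
\]
whose solutions are iterated $r_{j+1}$-th roots of \emph{nonzero} quantities, hence the special fiber is visibly reduced with the B\'ezout number of points and smoothness propagates. The same explicit solution set also gives the free $S_\Br$-orbit count directly.

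Your recursion $f_{j+1}=f_jg_j+q_jf_{j-1}$ is a nice repackaging of the orbit space and the weighted-B\'ezout count of its length is correct (with the top-weight forms asserting $f_1\prod_{j:\,r_{j+1}>r_j}g_j=X^n$; the case $r_{j-1}=r_j=r_{j+1}$ needs the little induction you allude to). But it hits the same wall: your special fiber at $\Bq=\Bzero$ is again a single fat point, and ``the fundamental solution lands outside the non-squarefree locus'' only rules that locus out of \emph{one} irreducible component of $\mathcal Y$ --- it does not preclude entire components consisting of non-squarefree or non-reduced points. So you get the upper bound $\#\{\text{non-degenerate}\}\le\prod_j r_j!\cdot\prod_j\binom{r_{j+1}}{r_j}$ but not the matching lower bound without an explicit reduced fiber somewhere.
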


\subsubsection{Idea of the proof}
Before attacking the problem by \textit{Gr\"obner degeneration}, let's explain the idea. We will construct a family of closed subschemes of $\BP^{{|\Br|}}$ 
\[\begin{tikzcd}
	\CX \arrow[r, hook, "i"] \arrow[dr, swap, "\pi"] &\BP^{{|\Br|}} \times (\BC^\ast)^k \times \BA^1 \arrow[d, "\pi_{q{,}t}"]\\
	& (\BC^\ast)^k \times \BA^1
\end{tikzcd}\]
where the generic fiber $\CX_{\vert (\Bq, t)}$, for $t \neq 0$ is isomorphic to $X(\tilde{\Bq})$ for some $\tilde{\Bq}$ depending on $\Bq$ and $t$, while the special fiber above $(\Bq, 0)$ is a nice smooth zero-dimensional scheme having the expected number of points. We will show that this family is smooth at every point of the form $(\Bq,0)$ on the base, and since smoothness of projective morphisms is an open property on the base we can move all the results to nearby fibers of the form $\CX_{\vert (q,t)}$ with $t \neq 0$, thus proving the claimed result.
\subsubsection{Construction of the degeneration}
For every $j \in [k]$ define
\begin{align*}
	\rho_j \coloneqq  r_{j+1}-r_{j-1} \in \bb{N}.
\end{align*}
Consider the homogeneous version of (\ref{eq:system}), which is 
\begin{align}\label{eq:h_system}
	\prod_{a=1}^{r_{j+1}}(z_{s, j}-z_{a,j+1}) \tag{$\mathrm{HP}_{s,j}(\Bq)$} - q_j Z_0^{\rho_j}\prod_{b=1}^{r_{j-1}}(z_{s,j}-z_{b, j-1}) = 0
\end{align}
once we introduce the extra variable $Z_0$.
Consider the $\BC^\ast$-action on $\BP^{{|\Br|}} \times (\BC^\ast)^k$ given by
\begin{align*}
	t\cdot z_{s,j} \coloneqq  t^j z_{s,j}, \quad t\cdot Z_0\coloneqq  Z_0 \quad \text{and} \quad t \cdot q_{j} \coloneqq  t^{j\rho_j + r_{j-1}}q_j.
\end{align*}
We construct the family over $\BA^1 \setminus 0 = \mathrm{Spec}(\BC[t]_t)$ given by the zero locus of the perturbed equations
\begin{align*}
	\prod_{a=1}^{r_{j+1}}t^j(z_{s,j}-t z_{a, j+1}) - t^{j \rho_j + r_{j-1}} q_j Z_0^{\rho_j} \prod_{b=1}^{r_{j-1}}t^{j-1}(t z_{s,j}-z_{b, j-1}) = 0 
\end{align*}
inside $\BP^{{|\Br|}} \times (\BC^\ast)^k \times (\BA^1 \setminus 0)$. 
\begin{remark}
	This is the family over $\BC^\ast = \BA^1\setminus 0$ obtained by moving the set of solutions to (\ref{eq:h_system}) via the $\BC^\ast$-action we just defined.
\end{remark}
There is a classical way to extend this family (which is not flat a priori) to the whole $\BA^1$ via \cite[Theorem 15.17]{eisenbud:CAbook}, and the resulting family is the so-called \textit{Gr\"obner degeneration} of (\ref{eq:h_system}).
This is obtained by factoring out all the extra powers of $t$, so that each equation admits a monomial of degree zero and no term of negative degree. Our equations become
\begin{align}\label{C_system}
	\prod_{a=1}^{r_{j+1}}(z_{s,j}-tz_{a, j+1})- q_j Z_0^{\rho_j}\prod_{b=1}^{r_{j-1}}(t z_{s,j}-z_{b, j-1}) = 0 \tag{$\mathrm{HP}_{s,j}(\Bq,t)$}
\end{align}
and their zero locus $\CX \hookrightarrow \BP^{|\Br|} \times (\BC^\ast)^k \times \BA^1$ defines a scheme over $(\BC^\ast)^k \times \BA^1$ whose generic fiber is isomorphic to the zero locus of (\ref{eq:h_system}) in $\BP^{|\Br|}$, while the special fiber above $(\Bq,0)$ is the zero locus of 
\begin{align}\label{eq:initial_equations}
	z_{s,j}^{r_{j+1}} - (-1)^{r_{j-1}} q_j Z_0^{\rho_j}\prod_{b=1}^{r_{j-1}} z_{b, j-1} = 0 \tag{$\mathrm{HP}_{s,j}(\Bq,0)$}.
\end{align}
\subsubsection{Properties of the fibers}
\begin{lemma}\label{no_infinity_lemma}
	For every $\Bq \in (\BC^\ast)^k$, there is a Zariski open neighborhood $U$ of $0 \in \BA^1$ so that the fiber $\CX_{\vert (\Bq,t)}$ doesn't meet the hyperplane at infinity $Z_0=0$ for every $t \in U$.
\end{lemma}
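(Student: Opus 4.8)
The plan is to reduce the claim to the single fiber over $t=0$ and then propagate it by a properness argument. Recall that $\CX$ is the closed subscheme of $\BP^{|\Br|}\times(\BC^\ast)^k\times\BA^1$ cut out by the equations \eqref{C_system}, that setting $t=0$ in \eqref{C_system} yields exactly the initial equations \eqref{eq:initial_equations}, and hence that $\CX_{\vert(\Bq,0)}$ is the subscheme of $\BP^{|\Br|}$ defined by \eqref{eq:initial_equations}. The morphism $\pi\colon\CX\to(\BC^\ast)^k\times\BA^1$ is proper, being a closed immersion followed by the projection off the factor $\BP^{|\Br|}$. Therefore the image $\pi\bigl(\CX\cap\{Z_0=0\}\bigr)$ of the closed set $\CX\cap\{Z_0=0\}$ is closed in $(\BC^\ast)^k\times\BA^1$. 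If we can show that $\CX_{\vert(\Bq,0)}$ is disjoint from $\{Z_0=0\}$, then $(\Bq,0)$ lies in the open complement of $\pi\bigl(\CX\cap\{Z_0=0\}\bigr)$; intersecting that open set with $\{\Bq\}\times\BA^1\cong\BA^1$ yields an open neighborhood $U$ of $0\in\BA^1$, and for every $t\in U$ the fiber $\CX_{\vert(\Bq,t)}$ is disjoint from $\{Z_0=0\}$, which is the assertion.

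So the real content is to prove $\CX_{\vert(\Bq,0)}\cap\{Z_0=0\}=\emptyset$, and here I would argue by induction on the column index $j$. Restricting the equations \eqref{eq:initial_equations} to $\{Z_0=0\}$, I claim that any point lying on both has $z_{s,j}=0$ for every $1\le j\le k$ and every $1\le s\le r_j$. For $j=1$ one has $\rho_1=r_2\ge r_1\ge 1$, so $Z_0^{\rho_1}$ vanishes on $\{Z_0=0\}$ and the equation $\mathrm{HP}_{s,1}(\Bq,0)$ becomes $z_{s,1}^{r_2}=0$, whence $z_{s,1}=0$. For the inductive step at a column $j\ge 2$, the restriction of $\mathrm{HP}_{s,j}(\Bq,0)$ to $\{Z_0=0\}$ reads
\[
z_{s,j}^{r_{j+1}}=(-1)^{r_{j-1}}q_j\,0^{\rho_j}\prod_{b=1}^{r_{j-1}}z_{b,j-1}.
\]
If $\rho_j>0$ the right-hand side vanishes; if $\rho_j=0$ then $r_{j-1}=r_j=r_{j+1}$, so $r_{j-1}\ge 1$, and the inductive hypothesis applied to column $j-1$ gives $z_{b,j-1}=0$ for all $b\le r_{j-1}$, so the product again vanishes. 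In both cases $z_{s,j}=0$. Running the induction up to $j=k$ shows that every homogeneous coordinate of the point is zero, which is impossible in $\BP^{|\Br|}$; hence $\CX_{\vert(\Bq,0)}\cap\{Z_0=0\}=\emptyset$.

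The only subtle point is the case $\rho_j=0$ (equivalently $r_{j-1}=r_j=r_{j+1}$): there $Z_0^{\rho_j}=1$ does not vanish on the hyperplane at infinity, so the ``$q_j$-term'' of \eqref{eq:initial_equations} does not drop out automatically, and one genuinely needs the preceding column to have already collapsed to zero. This forces the induction to be run from $j=1$ upward rather than from $j=k$ downward, and it is also why the lemma is only claimed for $t$ near $0$: for general $t$, and a chain containing equal consecutive ranks, the fibers can meet $\{Z_0=0\}$. Everything else is routine: identifying the special fiber with the zero locus of \eqref{eq:initial_equations} is immediate from plugging $t=0$ into \eqref{C_system}, and the passage from the special fiber to a neighborhood is the standard closedness of images under proper morphisms. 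I expect the bookkeeping around the $\rho_j=0$ columns to be the only place requiring real care in writing out the full proof.
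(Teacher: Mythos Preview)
Your proposal is correct and follows essentially the same approach as the paper: reduce to showing $\CX_{\vert(\Bq,0)}\cap\{Z_0=0\}=\emptyset$, then argue by induction on $j$ from $j=1$ upward that all $z_{s,j}$ must vanish, contradicting being a point of $\BP^{|\Br|}$. Your case split $\rho_j>0$ versus $\rho_j=0$ in the inductive step is harmless but unnecessary: since $r_{j-1}\ge r_1\ge 1$ for $j\ge 2$, the product $\prod_{b}z_{b,j-1}$ already vanishes by the inductive hypothesis regardless of $\rho_j$, which is how the paper phrases the step; you are also more explicit than the paper about the properness argument used to pass from $t=0$ to a Zariski open neighborhood.
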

\begin{proof}
	The statement follows from $\CX_{|(\Bq,0)} \cap \lbrace Z_0=0 \rbrace = \emptyset$, so we'll prove this. Clearly $\rho_1>0$ by definition, so the equation ($\mathrm{HP}_{s, 1}(\Bq,0)$) specialized to $Z_0=0$ implies $z_{s,1}=0$ for all $s$. Now fix $j>1$ and assume $z_{b,j-1}=0$ for every $b$. The equation (\ref{eq:initial_equations}) implies $z_{s,j}=0$ for every $s$, hence by induction we obtain that if a point of $\BP^{|\Br|}$ satisfies both $Z_0=0$ and the equations (\ref{eq:initial_equations}), then $z_{s,j}=0$ for all $s$ and $j$, which is a contradiction. 
\end{proof}
\begin{lemma}\label{comparison_lemma}
	Consider the projection $\CX \rightarrow (\BC^\ast)^k \times \BA^1$. Given any $\Bq$ and a generic $t \neq 0$, the fiber $\CX_{\vert (\Bq,t)}$ is isomorphic to $X(t \cdot \Bq)$.
\end{lemma}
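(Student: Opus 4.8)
The plan is to combine Lemma~\ref{no_infinity_lemma}, which confines the relevant fibers to an affine chart of $\BP^{|\Br|}$, with the diagonal rescaling of coordinates coming from the $\BC^\ast$-action used to construct the family; after this rescaling the fiber equations become exactly those defining $X(t\cdot\Bq)$. First I would record that for $t\neq 0$ the scheme-theoretic fiber $\CX_{|(\Bq,t)}$ is simply the closed subscheme of $\BP^{|\Br|}$ cut out by the equations $(\mathrm{HP}_{s,j}(\Bq,t))$, the Gr\"obner degeneration having altered only the central fiber over $t=0$. Lemma~\ref{no_infinity_lemma} then provides a Zariski open neighborhood $U\ni 0$ in $\BA^1$ with $\CX_{|(\Bq,t)}\cap\{Z_0=0\}=\emptyset$ for all $t\in U$; passing to the nonempty open set $U^\ast\coloneqq U\setminus\{0\}\subseteq\BA^1\setminus 0$ — which is what ``generic $t\neq 0$'' means here — the fiber $\CX_{|(\Bq,t)}$ becomes a closed subscheme of the affine chart $\{Z_0=1\}\cong\BA^{|\Br|}$, cut out there by the dehomogenized equations $\prod_{a=1}^{r_{j+1}}(z_{s,j}-t\,z_{a,j+1})-q_j\prod_{b=1}^{r_{j-1}}(t\,z_{s,j}-z_{b,j-1})=0$.

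Next I would apply the diagonal automorphism of $\BA^{|\Br|}$ given by $z_{a,\ell}\mapsto t^{-\ell}z_{a,\ell}$, invertible since $t\neq 0$, which is the affine trace of the $\BC^\ast$-action defining the family. A direct bookkeeping of the powers of $t$ — the first product scales by $t^{-j r_{j+1}}$, the second by $t^{-(j-1)r_{j-1}}$, and $j r_{j+1}-(j-1)r_{j-1}=j\rho_j+r_{j-1}$ with $\rho_j=r_{j+1}-r_{j-1}$ — turns these equations, after clearing the common factor $t^{-j r_{j+1}}$, into $\prod_{a=1}^{r_{j+1}}(z_{s,j}-z_{a,j+1})-\bigl(t^{\,j\rho_j+r_{j-1}}q_j\bigr)\prod_{b=1}^{r_{j-1}}(z_{s,j}-z_{b,j-1})=0$. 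This is precisely the system $(\mathrm{P}_{s,j}(t\cdot\Bq))$, with $t\cdot q_j=t^{\,j\rho_j+r_{j-1}}q_j$ the image of $\Bq$ under the $\BC^\ast$-action. Hence this automorphism identifies $\CX_{|(\Bq,t)}$ with $X(t\cdot\Bq)$ for every $t\in U^\ast$, which is the assertion.

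I do not anticipate a genuine obstacle: the argument is essentially a change of coordinates. The two points that require attention are the correct use of Lemma~\ref{no_infinity_lemma} to reduce to the affine chart — which is exactly why the statement is for generic, rather than all, $t\neq 0$, since that lemma only controls a neighborhood of $0$ — and keeping the exponents of $t$ straight so that the rescaled quantum parameters come out to be precisely $t\cdot\Bq$; this is, after all, how the $\BC^\ast$-weights $t\cdot q_j=t^{\,j\rho_j+r_{j-1}}q_j$ were reverse-engineered in the construction of the degeneration.
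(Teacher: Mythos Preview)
Your proposal is correct and follows the same approach as the paper: use Lemma~\ref{no_infinity_lemma} to pass to the affine chart $\{Z_0=1\}$, then apply the diagonal rescaling $z_{s,j}\mapsto t^{\pm j}z_{s,j}$ coming from the $\BC^\ast$-action to identify the two sets of equations. The paper's proof is terser (it just states the automorphism $z_{s,j}\mapsto t^j z_{s,j}$ without doing the exponent bookkeeping), but your more detailed verification of the powers of $t$ is entirely correct.
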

\begin{proof}
	For a generic $t$, by Lemma \ref{no_infinity_lemma} there are no solutions at $Z_0=0$, so we can work in the affine space $\BA^{|\Br|} = \lbrace Z_0\neq0 \rbrace$.
	The global automorphism of $\BA^{|\Br|}$ given by
	\begin{align*}
		z_{s,j} \mapsto t^j z_{s,j}
	\end{align*}
	maps the equations defining $X(t \cdot \Bq)$ into the equations defining $\CX_{\vert (\Bq,t)}$.
\end{proof}
\begin{lemma}\label{structure_special_fiber}
	For every $\Bq \in (\BC^\ast)^k$ the fiber $\CX_{\vert (\Bq,0)}$ is zero-dimensional, smooth, it consists of $\prod_{j=1}^k r_{j+1}^{r_j}$ distinct points and it doesn't intersect any coordinate hyperplane in $\BP^{{|\Br|}}$. Moreover, the number of points in $\CX_{\vert (\Bq,0)}$ that satisfy $z_{s,j} \neq z_{u,j}$ for all $s \neq u$ and $1\le j\le k$ is precisely $\prod_{j=0}^k \frac{r_{j+1}!}{(r_{j+1}-r_j)!}$.
\end{lemma}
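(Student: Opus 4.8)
The plan is to work in the open chart $\{Z_0\neq 0\}$ of $\BP^{|\Br|}$, which by Lemma~\ref{no_infinity_lemma} contains the whole fiber $\CX_{\vert(\Bq,0)}$, and to normalise $Z_0=1$. Setting $Z_0=1$ in \eqref{eq:initial_equations} turns the defining equations into
\[
z_{s,j}^{\,r_{j+1}} \;=\; (-1)^{r_{j-1}}\, q_j \prod_{b=1}^{r_{j-1}} z_{b,j-1}, \qquad 1\le j\le k,\ \ 1\le s\le r_j .
\]
The crucial observation is that for fixed $j$ the right-hand side is a single constant $c_j$, independent of $s$ and depending only on the coordinates of column $j-1$; this lets one solve the system one column at a time.

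First I would show, by induction on $j$, that at every point of $\CX_{\vert(\Bq,0)}$ all coordinates are nonzero and $c_j\neq 0$. For $j=1$ the product is empty, so $c_1=q_1\neq 0$, and each of $z_{1,1},\dots,z_{r_1,1}$ is one of the $r_2$ distinct nonzero $r_2$-th roots of $q_1$. If column $j-1$ has all entries nonzero, then $c_j=(-1)^{r_{j-1}}q_j\prod_b z_{b,j-1}\neq 0$ since $q_j\neq 0$, so each of $z_{1,j},\dots,z_{r_j,j}$ is one of the $r_{j+1}$ distinct nonzero $r_{j+1}$-th roots of $c_j$. This simultaneously shows that $\CX_{\vert(\Bq,0)}$ has, as a set, exactly $\prod_{j=1}^k r_{j+1}^{\,r_j}$ points, and that it meets none of the hyperplanes $\{z_{s,j}=0\}$; combined with Lemma~\ref{no_infinity_lemma}, this proves that the fiber avoids every coordinate hyperplane of $\BP^{|\Br|}$.

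Next I would promote this to the scheme-theoretic statement via the Jacobian criterion. Writing $F_{s,j}$ for the affine equation displayed above, $F_{s,j}$ involves only $z_{s,j}$ and the variables of column $j-1$; hence, grouping the rows and columns of the Jacobian $\bigl(\partial F_{s,j}/\partial z_{u,i}\bigr)$ by the index $j$, the matrix is block lower-triangular, and its $j$-th diagonal block is $\operatorname{diag}\bigl(r_{j+1}\,z_{s,j}^{\,r_{j+1}-1}\bigr)_{1\le s\le r_j}$. Since all $z_{s,j}\neq 0$, this determinant is a nonzero product of monomials, so $\CX_{\vert(\Bq,0)}$ is smooth of dimension $0$ — in particular zero-dimensional and reduced — at each of its points; therefore it consists of exactly $\prod_{j=1}^k r_{j+1}^{\,r_j}$ distinct points.

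Finally, for the last assertion I would count, column by column, the solutions satisfying $z_{s,j}\neq z_{u,j}$ for all $s\neq u$. Once the earlier columns are fixed (hence the nonzero constant $c_j$), such a choice in column $j$ amounts to picking an ordered $r_j$-tuple of pairwise distinct elements among the $r_{j+1}$ distinct $r_{j+1}$-th roots of $c_j$, which can be done in $r_{j+1}(r_{j+1}-1)\cdots(r_{j+1}-r_j+1)=r_{j+1}!/(r_{j+1}-r_j)!$ ways, independently of the previous choices. Multiplying over $1\le j\le k$, and noting that the $j=0$ factor equals $1$ because $r_0=0$, gives $\prod_{j=0}^k r_{j+1}!/(r_{j+1}-r_j)!$. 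I expect all of this to be routine bookkeeping; the one step that requires genuine care is the Jacobian computation, since it is precisely what turns the naive set-theoretic root count into the assertion that the fiber is reduced and has the Bézout-predicted number of points.
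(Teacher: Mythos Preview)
Your proposal is correct and follows essentially the same approach as the paper: work in the affine chart via Lemma~\ref{no_infinity_lemma}, solve the triangular system column by column, and count. The paper's proof is terser and does not spell out the Jacobian argument for smoothness here (that computation appears instead in the proof of Lemma~\ref{lem:smooth_family}), so your version is in fact more self-contained on that point.
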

\begin{proof}
	By Lemma \ref{no_infinity_lemma} we can work in affine coordinates setting $Z_0=1$. The system (\ref{eq:initial_equations}) can be solved explicitly. For $j=1$ we find 
	\begin{align*}
		z_{s,1} \text{ is a $r_2$-th root of $q_1$}.
	\end{align*}
	Then from $z_{b,1}$ we can recover $z_{s,2}$ via \eqref{eq:initial_equations} and so on:
	\begin{align*}
		z_{s,j} \text{ is a $r_{j+1}$-th root of $(-1)^{r_{j-1}}q_j \prod_{b=1}^{r_{j-1}}z_{b,j-1}$}.
	\end{align*}
	Since $q_j \neq 0$ for every $j$, the number of such choices is precisely $\prod_{j=1}^k r_{j+1}^{r_j}$, and in particular $z_{s,j} \neq 0$ for every point in $\CX^\ast_{\vert (\Bq,0)}$. Finally, counting solutions with $z_{s,j} \neq z_{u,j}$ yields the second part of the claim.
\end{proof}
\subsubsection{Smoothness of the family and proof of Proposition \ref{number_of_solutions}}
\begin{lemma}\label{lem:smooth_family}
	The projection
	\begin{align*}
		(\pi_q, \pi_t) : \CX \rightarrow (\BC^\ast)^k \times \BA^1.
	\end{align*}
	is smooth at every point of $(\BC^\ast)^k \times 0$.
\end{lemma}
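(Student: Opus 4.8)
The plan is to deduce smoothness of $(\pi_q,\pi_t)$ at a point $x\in\CX$ with $\pi_t(x)=0$ directly from the defining equations, by means of the Jacobian criterion in its relative form. The first step is to pass to the affine chart $\{Z_0\neq 0\}$ of $\BP^{|\Br|}$: applying Lemma~\ref{no_infinity_lemma} over all closed points $\Bq\in(\BC^\ast)^k$ shows that the closed subset $\CX\cap\pi_t^{-1}(0)$ is disjoint from the hyperplane $\{Z_0=0\}$, so every such $x$ lies in that chart. Setting $Z_0=1$, a neighbourhood of $x$ in $\CX$ becomes the zero scheme of the $|\Br|$ functions
\[
F_{s,j}\coloneqq \prod_{a=1}^{r_{j+1}}(z_{s,j}-t z_{a,j+1})-q_j\prod_{b=1}^{r_{j-1}}(t z_{s,j}-z_{b,j-1}),\qquad 1\le j\le k,\ 1\le s\le r_j
\]
(the affine forms of \eqref{C_system}) inside $A\coloneqq \BA^{|\Br|}\times S$, where $S\coloneqq (\BC^\ast)^k\times\BA^1$ and $(z_{s,j})$ are the coordinates on the first factor.

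Since $A\to S$ is smooth of relative dimension $|\Br|$, with $\Omega_{A/S}$ free on the differentials $dz_{s,j}$, the Jacobian criterion for smoothness of a morphism reduces the lemma to the following single statement: for every $x\in\CX$ with $t$-coordinate $0$, the square matrix $\big(\partial F_{s,j}/\partial z_{u,i}\big)(x)$ is invertible, in which case $\CX\to S$ is smooth at $x$ of relative dimension $|\Br|-|\Br|=0$. To verify this I would specialise to $t=0$, where $F_{s,j}=z_{s,j}^{r_{j+1}}-(-1)^{r_{j-1}}q_j\prod_{b=1}^{r_{j-1}}z_{b,j-1}$ as in \eqref{eq:initial_equations}. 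Hence $\partial F_{s,j}/\partial z_{u,i}$ vanishes unless $i\in\{j-1,j\}$; it equals $\delta_{su}\,r_{j+1}z_{s,j}^{\,r_{j+1}-1}$ when $i=j$, and $-(-1)^{r_{j-1}}q_j\prod_{b\neq u}z_{b,j-1}$ when $i=j-1$. Grouping equations and variables by the column index $j$, the Jacobian is block lower-triangular with square diagonal blocks $\mathrm{diag}\big(r_{j+1}z_{1,j}^{\,r_{j+1}-1},\dots,r_{j+1}z_{r_j,j}^{\,r_{j+1}-1}\big)$, so
\[
\det\big(\partial F_{s,j}/\partial z_{u,i}\big)\big|_{t=0}=\Big(\prod_{j=1}^{k}r_{j+1}^{\,r_j}\Big)\cdot\prod_{j=1}^{k}\prod_{s=1}^{r_j}z_{s,j}^{\,r_{j+1}-1}.
\]

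The final step is to see that this determinant does not vanish at $x$: by Lemma~\ref{structure_special_fiber} the fibre $\CX_{\vert(\Bq,0)}$ avoids every coordinate hyperplane of $\BP^{|\Br|}$, so $z_{s,j}(x)\neq 0$ for all $s,j$, and together with $r_{j+1}\ge 1$ this forces the Jacobian to be invertible; hence $(\pi_q,\pi_t)$ is smooth at $x$. Letting $x$ range over the special fibres over all $\Bq\in(\BC^\ast)^k$ yields smoothness along $(\BC^\ast)^k\times 0$, which is exactly the open condition needed to transport the conclusions of Lemma~\ref{structure_special_fiber} (via Lemma~\ref{comparison_lemma}) to nearby fibres in the proof of Proposition~\ref{number_of_solutions}. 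I do not expect a genuine obstacle here: the determinant computation is forced by the block-triangular shape, and the two delicate inputs — that the degenerate fibre stays in the chart $\{Z_0=1\}$ and avoids the coordinate hyperplanes — are precisely what Lemmas~\ref{no_infinity_lemma} and~\ref{structure_special_fiber} were arranged to provide; the only point requiring care is invoking the Jacobian criterion relatively over $S$ rather than for a single variety.
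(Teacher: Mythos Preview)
Your proof is correct and follows essentially the same route as the paper: both arguments apply the Jacobian criterion to the $z$-derivatives of the defining equations at $t=0$, observe the block lower-triangular structure, and conclude nonvanishing of the diagonal product $\prod_{j,s} r_{j+1}\,z_{s,j}^{\,r_{j+1}-1}$ via Lemma~\ref{structure_special_fiber}. You are simply a bit more explicit than the paper about invoking Lemma~\ref{no_infinity_lemma} to pass to the affine chart and about phrasing the Jacobian criterion relatively over $S$.
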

\begin{proof}
	Consider the Jacobian matrix $J(\Bz,\Bq, t)$ for the equations (\ref{eq:initial_equations}). The ${|\Br|} \times {|\Br|}$ submatrix $J_{\partial_z}(\Bz,\Bq, 0)$ of $J(\Bz,\Bq, 0)$ corresponding to the $z$-derivatives $\partial/\partial_{z_{i,j}}$ is lower triangular, hence its determinant is the product of the diagonal terms:
	\begin{align*}
		\det J_{\partial_z}(\Bz,\Bq, 0) = \prod_{j=1}^k \prod_{s=1}^{r_j} (-1)^{r_{j+1}} r_{j+1} z_{s,j}^{r_{j+1}-1}
	\end{align*}    
	which, by Lemma \ref{structure_special_fiber}, is non-zero at every point of $\CX_{\vert (\Bq,0)}$. By the Jacobian criterion, this implies the claim.
\end{proof}
We know that the locus of points $(\Bq,t)$ in $(\BC^\ast)^k \times \BA^1$ so that $(\pi_q, \pi_t)$ is smooth at $(\Bq, t)$ is open since the map $(\pi_q, \pi_t)$ is projective\footnote{A projective morphism is closed, hence the image of the critical locus of our map is closed and misses the origin of $\BA^1$, so we take its complement.}. Hence for every $\Bq \in (\BC^\ast)^k$ and a generic $t \in \BC^\ast$ the point $(\Bq, t)$ is a regular value of this map. 
\begin{proof}[Proof of Proposition \ref{number_of_solutions}]
	Given such a regular value $(\Bq, t)$ with $t \neq 0$ we have that $\CX_{\vert (\Bq,t)}$ is smooth of dimension zero, so by B\'ezout's theorem it is the union of $\prod_{j=1}^k r_{j+1}^{r_j}$ distinct closed points, and the proof of the first part of the Proposition follows from Lemma \ref{comparison_lemma}. Consider the symmetric group $S_\Br\coloneqq  \prod_{j=1}^k S_{r_j}$ acting on $\BA^{|\Br|}$ permuting the same level coordinates. The ideal generated by the equations (\ref{C_system}) is $S_\Br$-invariant, hence $\CX$ inherits an induced action. Fixed $(\Bq, 0) \in (\BC^\ast)^k \times \BA^1$, from Lemma \ref{structure_special_fiber} we know that the number of points in $\CX_{\vert (\Bq,0)}$ satisfying $z_{s,j} \neq z_{u,j}$ is $\prod_{j=0}^k \frac{r_{j+1}!}{(r_{j+1}-r_j)!}$. This implies that the fiber $\CX_{\vert (\Bq,0)}$ contains precisely $\prod_{j=0}^k \binom{r_{j+1}}{r_j}$ free $S_\Br$-orbits. By smoothness, in a complex analytic neighborhood $U$ of $(\Bq,0)$ the family $\CX_{\vert U}$ is the disjoint union of copies of $U$ indexed by the points of $\CX_{\vert (\Bq,0)}$. The group $S_\Br$ acts by shuffling these copies, and the action is completely determined by the action on the fiber above $\CX_{\vert(\Bq,0)}$, therefore for every $(\Bq,t) \in U$ with $t \neq 0$ we have that $X(t \cdot \Bq)$ has precisely $\prod_{j=0}^k \binom{r_{j+1}}{r_j}$ free $S_\Br$-orbits, concluding the proof.
\end{proof}
\subsubsection{Equivariant version}
Let $\BA^n$ be the space of some additional variables $\varepsilon_1, \dots, \varepsilon_n$. Consider the family over $(\BC^\ast)^k \times \BA^n$ of closed subschemes of $\BA^{{|\Br|}}$ given by the same equations (\ref{eq:system}), but this time interpreted as if $z_{s,k+1}\coloneqq  \varepsilon_s$ for all $s=1, \dots, n$. If we define
\begin{align*}
	\delta\coloneqq  \bigcup_{j=1}^k \bigcup_{1\leq s \neq u \leq r_j} \big\lbrace z_{s,j} = z_{u,j} \big\rbrace \subset \BA^{|\Br|}
\end{align*}
then we can consider the induced family of closed subschemes of $\BA^{|\Br|} \setminus \delta$
\[\begin{tikzcd}
	\CX^\prime \arrow[r, hook, "i"] \arrow[dr, swap, "p"] &(\BA^{{|\Br|}} \setminus \delta) \times (\BC^\ast)^k \times \BA^n \arrow[d, "p_{q,\varepsilon}"]\\
	& (\BC^\ast)^k \times \BA^n
\end{tikzcd}\]
\begin{lemma}\label{lem:trace}
	There is an open subscheme $\mathrm{Reg}(p) \subset \BA^n \times (\BC^\ast)^k$, which is nonempty and intersects the subspace $\varepsilon_1 = \dots = \varepsilon_n=0$, so that $p$ is \'etale on $\mathrm{Reg}(p)$. Moreover, if $f$ is a regular function on $p^{-1}(\mathrm{Reg}(p))\subset \CX^\prime$, then the function
	\begin{align*}
		\mathrm{Tr}_p(f) : \mathrm{Reg}(p) \rightarrow \BC \quad : \quad (\Bq, \boldsymbol{\varepsilon}) \mapsto \sum_{(\Bx, \Bq, \boldsymbol{\varepsilon}) \in p^{-1}(\Bq, \boldsymbol{\varepsilon})}f(\Bx, \Bq, \boldsymbol{\varepsilon})
	\end{align*}
	is a regular function on $\mathrm{Reg}(p)$.
\end{lemma}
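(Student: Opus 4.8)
The plan is to exhibit $p$ as the restriction of a finite flat morphism whose ramification is controlled, so that the trace becomes the trace of an endomorphism of a vector bundle. First I would homogenize the equations \eqref{eq:system} with an extra coordinate $Z_0$ (now regarding $z_{s,k+1}\coloneqq\varepsilon_s$ as parameters), obtaining a closed subscheme $\bar\CX\hookrightarrow\BP^{|\Br|}\times(\BC^\ast)^k\times\BA^n$ with projection $\bar p$ to the base; by construction $\CX'$ is the open subscheme of $\bar\CX$ obtained by deleting the hyperplane at infinity $\{Z_0=0\}$ and the closure of the degeneracy locus $\delta$. Since $\bar p$ is projective, the image $S_\infty\coloneqq\bar p(\bar\CX\cap\{Z_0=0\})$ is closed; put $U_1\coloneqq((\BC^\ast)^k\times\BA^n)\setminus S_\infty$. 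Over $U_1$ the family $\bar\CX|_{U_1}$ is closed in $\BA^{|\Br|}\times U_1$ and $\bar p|_{U_1}$ is proper and affine, hence finite; its fibres are then $0$-dimensional, and since $\bar\CX|_{U_1}$ is cut out by $|\Br|$ equations in the smooth $(|\Br|+k+n)$-dimensional scheme $\BA^{|\Br|}\times U_1$ it is a codimension-$|\Br|$ complete intersection, hence Cohen--Macaulay; miracle flatness then upgrades $\bar p|_{U_1}$ to a finite flat morphism. Let $U_2\subseteq U_1$ be the open locus over which the finite fibres are reduced; there $\bar p$ is finite étale.

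To see that $U_2\neq\emptyset$ and meets $\{\varepsilon_1=\dots=\varepsilon_n=0\}$: the specialization $\varepsilon=0$ turns the homogenized equivariant system into \eqref{eq:h_system}, whose solution set in $\BP^{|\Br|}$ avoids $\{Z_0=0\}$ by the argument of Lemma~\ref{no_infinity_lemma} and consists, for a generic $\Bq_0\in(\BC^\ast)^k$, of $\prod_j r_{j+1}^{r_j}$ distinct (hence reduced) points by Proposition~\ref{number_of_solutions}; so $(\Bq_0,0)\in U_2$.

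The main step is to descend from $\bar\CX|_{U_2}$ to $\CX'|_{U_2}=\bar\CX|_{U_2}\setminus\delta$, by showing that $\delta\cap\bar\CX|_{U_2}$ is a union of connected components of $\bar\CX|_{U_2}$. The group $S_\Br=\prod_j S_{r_j}$ acts on $\bar\CX$ over the base by permuting equal-level variables (the ideal of \eqref{eq:h_system} being $S_\Br$-invariant), and, exactly as in the proof of Proposition~\ref{number_of_solutions}, $\delta\cap\bar\CX|_{U_2}=\bigcup_{\tau\neq 1}(\bar\CX|_{U_2})^\tau$, since a point lies in $\delta$ precisely when it is fixed by some non-trivial $\tau$. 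Given $y\in(\bar\CX|_{U_2})^\tau$, the two sections $a\mapsto(a,a)$ and $a\mapsto(\tau a,a)$ of the second projection $\mathrm{pr}_2\colon\bar\CX|_{U_2}\times_{U_2}\bar\CX|_{U_2}\to\bar\CX|_{U_2}$ agree at $y$; as $\mathrm{pr}_2$ is separated and unramified (base change of a finite étale map), their equalizer is open and closed in $\bar\CX|_{U_2}$, hence contains the whole irreducible component through $y$, which is therefore contained in $(\bar\CX|_{U_2})^\tau\subseteq\delta$. Thus $\delta\cap\bar\CX|_{U_2}$ is open and closed in $\bar\CX|_{U_2}$, so $\CX'|_{U_2}=\bar\CX|_{U_2}\setminus\delta$ is closed in the finite $U_2$-scheme $\bar\CX|_{U_2}$, hence itself finite over $U_2$, and étale as an open subscheme of the étale $U_2$-scheme $\bar\CX|_{U_2}$. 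I set $\mathrm{Reg}(p)\coloneqq U_2$. I expect this to be the main obstacle: without the $S_\Br$-symmetry one cannot prevent solutions from colliding (running into $\delta$) under deformation, and it is the separatedness and unramifiedness of the étale cover that rule this out.

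Finally, write $\pi\coloneqq p|_{\mathrm{Reg}(p)}\colon\CX'|_{\mathrm{Reg}(p)}\to\mathrm{Reg}(p)$, finite étale of degree $\prod_j r_{j+1}!/(r_{j+1}-r_j)!$, so that $\pi_\ast\CO_{\CX'}$ is a locally free $\CO_{\mathrm{Reg}(p)}$-module. For a regular function $f$ on $\CX'|_{\mathrm{Reg}(p)}=p^{-1}(\mathrm{Reg}(p))$, i.e.\ an element of $\Gamma(\mathrm{Reg}(p),\pi_\ast\CO_{\CX'})$, multiplication by $f$ is an $\CO_{\mathrm{Reg}(p)}$-linear endomorphism of $\pi_\ast\CO_{\CX'}$, and its trace is an element of $\Gamma(\mathrm{Reg}(p),\CO)$, hence a regular function. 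Because $\pi$ is étale, the fibre $(\pi_\ast\CO_{\CX'})\otimes k(\Bq,\boldsymbol\varepsilon)$ is the product $\prod_{x\in p^{-1}(\Bq,\boldsymbol\varepsilon)}k(x)$ with $f$ acting coordinatewise by its values, so the value of that trace at $(\Bq,\boldsymbol\varepsilon)$ equals $\sum_{(\Bx,\Bq,\boldsymbol\varepsilon)\in p^{-1}(\Bq,\boldsymbol\varepsilon)}f(\Bx,\Bq,\boldsymbol\varepsilon)=\mathrm{Tr}_p(f)(\Bq,\boldsymbol\varepsilon)$. Hence $\mathrm{Tr}_p(f)$ is regular on $\mathrm{Reg}(p)$, completing the proof.
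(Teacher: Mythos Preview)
Your proof is correct and follows essentially the same route as the paper's: homogenize to a projective family, invoke the non-equivariant results (Lemma~\ref{lem:smooth_family} and Proposition~\ref{number_of_solutions}) to find points $(\Bq_0,0)$ where the projective family is smooth and has the expected count outside $\delta$, spread this to an open set by projectivity, and then apply the trace for finite locally free morphisms. Your argument is in fact more detailed in the key step, making explicit the $S_\Br$-symmetry mechanism (that $\delta\cap\bar\CX|_{U_2}=\bigcup_{\tau\neq1}(\bar\CX|_{U_2})^\tau$ is open-closed because fixed loci of automorphisms of a finite \'etale cover are open-closed), which the paper only alludes to by citing the earlier proposition.
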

\begin{proof}
	The first part of the claim follows by considering first the homogenized equations cutting a family of subschemes of $\BP^{|\Br|}$, showing that there are no solutions on the hyperplane at infinity, and that this family is smooth at a generic point of the form $(\Bq, 0)$ by the non-equivariant Lemma \ref{lem:smooth_family}. Then projectivity ensures that smoothness is generic on the base. Moreover, we know that for a generic point of the base, the number of solutions in $\BA^{|\Br|}\setminus \delta$ stays equal to $\prod_{j=0}^k \frac{r_{j+1}!}{(r_{j+1}-r_j)!}$, hence $\CX^\prime$ is finite \'etale over those points. The second part follows by applying the trace map $\mathrm{Tr}_p : p_\ast \CO_{\CX^\prime} \rightarrow \CO_{\BA^n \times (\BC^\ast)^k}$ to $f$ \cite[\href{https://stacks.math.columbia.edu/tag/0BVH}{Tag 0BVH}]{StacksProj}.
\end{proof}
\subsubsection{Formal and algebraic equations}
Fix $\varepsilon_1, \dots, \varepsilon_n \in \BC^\ast$ with $\varepsilon_s \neq \varepsilon_u$ for all $s \neq u$ and consider the corresponding equivariant equations.  A simple computation proves the following:
\begin{lemma}
	Fixed $q_1 = \dots = q_k = 0$, the equations can be solved in $\BA^{|\Br|} \setminus \delta$. In this case, the number of solutions is $\prod_{j=0}^k \frac{r_{j+1}!}{(r_{j+1}-r_j)!}$ and the $x$-Jacobian is invertible over the solutions.
\end{lemma}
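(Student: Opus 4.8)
The plan is to solve the system explicitly by descending induction on the level index $j$, exploiting the fact that setting $q_1 = \cdots = q_k = 0$ decouples the equations \eqref{eq:system}: with $z_{s,k+1}\coloneqq \varepsilon_s$, the equation $(\mathrm{P}_{s,j})$ becomes $\prod_{a=1}^{r_{j+1}}(z_{s,j}-z_{a,j+1}) = 0$, which involves only the variable $z_{s,j}$ itself together with the $r_{j+1}$ variables of level $j+1$. Thus a point of $\BA^{|\Br|}\setminus\delta$ solves the system precisely when, for each $j$, every $z_{s,j}$ coincides with one of $z_{1,j+1},\dots,z_{r_{j+1},j+1}$. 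First I would run the count from the top: at level $k$, level $k+1$ consists of the distinct constants $\varepsilon_1,\dots,\varepsilon_n$, and the requirement that $z_{1,k},\dots,z_{r_k,k}$ be pairwise distinct (exactly the condition of lying outside $\delta$) forces $(z_{1,k},\dots,z_{r_k,k})$ to be an injective word in $\{\varepsilon_1,\dots,\varepsilon_n\}$, giving $n!/(n-r_k)! = r_{k+1}!/(r_{k+1}-r_k)!$ choices. Descending, once $z_{1,j+1},\dots,z_{r_{j+1},j+1}$ have been chosen pairwise distinct, the level-$j$ equations force $(z_{1,j},\dots,z_{r_j,j})$ to be an injective word in this $r_{j+1}$-element set, contributing an independent factor $r_{j+1}!/(r_{j+1}-r_j)!$. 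Multiplying over $j = 1,\dots,k$ and appending the trivial factor $r_1!/r_1! = 1$ for $j = 0$ gives the asserted count $\prod_{j=0}^k r_{j+1}!/(r_{j+1}-r_j)!$.

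For invertibility of the $x$-Jacobian, i.e.\ the matrix $\bigl(\partial \mathrm{P}_{s,j}/\partial z_{u,i}\bigr)$ evaluated at a solution, I would order both the equations and the variables by decreasing level $j = k, k-1, \dots, 1$. Since $(\mathrm{P}_{s,j})$ at $\Bq = 0$ depends only on $z_{s,j}$ and on the level-$(j+1)$ variables, the Jacobian is block lower bidiagonal in this ordering, so its determinant is the product of the $k$ diagonal blocks $\bigl(\partial \mathrm{P}_{s,j}/\partial z_{u,j}\bigr)_{s,u\le r_j}$. Each such block is itself diagonal, because $\mathrm{P}_{s,j}$ involves no level-$j$ variable other than $z_{s,j}$; and at a solution, writing $z_{s,j} = z_{a_0,j+1}$ for the unique index $a_0\le r_{j+1}$ realizing the vanishing, a one-line computation gives $\partial \mathrm{P}_{s,j}/\partial z_{s,j} = \prod_{a\neq a_0}(z_{a_0,j+1}-z_{a,j+1})$, which is nonzero since the level-$(j+1)$ values are pairwise distinct. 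Hence every diagonal entry is a nonzero complex number and the Jacobian is invertible.

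I do not expect a genuine obstacle: as the statement anticipates, this is a direct computation once one observes that setting $\Bq = 0$ triangularizes the system level by level, and that working in $\BA^{|\Br|}\setminus\delta$ together with the hypothesis $\varepsilon_s\neq\varepsilon_u$ is exactly what both steps need — the counting step to turn ``$z_{s,j}$ lies in a set of size $r_{j+1}$'' into ``injective word'', and the Jacobian step to ensure that the derivative of $\prod_a(z_{s,j}-z_{a,j+1})$ at one of its roots does not vanish. The only points deserving a moment of care are the bookkeeping of which $r_{j+1}$-element set $(z_{1,j},\dots,z_{r_j,j})$ injects into at each level, and the verification that the dependence of $(\mathrm{P}_{s,j})$ at $\Bq = 0$ is confined to levels $j$ and $j+1$, both immediate from the displayed form of the equations.
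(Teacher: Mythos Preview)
Your proposal is correct and is precisely the ``simple computation'' the paper alludes to without spelling out; the paper provides no proof beyond that phrase, so there is nothing further to compare. Your descending induction on the level $j$ and the observation that the Jacobian is block lower triangular with diagonal blocks that are themselves diagonal is exactly the intended argument.
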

This implies, by the implicit function theorem, that the algebraic solutions in $\BA^{|\Br|} \setminus \delta$ can be upgraded to formal solutions $\boldsymbol{\zeta}_{\boldsymbol{\varepsilon}}(\Bq) \in \BC[\![q_1, \dots, q_k]\!]^{|\Br|}$. Not only that; there is an analytic neighborhood $U \subseteq (\BC^\ast)^k$ of the origin so that for every $\Bq \in U$ the formal solutions above converge, and $\boldsymbol{\zeta}_{\boldsymbol{\varepsilon}}(\Bq)$ are precisely all the solutions in $\BA^{|\Br|}\setminus \delta$ of the system corresponding to $(\Bq, \boldsymbol{\varepsilon})$. Then we find the following:
\begin{lemma}\label{lem:formal_alg_bij}
	Fix $\boldsymbol{\varepsilon} \in (\BC^\ast)^n$. There is a bijection between the solutions $\boldsymbol{\zeta}_{\boldsymbol{\varepsilon}}$ of the equivariant equations in $\BC[\![q_1, \dots, q_k]\!]^{|\Br|}$ such that $\boldsymbol{\zeta}_{\boldsymbol{\varepsilon}}(0) \in \delta$, and the solutions in $\BA^{|\Br|}\setminus \delta$ for $\Bq$ small enough.
\end{lemma}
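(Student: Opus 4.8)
The plan is to realize the asserted bijection concretely as the evaluation map $\boldsymbol{\zeta}_{\boldsymbol{\varepsilon}}\mapsto\boldsymbol{\zeta}_{\boldsymbol{\varepsilon}}(\Bq_0)$ at a fixed small value $\Bq_0$, and then to verify bijectivity by a cardinality count. First I would check that this map makes sense. Every formal power series solution $\boldsymbol{\zeta}_{\boldsymbol{\varepsilon}}\in\BC[\![q_1,\dots,q_k]\!]^{|\Br|}$ is algebraic over $\BC(q_1,\dots,q_k)$, since it solves the polynomial system~\eqref{eq:system}; hence each of its components is the germ of an algebraic function and converges on some polydisc about the origin. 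When $\boldsymbol{\zeta}_{\boldsymbol{\varepsilon}}(0)\in\delta$, for each colliding pair $s\ne u$ at a level $j$ the power series $z_{s,j}-z_{u,j}$ vanishes at $\Bq=0$ but is not identically zero, hence is nonzero on a punctured polydisc; after shrinking, the branch therefore lies in $\BA^{|\Br|}\setminus\delta$ for every $\Bq\ne0$. Intersecting the finitely many polydiscs obtained this way — over all solutions under consideration and all pairs $(s,u)$ — yields a single polydisc $U$ on which every such solution converges and, for $\Bq\ne0$, takes values off $\delta$. For $\Bq_0\in U\setminus\{0\}$ chosen outside the proper analytic subset where two distinct branches coincide, the evaluation map is then a well-defined injection from the set of formal solutions of the prescribed type to the set of algebraic solutions of~\eqref{eq:system} lying in $\BA^{|\Br|}\setminus\delta$ at $\Bq_0$.

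Injectivity is built into the choice of $\Bq_0$. For surjectivity I would match cardinalities. On the algebraic side, the Gr\"obner degeneration set up in Section~\ref{sec:Equations} — Lemmas~\ref{no_infinity_lemma}, \ref{comparison_lemma}, \ref{structure_special_fiber}, \ref{lem:smooth_family}, and their equivariant refinement Lemma~\ref{lem:trace} — shows that the family of solutions in $\BA^{|\Br|}\setminus\delta$ is finite \'etale over a nonempty Zariski-open subset of $(\BC^\ast)^k\times\BA^n$ meeting the slice through the fixed $\boldsymbol{\varepsilon}$; shrinking $U$ so that $(\boldsymbol{\varepsilon},\Bq_0)$ lies in this open set, the target of the evaluation map is finite with the same number of points as a nearby generic fiber. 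On the formal side, a Hensel-type analysis of the system at $\Bq=0$, exactly in the style of the proof of Lemma~\ref{lem:counting_formal_solutions}, counts the formal solutions of the prescribed type. One checks that the two counts agree, and an injection of finite sets of equal cardinality is a bijection.

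The step I expect to be the main obstacle is not a single deep argument but the uniformity and the control near the ``boundary'' loci. One must produce a single polydisc $U$ that works simultaneously for the convergence of all the finitely many branches, for the property that each branch stays off $\delta$ once $\Bq\ne0$, and for $(\boldsymbol{\varepsilon},\Bq_0)$ to lie in the \'etale locus of the degeneration family; and one must rule out branches escaping to infinity as $\Bq\to0$. These are handled, respectively, by a finite intersection of polydiscs, by the ``nonzero power series'' argument above, by the openness of the smooth locus of a projective morphism together with the nonemptiness assertion of Lemma~\ref{lem:trace}, and by the no-solutions-at-infinity input of Lemma~\ref{no_infinity_lemma}. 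Once these statements are assembled, the bijection — and its compatibility as $\Bq_0$ varies in $U\setminus\{0\}$ — follows formally.
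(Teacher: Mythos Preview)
The statement as printed contains a typographical slip: the condition should read $\boldsymbol{\zeta}_{\boldsymbol{\varepsilon}}(0)\notin\delta$ (equivalently $\in\BA^{|\Br|}\setminus\delta$), not $\in\delta$. This is forced by the surrounding text, which produces the formal solutions by lifting via the implicit function theorem the $\Bq=0$ solutions lying in $\BA^{|\Br|}\setminus\delta$, and by the paper's own proof, which invokes the count $\prod_{j=0}^k\frac{r_{j+1}!}{(r_{j+1}-r_j)!}$ of \emph{non-degenerate} formal solutions from Lemma~\ref{lem:counting_formal_solutions}.

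You have taken the typo at face value and devoted the first paragraph to arguing that a formal branch with $\boldsymbol{\zeta}_{\boldsymbol{\varepsilon}}(0)\in\delta$ leaves $\delta$ once $\Bq\neq0$. The crucial step --- that for a colliding pair $s\neq u$ at level $j$ the power series $z_{s,j}-z_{u,j}$ is ``not identically zero'' --- is unjustified and in fact false in general: the system~\eqref{eq:system} is symmetric in $z_{1,j},\dots,z_{r_j,j}$, so setting $z_{s,j}\equiv z_{u,j}$ in any formal solution produces another formal solution for which this difference vanishes identically. The Hensel-type count you sketch for the formal side also does not apply to such degenerate branches, since the Jacobian is singular there. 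With the corrected hypothesis $\boldsymbol{\zeta}_{\boldsymbol{\varepsilon}}(0)\notin\delta$, however, your overall architecture --- evaluation/convergence giving an explicit map, and bijectivity by matching the two cardinalities --- is exactly the paper's argument run in the opposite direction (the paper maps algebraic $\to$ formal via the implicit function theorem, with convergence as the inverse), and the step ``stays off $\delta$'' follows immediately by continuity rather than by the failed vanishing claim.
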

\begin{proof}
	By the proof of Theorem~\ref{thm:VI_Hyper_quot_equivariant_formal} in genus zero, the cardinality of the two sets is the same, equal to $\prod_{j=0}^k \frac{r_{j+1}!}{(r_{j+1}-r_j)!}$. Then the discussion above shows that lifting through the implicit function theorem gives an injective map from the set of algebraic solutions to the set of formal ones, and the inverse is given by letting the series converge.
\end{proof}
The equivariant formula of Theorem \ref{thm:VI_Hyper_quot_equivariant} holds true when the right-hand side is evaluated, fixed $\boldsymbol{\varepsilon} \in (\BC^\ast)^n$, at the formal solutions of the equivariant system of equations. By Lemma \ref{lem:formal_alg_bij} we can prove the following
\begin{proposition}\label{prop:genus0eval}
	Let $(\Bq, \boldsymbol{\varepsilon})\in \mathrm{Reg}(p)$.
	Then the generating polynomial of equivariant virtual integrals in \eqref{eq:Intersection_Hquot(C,V)_equivariant_formal} can be evaluated at $(\Bq, \boldsymbol{\varepsilon})$ by computing the right-hand side by summing over the solutions in $\BA^{|\Br|}\setminus\delta$ of the equivariant system of equations with fixed $(\Bq, \boldsymbol{\varepsilon})$. 
\end{proposition}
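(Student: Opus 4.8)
The plan is to reduce the statement to an identity between two regular functions on the irreducible scheme $\mathrm{Reg}(p)$. Write $F(\Bq,\boldsymbol\varepsilon)$ for the generating series on the left of \eqref{eq:Intersection_Hquot(C,V)_equivariant_formal}; it is a genuine polynomial in $\Bq$ and $\boldsymbol\varepsilon$, since for a fixed product of Chern classes only finitely many $\Bd$ give a nonzero equivariant integral and each such integral is a polynomial in $\boldsymbol\varepsilon$. Let $G\coloneqq\prod_{j=1}^k\frac1{r_j!}\prod_{s=1}^{r_j}e_s(\boldsymbol\zeta_j)^{m_{s,j}}\,J^{g-1}$, viewed as a rational function of $(\boldsymbol\zeta,\Bq,\boldsymbol\varepsilon)$. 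First I would check that $G$ restricts to a regular function on $p^{-1}(\mathrm{Reg}(p))\subseteq\CX'$: the factors $\Delta(\boldsymbol\zeta_\ell)$ occurring in $J$ are invertible on $\BA^{|\Br|}\setminus\delta$ by the very definition of $\delta$, while the remaining factor of $J$ is (up to the harmless sign rescaling $q_j\mapsto(-1)^{r_j-r_{j+1}-1}q_j$ relating \eqref{eq:Bethe_system_equiv} to \eqref{eq:system}) the Jacobian of the defining equations of $\CX'$, which is invertible precisely on the étale locus $\mathrm{Reg}(p)$. Hence Lemma~\ref{lem:trace} applies and $\Phi\coloneqq\mathrm{Tr}_p(G)$ is a regular function on $\mathrm{Reg}(p)$. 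Moreover, since $p$ is étale over $\mathrm{Reg}(p)$ its fibres are reduced, and by construction the fibre $p^{-1}(\Bq,\boldsymbol\varepsilon)$ is exactly the set of solutions of the equivariant system at $(\Bq,\boldsymbol\varepsilon)$ lying in $\BA^{|\Br|}\setminus\delta$; therefore $\Phi(\Bq,\boldsymbol\varepsilon)=\sum_{\boldsymbol\xi\in p^{-1}(\Bq,\boldsymbol\varepsilon)}G(\boldsymbol\xi,\Bq,\boldsymbol\varepsilon)$ is precisely the right-hand side of \eqref{eq:Intersection_Hquot(C,V)_equivariant_formal} summed over those solutions. So the proposition is equivalent to the equality $F=\Phi$ of regular functions on $\mathrm{Reg}(p)$, and since $\mathrm{Reg}(p)$ is open in the irreducible variety $(\BC^\ast)^k\times\BA^n$ it is irreducible, so it suffices to prove $F=\Phi$ on one nonempty open subset.

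Such a subset comes from combining the genus-zero case of Theorem~\ref{thm:VI_Hyper_quot_equivariant_formal} with the convergence analysis surrounding Lemma~\ref{lem:formal_alg_bij}. Fix $\boldsymbol\varepsilon_0\in(\BC^\ast)^n$ with pairwise distinct coordinates. At $\Bq=0$ the equivariant system has exactly $\prod_{j=0}^k\frac{r_{j+1}!}{(r_{j+1}-r_j)!}$ solutions in $\BA^{|\Br|}\setminus\delta$, all with invertible $z$-Jacobian; the implicit function theorem — applied with $\boldsymbol\varepsilon$, not just $\Bq$, as a parameter — promotes each of them to an analytic solution $\boldsymbol\zeta_{\boldsymbol\varepsilon}(\Bq)$ on some polydisk neighbourhood $W$ of $(\boldsymbol\varepsilon_0,0)$, and (shrinking $W$, and using that the total count stays constant, cf. Proposition~\ref{number_of_solutions} and the lemma stated just before Lemma~\ref{lem:formal_alg_bij}) these converge and are all the solutions in $\BA^{|\Br|}\setminus\delta$ whenever $(\Bq,\boldsymbol\varepsilon)\in W$ with $q_1\cdots q_k\neq0$. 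As power series in $\Bq$, the $\boldsymbol\zeta_{\boldsymbol\varepsilon}(\Bq)$ are exactly the formal non-degenerate solutions enumerated in Lemma~\ref{lem:counting_formal_solutions}, so Theorem~\ref{thm:VI_Hyper_quot_equivariant_formal} yields $F(\Bq,\boldsymbol\varepsilon)=\sum_{\text{formal }\boldsymbol\zeta}G(\boldsymbol\zeta(\Bq),\Bq,\boldsymbol\varepsilon)$ there. Now $W$ is nonempty open in $(\BC^\ast)^k\times\BA^n$ and $\mathrm{Reg}(p)$ is nonempty open, so both are Zariski dense and $W\cap\mathrm{Reg}(p)\neq\emptyset$; on this intersection the fibre of $p$ over $(\Bq,\boldsymbol\varepsilon)$ is reduced and equals $\{\boldsymbol\zeta_{\boldsymbol\varepsilon}(\Bq)\}$, whence $\Phi(\Bq,\boldsymbol\varepsilon)=\sum_{\boldsymbol\xi\in p^{-1}(\Bq,\boldsymbol\varepsilon)}G(\boldsymbol\xi,\Bq,\boldsymbol\varepsilon)=\sum_{\text{formal }\boldsymbol\zeta}G(\boldsymbol\zeta(\Bq),\Bq,\boldsymbol\varepsilon)=F(\Bq,\boldsymbol\varepsilon)$. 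Thus $F$ and $\Phi$ agree on the nonempty open set $W\cap\mathrm{Reg}(p)$, hence on all of $\mathrm{Reg}(p)$, which is the assertion of the proposition.

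I expect the only real difficulty to be the careful matching of the three notions of solution in play — the formal power series over $\Gamma=\BC(\boldsymbol\varepsilon)$ of Theorem~\ref{thm:VI_Hyper_quot_equivariant_formal}, the convergent analytic germs near $\Bq=0$ for a fixed generic $\boldsymbol\varepsilon_0$, and the honest algebraic solutions over a prescribed $(\Bq,\boldsymbol\varepsilon)\in\mathrm{Reg}(p)$ — and keeping the bookkeeping straight throughout. Concretely one must be sure that the implicit-function-theorem neighbourhood may be taken to depend on $\boldsymbol\varepsilon$ as well, so that the agreement locus $W\cap\mathrm{Reg}(p)$ is a genuine Zariski-open subset rather than a lower-dimensional slice; that the solution count of Proposition~\ref{number_of_solutions} (and its equivariant refinement used in Lemma~\ref{lem:trace}) guarantees no algebraic solution is lost when passing from formal to convergent; and that all the sign rescalings of the $q_j$ — the one in the proof of Theorem~\ref{thm:VI_Hyper_quot_equivariant_formal} and the one relating \eqref{eq:Bethe_system_equiv} to \eqref{eq:system} — are automorphisms of $(\BC^\ast)^k$ and so transport $\mathrm{Reg}(p)$, its fibres and the function $J$ compatibly. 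Notably, the argument never needs flatness of $\CX'$ over its base: only the étaleness on $\mathrm{Reg}(p)$ already furnished by Lemma~\ref{lem:trace}, together with Zariski density, is used.
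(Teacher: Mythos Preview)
Your proposal is correct and follows essentially the same approach as the paper's proof: both reformulate the claim as an equality between the polynomial left-hand side and the trace $\mathrm{Tr}_p(G)$ furnished by Lemma~\ref{lem:trace}, and both establish this equality by checking it on a small analytic open set near $\Bq=0$ (via the implicit function theorem and Lemma~\ref{lem:formal_alg_bij}) and then invoking irreducibility. Your write-up is simply more detailed than the paper's two-sentence sketch---in particular you explicitly verify that $G$ is regular on $p^{-1}(\mathrm{Reg}(p))$ and that the analytic neighbourhood can be taken open in $\boldsymbol\varepsilon$ as well as in $\Bq$, points the paper leaves implicit.
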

\begin{proof}
	First of all notice that the equality that we want to prove is
	\begin{align*}
		 \sum_{\Bd\in \BN^k} q_1^{d_1}q_2^{d_2}\cdots q_k^{d_k}
\int_{[\Hquot_{\Bd}]^{\vir}}  
\prod_{j=1}^{k}\prod_{s=1}^{r_j} c_s^{T}(\CE_{j|p}^\vee)^{m_{s,j}}= \left(\prod_{j=1}^k \frac{1}{r_j!}\right)\mathrm{Tr}_p\left(J(\Bz)^{g-1} \prod_{s,j}e_{s}(\Bz_j)^{m_{ij}} \right)
	\end{align*}
	for a generic choice of $(\Bq,\boldsymbol{\varepsilon})$. Note that the left-hand side is a polynomial, while the right-hand side is a rational function by Lemma \ref{lem:trace}. The discussion above shows that there is a small analytic nonempty open subset of $\BA^n \times (\BC^\ast)^k$ where the claim holds, but then the claim holds in general.
\end{proof}

\section{Integrals over products of Hilbert schemes on curves}\label{sec:symmetric_product_of_curves}
We collect notation and lemmas for intersection–theoretic computations on products of Hilbert schemes of points on curves. Although these results are essential for the proof of Theorem~\ref{thm:intro_VI_formula} for Hyperquot schemes on genus-$g$ curves, experts may wish to skip this section on a first reading and return to it as needed.

	 Here $C$ is a smooth projective curve of genus $g$ and we consider, given a non-negative integer $m$ and a degree vector $\Bd \in \mathbb{N}^m$, the product of Hilbert schemes
	\begin{align*}
		X_\Bd \coloneqq  \prod_{i=1}^m C^{[d_i]}.
	\end{align*} 
	This is a smooth projective variety of dimension $\sum_{i=1}^m d_i$ and, on $X_{\Bd} \times C$, we can consider the tautological ideal sheaves $\mathcal{L}_i$ pulled back from $C^{[d_i]}\times C$. We will be interested in some cohomology classes on $X_\Bd$. In this section, we will denote by $H^\ast$ the singular cohomology with real coefficients.
	\subsection{\(\boldsymbol{x}\)-classes and \(\boldsymbol{y}\)-classes}
	For every $i \in \lbrace 1, \dots, m \rbrace$ we can consider the K\"unneth decomposition of the cohomology of $C^{[d_i]} \times C$:
	\begin{align*}
		H^\ast(C^{[d_i]} \times C) \simeq H^\ast(C^{[d_i]}) \otimes_\mathbb{R} H^\ast(C). 
	\end{align*}
	We define the following $x$ and $y$ classes through the K\"unneth decomposition of the first Chern class of a tautological bundle. Having fixed a symplectic basis $1, \delta_1, \dots, \delta_{2g}, \eta$ for $H^\ast(C)$ we define $x_i \in H^2(C^{[d_i]})$ and $y_i^j \in H^1(C^{[d_i]})$ to be the classes satisfying
	\begin{align}\label{kunneth_dec_Li_dual}
		c_1(\mathcal{L}_i^\vee) = x_i \otimes 1 + \sum_{j=1}^{2g} y_i^j \otimes \delta_j + d_i \otimes \eta.
	\end{align}	
	Let's recall a useful fact about intersections of $x$ and $y$-classes, see \cite{thaddeus} or \cite[Section 6]{sinha} for instance:
	\begin{lemma}\label{x_y_intersections}
		On a Hilbert scheme $C^{[d]}$, fix a positive integer $l\leq d$ and consider an integral of the form
		\begin{align*}
			\int_{C^{[d]}} x^{d-l} \prod_{i=1}^{2l} y^{f(i)}
		\end{align*}
		for a function $f:[2l]\rightarrow[2g]$.	It is non-zero if and only if there is an injective function $h:[l]\rightarrow [g]$ so that
		\begin{align*}
			\prod_{i=1}^{2l} y^{f(i)} = \pm \prod_{j=1}^{l} y^{h(j)} \wedge y^{g+h(j)}.
		\end{align*}
		Moreover, in that case
		\begin{align*}
			\int_{C^{[d]}} x^{d-l} \prod_{j=1}^{l} y^{h(j)} \wedge y^{g+h(j)} = \int_{C^{[d]}} x^d = 1.
		\end{align*}
	\end{lemma}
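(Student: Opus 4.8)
The plan is to pull everything back to the $d$-fold product $C^d$ along the quotient map $\pi : C^d \to C^{[d]} = \mathrm{Sym}^d C$, which is finite of degree $d!$. With real coefficients $\pi^\ast$ identifies $H^\ast(C^{[d]})$ with the ring of $S_d$-invariants in $H^\ast(C^d) = H^\ast(C)^{\otimes d}$, and $\int_{C^{[d]}}\omega = \tfrac{1}{d!}\int_{C^d}\pi^\ast\omega$. So the first step is to express $\pi^\ast x$ and $\pi^\ast y^j$. The pullback of the universal divisor $\Delta\subset C^{[d]}\times C$ along $\pi\times\mathrm{id}$ is the sum of the $d$ graphs $\Gamma_k=\{(p_1,\dots,p_d,q):q=p_k\}$, each of which is the pullback of $[\Delta_C]$ along $\mathrm{pr}_k\times\mathrm{id}:C^d\times C\to C\times C$; inserting the Künneth expansion of the diagonal class, $[\Delta_C] = \eta\otimes 1 + 1\otimes\eta + \sum_{j=1}^{g}\bigl(\delta_j\otimes\delta_{j+g}-\delta_{j+g}\otimes\delta_j\bigr)$, and comparing with \eqref{kunneth_dec_Li_dual}, one reads off
\[ \pi^\ast x = \sum_{k=1}^d \eta_k, \qquad \pi^\ast y^j = \pm\sum_{k=1}^d \delta_{j^\ast,k}, \]
where $\eta_k,\delta_{a,k}\in H^\ast(C^d)$ are the pullbacks of $\eta,\delta_a$ from the $k$-th factor and $j\mapsto j^\ast$ is the involution of $\{1,\dots,2g\}$ swapping $j$ and $j+g$; the overall sign of $\pi^\ast y^j$ plays no role below.

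Next I would record the elementary multiplication rules inside $H^\ast(C)^{\otimes d}$: $\eta_k^2 = 0$; within a single factor a product of two odd classes is $\pm\eta$ when they are symplectic partners $\delta_a,\delta_{a^\ast}$ and vanishes otherwise, while any product of three or more odd classes in one factor vanishes; and $\bigl(\sum_k\delta_{a,k}\bigr)^2 = 0$, since the diagonal terms square to zero and the off-diagonal terms cancel in antisymmetric pairs. This last identity gives $(\pi^\ast y^j)^2 = 0$, so $\int_{C^{[d]}}x^{d-l}\prod_i y^{f(i)}$ vanishes unless $f$ is injective; assume it is. Expand $(\pi^\ast x)^{d-l} = (d-l)!\sum_{|S|=d-l}\prod_{k\in S}\eta_k$ and $\prod_{i=1}^{2l}\pi^\ast y^{f(i)} = \pm\sum_{\nu}\prod_i\delta_{f(i)^\ast,\nu(i)}$ over maps $\nu:[2l]\to[d]$. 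A term can contribute to the top class $\eta_1\cdots\eta_d\in H^{2d}(C^d)$ only if no $\delta$ lands in a factor carrying an $\eta$ (else degree $3$, zero), each of the remaining $l$ factors receives exactly two $\delta$'s forming a symplectic pair, and these fill those $l$ factors bijectively. Since distinct symplectic pairs among $\{\delta_{f(i)^\ast}\}_i$ are disjoint, the two $\delta$'s in a factor must come from a single pair $\{i,i'\}$ with $\{f(i),f(i')\} = \{h,h+g\}$. Hence non-vanishing forces $\{f(1),\dots,f(2l)\}$ to be a disjoint union of $l$ symplectic pairs, i.e. $\prod_i y^{f(i)} = \pm\prod_{j=1}^l y^{h(j)}\wedge y^{h(j)+g}$ for an injective $h:[l]\to[g]$, which is the stated criterion.

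When this criterion holds, each contributing term of $(\pi^\ast x)^{d-l}\prod_i\pi^\ast y^{f(i)}$ is pinned down by a subset $S\subset[d]$ with $|S|=d-l$ (the factors carrying $\eta$) together with a bijection from the $l$ symplectic pairs of $f$ to $[d]\setminus S$ (the factors receiving each pair), and equals $\sigma_f\cdot\eta_1\cdots\eta_d$, where $\sigma_f\in\{\pm1\}$ is the product of the sign of the permutation regrouping the $2l$ odd factors — which depends only on the pairing pattern of $f$, not on $S$ or the bijection — and the per-pair signs $\delta_a\delta_{a^\ast}=\pm\eta$. Counting, there are $\binom{d}{l}$ choices of $S$, the factor $(d-l)!$ appears as the multiplicity in the expansion of $(\pi^\ast x)^{d-l}$, and there are $l!$ choices of bijection, so the $H^{2d}$-part of the pullback is $d!\,\sigma_f\,\eta_1\cdots\eta_d$, giving $\int_{C^{[d]}}x^{d-l}\prod_i y^{f(i)} = \tfrac1{d!}\cdot d!\,\sigma_f = \sigma_f = \pm1$. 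For the normalization in the statement one checks $\int_{C^{[d]}}x^d = \tfrac1{d!}\int_{C^d}\bigl(\sum_k\eta_k\bigr)^d = 1$ and that $\sigma_f = +1$ for the ordered product $\prod_j y^{h(j)}\wedge y^{h(j)+g}$, tracing the two signs in $\pi^\ast y^j$ and the identity $\delta_h\cup\delta_{h+g}=\eta$. The only real work is this sign-and-multiplicity bookkeeping; the one point worth flagging is the independence of $\sigma_f$ from the slot assignment, which is exactly what makes the factor $l!$ — and hence the cancellation of $d!$ — come out.
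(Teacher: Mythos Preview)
The paper does not actually prove this lemma; it only states it with a pointer to the literature (\cite{thaddeus} and \cite[Section~6]{sinha}). Your proof by pulling back along $\pi:C^d\to C^{[d]}$ and doing the combinatorics in $H^*(C)^{\otimes d}$ is correct and is precisely the Macdonald-style argument that those references implement, so in that sense you are supplying what the paper outsourced. The key structural points --- that $(\pi^*y^j)^2=0$ forces $f$ to be injective, that in each slot of $[d]\setminus S$ the two $\delta$'s must form a symplectic pair, and that this forces the image of $f$ to decompose into $l$ symplectic pairs --- are all right, and your count $\binom{d}{l}\cdot(d-l)!\cdot l!=d!$ is exactly what cancels the $1/d!$ from the covering.

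The one place to tighten is the final sign verification, which you leave as ``tracing the two signs''. Concretely: with $\delta_j\cup\delta_{j+g}=\eta$ one has $\pi^*y^{h}=-\sum_k\delta_{g+h,k}$ and $\pi^*y^{g+h}=+\sum_k\delta_{h,k}$, so $\pi^*(y^{h}\wedge y^{g+h})$ has diagonal part $-\sum_k\delta_{g+h,k}\delta_{h,k}=+\sum_k\eta_k$; since these degree-two classes commute, the product over $j$ gives the right sign with no residual $(-1)$. Your claim that $\sigma_f$ is independent of the slot assignment is correct because the pairing on $[2l]$ is fixed by $f$, the reordering sign depends only on that pairing, and the pair products $\pm\eta_k$ are even and hence commute across slots.
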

	By a slight abuse of notation we will still denote by $x_i$ and $y_i^j$ the pullbacks of these classes from $C^{[d_i]}$ to $X_\Bd$.
    
    \subsection{Theta classes}\label{sec:theta}
	Consider a vector $\Bw = (w^1, \dots, w^m) \in \mathbb{Z}^m$. The tensor product
	\begin{align*}
		\mathcal{L}_\Bw \coloneqq  \bigotimes_{i = 1}^m \mathcal{L}_i^{\otimes w^i}
	\end{align*}
	defines, after dualisation, a line bundle $\mathcal{L}_\Bw^\vee$ on $X_\Bd \times C$ of relative degree $\langle \Bw, \Bd \rangle \coloneqq  \sum_i w^i d_i$, hence it induces a morphism
	\begin{align*}
		\pi_\Bw : X_\Bd \rightarrow \text{Pic}^{\langle \Bw, \Bd \rangle} (C)
	\end{align*}
	satisfying
	\begin{align}\label{universal_property_pi_w}
		(\pi_\Bw \times \text{Id}_C)^\ast \mathcal{P} = \mathcal{L}_\Bw^\vee
	\end{align}
	for some particular choice of the Poincar\'e line bundle $\mathcal{P}$ on \(\text{Pic}^{\langle \Bw, \Bd \rangle} (C)\times C\), by the universal property of the Jacobian of $C$. We also consider the K\"unneth decomposition of the first Chern class of the dual of this Poincar\'e bundle\footnote{It's immediate to see that the $y$-classes are invariant under tensoring $\mathcal{P}$ with a line bundle coming from the Jacobian, so they are independent of the choice of the Poincar\'e bundle. On the other hand, the class $x$ changes by the first Chern class of the line bundle.}:
		\begin{align}\label{eq:c1P^vee_decomp}
			c_1(\mathcal{P}^\vee) = x \otimes 1 + \sum_{j=1}^{2g} y^j \otimes \delta_j + \langle \Bw, \Bd \rangle \otimes \eta.
		\end{align}
	\begin{definition}\label{def:theta_class}
		The \textit{theta class} on $X_\Bd$ corresponding to $w$ is the pullback of the theta class on the target Jacobian variety: $\theta_\Bw \coloneqq  \pi_\Bw^\ast \theta$.
	\end{definition}
	\begin{remark}\label{vanishing_theta_powers}
		From this definition it's clear that $\theta_\Bw^{j} = 0$ for every $j > g$.
	\end{remark}
	We have a straightforward characterization of this theta class in terms of $y$-classes:
	\begin{lemma}\label{theta_y_classes}
		Given $\Bw \in \mathbb{Z}^m$, the following equality holds true:
		\begin{align*}
			\theta_\Bw = \sum_{j=1}^g \left(\sum_{i=1}^m w^i y_i^j \right) \wedge \left(\sum_{i=1}^m w^i y_i^{j+g} \right).
		\end{align*}
		In particular $\theta_\Bw = \theta_{-\Bw}$.
	\end{lemma}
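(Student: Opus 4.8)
The plan is to unwind the definition $\theta_\Bw=\pi_\Bw^\ast\theta$ and reduce everything to two K\"unneth-level computations on the Jacobian $\Pic^{\langle\Bw,\Bd\rangle}(C)$. First I would recall the standard description of the principal polarization class: writing $y^1,\dots,y^{2g}\in H^1(\Pic^{\langle\Bw,\Bd\rangle}(C))$ for the classes in the K\"unneth decomposition \eqref{eq:c1P^vee_decomp} of $c_1(\CP^\vee)$, one has $\theta=\sum_{j=1}^{g}y^j\wedge y^{j+g}$ (see \cite{thaddeus} or \cite[Section 6]{sinha}). To keep the write-up self-contained I would re-derive this by squaring $c_1(\CP^\vee)$: since $\delta_a\delta_b=0$ on $C$ unless $\{a,b\}=\{j,j+g\}$, with $\delta_j\delta_{j+g}=\eta=-\delta_{j+g}\delta_j$, the purely mixed part of $c_1(\CP^\vee)^2$ equals $2\left(\sum_{j=1}^g y^j\wedge y^{j+g}\right)\otimes\eta$, and the universal property of the Poincar\'e bundle identifies the $\eta$-component of $\tfrac12 c_1(\CP^\vee)^2$ with $\theta$ (up to the orientation conventions fixing the symplectic basis). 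I would note that this is unaffected by the normalization of $\CP$ along the Jacobian factor (cf.\ the footnote to \eqref{eq:c1P^vee_decomp}) and by replacing $\CP$ with $\CP^\vee$, the latter only changing the sign of every $y^j$ simultaneously.

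Next I would compute the pullback $\pi_\Bw^\ast y^j$. Applying $(\pi_\Bw\times\mathrm{Id}_C)^\ast$ to \eqref{eq:c1P^vee_decomp}, invoking the universal property \eqref{universal_property_pi_w} together with the additivity $c_1(\CL_\Bw^\vee)=\sum_{i=1}^m w^i\,c_1(\CL_i^\vee)$ and the decompositions \eqref{kunneth_dec_Li_dual}, and comparing $\delta_j$-components, I would read off $\pi_\Bw^\ast y^j=\pm\sum_{i=1}^m w^i y_i^j$ with a global sign $\pm$ independent of $j$ (an artifact of the dualization conventions in \eqref{universal_property_pi_w}). Since $\pi_\Bw^\ast$ is a ring homomorphism, substituting this into $\theta=\sum_j y^j\wedge y^{j+g}$ gives
\[
\theta_\Bw=\pi_\Bw^\ast\theta=\sum_{j=1}^{g}\bigl(\pi_\Bw^\ast y^j\bigr)\wedge\bigl(\pi_\Bw^\ast y^{j+g}\bigr)=\sum_{j=1}^{g}\Bigl(\sum_{i=1}^m w^i y_i^j\Bigr)\wedge\Bigl(\sum_{i=1}^m w^i y_i^{j+g}\Bigr),
\]
the two copies of the sign multiplying to $1$. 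For the last assertion $\theta_\Bw=\theta_{-\Bw}$, I would observe that negating $\Bw$ negates each degree-one factor on the right, so the wedge products are unchanged; alternatively, $\CL_{-\Bw}=\CL_\Bw^\vee$, whence $\pi_{-\Bw}$ is $\pi_\Bw$ followed by inversion on $\Pic$, under which $\theta$ is invariant as a cohomology class.

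The computations are all routine once the conventions are fixed; the only point requiring care is bookkeeping the various sign and normalization choices in the two K\"unneth-level identities. As explained, every such sign enters the final formula squared, so I do not expect any genuine obstacle here — this lemma is essentially a repackaging of the classical relation between the Poincar\'e bundle, the theta divisor, and the $y$-classes.
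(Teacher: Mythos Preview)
Your proposal is correct and follows essentially the same approach as the paper: cite (or sketch) the classical identity $\theta=\sum_{j=1}^g y^j\wedge y^{j+g}$ on the Jacobian, then compare K\"unneth components of \eqref{universal_property_pi_w} using \eqref{kunneth_dec_Li_dual} and \eqref{eq:c1P^vee_decomp} to obtain $\pi_\Bw^\ast y^j=-\sum_i w^i y_i^j$, and finally observe that the global sign squares away. The paper simply cites \cite[p.~335]{arbarello_et_al_1} for the theta identity rather than re-deriving it; your squaring argument is a reasonable heuristic but note that identifying the $\eta$-component of $\tfrac12 c_1(\CP^\vee)^2$ with $\theta$ is not automatic---it is exactly the Poincar\'e formula---so a citation is still needed there.
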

	\begin{proof}
		It is well-known \cite[page 335]{arbarello_et_al_1} that
		\begin{align*}
			\theta = \sum_{j=1}^g y^j \wedge y^{j+g}.
		\end{align*}
		Then the thesis follows from \eqref{kunneth_dec_Li_dual}, \eqref{universal_property_pi_w}, and \eqref{eq:c1P^vee_decomp}, which indeed imply
		\begin{align*}
			\pi_\Bw^\ast y^j = -\sum_{i=1}^m w^i y^j_i 
		\end{align*}
		for every $j \in \lbrace 1, \dots, 2g \rbrace$.
	\end{proof}
	\subsection{Tautological pushforwards}\label{sec:GRR}
	We consider the projection
	\begin{align*}
		\pi: X_\Bd \times C \rightarrow X_\Bd
	\end{align*}
	and, given a vector $\Bw \in \BZ^m$, we look at the class in $K^0(X_\Bd)$ given by the pushforward $\pi_\ast [\mathcal{L}_\Bw]$. The following result follows from a standard application of the Grothendieck--Riemann--Roch theorem:
	\begin{lemma}\label{lem:Chern_char}
		The Chern character of $\pi_\ast [\mathcal{L}_\Bw]$ is
		\begin{align*}
			\mathrm{ch}(\pi_\ast [\mathcal{L}^\vee_\Bw]) = e^{\langle \Bw, \Bx\rangle}(1-g + \langle \Bw, \Bd \rangle - \theta_\Bw)
		\end{align*}
		where we set $\langle \Bw, \Bx \rangle \coloneqq  \sum_i w^i x_i$.
        % \marginal{\textcolor{red}{Is it $w^ix_i$?}}
	\end{lemma}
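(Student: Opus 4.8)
This is a routine Grothendieck--Riemann--Roch (GRR) computation, and the plan is the following. First I would apply GRR to the projection $\pi\colon X_\Bd\times C\to X_\Bd$, which is smooth projective of relative dimension one with relative tangent bundle $\pi_C^\ast T_C$; since $c_1(T_C)=(2-2g)\,\eta$ and the higher Todd classes of a curve vanish, $\td(T_\pi)=1+(1-g)\otimes\eta$. Hence
\[
\mathrm{ch}(\pi_\ast[\mathcal{L}_\Bw^\vee])=\pi_\ast\Bigl(\mathrm{ch}(\mathcal{L}_\Bw^\vee)\cdot\bigl(1+(1-g)\otimes\eta\bigr)\Bigr),
\]
where under the K\"unneth decomposition $\pi_\ast$ is integration along $C$, sending $\alpha\otimes\eta\mapsto\alpha$ and killing $\alpha\otimes 1$ and $\alpha\otimes\delta_j$.

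Next I would expand the Chern character of the line bundle. Writing $Y^j\coloneqq\sum_i w^i y_i^j$, formula \eqref{kunneth_dec_Li_dual} gives $c_1(\mathcal{L}_\Bw^\vee)=\langle\Bw,\Bx\rangle\otimes 1+N$ with
\[
N\coloneqq\sum_{j=1}^{2g}Y^j\otimes\delta_j+\langle\Bw,\Bd\rangle\otimes\eta .
\]
The crucial point is that $N$ is nilpotent with $N^2=-2\,\theta_\Bw\otimes\eta$ and $N^3=0$: all terms of $N^2$ involving the $\langle\Bw,\Bd\rangle\otimes\eta$ summand vanish because $\eta^2=0$ and $\delta_j\eta=0$, while $\bigl(\sum_j Y^j\otimes\delta_j\bigr)^2=-\sum_{j,k}(Y^jY^k)\otimes(\delta_j\delta_k)$ — with a graded-commutativity sign from moving $\delta_j$ past $Y^k$ — and collecting over the symplectic pairs $\delta_l\wedge\delta_{g+l}=\eta$ yields $-2\bigl(\sum_{l=1}^gY^l\wedge Y^{g+l}\bigr)\otimes\eta=-2\theta_\Bw\otimes\eta$ by Lemma~\ref{theta_y_classes}. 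Therefore $\mathrm{ch}(\mathcal{L}_\Bw^\vee)=e^{\langle\Bw,\Bx\rangle}\bigl(1+N+\tfrac12N^2\bigr)=e^{\langle\Bw,\Bx\rangle}\bigl(1+\sum_j Y^j\otimes\delta_j+\langle\Bw,\Bd\rangle\otimes\eta-\theta_\Bw\otimes\eta\bigr)$.

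Finally I would multiply this by $\td(T_\pi)=1+(1-g)\otimes\eta$, discard every term with an $\eta^2$ or $\delta_j\eta$ factor, and extract the $\eta$-component, namely $e^{\langle\Bw,\Bx\rangle}\bigl(1-g+\langle\Bw,\Bd\rangle-\theta_\Bw\bigr)$; applying $\pi_\ast$ then gives the asserted formula, since the leftover $1$- and $\delta_j$-components die under fibre integration. The only delicate point is the sign bookkeeping in the computation of $N^2$: the graded-commutativity sign there is exactly what produces the minus sign in front of $\theta_\Bw$, so I would take care to match it against the symplectic-basis conventions fixed for $H^\ast(C)$ and against the sign appearing in Lemma~\ref{theta_y_classes}.
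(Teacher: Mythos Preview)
Your proposal is correct and follows essentially the same strategy as the paper: apply Grothendieck--Riemann--Roch for $\pi$, use $\td(T_\pi)=1+(1-g)\eta$, and extract the $\eta$-component of the K\"unneth expansion. The only difference is that the paper computes $\mathrm{ch}(\mathcal{L}_\Bw^\vee)$ by pulling back the known formula $\mathrm{ch}(\mathcal{P}')=1+\langle\Bw,\Bd\rangle\eta+\gamma-\theta\otimes\eta$ for a normalized Poincar\'e bundle from \cite{arbarello_et_al_1} via $\pi_\Bw\times\mathrm{Id}_C$, whereas you expand $e^{c_1(\mathcal{L}_\Bw^\vee)}$ directly and compute $N^2=-2\theta_\Bw\otimes\eta$ by hand; these two computations are equivalent, and your direct route is arguably more self-contained.
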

	\begin{proof}
		Grothendieck--Riemann--Roch tells us that the Chern character of $\pi_\ast [\mathcal{L}^\vee_\Bw]$ is equal to 
		\begin{align}\label{GRR}
			\pi_\ast \left(\text{ch}(\mathcal{L}_\Bw^\vee) \cap \text{Td}(T_\pi)\right),
		\end{align}
        where $T_\pi$ is the (pullback of the) tangent bundle of the curve.
		Notice that, since $\pi$ is a projection, $\pi_\ast$ in cohomology is just integration over $C$.  This means that, in order to compute the pushforward (\ref{GRR}) we can just compute the K\"unneth decomposition of the argument in terms of the symplectic basis of $H^\ast(C)$ and extract the coefficient of $\eta$. We know that 
		\begin{align*}
			\text{Td}(T_C) = 1+ (1-g)\eta,
		\end{align*}
		so we are left to computing $\text{ch}(\mathcal{L}_\Bw^\vee)$ in terms of the $x$ and $y$ classes defined in (\ref{kunneth_dec_Li_dual}). Let $\mathcal{P}'$ be a 
        % representative for the 
        Poincar\'e bundle on $\text{Pic}^{\langle \Bw,\Bd \rangle}(C)$ such that there is a point $p \in C$ satisfying
		\begin{align*}
			\mathcal{P}'_{\mid_{ \text{Pic}^{\langle \Bw,\Bd \rangle}(C) \times p}} \simeq \mathcal{O}_{\text{Pic}^{\langle \Bw,\Bd \rangle}(C)}.
		\end{align*} 
		This means that $c_1(\mathcal{P}')$ has trivial component coming from $H^2(\text{Pic}^{\langle \Bw,\Bd \rangle}(C))$, and by (\ref{universal_property_pi_w}) 
		\begin{align*}
			c_1(\mathcal{L}_\Bw^\vee) = \langle \Bw, \Bx \rangle + (\pi_\Bw \times \text{Id}_C)^\ast c_1(\mathcal{P}'), 
		\end{align*}
		which, at the level of Chern characters, reads
		\begin{align*}
			\text{ch}(\mathcal{L}_\Bw^\vee) = e^{\langle \Bw,\Bx \rangle} (\pi_\Bw \times \text{Id}_C)^\ast \text{ch}(\mathcal{P}').
		\end{align*}
		The Chern character of this Poincar\'e line bundle is
		\begin{align*}
			\text{ch}(\mathcal{P}') = 1 + \langle \Bw, \Bd \rangle \eta + \gamma - \theta \otimes \eta,
		\end{align*}
		as computed in \cite[page 336]{arbarello_et_al_1}, where $\gamma$ is a class in $H^1(\text{Pic}^{\langle \Bw,\Bd \rangle}(C)) \otimes H^1(C)$. Therefore, we find
		\begin{align*}
			\text{ch}(\mathcal{L}_\Bw^\vee) = e^{\langle \Bw,\Bx \rangle} (1 + \langle \Bw, \Bd \rangle \eta + (\pi_\Bw \times \text{Id}_C)^\ast\gamma - \theta_\Bw \otimes \eta).
		\end{align*}
		Putting all together, the Chern character of $\pi_\ast [\mathcal{L}_\Bw^\vee]$ is the coefficient of $\eta$ in 
		\begin{align*}
			e^{\langle \Bw,\Bx \rangle} (1 + \langle \Bw, \Bd \rangle \eta + (\pi_\Bw \times \text{Id}_C)^\ast\gamma - \theta_\Bw \otimes \eta)(1+(1-g)\eta),
		\end{align*}
		which gives the claimed expression after noticing that $\gamma$ doesn't contribute since its component coming from $C$ is of odd degree.
	\end{proof}
    From this we can obtain an expression for the total Chern class of this pushforward:
    \begin{proposition}\label{chern_class_pushforward}
        The total Chern class of $\pi_\ast [\mathcal{L}_\Bw]$ is
		\begin{align*}
			c(\pi_\ast [\mathcal{L}^\vee_\Bw]) = (1+\langle \Bw, \Bx\rangle)^{1-g + \langle \Bw, \Bd \rangle}\exp\left(- \frac{\theta_\Bw}{1+\langle \Bw, \Bx\rangle}\right).
		\end{align*}
    \end{proposition}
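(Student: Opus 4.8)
The plan is to read off the total Chern class directly from the Chern character formula of Lemma~\ref{lem:Chern_char}, using that $\mathrm{ch}(\pi_\ast[\mathcal{L}_\Bw^\vee])$ has the very special shape ``$e^{a}$ times $(N-\theta_\Bw)$'', where I abbreviate $a\coloneqq \langle\Bw,\Bx\rangle$ and $N\coloneqq 1-g+\langle\Bw,\Bd\rangle$. The key structural facts that make this a finite (hence rigorous) computation are that $a$ is nilpotent in $H^\ast(X_\Bd)$, being a positive-degree class in a finite-dimensional graded ring, and that $\theta_\Bw^{g+1}=0$ by Remark~\ref{vanishing_theta_powers}; so every power series written below is actually a polynomial.

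The first step is to record the universal identity expressing $\log c$ in terms of the components of the Chern character of an arbitrary (virtual) bundle $\xi$: applying $\sum_i\log(1+r_i)=\sum_{k\ge1}\frac{(-1)^{k-1}}{k}p_k$ to the Chern roots $r_i$ of $\xi$ and substituting $p_k=k!\,\mathrm{ch}_k(\xi)$ gives
\[
\log c(\xi)\;=\;\sum_{k\ge1}(-1)^{k-1}(k-1)!\,\mathrm{ch}_k(\xi),
\]
which one checks first for line bundles (both sides equal $\log(1+c_1)$) and then extends to virtual bundles by the splitting principle. The second step is to expand Lemma~\ref{lem:Chern_char}: since $\theta_\Bw$ has cohomological degree $2$, the degree-$2k$ part of $e^{a}(N-\theta_\Bw)$ is
\[
\mathrm{ch}_k(\pi_\ast[\mathcal{L}_\Bw^\vee])\;=\;\frac{N\,a^{k}}{k!}\;-\;\frac{\theta_\Bw\,a^{k-1}}{(k-1)!}\qquad(k\ge1).
\]
Plugging this into the previous display, the factorials cancel and the two resulting sums are recognized as $N\log(1+a)$ and (a geometric series) $\theta_\Bw/(1+a)$, so that
\[
\log c(\pi_\ast[\mathcal{L}_\Bw^\vee])\;=\;N\log(1+a)\;-\;\frac{\theta_\Bw}{1+a}.
\]
Exponentiating yields $c(\pi_\ast[\mathcal{L}_\Bw^\vee])=(1+a)^{N}\exp\!\big({-\theta_\Bw}/{(1+a)}\big)$, which is exactly the claimed formula.

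There is no real obstacle here — this is a routine passage from Chern character to total Chern class — so the write-up should be short; the only points worth flagging are the justification of the $\log c$ identity above, and the remark that $N$ may be an arbitrary (possibly negative) integer, so $(1+a)^{N}$ is to be understood as the nilpotent power series $\exp\!\big(N\log(1+a)\big)$, which is precisely what exponentiating the computed $\log c$ produces. If a more conceptual phrasing is preferred, one can instead write $\mathrm{ch}(\pi_\ast[\mathcal{L}_\Bw^\vee])=\mathrm{ch}(L)\cdot\mathrm{ch}(\eta)$ with $c_1(L)=a$ and $\mathrm{ch}(\eta)=N-\theta_\Bw$: the latter forces $p_1(\eta)=-\theta_\Bw$ and $p_k(\eta)=0$ for $k\ge2$, whence $c(\eta)=e^{-\theta_\Bw}$ by Newton's identities, and then $c(\pi_\ast[\mathcal{L}_\Bw^\vee])=c(\eta\otimes L)$ is evaluated via the twist formula $\prod_i(1+r_i+a)=(1+a)^{N}\prod_i\big(1+r_i/(1+a)\big)$, giving the same answer.
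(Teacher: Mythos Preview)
Your proof is correct and essentially follows the same approach as the paper. In fact, your ``more conceptual phrasing'' at the end---factoring $\mathrm{ch}$ as $\mathrm{ch}(L)\cdot(N-\theta_\Bw)$, deducing $c(\eta)=e^{-\theta_\Bw}$ from the vanishing of higher power sums, and then applying the twist formula $\prod_i(1+r_i+a)=(1+a)^N\prod_i(1+r_i/(1+a))$---is precisely the argument the paper gives; your main approach via the identity $\log c(\xi)=\sum_{k\ge1}(-1)^{k-1}(k-1)!\,\mathrm{ch}_k(\xi)$ is just a direct unwinding of the same computation without the intermediate factorization.
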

    \begin{proof}
        This follows from Lemma \ref{lem:Chern_char} and the following standard fact: for every vector bundle $V$, if $\mathrm{ch}(V) = (k+\alpha)\mathrm{ch}(L)$ for $k \in \BZ$, $\alpha \in H^2(X)$ and a line bundle $L$, then $c(V) = (1+c_1(L))^k \exp(\frac{\alpha}{1+c_1(L)})$. To see that this holds true, apply the splitting principle and let $\omega_1, \dots, \omega_k$ be the Chern roots of $V\otimes L^{-1}$, so that
        \begin{align*}
            \log(c(V\otimes L^{-1})) = \sum_{i=1}^k \log(1+\omega_i) \quad \text{and} \quad \mathrm{ch}(V\otimes L^{-1}) = \sum_{i=1}^k \exp(\omega_i) = k+\alpha.
        \end{align*}
        From the expression for the Chern character we find that $\sum_{i=1}^k {\omega_i^l}$ is non-zero if and only if $l=0$ or $l=1$, in which case we find $\sum_{i=1}^k \omega_i = \alpha$. Then expanding the logarithms we find that \(\log(c(V\otimes L^{-1})) = \alpha\)
        and therefore
        \begin{align*}
            \prod_{i=1}^k (1 + \omega_i) = c(V\otimes L^{-1}) = \exp(\alpha).
        \end{align*}
        This readily implies that
        \begin{align*}
            c(V) &= \prod_{i=1}^k (1 + \omega_i + c_1(L))
            = (1+c_1(L))^k \prod_{i=1}^k \left(1 + \frac{\omega_i}{1+c_1(L)}\right)\\
            &= (1+c_1(L))^k \exp\left(\frac{\alpha}{1+c_1(L)}\right),
        \end{align*}
        concluding the proof.
    \end{proof}
\subsection{Substitution rules}\label{sec:Substitution_rule} 
    Here we study some integrals of the form
	\begin{align*}
		\int_{X_\Bd} P(x_1, \dots, x_m) Q(\theta_{\Bw_1}, \dots \theta_{\Bw_n})
	\end{align*}
	for specific choices of a polynomial $Q$ in $n$ variables. In this expression, the vectors $\Bw_1,\dots, \Bw_n \in \mathbb{Z}^m$ are possibly not distinct and $P$ is a polynomial in $m$ variables. Our aim in this section is to find a class $R_Q(\Bw_1, \dots, \Bw_n, x_1, \dots, x_m)$, involving the vectors $\Bw_i$ and the $x$-classes, so that the integral above is equal to
	\begin{align*}
		\int_{X_\Bd} P(x_1, \dots, x_m) R_Q(\Bw_1, \dots, \Bw_n, x_1, \dots, x_m).
	\end{align*}
	Notice that we don't aim at an equality at the level of cohomology classes; we just look for classes that integrate to the same number. 
	\begin{definition}
		Let $\alpha_1, \alpha_2$ be two cohomology classes on $X_\Bd$ such that
		\begin{align*}
			\int_{X_\Bd} \alpha_1 \wedge P(x_1, \dots, x_m) = \int_{X_\Bd} \alpha_2 \wedge P(x_1, \dots, x_m)
		\end{align*}
		for every polynomial $P$ in the $x$-classes. We denote this equivalence relation by
		\begin{align*}
			\alpha_1 \rightsquigarrow \alpha_2
		\end{align*}
		and call it a \textit{substitution rule}.
	\end{definition}
	We state here the first substitution rule, which is just the result of a rather lengthy linear algebra computation:
	\begin{lemma}\label{power_1_substitution_rule}
		Let $n$ be a positive integer and consider $n$ vectors $\Bw_1,\dots, \Bw_n \in \mathbb{Z}^m$, from which we construct the cohomology-valued $n \times n$ matrix $M$ whose entries are
		\begin{align*}
			M_a^b \coloneqq  \sum_{c=1}^m w_a^c w_b^c x_c.
		\end{align*}
		Then 
		\begin{align*}
			\prod_{i=1}^n \theta_{\Bw_i} \rightsquigarrow \sum_{A_1 \sqcup \dots \sqcup A_g = [n]} \prod_{j=1}^g \det(M_{A_j,A_j})
		\end{align*}
		is a substitution rule, where the sum is over all ordered partitions of $[n]$ in $g$ (possibly empty) subsets, and $M_{A_i,A_i}$ denotes the submatrix of $M$ obtained by only considering the rows and the columns indexed by elements of $A_i$.
	\end{lemma}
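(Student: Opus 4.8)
The plan is to reduce the claim to a purely linear-algebraic identity about Gaussian-type integrals over the cohomology of the Jacobian, using Lemma~\ref{theta_y_classes} to express each $\theta_{\Bw_i}$ in terms of the $y$-classes and then Lemma~\ref{x_y_intersections} to evaluate the resulting monomials. Write $u_i \coloneqq \sum_{c=1}^m w_i^c y_c^j$ and $v_i \coloneqq \sum_{c=1}^m w_i^c y_c^{j+g}$, so that $\theta_{\Bw_i} = \sum_{j=1}^g u_i^{(j)} \wedge v_i^{(j)}$ where $u_i^{(j)}, v_i^{(j)}$ are the two "halves" of the symplectic data at index $j$. Expanding $\prod_{i=1}^n \theta_{\Bw_i}$ multilinearly produces a sum over functions $\phi : [n] \to [g]$ recording which symplectic index each factor contributes, and the body of the $\phi$-term is $\pm \bigwedge_{i=1}^n u_i^{(\phi(i))} \wedge v_i^{(\phi(i))}$. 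Grouping the factors by the fibers $A_j \coloneqq \phi^{-1}(j)$ partitions $[n]$ into $g$ ordered (possibly empty) blocks, so the sum over $\phi$ becomes a sum over ordered partitions $A_1 \sqcup \cdots \sqcup A_g = [n]$.

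The next step is to integrate each block separately against an arbitrary polynomial $P$ in the $x$-classes. For a fixed block $A_j$ of size $\ell$, the product $\bigwedge_{i \in A_j}(u_i^{(j)} \wedge v_i^{(j)})$ is a form built only from the $y_c^j$ and $y_c^{j+g}$ classes with $j$ fixed; by Lemma~\ref{x_y_intersections} (applied factor-of-the-curve by factor-of-the-curve, after expanding $P$ monomially in the $x_c$ and reducing each $C^{[d_c]}$ integral) the only surviving contributions pair up a $y_c^j$ with a $y_{c'}^{j+g}$ on the \emph{same} Hilbert-scheme factor $C^{[d_c]}$, contributing $x_c$ to the right-hand integrand. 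A bookkeeping computation shows that summing over all such pairings of the $2\ell$ one-forms in the block reproduces exactly the determinant $\det(M_{A_j,A_j})$ of the $\ell \times \ell$ submatrix with entries $M_a^b = \sum_c w_a^c w_b^c x_c$: the expansion of the determinant over permutations $\tau \in S_{A_j}$ matches the sign-weighted sum over ways of matching $u_a^{(j)}$ with $v_{\tau(a)}^{(j)}$, and the entry $M_a^{\tau(a)} = \sum_c w_a^c w_{\tau(a)}^c x_c$ is precisely the coefficient one gets from contracting $u_a^{(j)}$ with $v_{\tau(a)}^{(j)}$ via Lemma~\ref{x_y_intersections}. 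Since distinct blocks involve disjoint symplectic indices $j$ and hence independent $y$-classes, the contributions multiply, yielding $\prod_{j=1}^g \det(M_{A_j,A_j})$ for each ordered partition, and summing over partitions gives the asserted substitution rule.

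The main obstacle is the sign bookkeeping in the middle step: one must check that the Koszul signs arising from reordering the wedge product $\bigwedge_{i \in A_j} u_i^{(j)} \wedge v_i^{(j)}$ so as to isolate each $u$–$v$ pair on a common curve factor, combined with the signs from Lemma~\ref{x_y_intersections} (which records $y^{h}\wedge y^{g+h} = +1$ but $y^{g+h}\wedge y^{h}=-1$), assemble into exactly the permutation sign $\operatorname{sgn}(\tau)$ needed for the determinant. This is the "lengthy linear algebra computation" alluded to in the statement; it is most cleanly organized by first treating a single block with all $\Bw_i$ supported on a single coordinate (so $M$ is a rank-one matrix and the determinant vanishes unless $\ell = 1$), verifying the sign there, and then observing that the general case follows by multilinearity in the $w$-variables since both sides are polynomial in the entries $w_i^c$. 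The remaining steps — expanding $P$ into monomials, reducing products over the $C^{[d_c]}$ to the normalized integrals of Lemma~\ref{x_y_intersections}, and checking that empty blocks contribute the empty determinant $1$ (consistent with Remark~\ref{vanishing_theta_powers}, since partitions into more than $g$ nonempty blocks cannot occur) — are routine.
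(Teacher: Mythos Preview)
Your proposal is correct and follows essentially the same approach as the paper: expand each $\theta_{\Bw_i}$ via Lemma~\ref{theta_y_classes}, sum over functions $[n]\to[g]$, group by fibers to obtain the ordered partition $A_1\sqcup\cdots\sqcup A_g$, and recognize the determinant from the permutation expansion using Lemma~\ref{x_y_intersections}. The only real difference is in the sign bookkeeping: the paper carries out the reordering directly, tracking the permutation $\sigma$ in the stabilizer $(S_n)_i$ and extracting $\operatorname{sgn}(\sigma)$ by an explicit swap of the $y$-factors, whereas you propose a multilinearity reduction in the $w_i^c$. Your reduction works, but note that reducing to the case where all $\Bw_i$ share a \emph{single} coordinate is vacuous for signs (the block determinant vanishes once $|A_j|\geq 2$); the right reduction is to let each $\Bw_i$ be an arbitrary standard basis vector, after which the block product is already in the canonical form of Lemma~\ref{x_y_intersections} and the sign is trivially $+1$, matching the diagonal determinant.
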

	\begin{proof}
		By Lemma \ref{theta_y_classes} we can express the product of theta classes in terms of the $y$-classes. By considering sums over $n$-tuples as sums over functions from $[n]$ we can write
		\begin{align*}
			\prod_{i=1}^n \theta_{\Bw_i} = \sum_{i: [n]\rightarrow [g]} \sum_{j:[n]\rightarrow [m]} \sum_{k : [n]\rightarrow [m]} \Sigma_{i,j,k},
		\end{align*}
		where the summand $\Sigma_{i,j,k}$ is
		\begin{align*}
			\Sigma_{i,j,k} \coloneqq  \prod_{h=1}^n \left(w_h^{j(h)} w_h^{k(h)}\right) y_{j(h)}^{i(h)} \wedge y_{k(h)}^{i(h) + g}.
		\end{align*}
		By Lemma \ref{x_y_intersections} we know which of these summands will have a chance\footnote{meaning that it can give a non-zero contribution depending on the shape of $P$.} at contributing to the integral. More precisely we know that the integral of $P(x_1,\dots, x_m)\Sigma_{i,j,k}$ vanishes unless there is a permutation $\sigma \in S_n$ such that
		\begin{enumerate}
			\item\label{stabiliser_condition} $i(h) = i(\sigma(h))$ for every $h \in [n]$, and
			\item $k(h) = j(\sigma(h))$ for every $h \in [n]$.
		\end{enumerate}
		If such a $\sigma$ exists, then the corresponding summand is
		\begin{align*}
			\Sigma_{i,j,k} \coloneqq  \Sigma_{i,j,j\circ \sigma} = \prod_{h=1}^n \left(w_h^{j(h)} w_h^{j(\sigma(h))}\right) y_{j(h)}^{i(h)} \wedge y_{j(\sigma(h))}^{i(\sigma(h)) + g}.
		\end{align*}
		By a simple reordering of the variables, we see that
		\begin{align*}
			\prod_{h=1}^n y_{j(h)}^{i(h)} \wedge y_{j(\sigma(h))}^{i(\sigma(h)) + g} = (-1)^{\sigma} \prod_{h=1}^n y_{j(h)}^{i(h)} \wedge y_{j(h)}^{i(h) + g}.
		\end{align*}
		Therefore, we can rewrite the summand $\Sigma_{i,j,\sigma}$ as
		\begin{align*}
			\Sigma_{i,j,\sigma} = \mathrm{sgn}(\sigma) \prod_{h=1}^n \left(w_h^{j(h)} w_h^{j(\sigma(h))}\right) y_{j(h)}^{i(h)} \wedge y_{j(h)}^{i(h) + g}.
		\end{align*}
		By Lemma \ref{x_y_intersections} we find the substitution rule for the summand $\Sigma_{i,j, \sigma}$
		\begin{align*}
			\Sigma_{i,j,\sigma} \rightsquigarrow \mathrm{sgn}(\sigma) \prod_{h=1}^n w_h^{j(h)} w_h^{j(\sigma(h))} x_{j(h)}.
		\end{align*}
		Putting all together, we obtain that 
\begin{align}\label{intermediate_substitution}
			\prod_{i=1}^n \theta_{w_i} \rightsquigarrow \sum_{i: [n]\rightarrow [g]} \sum_{j:[n]\rightarrow [m]} \sum_{\sigma \in (S_n)_i} \mathrm{sgn}(\sigma) \prod_{h=1}^n w_h^{j(h)} w_h^{j(\sigma(h))} x_{j(h)},
		\end{align}
		where $(S_n)_i$ is the stabilizer of $i$ for the action of $S_n$ on $\text{Hom}([n], [m])$, or in other words is the set of $\sigma$ satisfying condition \ref{stabiliser_condition} above. Let's simplify this expression. Keeping $i$ and $\sigma$ fixed, consider the sum
		\begin{align*}
			&\sum_{j: [n] \rightarrow [m]} \prod_{h=1}^n w_h^{j(h)} w_h^{j(\sigma(h))} x_{j(h)} = \sum_{j: [n] \rightarrow [m]} \prod_{h=1}^n w_h^{j(h)} w_{\sigma^{-1}(h)}^{j(h)} x_{j(h)}\\
			=& \sum_{j_1, \dots, j_n=1}^m \prod_{h=1}^n w_h^{j_h} w_{\sigma^{-1}(h)}^{j_h} x_{j_h}
			= \prod_{h=1}^n \sum_{j=1}^m w_h^{j} w_{\sigma^{-1}(h)}^{j} x_j			= \prod_{h=1}^n M_h^{\sigma^{-1}(h)}.
		\end{align*}
		This means that the substitution rule (\ref{intermediate_substitution}) has right-hand side equal to
		\begin{align}\label{intermediate_substitution_2}
			\sum_{i: [n]\rightarrow [g]} \sum_{\sigma \in (S_n)_i} \mathrm{sgn}(\sigma) \prod_{h=1}^n M_h^{\sigma^{-1}(h)}.
		\end{align}
		Now notice that a function $i : [n] \rightarrow [g]$ corresponds to an ordered partition $A_1 \sqcup \dots \sqcup A_g = [n]$ via $A_l \coloneqq  i^{-1}(l)$, and the group $(S_n)_i$ corresponds to the product $\prod_{l=1}^g S_{A_l}$ of automorphisms groups of the subsets forming the partition. Thus, we find that (\ref{intermediate_substitution_2}) is equal to
		\begin{align*}
			&\sum_{A_1 \sqcup \dots \sqcup A_g = [n]} \sum_{\sigma \in \prod_{l=1}^g S_{A_l}} \mathrm{sgn}(\sigma) \prod_{h=1}^n M_h^{\sigma^{-1}(h)}
			= \sum_{A_1 \sqcup \dots \sqcup A_g = [n]} \prod_{l=1}^g \sum_{\sigma_l \in S_{A_l}} (-1)^{\sigma_l} \prod_{h \in A_l} M_h^{\sigma_l^{-1}(h)}\\
			= &\sum_{A_1 \sqcup \dots \sqcup A_g = [n]} \prod_{l=1}^g \det(M_{A_j,A_j}),
		\end{align*}
		where the last equality is the usual expansion of the determinant.
	\end{proof}
	\subsubsection{The monomial substitution rule}
	In Lemma \ref{power_1_substitution_rule} we found a substitution rule once the monomial in the theta classes is expressed in the form $\prod_{i=1}^n \theta_{\Bw_i}$, where each theta class appears with power one but the vectors $\Bw_i$ are allowed to appear repeated multiple times. 
	We now find a more explicit formula for the presentation of monomials of the form $\prod_{i=1}^n \theta_{\Bw_i}^{u_i}$, where powers are allowed:
	\begin{proposition}\label{prop:mono_sub}
		Let $n$ be a positive integer, consider $n$ vectors $\Bw_1, \dots, \Bw_n \in \mathbb{Z}^m$ and a $n$-tuple of non-negative integers $u = (u_1, \dots, u_n)$. Then the substitution rule
		\begin{align*}
			\prod_{i=1}^n \theta^{u_i}_{\Bw_i}\rightsquigarrow \prod_{i=1}^n u_i! \sum_{(A_1,  \dots,A_g)\in \mathcal{P}(u)} \prod_{j=1}^g \det(M_{A_j,A_j})
		\end{align*}
		holds true, where $M$ is the same matrix of Lemma \ref{power_1_substitution_rule} and $\mathcal{P}(u)$ is the set of $g$-tuples $(A_1, \dots, A_g)$ of subsets of $[n]$ satisfying the property
		\begin{align}\label{u_property}
			i \text{ belongs to exactly } u_i \text{ sets among } A_1, \dots, A_g \text{ for every } i \in [n]. 
		\end{align}
	\end{proposition}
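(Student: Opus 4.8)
The plan is to reduce to Lemma~\ref{power_1_substitution_rule} by artificially splitting each power $\theta_{\Bw_i}^{u_i}$ into a product of $u_i$ copies of $\theta_{\Bw_i}$. Concretely, set $N \coloneqq u_1 + \dots + u_n$, fix a surjection $\phi \colon [N] \to [n]$ with $\#\phi^{-1}(i) = u_i$ for each $i$, and put $\mathbf{v}_\ell \coloneqq \Bw_{\phi(\ell)}$ for $\ell \in [N]$. Since $\theta_{\mathbf{v}_\ell} = \theta_{\Bw_{\phi(\ell)}}$, we have the identity of cohomology classes $\prod_{i=1}^n \theta_{\Bw_i}^{u_i} = \prod_{\ell=1}^N \theta_{\mathbf{v}_\ell}$, so Lemma~\ref{power_1_substitution_rule} applied to the $N$ vectors $\mathbf{v}_1,\dots,\mathbf{v}_N$ yields the substitution rule
\[
\prod_{i=1}^n \theta_{\Bw_i}^{u_i} \;\rightsquigarrow\; \sum_{B_1 \sqcup \dots \sqcup B_g = [N]} \prod_{j=1}^g \det\bigl(\tilde M_{B_j,B_j}\bigr),
\]
where $\tilde M$ is the $N\times N$ matrix with entries $\tilde M_\ell^{\ell'} = \sum_{c=1}^m v_\ell^c v_{\ell'}^c x_c = M_{\phi(\ell)}^{\phi(\ell')}$. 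It then remains to identify this right-hand side with $\prod_i u_i!\sum_{(A_1,\dots,A_g)\in\mathcal{P}(u)} \prod_j \det(M_{A_j,A_j})$ by a purely combinatorial manipulation, which I would carry out in two steps.

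First I would exploit that $\tilde M$ has repeated rows: whenever $\phi(\ell) = \phi(\ell')$, rows $\ell$ and $\ell'$ of $\tilde M$ coincide, and this persists in any principal submatrix $\tilde M_{B_j,B_j}$ containing both indices. Hence $\det(\tilde M_{B_j,B_j}) = 0$ unless $\phi$ restricts to an injection on $B_j$, so the only partitions $(B_1,\dots,B_g)$ contributing are those for which each block $B_j$ meets every fiber $\phi^{-1}(i)$ at most once. For such a partition, set $A_j \coloneqq \phi(B_j)\subseteq[n]$; then $\phi|_{B_j}$ is a bijection $B_j\to A_j$, so up to a simultaneous permutation of rows and columns $\tilde M_{B_j,B_j}$ equals $M_{A_j,A_j}$, giving $\det(\tilde M_{B_j,B_j}) = \det(M_{A_j,A_j})$. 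Moreover the requirements that the $B_j$ partition $[N]$ and that $\#\phi^{-1}(i)=u_i$ translate precisely into the condition that each $i\in[n]$ lies in exactly $u_i$ of the sets $A_1,\dots,A_g$, i.e. $(A_1,\dots,A_g)\in\mathcal{P}(u)$.

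Second I would count the fibers of the map $(B_1,\dots,B_g)\mapsto(A_1,\dots,A_g)$. Given $(A_j)\in\mathcal{P}(u)$, reconstructing a compatible $(B_j)$ amounts, for each $i\in[n]$, to choosing a bijection from $\phi^{-1}(i)$ (which has $u_i$ elements) to the set $\{\,j : i\in A_j\,\}$ (which also has $u_i$ elements), and these choices are independent over $i$; so there are exactly $\prod_i u_i!$ preimages, each contributing the same product of determinants. Summing over the fibers gives the claimed identity, hence the substitution rule. I expect the main obstacle to be purely bookkeeping rather than conceptual: making the repeated-row vanishing argument airtight inside every relevant principal submatrix, and checking that the reindexing $(B_j)\leftrightarrow(A_j)$ together with the $\prod_i u_i!$-to-one count respects signs and orderings, since $\det$ is invariant only under simultaneous row and column permutations. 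No input beyond Lemma~\ref{power_1_substitution_rule} and elementary combinatorics is needed.
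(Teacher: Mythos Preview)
Your proposal is correct and essentially identical to the paper's proof: the paper introduces the multiset $U$ with a map $f:U\to[n]$ playing exactly the role of your $\phi:[N]\to[n]$, applies Lemma~\ref{power_1_substitution_rule}, discards partitions on which $f$ fails to be injective by the repeated-row argument, and then counts the $\prod_i u_i!$ preimages of each $(A_1,\dots,A_g)\in\mathcal{P}(u)$. The only cosmetic difference is notation.
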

	\begin{proof}
		Consider the multiset $U$ whose underlying set is $[n]$ and where $i$ is repeated $u_i$ times. In other words, we consider the set
		\begin{align*}
			U \coloneqq  \left\lbrace 1_1, \dots, 1_{u_1}, 2_1, \dots, 2_{u_2}, \dots, n_1, \dots, n_{u_n} \right\rbrace,
		\end{align*}
		which is endowed with the map $f: U \rightarrow [n]$ given by $f(i_j)\coloneqq  i$.
		Since the monomial we are trying to substitute is 
		\begin{align*}
			\prod_{i=1}^n \theta_{\Bw_i}^{u_i} = \prod_{i \in U} \theta_{\Bw_{f(i)}},
		\end{align*}
		by Lemma \ref{power_1_substitution_rule} we find the substitution rule
		\begin{align}\label{intermediate_monomial_rule}
			\prod_{i=1}^n \theta_{\Bw_i}^{u_i} \rightsquigarrow \sum_{\tilde{A}_1 \sqcup \dots \sqcup \tilde{A}_g = U} \prod_{j=1}^g \det(\tilde{M}_{\tilde{A}_j,\tilde{A}_j})
		\end{align}
		where $\tilde{M}$ is the matrix whose rows and columns are indexed by elements of $U$ and whose entries are $\tilde{M}_i^j = M_{f(i)}^{f(j)}$. This means that the right-hand side of (\ref{intermediate_monomial_rule}) is 
		\begin{align} \label{intermediate_monomial_rule_2}
			\sum_{\tilde{A}_1 \sqcup \dots \sqcup \tilde{A}_g = U} \prod_{j=1}^g \det(M_{f(\tilde{A}_j),f(\tilde{A}_j)}).
		\end{align}
		This immediately shows that the function $f$ must be injective on every $\tilde{A}_j$ in order for $\det(\tilde{M}_{\tilde{A}_j, \tilde{A}_j})$ to be non-zero, hence we can restrict the sum over partitions of $U$ so that each $\tilde{A}_i$ contains at most one copy of each element of $[n]$. Clearly the function
		\begin{align*}
			\left\lbrace{\text{partitions } \tilde{A}_1 \sqcup \dots \sqcup \tilde{A}_g = U \text{ s.t. } f \text{ is injective on each } \tilde{A}_i }\right\rbrace \xrightarrow{f} \mathcal{P}(u)
		\end{align*}
		has fibers of cardinality $\prod_{i=1}^n u_i!$, since given a partition of $U$ one can shuffle all the copies of each element of $[n]$ to produce another partition of $U$ mapping to the same subsets of $[n]$. Therefore (\ref{intermediate_monomial_rule_2}) coincides with the claimed substitution rule once we rename $A_j \coloneqq  f(\tilde{A}_j)$.
	\end{proof}
	\subsubsection{The exponential substitution rule}
	In our applications, we will substitute polynomials in the theta classes that appear as truncated exponentials. For this kind of polynomials, the substitution rule simplifies:
	\begin{proposition}\label{exponential_substitution_rule_n}
		Let $f_1, \dots, f_n \in \mathbb{C}[x_1, \dots, x_m]$ be polynomials in the $x$-classes, from which we form the diagonal $n \times n$ matrix 
		\begin{align*}
			C \coloneqq  \text{diag}\left(f_1, \dots, f_n\right).
		\end{align*}
		Then we have the substitution rule
		\begin{align*}
			\prod_{i=1}^n \exp(f_i \theta_{\Bw_i}) \rightsquigarrow \det(\mathbb{I}_n + CM)^g.
		\end{align*}
		% holds true.
	\end{proposition}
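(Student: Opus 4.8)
The plan is to expand the truncated exponentials, apply the monomial substitution rule of Proposition~\ref{prop:mono_sub} term by term, and then recognize the resulting combinatorial sum as a power of a determinant. First I would record two elementary closure properties of $\rightsquigarrow$, both immediate from the definition of a substitution rule: if $\alpha \rightsquigarrow \beta$ and $P$ is a polynomial in the $x$-classes, then $P\alpha \rightsquigarrow P\beta$ (since $\int_{X_\Bd} P\alpha\, R = \int_{X_\Bd} \alpha\, (PR) = \int_{X_\Bd} \beta\,(PR) = \int_{X_\Bd} P\beta\, R$ for every polynomial $R$ in the $x$-classes), and $\rightsquigarrow$ is additive. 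Using Remark~\ref{vanishing_theta_powers}, each factor is the finite sum $\exp(f_i\theta_{\Bw_i}) = \sum_{u_i=0}^{g} f_i^{u_i}\theta_{\Bw_i}^{u_i}/u_i!$, so there are no convergence subtleties, and
\[
\prod_{i=1}^n \exp(f_i\theta_{\Bw_i}) = \sum_{u\in\BN^n}\Bigl(\prod_{i=1}^n \frac{f_i^{u_i}}{u_i!}\Bigr)\prod_{i=1}^n\theta_{\Bw_i}^{u_i}.
\]

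Next I would apply Proposition~\ref{prop:mono_sub} to each monomial $\prod_i\theta_{\Bw_i}^{u_i}$ (the hypotheses are met since the $f_i$ are polynomials in the $x$-classes), and combine it with the two closure properties above. The factor $\prod_i u_i!$ produced by Proposition~\ref{prop:mono_sub} cancels the denominators $\prod_i u_i!$, yielding
\[
\prod_{i=1}^n \exp(f_i\theta_{\Bw_i}) \rightsquigarrow \sum_{u\in\BN^n}\Bigl(\prod_{i=1}^n f_i^{u_i}\Bigr)\sum_{(A_1,\dots,A_g)\in\mathcal{P}(u)}\prod_{j=1}^g\det(M_{A_j,A_j}).
\]
The key reindexing is that every $g$-tuple $(A_1,\dots,A_g)$ of subsets of $[n]$ lies in $\mathcal{P}(u)$ for exactly one $u$, namely $u_i = \#\{j : i\in A_j\}$ (and $\mathcal{P}(u)=\emptyset$ as soon as some $u_i>g$), so the double sum collapses to a single sum over all $g$-tuples of subsets of $[n]$; for such a tuple $\prod_i f_i^{u_i} = \prod_{j=1}^g\prod_{i\in A_j}f_i$. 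Since $C=\mathrm{diag}(f_1,\dots,f_n)$ is diagonal, $(CM)_{A_j,A_j} = C_{A_j,A_j}M_{A_j,A_j}$ with $\det C_{A_j,A_j} = \prod_{i\in A_j}f_i$, so the right-hand side becomes $\sum_{(A_1,\dots,A_g)}\prod_{j=1}^g\det\bigl((CM)_{A_j,A_j}\bigr)$.

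Finally I would invoke the classical principal-minor expansion: for any $n\times n$ matrix $N$, $\det(\mathbb{I}_n+N) = \sum_{A\subseteq[n]}\det(N_{A,A})$ (the empty set contributing $1$), which is just the characteristic polynomial of $N$ evaluated at $1$. Raising to the $g$-th power and expanding the product over the $g$ copies gives $\det(\mathbb{I}_n+N)^g = \sum_{(A_1,\dots,A_g)}\prod_{j=1}^g\det(N_{A_j,A_j})$; taking $N = CM$ then gives the asserted substitution rule. I do not expect a genuine obstacle: once Proposition~\ref{prop:mono_sub} is in hand, the only points requiring care are the cancellation of the $u_i!$ factors and the reindexing of the nested sums over $u$ and $\mathcal{P}(u)$, both of which are routine bookkeeping.
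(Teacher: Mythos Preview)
Your proof is correct and follows essentially the same approach as the paper: expand the exponentials, apply Proposition~\ref{prop:mono_sub}, reindex the sum over $u$ and $\mathcal{P}(u)$ as a sum over $\mathcal{P}([n])^g$, and identify the result via the principal-minor expansion of $\det(\mathbb{I}_n+CM)$. You spell out the closure properties of $\rightsquigarrow$ and the absorption of $\prod_i f_i^{u_i}$ into $\det((CM)_{A_j,A_j})$ a bit more explicitly than the paper does, but the argument is the same.
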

	\begin{proof}
		By the definition of the exponential (note that $\theta_{\Bw_i}^{g+1}=0$ as discussed in Remark \ref{vanishing_theta_powers}) and Proposition~\ref{prop:mono_sub} we find that
		\begin{align*}
			\prod_{i=1}^n \exp(f_i \theta_{\Bw_i}) =& \sum_{u_1, \dots, u_n = 0}^g \left(\prod_{i=1}^n \frac{f_i^{u_i}}{u_i!}\right) \prod_{j=1}^n \theta_{\Bw_j}^{u_j}\\
			\rightsquigarrow& \sum_{u_1, \dots, u_n = 0}^g \left(\prod_{i=1}^n f_i^{u_i}\right) \sum_{(A_1,  \dots,A_g)\in \mathcal{P}(u)} \prod_{j=1}^g \det(M_{A_j,A_j})\\
			=& \sum_{u_1, \dots, u_n = 0}^g \sum_{(A_1,  \dots,A_g)\in \mathcal{P}(u)} \prod_{j=1}^g \det((C M)_{A_j,A_j}).
		\end{align*}
        Note that 
		\begin{align*}
			\bigcup_{u_1, \dots, u_n=0}^g \mathcal{P}(u) = \mathcal{P}([n])^g,
		\end{align*}
		where the right-hand side is the Cartesian product of $g$ copies of the power set of $[n]$. Thus we have the substitution rule
		\begin{align*}
			\prod_{i=1}^n \exp(f_i \theta_{\Bw_i}) \rightsquigarrow& \sum_{A_1,  \dots,A_g \in \mathcal{P}([n])} \prod_{j=1}^g \det((C M)_{A_j,A_j})
			\\
			=& \left(\sum_{A \in \mathcal{P}([n])} \det(( CM)_{A,A}) \right)^g= \det(\mathbb{I}_n + CM )^g,
		\end{align*}
		where the last equality is the well-known expansion of the determinant of $\mathbb{I} + Z$ as the sum of the principal minors of $Z$.
	\end{proof}
	Finally, we can reshape this result into the most useful form for our applications:
	\begin{corollary}[Exponential substitution rule]\label{exponential_substitution_rule_m}
		Let $f_1, \dots, f_n \in \mathbb{C}[x_1, \dots, x_m]$ be polynomials in the $x$-classes, and consider the diagonal $m \times m$ matrix 
		\begin{align*}
			X \coloneqq  \text{diag}\left(x_1, \dots, x_m\right).
		\end{align*}
		The substitution rule
		\begin{align*}
			\prod_{i=1}^n \exp(f_i \theta_{\Bw_i}) \rightsquigarrow \det(\mathbb{I}_m + XG(c))^g
		\end{align*}
		holds true, where $G(c)$ is the $m \times m$ matrix
		\begin{align*}
			G(c)\coloneqq  \sum_{i=1}^n f_i \Bw_i^T \cdot \Bw_i,
		\end{align*}
		and the vectors $\Bw_i \in \mathbb{Z}^m$ are considered as row vectors.
	\end{corollary}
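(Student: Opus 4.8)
The plan is to deduce the corollary from Proposition~\ref{exponential_substitution_rule_n} by a purely formal manipulation of determinants, changing the size of the matrix from $n\times n$ to $m\times m$ via Sylvester's determinant identity $\det(\mathbb{I}_n+AB)=\det(\mathbb{I}_m+BA)$.

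First I would repackage the combinatorial data as matrices. Let $W$ be the $n\times m$ matrix whose $i$-th row is $\Bw_i$, that is $W_{ic}=w_i^c$, let $C=\mathrm{diag}(f_1,\dots,f_n)$ be the matrix from Proposition~\ref{exponential_substitution_rule_n}, and let $X=\mathrm{diag}(x_1,\dots,x_m)$ be the matrix in the statement. A direct comparison of entries shows that the matrix $M$ of Lemma~\ref{power_1_substitution_rule} factors as $M=WXW^{T}$, since $(WXW^{T})_{ab}=\sum_{c=1}^m W_{ac}\,x_c\,W_{bc}=\sum_{c=1}^m w_a^c w_b^c x_c=M_a^b$. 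Similarly, the $m\times m$ matrix $G(c)=\sum_{i=1}^n f_i\,\Bw_i^{T}\cdot\Bw_i$ has $(c,d)$-entry $\sum_{i=1}^n f_i w_i^c w_i^d=(W^{T}CW)_{cd}$, so $G(c)=W^{T}CW$. Hence $CM=(CW)(XW^{T})$ and $XG(c)=(XW^{T})(CW)$.

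Next I would apply Sylvester's determinant identity with $A=CW$ (an $n\times m$ matrix) and $B=XW^{T}$ (an $m\times n$ matrix), noting $AB=CM$ and $BA=XG(c)$; all entries of the matrices involved lie in the commutative subring $H^{\mathrm{even}}(X_\Bd;\mathbb{R})$ of the cohomology ring, since the $f_i$ are polynomials in the degree-$2$ classes $x_c$, so the identity is valid over this ring. It yields $\det(\mathbb{I}_n+CM)=\det(\mathbb{I}_m+XG(c))$. Raising both sides to the $g$-th power and combining with the substitution rule $\prod_{i=1}^n\exp(f_i\theta_{\Bw_i})\rightsquigarrow\det(\mathbb{I}_n+CM)^g$ of Proposition~\ref{exponential_substitution_rule_n} gives $\prod_{i=1}^n\exp(f_i\theta_{\Bw_i})\rightsquigarrow\det(\mathbb{I}_m+XG(c))^g$, which is the claim.

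I do not expect a serious obstacle here: the content is entirely in spotting the factorizations $M=WXW^{T}$ and $G(c)=W^{T}CW$ and invoking the transpose-of-sizes form of the determinant. The only point needing (minor) care is to justify that we may work over a commutative ring when applying Sylvester's identity, which holds because every matrix entry in sight is an even-degree cohomology class on $X_\Bd$.
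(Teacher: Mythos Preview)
Your proof is correct and follows essentially the same route as the paper: both factor $M=WXW^{T}$ and $G(c)=W^{T}CW$, then apply Sylvester's determinant identity to pass from $\det(\mathbb{I}_n+CM)^g$ to $\det(\mathbb{I}_m+XG(c))^g$. Your remark that the identity is valid because all entries lie in the commutative subring of even-degree classes is a useful clarification not made explicit in the paper.
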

	\begin{proof}
		Consider the $n \times m$ matrix $W$ with entries $W_a^b \coloneqq  w_a^b$, or, in other words, the one obtained by treating the vectors $\Bw_i$ as rows of a matrix. Then we immediately find that the matrix $M$ defined in Lemma \ref{power_1_substitution_rule} is
		\begin{align*}
			M = W X W^T,
		\end{align*}
		hence by Proposition \ref{exponential_substitution_rule_n} we find
		\begin{align*}
			\prod_{i=1}^n \exp(f_i \theta_{\Bw_i}) \rightsquigarrow \det(\mathbb{I}_n + CWXW^T)^g.
		\end{align*}
		By Sylvester's determinant theorem, stating that
		\begin{align}
			\det(\mathbb{I}_n + AB) = \det(\mathbb{I}_m + BA)
		\end{align}
		for any $n \times m$ matrix $A$ and $m \times n$ matrix $B$, we obtain that
		\begin{align*}
			\prod_{i=1}^n \exp(f_i \theta_{\Bw_i}) \rightsquigarrow \det(\mathbb{I}_m + XW^TCW)^g
		\end{align*}
		concluding the proof.
	\end{proof}

    \subsubsection{The Jacobian substitution rule}\label{jacobian_substitution_rule}
	Let $\varepsilon_1, \dots, \varepsilon_n \in \mathbb{C}$. Chosen the constants of Corollary \ref{exponential_substitution_rule_m} as 
    $$
    f_i= (\langle \Bx, \Bw_i \rangle + \varepsilon_i)^{-1} \qquad \text{where} \qquad \langle \Bx, \Bw \rangle \coloneqq  \sum_{j=1}^m w^j x_j,
    $$ 
    we can give a nice description of the substitution rule:
	\begin{corollary}\label{jacobian_substitution_rule}
		We have the substitution rule
		\begin{align*}
			\prod_{i=1}^n \exp\left(\frac{ \theta_{\Bw_i}}{\langle \Bx, \Bw_i \rangle + \varepsilon_i}\right) \rightsquigarrow \prod_{j=1}^{m}x_{j}^g\det\left(\frac{1}{t_j}\frac{\partial t_{j}}{\partial x_{k}} \right)^g,
		\end{align*}
		where  $t : \mathbb{C}^m \dashrightarrow \mathbb{C}^m$ is defined as
	\begin{align*}
		t_j(\Bx) \coloneqq  x_j\prod_{i=1}^n (\langle \Bx, \Bw_i \rangle + \varepsilon_i)^{w_i^j} \qquad \forall j \in [m].
	\end{align*}
	\end{corollary}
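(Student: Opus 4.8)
The plan is to obtain this corollary directly from the Exponential substitution rule of Corollary~\ref{exponential_substitution_rule_m}, the remaining work being a short determinant manipulation. As is implicit in the statement, we assume each $\varepsilon_i\neq 0$, so that the class $\langle\Bx,\Bw_i\rangle+\varepsilon_i$ has invertible constant term; since the $x$-classes are nilpotent, the geometric expansion
\[
(\langle \Bx,\Bw_i\rangle + \varepsilon_i)^{-1} = \varepsilon_i^{-1}\sum_{\ell \ge 0}(-1)^\ell \varepsilon_i^{-\ell}\langle \Bx,\Bw_i\rangle^\ell
\]
is a finite sum, hence $f_i \coloneqq (\langle \Bx,\Bw_i\rangle+\varepsilon_i)^{-1}$ is a genuine polynomial in the $x$-classes and Corollary~\ref{exponential_substitution_rule_m} applies verbatim, giving
\[
\prod_{i=1}^n \exp\left(\frac{\theta_{\Bw_i}}{\langle \Bx,\Bw_i\rangle+\varepsilon_i}\right) \rightsquigarrow \det(\mathbb{I}_m + X\,G(c))^g, \qquad G(c) = \sum_{i=1}^n \frac{1}{\langle \Bx,\Bw_i\rangle+\varepsilon_i}\,\Bw_i^T\cdot\Bw_i .
\]
It then suffices to prove the honest identity $\det(\mathbb{I}_m + X G(c)) = \left(\prod_{j=1}^m x_j\right)\det\left(\frac{1}{t_j}\frac{\partial t_j}{\partial x_k}\right)$ and raise it to the $g$-th power.

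For this, write $L$ for the $m\times m$ matrix with entries $L_{jk} = \frac{1}{t_j}\frac{\partial t_j}{\partial x_k}$, to be read in the localization of our coefficient ring at $x_1,\dots,x_m$ (the resulting denominators are cleared by the prefactor $\prod_j x_j$). Factor $t_j = x_j\cdot s_j$ with $s_j \coloneqq \prod_{i=1}^n(\langle\Bx,\Bw_i\rangle+\varepsilon_i)^{w_i^j}$, which is well defined even when some $w_i^j<0$ because each factor is invertible. Using $\log s_j = \sum_i w_i^j \log(\langle\Bx,\Bw_i\rangle+\varepsilon_i)$ — a legitimate formal computation since each argument has invertible constant term $\varepsilon_i$ — the product rule gives
\[
L_{jk} = \frac{\delta_{jk}}{x_j} + \frac{1}{s_j}\frac{\partial s_j}{\partial x_k} = \frac{\delta_{jk}}{x_j} + \sum_{i=1}^n w_i^j\,\frac{w_i^k}{\langle\Bx,\Bw_i\rangle+\varepsilon_i} = (X^{-1})_{jk} + (G(c))_{jk},
\]
that is, $L = X^{-1} + G(c) = X^{-1}(\mathbb{I}_m + X G(c))$. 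Taking determinants yields $\det L = \det(X^{-1})\det(\mathbb{I}_m + XG(c)) = \left(\prod_{j=1}^m x_j\right)^{-1}\det(\mathbb{I}_m + XG(c))$, hence $\det(\mathbb{I}_m + XG(c)) = \left(\prod_{j=1}^m x_j\right)\det L$. Raising this to the $g$-th power and combining with the substitution rule above gives the claim (note that $\prod_j x_j^g\det(L)^g = \det(\mathbb{I}_m+XG(c))^g$ is automatically a genuine cohomology class, so the localization is harmless).

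The argument is entirely routine; I do not anticipate a genuine obstacle. The only points demanding mild care are giving $f_i$, $t_j$, and the matrix $L$ a precise meaning — which works exactly because the $\varepsilon_i$ are nonzero, so each $\langle\Bx,\Bw_i\rangle+\varepsilon_i$ is invertible and all expansions terminate — and keeping track of the negative exponents $w_i^j$ in $s_j$. This corollary is essentially a repackaging of Corollary~\ref{exponential_substitution_rule_m} into the form needed downstream, where the auxiliary variables $t_j$ will be matched with the change of variables $t_{s,j}=x_{s,j}/\Phi_{s,j}$ of \eqref{eq:tx_variables} in Section~\ref{sec:genus_zero_calculation}.
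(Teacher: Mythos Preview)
Your proof is correct and follows essentially the same approach as the paper: apply Corollary~\ref{exponential_substitution_rule_m} with $f_i=(\langle\Bx,\Bw_i\rangle+\varepsilon_i)^{-1}$, then compute $\frac{1}{t_j}\frac{\partial t_j}{\partial x_k}=\frac{1}{x_j}(\delta_{jk}+x_jG(c)_j^k)$ via logarithmic differentiation to identify $\det(\mathbb{I}_m+XG(c))$ with $\prod_j x_j\cdot\det L$. The paper's argument is slightly terser but identical in substance.
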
 
	\begin{proof}
		It's immediate to compute that
		\begin{align*}
			\frac{\partial t_j}{\partial x_k} = {t_j}(\Bx) \left(\frac{\delta_{j,k}}{x_{j}}+\sum_{i=1}^n \frac{w_i^j w_i^k}{\langle \Bx, \Bw_i \rangle + \varepsilon_i}\right) = \frac{t_{j}(\Bx)}{x_{j}}\left(\delta_{j,k}+x_{j}G(f)_{j}^k\right) .
		\end{align*}
		where we used the notation of Corollary \ref{exponential_substitution_rule_m}.
	\end{proof}

\section{Virtual integrals in higher genus}\label{sec:higher_genus_case} We now turn our attention to the higher genus case.  In this section, we apply virtual localization \cite{graber-pandharipande} to reduce the integrals in Theorem~\ref{thm:VI_Hyper_quot_equivariant} to a sum of integrals over products of Hilbert schemes of points on $C$. Compared to the genus zero calculation of Section \ref{sec:genus_zero_calculation}, the higher genus case is more complicated, since these localized integrals involve additional theta classes. We developed the tools needed to handle these classes in Section~\ref{sec:symmetric_product_of_curves}. 
\subsection{Cohomology classes on $\boldsymbol{\fix_{\sigma,D}}$}
We continue to use the notation introduced in Sections \ref{sec:genus_zero_calculation} and \ref{sec:symmetric_product_of_curves}. In particular recall that, given $\sigma \in \mathfrak{S}_\Br$ and a splitting degree $D$, there is a corresponding fixed locus $X_{\sigma,D}$  in $\Hquot_\Bd(C,\Br,V)$ isomorphic to the product of Hilbert schemes \eqref{eq:product_hilb_sch}, and that the different choices for $\sigma$ yield fixed loci which are abstractly isomorphic, but are embedded in the Hyperquot scheme in different ways.
\subsubsection{Chern classes}
Whenever \(\CL\) is a vector bundle on a product scheme \(X\times C\) and $p \in C$ is a point, we will denote by \(\CL_{\mid p}\) the restriction of $\CL$ to \(X\times\{p\}\).
Let $\CL_{s,j}$ be the universal ideal subsheaf over the product $C^{[d_{s,j}-d_{s,j+1}]}\times C$. We will denote by the same letter its pullback to $X_{\sigma,D}\times C$, by a slight abuse of notation. Note that 
\begin{equation}\label{eq:K_to_L}
    \CL_{s,j}^\vee=\CK_{s,j}^\vee\otimes \CK_{s,j+1},
\end{equation}
where $\CK_{s,j}$ is the $s$-th line bundle summand of $\CE_j \vert_{X_{\sigma, D}\times C}$ as in \eqref{eq:splitting_Ei}. 
For $\sigma = 1 \in \mathfrak{S}_\Br$ we define the classes
\[
x_{s,j}\coloneqq c_1\left({\CL_{s,j| p}^\vee}\right)\in H^{2}(X_{1,D})\quad\text{for all }1\le j\le k\text{ and }\ 1\le s\le r_j.
\]
From \eqref{eq:K_to_L}, we obtain
\[
z_{s,j}\coloneqq c_{1}^T\left( \CK_{s,j\mid p}^\vee \right) = x_{s,j}+x_{s,{j+1}}+\cdots+x_{s,k}+\varepsilon_{s}  \in H^2(X_{1,D}),
\]
and to be consistent we define $z_{s,k+1}:= c_{1}^T\left( \pi_C^{*}M_{s\mid p}^\vee \right) =\varepsilon_{s}$.
For an arbitrary element $\sigma \in \mathfrak{S}_\Br$ we immediately find that $c_{1}^T\left( \CK_{s,j\mid p}^\vee \right) = \sigma \cdot z_{s,j}$ in $H^2_T(X_{\sigma,D})$.

\subsubsection{Theta classes}
Let $\Bw$ be a vector that assigns an integer to each Hilbert scheme appearing as a factor of $X_{\sigma,D}$, that is, $\Bw = (w^{c,i}) \in \bb{Z}^{r_1}\oplus\dots\oplus\bb{Z}^{r_k}$. We consider the corresponding line bundle
$$
\CL_{\Bw} = \bigotimes_{i=1}^k \bigotimes_{c=1}^{r_i} \CL_{c,i}^{\otimes w^{c,i}},
$$
whose dual \(\CL_\Bw^\vee\) induces a morphism
\[
\pi_{\Bw}:X_{\sigma,D}\to \Pic^{\langle \Bw, \Be\rangle}(C)
\]
where $\Be\in \bb{Z}^{r_1}\oplus\dots\oplus\bb{Z}^{r_k}$ is the dimension vector given by $e^{s,j} = d_{s,j}-d_{s,j+1}$. For any vector $\Bw$ we set,  as in Definition \ref{def:theta_class},
\[
\theta_{\Bw}\coloneqq \pi_\Bw^*\theta\in H^2(\fix_{\sigma,D},\BZ)
\]
where $\theta\in \Pic^{\langle \Bw, \Be\rangle}(C)$ is the theta class. Note that $\theta_\Bw=\theta_{-\Bw}$ and $\theta_\Bw^{g+1}=0$.
\begin{remark}
    We have $ \CK_{s,j}\otimes \CK_{s,k+1}^\vee =\CL_{s,j}\otimes\dots\otimes\CL_{s,k}= \CL_{\Bw_{s,j}}$  for the associated vector 
    $$\Bw_{s,j}=\left(\Bzero^{(1)},\dots,\Bzero^{(j-1)},\Bdelta^{(j)}_s,\Bdelta^{(j+1)}_s,\dots, \Bdelta^{(k)}_s\right)\in \bb{Z}^{r_1}\oplus\dots\oplus\bb{Z}^{r_k},$$
        where \(\Bzero^{(j)}\) is the zero vector in \(\bb{Z}^{r_j}\) and \(\Bdelta^{(j)}_s=(0,\dots,0,1,0\dots,0)\in\bb{Z}^{r_j}\) denotes the $s$-th element of the canonical basis of $\BZ^{r_j}$, with $1$ placed in the $s$-th position.  
\end{remark}

\subsection{Euler class of virtual normal bundle}
 
All the cohomology classes that will appear in the localization calculation are expressions of the $\Bx$ and $\theta$ classes. In particular, Proposition~\ref{chern_class_pushforward} readily implies the following
\begin{proposition}\label{prop:Euler_class_pair}
    Let $1\le j\le k$, $s\le r_j$, $u\le r_{j+1}$ and $s\ne u$. On $X_{\sigma, D}$ we find    \begin{align}
        e_T\left(\pi_*\left(\CK_{s,j}^\vee\otimes\CK_{u,j}\right)\right)&=\sigma \cdot \left(z_{s,j}-z_{u,j}\right)^{d_{s,j}-d_{u,j}+1-g}\exp\left({-\frac{\theta_{\Bw_{s,j}-\Bw_{u,j}}}{z_{s,j}-z_{u,j}}}\right);\label{eq:Euler_class_same_j}
        \\ e_T\left(\pi_*\left(\CK_{s,j}^\vee\otimes\CK_{u,{j+1}}\right)\right)&= \sigma \cdot \left(z_{s,j} -z_{u,{j+1}} \right)^{d_{s,j}-d_{u,{j+1}}+1-g}\exp\left(-\frac{\theta_{\Bw_{s,j}-\Bw_{u,j+1}}}{z_{s,j} -z_{u,{j+1}}}
        \right).\label{eq:Euler_class_j,j+1}
\end{align}
\end{proposition}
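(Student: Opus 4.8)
The plan is to realize each of the two $K$-theory classes $\pi_*(\CK_{s,j}^\vee\otimes\CK_{u,j'})$, $j'\in\{j,j+1\}$, as the pushforward of a line bundle of the shape $\CL_{\Bw}$ twisted by a pullback from $C$, to compute its equivariant Chern character using Lemma~\ref{lem:Chern_char}, and then to pass from the Chern character to the equivariant Euler class by the splitting-principle argument already carried out in the proof of Proposition~\ref{chern_class_pushforward}.

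First I would rewrite the integrands on $X_{\sigma,D}\times C$. Combining the identity $\CK_{s,j}\otimes\CK_{s,k+1}^\vee=\CL_{\Bw_{s,j}}$ recorded above with $\CK_{s,k+1}=\pi_C^*M_{\sigma(s)}$ from \eqref{eq:splitting_Ei} gives $\CK_{s,j}=\CL_{\Bw_{s,j}}\otimes\pi_C^*M_{\sigma(s)}$, hence for $j'\in\{j,j+1\}$ (and $u\le r_{j'}$),
\[
\CK_{s,j}^\vee\otimes\CK_{u,j'}\;=\;\CL_{\Bw_{u,j'}-\Bw_{s,j}}\otimes\pi_C^*\bigl(M_{\sigma(s)}^\vee\otimes M_{\sigma(u)}\bigr).
\]
The factor $\CL_{\Bw_{u,j'}-\Bw_{s,j}}$ is pulled back from the product of Hilbert schemes and carries the trivial $T$-linearization, while $N\coloneqq M_{\sigma(s)}^\vee\otimes M_{\sigma(u)}$ has degree $0$ on $C$, so $\mathrm{ch}(\pi_C^*N)=1$ and its only equivariant effect is multiplication by the scalar class $\mathrm{ch}^T(\pi_C^*N)=e^{\varepsilon_{\sigma(s)}-\varepsilon_{\sigma(u)}}$. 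Applying Lemma~\ref{lem:Chern_char} to $\CL_{\Bw_{s,j}-\Bw_{u,j'}}^\vee=\CL_{\Bw_{u,j'}-\Bw_{s,j}}$ over $X_{\sigma,D}$ (whose dimension vector is $\Be$ with $e^{s,j}=d_{s,j}-d_{s,j+1}$), multiplying by this scalar, and simplifying with $\theta_\Bw=\theta_{-\Bw}$, the identity $\langle\Bw_{s,j},\Bx\rangle=\sum_{\ell\ge j}x_{s,\ell}$ and the telescoping $\langle\Bw_{s,j},\Be\rangle=\sum_{\ell\ge j}(d_{s,\ell}-d_{s,\ell+1})=d_{s,j}$, I would arrive at
\[
\mathrm{ch}^T\bigl(\pi_*(\CK_{s,j}^\vee\otimes\CK_{u,j'})\bigr)=\bigl(1-g+d_{s,j}-d_{u,j'}-\theta_{\Bw_{s,j}-\Bw_{u,j'}}\bigr)\,e^{\,\sigma\cdot(z_{s,j}-z_{u,j'})}.
\]

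Finally I would convert this into the Euler class. Set $a\coloneqq\sigma\cdot(z_{s,j}-z_{u,j'})=c_1^T\bigl(\CK_{s,j|p}^\vee\otimes\CK_{u,j'|p}\bigr)$; since $s\ne u$, the $\varepsilon$-part $\varepsilon_{\sigma(s)}-\varepsilon_{\sigma(u)}$ of $a$ is nonzero, so $a$ is a unit in the localized equivariant cohomology $H_T^*(X_{\sigma,D})\otimes_{H_T^*(\pt)}\Gamma$. Exactly as in the proof of Proposition~\ref{chern_class_pushforward}, but for the multiplicative class $e_T$ in place of the total Chern class: if a $K$-theory class $\xi$ satisfies $\mathrm{ch}^T(\xi)=(k-\theta)e^{a}$ with $k\in\BZ$ and $\theta$ a degree-two nilpotent class, then the virtual Chern roots $\omega_i$ of $\xi\otimes L^{-1}$, where $L$ is any line bundle with $c_1^T(L)=a$, satisfy $\sum_i\omega_i=-\theta$ and $\sum_i\omega_i^{\ell}=0$ for all $\ell\ge2$, so that
\[
e_T(\xi)=e_T\bigl((\xi\otimes L^{-1})\otimes L\bigr)=\prod_i(\omega_i+a)=a^{k}\exp\!\Bigl(\sum_i\log(1+\omega_i/a)\Bigr)=a^{k}e^{-\theta/a}.
\]
Applied with $k=1-g+d_{s,j}-d_{u,j'}$ and $\theta=\theta_{\Bw_{s,j}-\Bw_{u,j'}}$, this produces \eqref{eq:Euler_class_same_j} for $j'=j$ and \eqref{eq:Euler_class_j,j+1} for $j'=j+1$. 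I expect the only genuinely delicate point to be the middle step---carrying the $T$-linearization correctly through the twist by the degree-zero bundle $N$ and confirming that it contributes nothing beyond the scalar $e^{\varepsilon_{\sigma(s)}-\varepsilon_{\sigma(u)}}$; the remaining manipulations are a direct unwinding of definitions together with a reuse of Proposition~\ref{chern_class_pushforward}.
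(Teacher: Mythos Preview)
Your proposal is correct and follows essentially the same route as the paper, which simply states that the result ``readily'' follows from Proposition~\ref{chern_class_pushforward}. You have unpacked this in detail: realize $\CK_{s,j}^\vee\otimes\CK_{u,j'}$ as $\CL_{\Bw_{u,j'}-\Bw_{s,j}}$ twisted by a degree-zero line bundle from $C$, apply Lemma~\ref{lem:Chern_char} to get the Chern character, and then rerun the splitting-principle computation of Proposition~\ref{chern_class_pushforward} with the Euler class in place of the total Chern class. The only point worth noting is that Proposition~\ref{chern_class_pushforward} is stated for $c(\,\cdot\,)$ rather than $e_T(\,\cdot\,)$, so your explicit rederivation of $e_T(\xi)=a^k e^{-\theta/a}$ from $\mathrm{ch}^T(\xi)=(k-\theta)e^a$ is the right thing to do; your handling of the equivariant twist by $N=M_{\sigma(s)}^\vee\otimes M_{\sigma(u)}$ is also correct, since $\deg N=0$ forces $c_1(N)=0$ in $H^2(C)$ and the $T$-weight contributes only the scalar $e^{\varepsilon_{\sigma(s)}-\varepsilon_{\sigma(u)}}$.
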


By a short explicit computation, these equalities allow us to compute the Euler class of the virtual normal bundle of $X_{\sigma, D} \hookrightarrow \Hquot_\Bd(C,\Br,V)$, as outlined below. Recall the definition \eqref{eq:def_of_R} of the polynomials 
$$R_{j}(z)=(z-z_{1,{j}})(z-z_{2,{j}})\cdots(z-z_{r_j,{j}})\quad \text{for all } 0\le j\le k+1.
$$ 
 \begin{proposition}\label{prop:Euler_class_virtual_normal}
    The equivariant Euler class of the virtual normal bundle is
    \begin{align}\label{eq:Euler_class_Normal_bundle}
        \frac{1}{e_T(\Nor)} = \sigma \cdot (-1)^{\beta}J(\mathbf{x},\boldsymbol{\theta},\boldsymbol{\varepsilon})\cdot\prod_{j=1}^{k}\prod_{s\le r_j}x_{s,j}^{d_{s,j}-d_{s,j+1}+1}
        \frac{R_j'(z_{s,j})  R_{{j-1}}(z_{s,j})^{d_{s,j}} }{R_{{j+1}}(z_{s,j})^{d_{s,j}+1} } ,
    \end{align}
    where the sign is given by
    $$\beta=\sum_{j\leq k}d_j(r_j-r_{j-1}-1)$$
    and the class $J(\mathbf{x},\boldsymbol{\theta},\boldsymbol{\varepsilon})$ is
    $$J(\mathbf{x},\boldsymbol{\theta},\boldsymbol{\varepsilon}) = \prod_{j \leq k}\prod_{s\leq r_j}\parens*{\frac{R_{j+1}(z_{s,j})}{x_{s,j}R_{j}'(z_{s,j})}}^{g}\prod_{\substack{u\leq r_{j+1}\\ u\ne s}}\exp\parens*{\frac{\theta_{\Bw_{s,j}-\Bw_{u,j+1}}}{z_{s,j} -z_{u,{j+1}}}}.$$
    The right-hand side should be interpreted as a rational function of the formal variables $\Bx$, $\boldsymbol{\theta}$ and $\boldsymbol{\varepsilon}$, which induces a localized equivariant cohomology class in $H^\ast(\fix_{\sigma,D})\otimes_{H_T^\ast(\pt)}\Gamma$ only after clearing the common factors in the numerators and denominators.
\end{proposition}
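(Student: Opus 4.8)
The plan is to reduce to the genus-zero computation of Proposition~\ref{prop:Euler_class_normal_genus_0} and then to isolate the two features that are genuinely new in higher genus: the extra exponent $-g$ on each difference of Chern roots appearing in Proposition~\ref{prop:Euler_class_pair}, and the exponential theta-factors. First I would reduce to the case $\sigma = 1$: the universal subsheaves restrict to $X_{\sigma,D}$ by applying the permutation $\sigma$ to the equivariant weights, so every class in play transforms by $\sigma\cdot(-)$ and both sides of \eqref{eq:Euler_class_Normal_bundle} carry the same prefactor. Next, since the equivariant Euler class is multiplicative and inverts differences in $K$-theory, the description \eqref{eq:Normal_bundle_K-theory} of $\Nor$ gives
\[
\frac{1}{e_T(\Nor)} \;=\; \prod_{j=1}^{k}\;
\frac{\displaystyle\prod_{\substack{s,u\le r_j\\ s\ne u}} e_T\bigl(\pi_*(\CK_{s,j}^\vee\otimes\CK_{u,j})\bigr)}
{\displaystyle\prod_{\substack{s\le r_j,\ u\le r_{j+1}\\ s\ne u}} e_T\bigl(\pi_*(\CK_{s,j}^\vee\otimes\CK_{u,j+1})\bigr)},
\]
and I would substitute into it the two formulas of Proposition~\ref{prop:Euler_class_pair}.

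The key observation is that the exponent of each Chern-root difference in Proposition~\ref{prop:Euler_class_pair} splits as $(d_\bullet-d_\bullet+1)+(-g)$. The factors carrying exponent $d_\bullet-d_\bullet+1$ are exactly those treated in the proof of Proposition~\ref{prop:Euler_class_normal_genus_0}: pairing $(s,u)$ with $(u,s)$ collapses the same-column product to $(-1)^{d_j(r_j-1)}\prod_{s\le r_j}R_j'(z_{s,j})$, the consecutive-column product collapses to $\prod_{s\le r_j}\bigl(R_{j+1}(z_{s,j})/x_{s,j}\bigr)^{d_{s,j}+1}\prod_{u\le r_{j+1}}\bigl((-1)^{r_j}x_{u,j}/R_j(z_{u,j+1})\bigr)^{d_{u,j+1}}$ (with the convention $x_{u,j}:=1$ for $u>r_j$), and telescoping in $j$ together with $d_{s,k+1}=0$ assembles these into $(-1)^\beta\prod_{j,s}x_{s,j}^{d_{s,j}-d_{s,j+1}+1}R_j'(z_{s,j})R_{j-1}(z_{s,j})^{d_{s,j}}/R_{j+1}(z_{s,j})^{d_{s,j}+1}$, where $\beta=\sum_{j\le k}d_j(r_j-r_{j-1}-1)$. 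I would then handle the remaining exponent $-g$ by pulling it out of each product directly, using $\prod_{s\ne u}(z_{s,j}-z_{u,j})=\prod_s R_j'(z_{s,j})$ for the same-column factors and $\prod_{s\le r_j,\,u\le r_{j+1},\,s\ne u}(z_{s,j}-z_{u,j+1})=\prod_{s\le r_j}R_{j+1}(z_{s,j})/x_{s,j}$ for the consecutive-column ones (the latter is valid since $r_j\le r_{j+1}$ and $z_{s,j}-z_{s,j+1}=x_{s,j}$); dividing, the $-g$-contributions combine into $\prod_{j,s}\bigl(R_{j+1}(z_{s,j})/(x_{s,j}R_j'(z_{s,j}))\bigr)^{g}$, which is precisely the power-$g$ part of $J(\mathbf{x},\boldsymbol{\theta},\boldsymbol{\varepsilon})$.

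Finally I would account for the exponential theta-factors. In the same-column product they should cancel identically: pairing $(s,u)$ with $(u,s)$, the product $\exp\bigl(-\theta_{\Bw_{s,j}-\Bw_{u,j}}/(z_{s,j}-z_{u,j})\bigr)\exp\bigl(-\theta_{\Bw_{u,j}-\Bw_{s,j}}/(z_{u,j}-z_{s,j})\bigr)$ is $1$, because $\theta_\Bw=\theta_{-\Bw}$ by Lemma~\ref{theta_y_classes} and $1/(z_{u,j}-z_{s,j})=-1/(z_{s,j}-z_{u,j})$. In the consecutive-column product the theta-factors persist, and since that product sits in the denominator of $1/e_T(\Nor)$, inverting it yields $\prod_{j}\prod_{s\le r_j,\,u\le r_{j+1},\,u\ne s}\exp\bigl(\theta_{\Bw_{s,j}-\Bw_{u,j+1}}/(z_{s,j}-z_{u,j+1})\bigr)$, the remaining factor of $J(\mathbf{x},\boldsymbol{\theta},\boldsymbol{\varepsilon})$. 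Multiplying the three pieces then gives \eqref{eq:Euler_class_Normal_bundle}, with the same caveat as in Proposition~\ref{prop:Euler_class_normal_genus_0} that the right-hand side is to be read as a rational function in $\mathbf{x},\boldsymbol{\theta},\boldsymbol{\varepsilon}$ whose common factors are cleared first. I expect the main obstacle to be purely organizational: keeping track of the signs (in particular the exponent $\beta$) and of which products land in the numerator versus the denominator, together with the reindexing $j\mapsto j-1$ in the telescoping step — all of which is already carried out in the genus-zero proof, so the higher-genus argument is essentially a transcription of that computation combined with the two new $g$-dependent contributions.
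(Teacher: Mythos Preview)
Your proposal is correct and follows exactly the approach the paper indicates: substitute the Euler classes of Proposition~\ref{prop:Euler_class_pair} into the $K$-theoretic expression \eqref{eq:Normal_bundle_K-theory} and then repeat the genus-zero computation of Proposition~\ref{prop:Euler_class_normal_genus_0}. The paper's proof is two sentences long and leaves all of this implicit; your write-up correctly fills in the three new steps (splitting off the exponent $-g$, the pairwise cancellation of the same-column theta-exponentials via $\theta_\Bw=\theta_{-\Bw}$, and the survival of the consecutive-column ones), so there is nothing to add.
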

\begin{proof}
    We substitute the equivariant Euler classes computed in Proposition~\ref{prop:Euler_class_pair} in the expression \eqref{eq:Normal_bundle_K-theory} for the virtual normal bundle $\Nor$. The proof is identical to the one of Proposition \ref{prop:Euler_class_normal_genus_0}.
\end{proof}

    \subsection{Proof of Theorem~\ref{thm:VI_Hyper_quot_equivariant}}\label{Sec:proof_of_thm_1}    
    By applying the substitution rules of Section~\ref{sec:Substitution_rule}, we can simplify the integrals involving the factor $J(\mathbf{x},\boldsymbol{\theta},\boldsymbol{\varepsilon})$:
    
\begin{lemma}\label{lem:J(x,theta)-substitution}
For any splitting data $D$ and polynomial $H(\bf{x},\boldsymbol{\varepsilon})\in\Gamma[\Bx]$, we have
    \begin{align}\label{eq:J(x,theta)-substitution}
        \int_{\fix_{1,D}} \
        H(\mathbf{x},\boldsymbol{\varepsilon}) J(\bf{x},\boldsymbol{\theta},\boldsymbol{\varepsilon}) 
        &= \int_{\fix_{1,D}}H(\mathbf{x},\boldsymbol{\varepsilon})J(\mathbf{x},\boldsymbol{\varepsilon})^g, 
    \end{align}
    where 
    \[
    J(\mathbf{x},\boldsymbol{\varepsilon})=\prod_{j \leq k}\prod_{s\le r_j}\frac{R_{{j-1}}(z_{s,j})}{ R_{j}'(z_{s,j})}\cdot \det\left(\frac{\partial q_{s,j}}{\partial x_{u,m}}\right)\quad \text{and}\quad q_{s,j}\coloneqq (-1)^{r_j-r_{j-1}-1}\frac{R_{{j+1}}(z_{s,j})}{R_{{j-1}}(z_{s,j})}.
    \]
\end{lemma}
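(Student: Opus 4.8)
The plan is to strip from $J(\mathbf{x},\boldsymbol{\theta},\boldsymbol{\varepsilon})$ the factor carrying all the theta classes and to recognize it as an instance of the Jacobian substitution rule of Corollary~\ref{jacobian_substitution_rule}. Write
\begin{align*}
J(\mathbf{x},\boldsymbol{\theta},\boldsymbol{\varepsilon})=J_0(\mathbf{x},\boldsymbol{\varepsilon})^g\cdot\Theta,\qquad J_0\coloneqq\prod_{j\le k}\prod_{s\le r_j}\frac{R_{j+1}(z_{s,j})}{x_{s,j}R_j'(z_{s,j})},\qquad \Theta\coloneqq\prod_{j\le k}\prod_{s\le r_j}\prod_{\substack{u\le r_{j+1}\\u\ne s}}\exp\!\left(\frac{\theta_{\Bw_{s,j}-\Bw_{u,j+1}}}{z_{s,j}-z_{u,j+1}}\right).
\end{align*}
Since $s\le r_j\le r_{j+1}$, the factor $z_{s,j}-z_{s,j+1}=x_{s,j}$ occurs in $R_{j+1}(z_{s,j})$, so $R_{j+1}(z_{s,j})/x_{s,j}$ is pole-free; hence $J_0$ is a genuine cohomology class and all the theta-dependence is confined to $\Theta$.

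Next I would use $z_{s,j}=\langle\Bx,\Bw_{s,j}\rangle+\varepsilon_s$ (with $\varepsilon_s=z_{s,k+1}$ and the pairing of Section~\ref{sec:symmetric_product_of_curves}), whence $z_{s,j}-z_{u,j+1}=\langle\Bx,\Bw_{s,j}-\Bw_{u,j+1}\rangle+(\varepsilon_s-\varepsilon_u)$; together with $\theta_\Bw=\theta_{-\Bw}$ (Lemma~\ref{theta_y_classes}) this shows $\Theta$ has exactly the shape to which Corollary~\ref{jacobian_substitution_rule} applies, the index running over triples $(j,s,u)$ with $u\ne s$, with associated vector $\Bw_{s,j}-\Bw_{u,j+1}$ and constant $\varepsilon_s-\varepsilon_u$. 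The rule then yields
\begin{align*}
\Theta\ \rightsquigarrow\ \prod_{j\le k}\prod_{s\le r_j}x_{s,j}^g\cdot\det\!\left(\frac{1}{t_{s,j}}\frac{\partial t_{s,j}}{\partial x_{u,m}}\right)^{\!g},\qquad t_{s,j}(\Bx)=x_{s,j}\!\!\prod_{\substack{(j',s',u')\\u'\ne s'}}\!\!\big(z_{s',j'}-z_{u',j'+1}\big)^{(\Bw_{s',j'}-\Bw_{u',j'+1})^{(s,j)}}.
\end{align*}

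The heart of the argument is the closed-form evaluation of $t_{s,j}$. The $(s,j)$-component of $\Bw_{s',j'}-\Bw_{u',j'+1}$ equals $[s'=s][j'\le j]-[u'=s][j'\le j-1]$, so the product defining $t_{s,j}$ splits into a ``positive part'' (triples with $s'=s$) and a ``negative part'' (triples with $u'=s$). Inserting the elementary identities $\prod_{u\ne s}(z_{s,j'}-z_{u,j'+1})=R_{j'+1}(z_{s,j'})/x_{s,j'}$ and $\prod_{s'\ne s}(z_{s',j'}-z_{s,j'+1})=(-1)^{r_{j'}}R_{j'}(z_{s,j'+1})/x_{s,j'}$, the $x$-factors telescope away (leaving only the leading $x_{s,j}$, which cancels) and a column-by-column regrouping collapses the $R$-factors to
\begin{align*}
t_{s,j}=(\pm1)\prod_{m=j_s}^{j}\frac{R_{m+1}(z_{s,m})}{R_{m-1}(z_{s,m})}=(\pm1)\prod_{m=j_s}^{j}q_{s,m},
\end{align*}
where $j_s$ is the least index with $r_{j_s}\ge s$, $q_{s,m}$ is as in the statement, and $(\pm1)$ is an explicit constant sign depending only on $\Br$. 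Thus $t$ is, up to constant signs, the partial-product change of variables built from $q$: the matrix $\big(t_{s,j}^{-1}\,\partial t_{s,j}/\partial x_{u,m}\big)$ equals $L\cdot\big(q_{s,j}^{-1}\,\partial q_{s,j}/\partial x_{u,m}\big)$, where $L_{(s,j),(s',j')}=[s'=s][j'\le j]$ is unipotent lower triangular with $\det L=1$, so $\det\big(t_{s,j}^{-1}\partial t_{s,j}/\partial x_{u,m}\big)=\big(\prod_{j,s}q_{s,j}\big)^{-1}\det\big(\partial q_{s,j}/\partial x_{u,m}\big)$.

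Substituting this back into the image of $\Theta$ and using $J_0\prod_{j,s}x_{s,j}=\prod_{j,s}R_{j+1}(z_{s,j})/R_j'(z_{s,j})$ together with $q_{s,j}^{-1}=(-1)^{r_j-r_{j-1}-1}R_{j-1}(z_{s,j})/R_{j+1}(z_{s,j})$, the $R_{j+1}$-factors cancel, the accumulated constant signs reconcile with the definition of $J(\mathbf{x},\boldsymbol{\varepsilon})$ (exactly as the reparametrization $q_j\mapsto(-1)^{r_j-r_{j+1}-1}q_j$ does in the genus-zero proof), and one is left with $J(\mathbf{x},\boldsymbol{\theta},\boldsymbol{\varepsilon})\rightsquigarrow J(\mathbf{x},\boldsymbol{\varepsilon})^g$. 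Multiplying $\Theta$ (or its image) by the power series $J_0^g$ in the nilpotent $\Bx$-classes over $\Gamma$ is harmless—only finitely many terms survive on a finite-dimensional space—so the substitution rule persists after this multiplication, and pairing with $H(\mathbf{x},\boldsymbol{\varepsilon})$ gives \eqref{eq:J(x,theta)-substitution}. I expect the main obstacle to be this closed-form evaluation of $t_{s,j}$: it requires careful combinatorics with the vectors $\Bw_{s,j}$, bookkeeping of the telescoping $x$-factors, and—most delicately—tracking the constant signs, including the corner-case conventions $x_{s,j}=1$ for $s>r_j$ from Section~\ref{sec:genus_zero_calculation}.
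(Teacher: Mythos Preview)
Your proposal is correct and follows essentially the same route as the paper: split off the theta-dependent factor, apply the Jacobian substitution rule (Corollary~\ref{jacobian_substitution_rule}) to it, and then compute the resulting functions $t_{s,j}$ as (signed) partial products $\prod_{m\le j}q_{s,m}$, from which the Jacobian determinant simplifies via the chain rule. Your write-up is in fact slightly more explicit than the paper's about the combinatorics of the $\Bw$-vectors and the unipotent change of variables from $t$ to $q$; the only genuinely delicate step you correctly flag is the sign bookkeeping, which the paper handles with the same $(-1)^{\alpha(c,i)}$ shorthand.
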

\begin{proof}
    Define $\varepsilon_{su}:=\varepsilon_u-\varepsilon_s$ and the auxiliary vectors $\Bw_{suj}\coloneqq \Bw_{s,j}-\Bw_{u,j+1}$ in $\bb{Z}^{r_1}\oplus\dots\oplus\bb{Z}^{r_k}$. Then
        \[
        \frac{\theta_{\Bw_{s,j}-\Bw_{u,j+1}}}{z_{s,j} -z_{u,{j+1}}}=\frac{\theta_{\Bw_{suj}}}{\langle \Bx,\Bw_{suj}\rangle+\varepsilon_{su}}
        \]
        and the left-hand side of \eqref{eq:J(x,theta)-substitution} can be written as
        \begin{equation}\label{eq:expression_theta}
            \int_{X_{1,D}} \left(H(\Bx,\boldsymbol{\varepsilon}) \prod_{j\leq k}\prod_{s\le r_j}\parens*{\frac{R_{j+1}(z_{s,j})}{x_{s,j}R_{j}'(z_{s,j})}}^{g}\right)\left(\prod_{j\leq k} \prod_{s\leq r_j} \prod_{\substack{u\leq r_{j+1}\\ u\ne s }}\exp\parens*{\frac{\theta_{\Bw_{suj}}}{\langle \Bx,\Bw_{suj}\rangle+\varepsilon_{su}}}\right).
        \end{equation} 
        We can now apply Corollary~\ref{jacobian_substitution_rule} to convert the $\theta$-classes in this integral into expressions in the $x$-classes. By setting
        \begin{align*}
            t_{c,i}(\textbf{x},\boldsymbol{\varepsilon})&\coloneqq x_{c,i}\prod_{j=1}^k \prod_{s=1}^{r_j} \prod^{r_{j+1}}_{\substack{u=1\\ u\ne s }}\left(\langle \Bx,\Bw_{suj}\rangle+\varepsilon_{su}\right)^{w_{suj}^{c,i}}\quad\text{for }c\le r_i,\  i\le k
        \end{align*}
         and applying Corollary~\ref{jacobian_substitution_rule} we see that \eqref{eq:expression_theta} coincides with
         \[
         \int_{X_{1,D}}H(\Bx,\boldsymbol{\varepsilon}) \prod_{j=1}^{k}\prod_{s\le r_j}\left(\frac{R_{j+1}(z_{s,j})}{x_{s,j}R_{j}'(z_{s,j})}\right)^{g} \prod_{i=1}^k \prod_{c\leq r_j}
        x_{c,i}^g \det \left(\frac{1}{t_{c,i}(\mathbf{x,\boldsymbol{\varepsilon}})}\frac{\partial t_{c,i}(\mathbf{x,\boldsymbol{\varepsilon}})}{\partial x_{c',i'}} \right)^g.
         \]
        Note that 
\begin{align*}
    t_{c,i}(\mathbf{x},\boldsymbol{\varepsilon})=x_{c,i}\frac{\prod_{\substack{j\leq i \\ c\leq r_j}} \prod_{\substack{u\le r_{j+1}\\ u\ne c}}\parens*{z_{c,j}-z_{u,j+1}}}{\prod_{\substack{j \leq i-1 \\ c\leq r_{j+1}}}\prod_{\substack{s\leq r_j \\ s\ne c}}\parens*{z_{s,j}-z_{c,j+1}}}=(-1)^{\alpha(c,i)}\prod_{\substack{j\le i \\ c \leq r_j}}q_{c,j}
\end{align*}
for some sign \(\alpha(c,i)\). This immediately implies that 
        \[
        \det \left(\frac{1}{t_{c,i}(\mathbf{x},\boldsymbol{\varepsilon})}\frac{\partial t_{c,i}(\mathbf{x},\boldsymbol{\varepsilon})}{\partial q_{c',i'}} \right) = \prod_{c\le r_i,\ i\le k}\frac{1}{q_{c,i}},
        \]
        and the lemma follows from the chain rule.
\end{proof}
The procedure of summing the contributions of the fixed loci mirrors the genus–zero analysis in Section~\ref{sec:genus_zero_VI}. We briefly recall that setup and then complete the proof of Theorem~\ref{thm:VI_Hyper_quot_equivariant_formal}, which is the formal version of our main result. Theorems~\ref{thm:VI_Hyper_quot_equivariant} and \ref{thm:intro_VI_formula} follow by the discussion in Section~\ref{sec:Equations}. 

\begin{proof}[Proof of Theorem~\ref{thm:VI_Hyper_quot_equivariant_formal}]
    Let $Q=\prod_{j=1}^{k}\prod_{i=1}^{r_j}c_i(\CE_{j|p}^\vee)^{m_{i,j}}$. The virtual localization formula of \cite{graber-pandharipande} implies
\[
\int_{[\Hquot_{\Bd}(C,\Br, V)]^{\vir}}  Q = \sum_{\sigma,D} \int_{\fix_{\sigma,D}} 
        \frac{Q|_{\fix_{\sigma,D}}}{e_{T}(\Nor)},
\]
where the equivariant Euler class $e_{T}(\Nor)$ of the virtual normal bundle was computed in Proposition~\ref{prop:Euler_class_virtual_normal}. Explicitly, this gives
\[
(-1)^{\beta} \sum_{\sigma,D} \sigma\cdot \int_{\fix_{1,D}} Q\vert_{X_{1,D}}\cdot J(\mathbf{x},\boldsymbol{\theta},\boldsymbol{\varepsilon})\cdot\prod_{j\leq k}\prod_{s\le r_j}x_{s,j}^{d_{s,j}-d_{s,j+1}+1}
        \frac{R_j'(z_{s,j})  R_{{j-1}}(z_{s,j})^{d_{s,j}} }{R_{{j+1}}(z_{s,j})^{d_{s,j}+1} }.
\]
Using Lemma~\ref{lem:J(x,theta)-substitution}, the factor $J(\mathbf{x},\boldsymbol{\theta},\boldsymbol{\varepsilon})$ can be replaced by $J(\mathbf{x},\boldsymbol{\varepsilon})^g$, which depends only on the $x$-variables and the genus $g$. Since the rule for integrating $x$-classes, described in Lemma~\ref{x_y_intersections}, is identical to the one in the case of $C=\mathbb{P}^1$, the above integral reduces to a residue:
\[
\int_{[\Hquot_{\Bd}(C,\Br, V)]^\vir} Q = (-1)^{\beta} \sum_{\sigma,D} \sigma \cdot \Res_{\mathbf{x}=\mathbf{0}} Q\vert_{X_{1,D}} \prod_{j\le k}\prod_{i\le r_j} \frac{ R_{j}'(z_{i,j}) \, R_{{j-1}}(z_{i,j})^{d_{i,j}} }{ R_{{j+1}}(z_{i,j})^{d_{i,j}+1} } \cdot J(\mathbf{x},\boldsymbol{\varepsilon})^g.
\]
Note that the new factor $J(\mathbf{x},\boldsymbol{\varepsilon})^g$ is independent of the splitting data $D$, and consequently the Lagrange--Bürmann argument from the genus-zero case in Section~\ref{sec:genus_zero_VI} applies unchanged. The proof is thus completed by following the proof of Theorem~\ref{thm:VI_Hyper_quot_equivariant} for $C=\BP^1$ verbatim, with the additional observation that $J(\mathbf{x},\boldsymbol{\varepsilon}) = J$ when expressed in terms of the $z$-variables.
\end{proof}

\section{Explicit formulas}
In this section, we specialize Theorem~\ref{thm:intro_VI_formula} to several cases and obtain explicit combinatorial formulas. In two basic instances, the Hyperquot scheme of points on $C$ and the two-step type $\Br=(1,n-1)$, we derive simple combinatorial expressions that manifest the positivity of the virtual intersection numbers. 

\begin{definition}
Fix a genus $g$ curve $C$, ranks $\Br=(r_1,r_2,\dots,r_k)$, and a vector bundle $V$ of degree $e$ and rank $n$.  
For any tuple of natural numbers $$\mathbf{m}=\{m_{i,j}\in\BN : 1\le j\le k,\;1\le i\le r_j\},$$  
we consider the multivariate Laurent polynomial
\begin{equation}\label{eq:B^m_def}
    \mathbf{B}_{g,\Br,n,e}^{\mathbf{m}}(q_1,q_2,\dots,q_k)
= \sum_{\Bd\in \BZ^k} q_1^{d_1}q_2^{d_2}\cdots q_k^{d_k}
\int_{[\Hquot_{\Bd}]^{\vir}}  
\prod_{j=1}^{k}\prod_{i=1}^{r_j} c_i(\CE_{j|p}^\vee)^{m_{i,j}}.
\end{equation}
Note that when $V$ is the trivial bundle, \eqref{eq:B^m_def} is a polynomial.
\end{definition}

\subsection{Explicit formulas for Hyperquot schemes of points on $C$}
Fix a positive integer $k$ and vector bundle $V$ of rank $n$ on a genus $g$ curve $C$. For a tuple of non-negative integers $\Bd=(d_1,d_2,\dots,d_k)$, let $\Hquot_\Bd \coloneqq \Hquot_\Bd(C,\Br,V)$ denote the Hyperquot scheme of points, i.e. $\Br=(n)^k\coloneqq (n,n,\dots,n)$, parametrizing successive zero-dimensional quotients
\[
V\twoheadrightarrow F_1 \twoheadrightarrow F_2 \twoheadrightarrow \cdots \twoheadrightarrow F_k,
\]
where $F_j$ has support of length $d_j$ for all $1\le j\le k$. Note that for \(\Hquot_\Bd\) to be non-empty,  the lengths of the support for subsequent quotients must decrease or stay the same at each step, that is, $d_1\ge d_2\ge \dots\ge d_k$. In this case, \(\Hquot_\Bd\)
is a smooth irreducible scheme of dimension $d_1 n$, see \cite[Theorem 1.4, Proposition 2.1]{mr2}.

\begin{corollary}\label{thm:punctual}
    For any tuple $\Bm =(m_{i,j}: 1\le j\le k, \, 1\le i\le n  )$, 
    \begin{align*}
        \mathbf{B}_{g,(n)^k,n,e}^{\Bm}(q_1,q_2,\dots,q_k) =\begin{cases}
            \prod_{j=1}^k\alpha_j^{m_{n,j}}& \text{if }m_{i,j}=0\quad \forall i<n ,1\le j\le k
            \\
            0& \text{otherwise}.
        \end{cases} 
    \end{align*}
   where 
   \begin{equation}\label{eq:def_alpha_j}
       \alpha_j\coloneqq q_1q_2\cdots q_j(1+q_{j+1}+q_{j+1}q_{j+2}+\cdots +q_{j+1}q_{j+2}\cdots q_{k}) \quad \text{ for }1\le j\le k.
   \end{equation}
   Note that the above formula does not depend on the genus $g$ of $C$ and degree $e$ of $V$.  
\end{corollary}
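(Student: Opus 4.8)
The plan is to deduce the statement from the main formula, Theorem~\ref{thm:intro_VI_formula}, by solving the system of equations \eqref{eq:Bethe_system_intro} explicitly when $\Br=(n,n,\dots,n)$ and then evaluating the right-hand side. I would first treat $\deg V=0$. For arbitrary $\deg V=e$: twisting $V$ by a line bundle of degree $d$ shifts $\deg V$ by $nd$ but, since all ranks equal $n$, induces an \emph{isomorphism} of Hyperquot schemes identifying the tautological classes $c_i(\CE_{j|p}^\vee)$, so the invariants depend only on $e\bmod n$; and successive elementary modifications (Proposition~\ref{prop:elementary_mod}) connect the residue classes, the generating series picking up exactly a factor $q_1\cdots q_k$ and an insertion of $c_{r_k}(\CE_{k|p}^\vee)=c_n(\CE_{k|p}^\vee)$ per step — which is precisely how $\alpha_k=q_1\cdots q_k$ and the exponent $m_{n,k}$ enter the claimed answer. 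This reduces every $e$ to $e=0$ without gaps.

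\textbf{Solving the system.} For $\Br=(n)^k$ one has $\Bz_0=\{\}$ and $\Bz_{k+1}=(0,\dots,0)$, so with $R_j(X):=\prod_{i=1}^n(X-z_{i,j})$ (and $R_0:=1$, $R_{k+1}(X):=X^n$) the equations \eqref{eq:Bethe_system_intro} say precisely that every root of $R_j$ is a root of $R_{j+1}+(-1)^{r_j-r_{j-1}}q_jR_{j-1}$, a polynomial of degree exactly $n$ for generic $\Bq$. Hence this polynomial is a scalar multiple of $R_j$, which forces a three-term linear recursion with constant coefficients for the $R_j$ subject to the prescribed boundary data $R_0,R_{k+1}$; this pins down all $R_j$ uniquely, and one checks directly (using the telescoping identity $\alpha_{j-1}-\alpha_j=q_1\cdots q_{j-1}$) that the solution is $R_j(X)=X^n+(-1)^n\alpha_j$ with $\alpha_j=\sum_{l=j}^k q_1\cdots q_l$. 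Therefore, for generic $\Bq$, the non-degenerate solutions of \eqref{eq:Bethe_system_intro} are exactly the $(n!)^k$ level-wise reorderings of the sets of $n$-th roots of $(-1)^{n+1}\alpha_j$, and Vieta's formulas give $e_i(\boldsymbol\zeta_j)=0$ for $i<n$ and $e_n(\boldsymbol\zeta_j)=\alpha_j$.

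\textbf{Evaluating the formula.} Substituting into \eqref{eq:Intersection_Hquot(C,V)}, the factor $\prod_{i<n}e_i(\boldsymbol\zeta_j)^{m_{i,j}}$ kills every term unless all $m_{i,j}$ with $i<n$ vanish; and since both $\prod_i e_i(\boldsymbol\zeta_j)^{m_{i,j}}$ and $J$ are invariant under permuting the entries of each $\boldsymbol\zeta_j$ (permuting a level permutes the corresponding rows and columns of the Jacobian simultaneously), the $(n!)^k$ reorderings combine with the prefactor $\prod_j\frac1{n!}$ to leave the value at a single solution, namely $\prod_j\alpha_j^{m_{n,j}}\cdot J^{g-1}$. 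It remains to show $J=1$. I would write the Jacobian of \eqref{eq:Bethe_system_intro}, which is block tridiagonal in $j$ with diagonal blocks $B_j=\mathrm{diag}(P_j'(z_{i,j}))$ and off-diagonal Cauchy-type blocks built from the values $R_{j\pm1}(z_{i,j})$ — crucially independent of $i$ on our solution, with the cancellation $R_j(z_{i,j-1})=-R_{j-1}(z_{i,j})$. Running the block Schur-complement recursion $D_1=B_1$, $D_j=B_j-C_jD_{j-1}^{-1}A_{j-1}$, the identity $\sum_m\frac{1}{z_m^{n-1}(a-z_m)}=\frac{n}{R(a)}$ for the roots $z_m$ of $R(X)=X^n-c$ (and its derivative in $a$) shows that $C_jD_{j-1}^{-1}A_{j-1}$ is diagonal and precisely cancels the $q_j$-part of $B_j$, so $D_j=\mathrm{diag}(n z_{i,j}^{n-1})$ for every $j$. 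Then $\det=\prod_j\det D_j=\prod_j n^n\alpha_j^{n-1}=\prod_j\Delta(\boldsymbol\zeta_j)$, whence $J=1$; the right-hand side is $\prod_j\alpha_j^{m_{n,j}}$ (resp.\ $0$), and since $\mathbf B^{\Bm}_{g,(n)^k,n,e}$ is a (Laurent) polynomial agreeing with it for generic $\Bq$, they are equal.

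\textbf{Main obstacle.} The hard part is the identity $J=1$: this is where the precise shape of the solution $R_j=X^n+(-1)^n\alpha_j$ is used, through the cancellations that make $C_jD_{j-1}^{-1}A_{j-1}$ come out diagonal. Everything else is bookkeeping, but one should keep an eye on genericity of $\Bq$ (equivalently $\alpha_j\ne0$ for all $j$), so that the relevant polynomials have the expected degree, the blocks $D_j$ are invertible, and the Schur recursion is valid.
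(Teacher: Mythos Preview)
Your proposal is correct and follows the same overall strategy as the paper: apply Theorem~\ref{thm:intro_VI_formula}, derive the three-term recursion for the auxiliary polynomials $R_j$, solve it as $R_j(X)=X^n+(-1)^n\alpha_j$, read off the elementary symmetric functions, and then reduce from arbitrary $e$ to $e=0$ via elementary modifications (the paper uses only Proposition~\ref{prop:elementary_mod}, but your additional observation that twisting by a line bundle gives an isomorphism of Hyperquot schemes compatible with the tautological classes is also fine).

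The one genuine difference is your proof that $J=1$. You compute the block-tridiagonal Jacobian directly by the Schur recursion; your key identities ($R_{j-1}(\zeta_{s,j})=(-1)^nq_1\cdots q_{j-1}$ independently of $s$, the partial-fraction identity $\sum_m\frac{1}{\zeta_m^{n-1}(a-\zeta_m)}=\frac{n}{R(a)}$, and the resulting diagonality of $C_jD_{j-1}^{-1}A_{j-1}$) are all correct, and the induction goes through to give $D_j=\mathrm{diag}(n\zeta_{s,j}^{n-1})$ and hence $\det=\prod_j\Delta(\boldsymbol{\zeta}_j)$. The paper instead avoids this computation entirely: it evaluates the case $\Bm=\mathbf{0}$, where the only zero-dimensional Hyperquot scheme of points is the one with $\Bd=\mathbf{0}$ (a single reduced point), so $\mathbf{B}^{\mathbf{0}}_{g,(n)^k,n,0}=1$; plugging this into the single-term formula $\mathbf{B}^{\Bm}=J^{g-1}\prod e_i^{m_{i,j}}$ yields $J^{g-1}=1$ for every $g$, hence $J=1$. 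Your route proves $J=1$ unconditionally by linear algebra; the paper's route is short and conceptual but relies on knowing one geometric input.
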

\begin{proof}
Let us first fix $e=0$ and a generic $(q_1,q_2,\dots,q_k) \in (\BC^\ast)^k$. Consider the system of variables $\Bz_{j}=(z_{1,j},\dots,z_{n,j})$ for $1\le j\le k$, and set by definition $\Bz_0 := \lbrace \rbrace$ and $z_{i,k+1} := 0$ for all $i$. Consider the polynomials
\begin{equation}\label{eq:simple_P_point}
    P_{j}(X)= \prod_{i=1}^{n}(X-z_{i,j+1})+q_j\prod_{i=1}^{n}(X-z_{i,j-1})
    \qquad\text{for}\quad 1\le j\le k.
\end{equation}
In order to apply Theorem \ref{thm:intro_VI_formula}, we need to solve the system of equations 
\begin{align*}
    P_j(z_{i,j}) = 0 \qquad \text{for all } 1\leq j \leq k \quad \text{and}\quad 1\leq i\leq n
\end{align*}
defined in \eqref{eq:Bethe_P(X)}. By Proposition~\ref{number_of_solutions}, there is a unique non-degenerate solution $(\Bz_1,\dots,\Bz_k)=(\boldsymbol{\zeta}_1,\dots,\boldsymbol{\zeta}_k)$ to this system of equations up to permuting the entries in each $\boldsymbol{\zeta}_j$ by elements of $S_n$. In particular, the non-degenerate solutions form a free orbit under the action of $(S_n)^k$. 

Fixed one such solution $(\boldsymbol{\zeta}_1,\dots,\boldsymbol{\zeta}_k)$, since the expression on the right-hand side of the formula in Theorem \ref{thm:intro_VI_formula} is invariant under the action of $(S_n)^k$, we find that
\begin{equation}\label{eq:simplified_formula_points}
    \mathbf{B}_{g,(n)^k,n,e}^{\Bm}(q_1,\dots,q_k) = J(\boldsymbol{\zeta}_1, \dots, \boldsymbol{\zeta}_k)^{g-1} \prod_{j=1}^k \prod_{i=1}^n e_i(\boldsymbol{\zeta}_{j})^{m_{i,j}}.
\end{equation}

In the special case where $\Bm=\boldsymbol{0}$, we can directly compute
\[
 \mathbf{B}_{g,(n)^k,n,0}^{\boldsymbol{0}}(q_1,\dots,q_k)=1 \qquad \forall g \geq 0
\]
since the Hyperquot scheme of points on $C$ is zero-dimensional exactly when $d_1=\cdots=d_k=0$, in which case it consists of a single reduced point. From this and \eqref{eq:simplified_formula_points} we immediately obtain $J(\boldsymbol{\zeta}_1,\dots,\boldsymbol{\zeta}_k)=1$, which in particular implies that the integrals don't depend on the genus $g$. 

In the general case $\Bm \neq 0$, in order to apply \eqref{eq:simplified_formula_points}, we have to evaluate the symmetric polynomials $e_i$ at the chosen solution. To do so, consider the auxiliary polynomials
$$
R_j(X)\coloneqq \prod_{i=0}^n(X-\zeta_{i,j}) = \sum_{i=1}^n (-1)^i e_i(\boldsymbol{\zeta}_j)X^{n-i} \qquad \text{ where } \boldsymbol{\zeta_j} = (\zeta_{1,j}, \dots, \zeta_{n,j})
$$
for each $1\le j\le k$. Since $\boldsymbol{\zeta}_j$ is the complete list of roots of $P_{j}(X)\vert_{\Bz = \boldsymbol{\zeta}}$, we find that $R_j$ divides this polynomial, and since they are both of degree $n$ there must exist a constant $c_j \in \BC$ so that $P_j(X)\vert_{\Bz = \boldsymbol{\zeta}} = c_j R_j$. Equation \eqref{eq:simple_P_point} becomes 
\begin{equation}\label{eq:new_P_points}
    P_{j}(X)\vert_{ \Bz = \boldsymbol{\zeta}} = R_{j+1}(X) +q_j R_{j-1}(X)
    \qquad\text{for}\quad 1\le j\le k,
\end{equation}
and the constant can be easily determined from  by comparing the coefficients of $X^n$, so that $c_1:=1$ and $c_j := (1+q_j)$ for $j \geq 2$. From \eqref{eq:new_P_points} we obtain
\begin{align*}
   (1+q_j) R_j(X)&= R_{j+1}(X)+q_jR_{j-1}(X) \qquad \text{for all } 1 \leq j \leq k,
\end{align*}
where we set $R_0 := 1$ and $R_{k+1}(X):= X^n$.
Solving the above linear relations for the monic polynomials $R_j(X)$, we obtain
$$R_j(X)=X^n+(-1)^{n}\alpha_j,$$
    where $\alpha_j$ is defined in \eqref{eq:def_alpha_j}, and hence the elementary symmetric polynomials satisfy $e_n(\boldsymbol{\zeta}_{j}) =        \alpha_j$ and $e_i(\boldsymbol{\zeta}_{j})=0$ whenever $i<n$.

    For an arbitrary degree $e$, Proposition~\ref{prop:elementary_mod} implies
    \[
     \mathbf{B}_{g,(n)^k,n,e}^{\tilde\Bm}(q_1,\dots,q_k) =  \alpha_k \cdot\mathbf{B}_{g,(n)^k,n,e-1}^{\Bm}(q_1,\dots,q_k)
    \]
    where $\tilde{m}_{k,n} = m_{k,n} + 1$ and $\tilde{m}_{i,j} = m_{i,j}$ for all the other indices. The claim now follows from the result for $e=0$.
\end{proof}
\begin{remark}
 Recall that for a vector bundle $E$ on a projective scheme $X$, the Segre polynomial of $E$ is given by $s_{t}(E) =1/(1+c_1(E)t+\dots +c_{n}(E)t^n)$. Corollary~\ref{cor:Segra_integral} is a direct consequence of Corollary~\ref{thm:punctual}, by expressing the Segre polynomial in terms of the Chern classes of $E$ as above.
\end{remark}

\subsection{Explicit formulas for $\Br=(1,n-1)$}
Fix a smooth curve $C$ of genus $g$, $V$ be a vector bundle of rank $n$ and degree zero. Let $\Br=(1,n-1)$. For any multi-degree $\Bd=(d_1,d_{2})$, the virtual dimension of the Hyperquot scheme $\Hquot_\Bd=\Hquot_\Bd(C,\Br,V)$ is given by 
\[
\vdim\Hquot_{\Bd} = (n-1)(d_1+d_{2}) - (2n-3)\bar{g},
\]
where $\bar{g}:=g-1$.
For any non-negative integers $\ell$ and $m_1,m_2,\dots,m_{n-1}$, we are interested in evaluating the virtual invariants
\[
\mathbf{B}_{g,\Br,n,0}^{\ell,\Bm}(q_1,q_{2}) \coloneqq      \sum_{d_1,d_{2}\in \mathbb{N}}
q_1^{d_1} q_{2}^{d_{2}} \int_{[\Hquot_{\Bd}]^{\vir}}  c_1(\CE_{1|p}^\vee)^{\ell} \prod_{i=1}^{n-1}c_i(\CE_{2|p}^\vee)^{m_{i}},
\]
where we denote $(\ell,\Bm)=(\ell,m_1,\dots,m_{n-1})$, instead of calling it $\Bm$.

To apply Theorem~\ref{thm:intro_VI_formula}, we must first solve system of 
equations in~\eqref{eq:Bethe_system_intro}; we return to the proof of Corollary~\ref{cor:intro_r=(1,n-1)} at the end of this section. For ease of notation, let $x=z_{11}$ and $y_i=z_{i,n-1}$ for $1 \le i \le n-1$. 
The system of equations in \eqref{eq:Bethe_system_intro} specializes to
\begin{align}\label{eq:system_(1,n-1)}
    &&P_{2}(y_1)&\equiv y_1^n+(-1)^{n}q_{2}(y_1-x)=0,\nonumber\\
   P_{1}(x)\equiv\prod_{i=1}^{n-1}(x-y_i)-q_1=0&,\qquad&  \vdots\ \ \ & \ \ \ \ \ \ \ \ \ \ \ \ \ \ \ \ \ \ 
    \vdots\hspace{5em}
    \\
    &&P_{2}(y_{n-2})&\equiv y_{n-2}^n+(-1)^{n}q_{2}(y_{n-2}-x)=0,\nonumber\\
&&P_2(y_{n-1})&\equiv y_{n-1}^n+(-1)^{n}q_{2}(y_{n-1}-x)=0.\nonumber
\end{align}

\begin{lemma}\label{lem:FL(1,n-1,n)-solutions}
   Fix a tuple $(q_1,q_2) $ of generic complex numbers. Every non-degenerate solution of \eqref{eq:system_(1,n-1)},
\[
  (x=\zeta, \; y_1=\eta_1, \dots, y_{n-1}=\eta_{n-1}),
\]
is of the following form:
 let \(w\) be an \(n\)-th root of \(q_1/q_2\), then
    \begin{enumerate}
        \item[(i)] \(\zeta\) is an \((n-1)\)-th root of \(q_1(1+w^{-1})\),
        \item[(ii)] \(\eta_1,\eta_2,\dots,\eta_{n-1}\) are distinct roots
    % \footnote{Roots being distinct is guaranteed for general values of \((q_1,q_2)\in \bb{C}^2\).}
    of 
%     \[
% Y^{n-1}+ \sum_{j=0}^{n-2}(-1)^{n-1-j}\left(\frac{\zeta Y}{w}\right)^j+(-1)^{n}q_{2}=0.
% \]
\[
Y^{n-1}+\left(-\frac{\zeta}{w}\right)Y^{n-2}+\cdots+\left(-\frac{\zeta}{w}\right)^{n-2}Y+\left(-\frac{\zeta}{w}\right)^{n-1} +(-1)^{n}q_{2}=0.
% Y^{n-1}+ \sum_{j=0}^{n-2}\left(-\frac{\zeta}{w}\right)^{n-1-j}Y^{j}+(-1)^{n}q_{2}=0?
\]
    \end{enumerate}
    % where .
    
%     Let \(\omega\) be any \(n\)-th root of \(q_1/q_2\) and . Let \(y_1,\dots,y_{n-1}\) be the distinct roots
%     % \footnote{Roots being distinct is guaranteed for general values of \((q_1,q_2)\in \bb{C}^2\).}
%     of 
%     \[
% Y^{n-1}+ \sum_{j=0}^{n-2}(-1)^{n-1-j}\left(\frac{x Y}{w}\right)^j+(-1)^{n}q_{2}=0.
% \]
% \Weihong{\[
% Y^{n-1}+ \sum_{j=0}^{n-2}\left(-\frac{x}{w}\right)^{n-1-j}Y^{j}+(-1)^{n}q_{2}=0?
% \]}
% Then \(x\) and \(y_1,\dots,y_{n-1}\) satisfy the equations~\eqref{eq:1,n-1}, and all non-degenerate solutions are of this form.
\end{lemma}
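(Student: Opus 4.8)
\textbf{Proof plan for Lemma~\ref{lem:FL(1,n-1,n)-solutions}.}
The plan is to analyze the system \eqref{eq:system_(1,n-1)} directly, exploiting the observation that all the equations $P_2(y_i)=0$ share the same form, so the $y_i$ are all roots of a single degree-$n$ polynomial in $Y$ (with $x$ as a parameter). First I would fix a non-degenerate solution $(x=\zeta,\,y_1=\eta_1,\dots,y_{n-1}=\eta_{n-1})$ and write $g(Y)\coloneqq Y^n+(-1)^n q_2(Y-\zeta)$, so that $\eta_1,\dots,\eta_{n-1}$ are $n-1$ distinct roots of $g$. Since $g$ has degree $n$, it has exactly one further root; call it $\eta_n$, so $g(Y)=\prod_{i=1}^n(Y-\eta_i)$. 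Comparing the coefficient of $Y^{n-1}$ gives $\sum_{i=1}^n\eta_i=0$, and comparing the constant terms gives $\prod_{i=1}^n\eta_i=(-1)^n\cdot(-1)^n q_2\zeta=q_2\zeta$ (up to the sign bookkeeping, which I would track carefully). The first equation $\prod_{i=1}^{n-1}(x-y_i)=q_1$ evaluated at the solution reads $\prod_{i=1}^{n-1}(\zeta-\eta_i)=q_1$. Now observe $g(\zeta)=\zeta^n+(-1)^n q_2(\zeta-\zeta)=\zeta^n$, and also $g(\zeta)=\prod_{i=1}^n(\zeta-\eta_i)=(\zeta-\eta_n)\prod_{i=1}^{n-1}(\zeta-\eta_i)=(\zeta-\eta_n)q_1$. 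Hence $\zeta^n=(\zeta-\eta_n)q_1$, which determines $\eta_n=\zeta-\zeta^n/q_1=\zeta(1-\zeta^{n-1}/q_1)$.

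Next I would extract the $n$-th root parameter $w$. Set $w\coloneqq -\zeta/\eta_n$ (I expect this to be the right normalization up to sign); equivalently $\eta_n=-\zeta/w$. From $\eta_n=\zeta-\zeta^n/q_1$ one gets $\zeta^n/q_1=\zeta+\zeta/w=\zeta(1+w^{-1})$, hence $\zeta^{n-1}=q_1(1+w^{-1})$, giving statement (i). To pin down what $w$ is, I would use the product relation $\prod_{i=1}^n\eta_i=q_2\zeta$: since $\prod_{i=1}^{n-1}\eta_i=\prod_{i=1}^n\eta_i/\eta_n=q_2\zeta/(-\zeta/w)=-q_2 w$, but I also need an independent handle on $\prod_{i=1}^{n-1}\eta_i$. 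Instead, a cleaner route: $g(Y)/(Y-\eta_n)$ is the monic degree-$(n-1)$ polynomial whose roots are exactly $\eta_1,\dots,\eta_{n-1}$. Performing the polynomial division $\bigl(Y^n+(-1)^nq_2(Y-\zeta)\bigr)/(Y-\eta_n)$ and substituting $\eta_n=-\zeta/w$ should produce precisely the polynomial in (ii). The telescoping structure $Y^{n-1}+(-\zeta/w)Y^{n-2}+\cdots+(-\zeta/w)^{n-1}$ is exactly $\bigl(Y^n-\eta_n^n\bigr)/(Y-\eta_n)$, so the division will work out provided $\eta_n^n$ matches the constant-term contribution, i.e.\ provided $(-\zeta/w)^n+(-1)^nq_2=0$ reproduces $w^n=q_1/q_2$ via $\zeta^{n-1}=q_1(1+w^{-1})$; I would verify this compatibility, which is where the relation $w^n=q_1/q_2$ gets forced.

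Concretely, the key chain of deductions I would carry out in order is: (1) all $\eta_i$ ($1\le i\le n-1$) together with an $n$-th root $\eta_n$ of $g$ satisfy $g(Y)=\prod_{i=1}^n(Y-\eta_i)$; (2) evaluate $g$ at $Y=\zeta$ two ways to get $\zeta^n=(\zeta-\eta_n)q_1$; (3) set $w=-\zeta/\eta_n$ and rearrange to get $\zeta^{n-1}=q_1(1+w^{-1})$, which is (i); (4) divide $g(Y)$ by $(Y-\eta_n)$ and simplify using $\eta_n=-\zeta/w$ to obtain the polynomial in (ii), whose roots are the $\eta_i$, and in doing so read off that consistency forces $w^n=q_1/q_2$; (5) conversely, check that any $w$ with $w^n=q_1/q_2$, any $(n-1)$-th root $\zeta$ of $q_1(1+w^{-1})$, and any choice of $n-1$ distinct roots of the polynomial in (ii) yields a genuine non-degenerate solution, so the description is exhaustive (the non-degeneracy of $\eta_1,\dots,\eta_{n-1}$ is exactly the hypothesis, and for generic $(q_1,q_2)$ this polynomial has distinct roots and $\zeta\ne 0$). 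The main obstacle I anticipate is bookkeeping: keeping the signs $(-1)^n$ straight through the evaluation $g(\zeta)=\zeta^n$ and the polynomial division, and confirming that the forced relation is precisely $w^n=q_1/q_2$ rather than some sign variant; a secondary subtlety is making sure the "distinct roots" condition in (ii) is both necessary (it is the non-degeneracy hypothesis) and that no non-degenerate solution is lost when $\eta_n$ happens to coincide with some $\eta_i$ — but this is excluded for generic $(q_1,q_2)$ since $g$ then has simple roots.
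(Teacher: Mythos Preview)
Your approach is essentially the paper's: introduce the missing root $\eta_n$ of $g(Y)=Y^n+(-1)^nq_2(Y-\zeta)$, evaluate $g$ at $Y=\zeta$ to get $\zeta^n=q_1(\zeta-\eta_n)$, set $\eta_n=-\zeta/w$, and divide $g(Y)$ by $(Y-\eta_n)$ to obtain the polynomial in (ii). The paper computes the elementary symmetric functions of $\eta_1,\dots,\eta_{n-1}$ instead of doing the division, but this is equivalent.

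There is one genuine slip in step (4): the polynomial division is \emph{automatically} consistent (since $\eta_n$ is by definition a root of $g$), so it cannot force $w^n=q_1/q_2$. In fact
\[
g(Y)/(Y-\eta_n)=Y^{n-1}+\eta_n Y^{n-2}+\cdots+\eta_n^{n-1}+(-1)^nq_2
\]
holds for any root $\eta_n$ with no constraint on $w$. The relation $w^n=q_1/q_2$ instead comes from comparing your two evaluations: $g(\eta_n)=0$ gives $\eta_n^n=(-1)^nq_2(\zeta-\eta_n)$, while $g(\zeta)=\zeta^n$ gives $\zeta^n=q_1(\zeta-\eta_n)$; taking the ratio yields $(\eta_n/\zeta)^n=(-1)^nq_2/q_1$, i.e.\ $(-1/w)^n=(-1)^nq_2/q_1$, hence $w^n=q_1/q_2$. (The paper reaches the same conclusion by evaluating $g$ at $Y=0$ and combining with the product $\prod_{i=1}^{n-1}\eta_i^n=q_1q_2^{n-1}$.) Also, your step (5) converse is not needed for the lemma as stated; the paper instead counts the candidates ($n\cdot(n-1)\cdot(n-1)!$) and invokes Proposition~\ref{number_of_solutions} to conclude they exhaust the non-degenerate solutions.
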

\begin{proof}
% We list $n$ equations \eqref{eq:1,n-1} explicitly that we have to solve:
% \begin{align*}
%     &&y_1^n&=(-1)^{n}q_{2}(x-y_1),\\
%    \prod_{i=1}^{n-1}(x-y_i)&=q_1,\qquad&  \vdots\ \ \ & \ \ \ \ \ \ \ \ \ \ \ \ \ \ \ \ \ \ 
%     \vdots\hspace{5em}
%     \\
%     &&y_{n-2}^n&=(-1)^{n}q_{2}(x-y_{n-2}),\\
% &&y_{n-1}^n&=(-1)^{n}q_{2}(x-y_{n-1}).
% \end{align*}

Suppose  $(\zeta,\eta_1,\dots,\eta_{n-1})$ is a non-degenerate solution to the above system of equations. Using all the equations in \eqref{eq:system_(1,n-1)}, we observe that 
\[
(\eta_1\eta_2\cdots\eta_{n-1})^n =(-1)^{n-1}(\eta_1-\zeta)\cdots(\eta_{n-1}-\zeta)q_2^{n-1}= q_1q_2^{n-1}.
\]
Consider the polynomial $$R_{2}(Y)= Y^n+(-1)^{n}q_{2}(Y-\zeta).$$
Note that $\eta_1,\dots,\eta_{n-1}$ are roots of $R_2$. We denote $\eta_n$ for the missing root of $P_{2}(Y)$, that is $\{\eta_1,\eta_{2},\dots,\eta_n\}$ is the complete list of roots. 

Evaluating the polynomial at $Y=\zeta$, we obtain
\begin{equation}\label{eq:P_n-1(x)}
    \zeta^n= R_{2}(\zeta) = (\zeta-\eta_{1})\cdots(\zeta-\eta_{n-1}) (\zeta-\eta_n) = q_{1}(\zeta-\eta_n).
\end{equation}
On the other hand, evaluating the polynomial at \(Y=0\) and exponentiating to the $n$-th exponent, we have the identity
\begin{align*}
    &\parens*{q_{2}\zeta }^n = R^n_{2}(0)= \parens*{(-1)^n\eta_{1}\eta_2\cdots \eta_{n}}^n = (-1)^nq_1q_2^{n-1}\eta_n^n
    % \\ =& (-1)^{n^2}q_{2}^{n-1}(x-y_1)\cdots (x-y_{n-1})y_n^n = (-1)^{n^2}q_{2}^{n-1}q_1y_n^n,
\end{align*}
which implies \begin{align}\label{eq:omega}
    \eta_{n}=-\zeta/w\text{ with }w\text{ an }n\text{-th root of }q_1/q_2.
\end{align} Substituting \eqref{eq:omega} back in \eqref{eq:P_n-1(x)}, we obtain the identity \begin{align}\label{eq:x}
    \zeta^{n-1}= {q_1(1+w^{-1})}.
\end{align}

Finally, we express elementary symmetric polynomials in $\eta_{1},\eta_2,\dots,\eta_{n-1}$ in terms of $q_2,w$ and $\zeta$. Analyzing the coefficients of the polynomial $R_{2}(Y)$, we have
\begin{align*}
    e_{n}(\eta_1, \eta_2, \dots, \eta_n) = -\zeta q_{2}, \quad
    e_{n-1}(\eta_1, \eta_2, \dots, \eta_n) = - q_{2}, 
\end{align*}
and $ e_{i}(\eta_1, \eta_2, \dots, \eta_n) = 0$ {for all} $1 \le i \le n-2$.
Using \eqref{eq:omega} and \eqref{eq:x}, we obtain
\begin{align}\label{eq:e_i()FL(1,n-1,n)}
    e_{n-1}(\eta_1,\eta_2,\dots,\eta_{n-1})&=&&q_{2}w&=&\left(\frac{\zeta}{w}\right)^{n-1}-q_{2} \nonumber
    \\
    e_{n-2}(\eta_1,\eta_2,\dots,\eta_{n-1})&=&\frac{w}{\zeta}\cdot &q_{2}(1+w)&=&\left(\frac{\zeta}{w}\right)^{n-2}\nonumber
    \\
    e_{n-3}(\eta_1,\eta_2,\dots,\eta_{n-1})&=&\left(\frac{w}{\zeta}\right)^2\cdot &q_{2}(1+w)&=&\left(\frac{\zeta}{w}\right)^{n-3}\\
    \vdots&&&&&\nonumber\\
    e_{1}(\eta_1,\eta_2,\dots,\eta_{n-1})&=&\left(\frac{w}{\zeta}\right)^{n-2}\cdot &q_{2}(1+w)&=&\left(\frac{\zeta}{w}\right)\nonumber.
\end{align}
We have thus obtained that there are at most \(n\cdot(n-1)\cdot(n-1)!\) tuples \((\zeta,\eta_1,\dots,\eta_{n-1})\) satisfying (i) and (ii) in the statement of Lemma~\ref{lem:FL(1,n-1,n)-solutions}. It follows from Proposition~\ref{number_of_solutions} that they must be all the non-degenerate solutions to \eqref{eq:system_(1,n-1)}.
\end{proof}

\begin{lemma}\label{lem:J(x)FL(1,n-1,n)}
    Continuing with the notation in Lemma~\ref{lem:FL(1,n-1,n)-solutions}, for any choice of $w$ and $\zeta$. Let \(\boldsymbol{\eta}:=\parens*{\eta_1,\dots,\eta_{n-1}}\), we have
    \[
    J(\zeta,\boldsymbol{\eta})
    % =n(n-1)q_1 q_{2}\frac{1+w}{\zeta}
    =n(n-1)q_{2}w\zeta^{n-2}.
    \]
\end{lemma}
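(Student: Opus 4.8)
The goal is to compute the Jacobian factor $J(\zeta,\boldsymbol{\eta})$ from Theorem~\ref{thm:intro_VI_formula}, which for $\Br=(1,n-1)$ reads
\[
J = \frac{1}{\Delta(\boldsymbol{\eta})}\cdot\det\left(\frac{\partial P_j(z_{i,j})}{\partial z_{i',j'}}\right),
\]
where the first factor involves only the $(n-1)$ variables $y_i$ (the singleton $\Bz_1=(x)$ contributes a trivial $\Delta=1$). The first step is to write out the $n\times n$ Jacobian matrix of the system~\eqref{eq:system_(1,n-1)} with respect to the ordered variables $(x,y_1,\dots,y_{n-1})$: the row for $P_1$ has $\partial P_1/\partial x = \sum_i \prod_{k\ne i}(x-y_k)$ and $\partial P_1/\partial y_j = -\prod_{k\ne j}(x-y_k)$; each row for $P_2(y_i)$ has $\partial P_2(y_i)/\partial x = (-1)^{n-1}q_2$, $\partial P_2(y_i)/\partial y_i = n y_i^{n-1}+(-1)^n q_2 = R_2'(y_i)$, and zero off-diagonal. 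So the matrix is a $1$-row-plus-(arrowhead) shape: block-diagonal $\mathrm{diag}(R_2'(\eta_1),\dots,R_2'(\eta_{n-1}))$ in the $y$-block, a constant column $(-1)^{n-1}q_2$ below the $(1,1)$ entry, and the $P_1$-row on top.

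The second step is to expand this determinant. Since the $y$-block is diagonal except for the constant first column, cofactor expansion along the $P_1$-row (or a direct Schur-complement computation) gives
\[
\det = \Bigl(\tfrac{\partial P_1}{\partial x}\Bigr)\prod_{i=1}^{n-1}R_2'(\eta_i)\;+\;(-1)^{n-1}q_2\sum_{j=1}^{n-1}\Bigl(-\tfrac{\partial P_1}{\partial y_j}\Bigr)\prod_{i\ne j}R_2'(\eta_i),
\]
i.e. $\det = \bigl(\prod_i R_2'(\eta_i)\bigr)\cdot\bigl[\sum_i\prod_{k\ne i}(\zeta-\eta_k) + (-1)^{n-1}q_2\sum_j \prod_{k\ne j}(\zeta-\eta_k)\,R_2'(\eta_j)^{-1}\bigr]$, after factoring. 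Then I would use the identity $\prod_{k=1}^{n-1}(\zeta - \eta_k) = q_1$ from the $P_1$-equation to simplify $\prod_{k\ne j}(\zeta-\eta_k) = q_1/(\zeta - \eta_j)$, turning the bracket into $q_1\bigl[\sum_i (\zeta-\eta_i)^{-1}\bigr] + (-1)^{n-1}q_1 q_2\sum_j \bigl[(\zeta-\eta_j)R_2'(\eta_j)\bigr]^{-1}$. Dividing by $\Delta(\boldsymbol{\eta}) = \prod_{a\ne b}(\eta_a-\eta_b)$ and recalling $\prod_i R_2'(\eta_i)$ relates to $\Delta(\boldsymbol{\eta})$ together with the missing root $\eta_n$ via $R_2'(\eta_i) = \prod_{m\ne i}(\eta_i - \eta_m)$ (product over all $n$ roots), I expect $\prod_{i=1}^{n-1}R_2'(\eta_i)/\Delta(\boldsymbol{\eta}) = (-1)^{?}\prod_{i=1}^{n-1}(\eta_i - \eta_n)$ up to sign, which is $(-1)^{?}\,R_2(\eta_n)/(\eta_n)$-type expression but $R_2(\eta_n)=0$, so I must be more careful: $\prod_{i=1}^{n-1}(\eta_i-\eta_n) = \lim$-free, equal to $(-1)^{n-1}\prod(\eta_n-\eta_i)$, and $\prod_{i=1}^n(Y-\eta_i)=R_2(Y)$ gives $\prod_{i=1}^{n-1}(\eta_n - \eta_i) = R_2'(\eta_n)$.

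The third step is to evaluate everything at the explicit solution from Lemma~\ref{lem:FL(1,n-1,n)-solutions}, using $\eta_n = -\zeta/w$, $\zeta^{n-1}=q_1(1+w^{-1})$, $q_1 = q_2 w^n$, and the list of elementary symmetric functions in~\eqref{eq:e_i()FL(1,n-1,n)}. In particular $R_2'(\eta_n) = n\eta_n^{n-1}+(-1)^n q_2 = n(-\zeta/w)^{n-1}+(-1)^n q_2$, and the two sums $\sum_i(\zeta-\eta_i)^{-1}$, $\sum_j[(\zeta-\eta_j)R_2'(\eta_j)]^{-1}$ can be evaluated as logarithmic-derivative-type sums: $\sum_i(\zeta-\eta_i)^{-1} = \frac{d}{dZ}\log\bigl(\prod(Z-\eta_i)\bigr)|_{Z=\zeta}$, and for the second use the standard partial-fraction identity $\sum_j \frac{1}{(\zeta-\eta_j)R_2'(\eta_j)} = $ the coefficient extraction from $\frac{1}{(\zeta - Y)R_2(Y)}$, which is essentially $\frac{1}{R_2(\zeta)}\bigl(\text{something}\bigr) - \frac{R_2'(\eta_n)\text{-term}}{\cdots}$; here $R_2(\zeta) = \zeta^n$ is nonzero so this is clean. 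After plugging in, everything should collapse to the claimed $n(n-1)q_2 w \zeta^{n-2}$.

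\textbf{Main obstacle.} The computational heart — and where errors creep in — is the second and third steps: correctly handling the arrowhead-determinant expansion, keeping track of the signs $(-1)^{n-1}$, $(-1)^n$ from $P_2$ and from the $\rho_j$-type factors, and evaluating the two rational sums $\sum_i(\zeta-\eta_i)^{-1}$ and $\sum_j[(\zeta-\eta_j)R_2'(\eta_j)]^{-1}$ in closed form at the explicit solution. The key simplifying leverage is that at the solution we know \emph{all} the symmetric functions of $\boldsymbol{\eta}$ (and of $\eta_1,\dots,\eta_n$) from~\eqref{eq:e_i()FL(1,n-1,n)}, so both sums become explicit functions of $\zeta$, $w$, $q_2$; I expect a clean cancellation leaving precisely $n(n-1)q_2 w\zeta^{n-2}$. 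An alternative to the partial-fraction bookkeeping, which I would try if the direct route gets messy, is to exploit the $(S_n)$-equivariance already used in Corollary~\ref{thm:punctual}: since $J$ is symmetric in the $\eta_i$ and the final formula in Theorem~\ref{thm:intro_VI_formula} only uses $J$ up to the $(S_{n-1}\times S_1)$-action, one may instead compute $J$ indirectly by comparing the generating function produced by the formula against a separately-known low-degree virtual integral (e.g. the virtual count of maps $\mathbb{P}^1\to\Fl(1,n-1,n)$ of small multidegree, or an Euler-characteristic normalization), thereby pinning down the single unknown constant. But the direct determinant computation is the natural path and should go through.
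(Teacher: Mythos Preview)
Your plan is correct and follows essentially the same route as the paper: the same arrowhead Jacobian, the same cofactor expansion, and the same reduction $\prod_{i=1}^{n-1}R_2'(\eta_i)/\Delta(\boldsymbol{\eta})=(-1)^{n-1}R_2'(\eta_n)$.

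The only place where the paper is slicker is your third step. Rather than evaluating the two sums $\sum_i(\zeta-\eta_i)^{-1}$ and $\sum_j\bigl[(\zeta-\eta_j)R_2'(\eta_j)\bigr]^{-1}$ separately, the paper first combines them using the identity $\partial P_1/\partial x=-\sum_i\partial P_1/\partial y_i$; after factoring, the bracket becomes
\[
\sum_{i=1}^{n-1}\frac{-R_2'(\eta_i)+(-1)^nq_2}{(\zeta-\eta_i)R_2'(\eta_i)}
=\sum_{i=1}^{n-1}\frac{-n\eta_i^{n-1}}{(\zeta-\eta_i)R_2'(\eta_i)}
=\sum_{i=1}^{n-1}\frac{-n(-1)^nq_2}{\eta_i R_2'(\eta_i)},
\]
where the first equality is $R_2'(Y)=nY^{n-1}+(-1)^nq_2$ and the second uses the $P_2$-equation $\eta_i^n=(-1)^nq_2(\zeta-\eta_i)$. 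One then observes $\eta_i R_2'(\eta_i)=(-1)^nq_2(n-1)\bigl(\tfrac{n\zeta}{n-1}-\eta_i\bigr)$, so extending the sum to all $n$ roots reduces it to the logarithmic derivative $R_2'/R_2$ at $Y=\tfrac{n\zeta}{n-1}$, which equals $n/\zeta$ after using $\zeta^{n-1}=q_1(1+w^{-1})$. This bypasses your separate computation of $S'(\zeta)/S(\zeta)$ and the partial-fraction sum entirely. Your two-sum route also works (your residue identity for the second sum is correct, with the missing $\eta_n$-term subtracted), but expect more algebra and more sign-tracking before the dust settles.
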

\begin{proof}
Suppose  $(\zeta,\eta_1,\dots,\eta_{n-1})$ is a non-degenerate solution. The term $J(\zeta,\boldsymbol{\eta})$ in Theorem~\ref{thm:intro_VI_formula} in our case is explicitly given by
\begin{align*}
     \parens*{\det 
     \begin{bmatrix}
     \frac{\partial P_1(x)}{\partial x}&\frac{\partial P_1(x) }{\partial y_1}&\frac{\partial P_1(x) }{\partial y_2}&\cdots&   \frac{\partial P_1(x) }{\partial y_{n-1}}\\
     (-1)^{n-1}q_{2}&\frac{\partial P_2(y_1)}{\partial y_1}&0&\cdots&0\\
     (-1)^{n-1}q_{2}&0&\frac{\partial P_2(y_2)}{\partial y_2}&\cdots&0\\
     \vdots&\vdots&&\ddots&\vdots\\
     (-1)^{n-1}q_{2}& 0&0&\cdots&\frac{\partial P_2(y_{n-1})}{\partial y_{n-1}}
     \end{bmatrix}
     \cdot\prod_{1\le i\ne j\le n-1}(y_{i}-y_j)^{-1}}_{x=\zeta,\ \mathbf{y}=\boldsymbol{\eta}}
\end{align*}
Recall the polynomial
$$R_2(Y)=Y^n+(-1)^nq_2(Y-\zeta)=  (Y-\eta_1)(Y-\eta_2)\cdots(Y-\eta_n),$$ where $\eta_n$ is the missing root as in the proof of Lemma~\ref{lem:FL(1,n-1,n)-solutions}. Note that 
\[
\prod_{1\le i\ne j\le n-1}(\eta_{i}-\eta_j) = \frac{(-1)^{n-1}}{R_{2}'(\eta_n)}\prod_{i=1}^{n-1}R_{2}'(\eta_i)
\]
and 
\[
\frac{\partial P_2(y_i)}{\partial y_i}\bigg|_{x=\zeta,\ \mathbf{y}=\boldsymbol{\eta}}=R_2'(y_i)|_{\mathbf{y}=\boldsymbol{\eta}}=R_2'(\eta_i).
\]
Using the identity
$$\frac{\partial P_1(x)}{\partial x} = -\sum_{i=1}^{n-1}\frac{\partial P_1(x) }{\partial y_i},$$
we expand the determinant along the first row and obtain
\begin{align*}
    J(\zeta,\boldsymbol{\eta}) &=(-1)^{n-1}R_{2}'(\eta_n)\left[\sum_{i=1}^{n-1}\left( -1 +  (-1)^{n}\frac{q_{2}}{R_{2}'(\eta_i)} \right)\cdot\frac{\partial P_1(x) }{\partial y_i}\bigg|_{x=\zeta,\ \mathbf{y}=\boldsymbol{\eta}}\right]
    \\&=(-1)^{n-1}R_{2}'(\eta_n)\cdot\left[\sum_{i=1}^{n-1} \frac{-R_2'(\eta_i)+(-1)^nq_2}{(\zeta-\eta_i)R_{2}'(\eta_i)} \right]\cdot \prod_{j=1}^{n-1}(\zeta-\eta_j)\\
    &=(-1)^{n-1}R_{2}'(\eta_n)\cdot\left[\sum_{i=1}^{n-1} \frac{n\eta_i^{n-1}}{(\zeta-\eta_i)R_{2}'(\eta_i)} \right]\cdot \prod_{j=1}^{n-1}(\zeta-\eta_j)\\
    &=(-1)^{n-1}R_{2}'(\eta_n)\cdot\left[\sum_{i=1}^{n-1} \frac{n\cdot (-1)^{n}q_{2}}{\eta_i R_{2}'(\eta_i)} \right]\cdot q_1,
\end{align*}
where in the last equality, we used $q_1=\prod_{j=1}^{n-1}(\zeta-\eta_j)$ and $\eta_i^n=(-1)^nq_{2}(\zeta-\eta_i)$. We note that $$\eta_iR'_{2}(\eta_i) =n\eta_i^n+(-1)^nq_{2}\eta_i
% =(-1)^n(n(x-\eta_i)+\eta_n)
=(-1)^nq_{2}(n-1)\left(\frac{n\zeta}{n-1}-\eta_i \right),$$
and hence summing over all roots of $R_{2}$ and using the identity $
\zeta^{n-1}= {q_1(1+w^{-1})}$, we have
\begin{align}\label{eq:n/zeta}
    \sum_{i=1}^{n} \frac{n\cdot (-1)^{n}q_{2}}{\eta_i R_{2}'(\eta_i)}&= \frac{n}{n-1}\cdot \frac{R_{2}'\left(\frac{n\zeta}{n-1}\right)}{R_{2}\left(\frac{n\zeta}{n-1}\right)}
    =\frac{n}{\zeta}.
\end{align}
Recall from \eqref{eq:omega} that $\eta_{n}=-\zeta/w$, and thus
\begin{equation}\label{eq:R_2(eta_n)}
    R_{2}'(\eta_n)= n\eta_n^{n-1}+(-1)^nq_{2}=(-1)^{n-1}q_{2}(n(1+w)-1)
\end{equation}
Subtituting \eqref{eq:n/zeta} and \eqref{eq:R_2(eta_n)} back in the required expression, we have
\begin{align*}
     J(\zeta,\boldsymbol{\eta})&= q_1q_{2} (n(1+w)-1)\left[\frac{n}{\zeta} -\frac{nw}{\zeta (n(1+w)-1)} \right]\\
     &= \frac{n(n-1)q_1 q_{2}(1+w)}{\zeta}.
\end{align*}
We obtain the required expression using the identity $q_1(1+w)=w\zeta^{n-1}$.
 \end{proof}

\begin{proof}[Proof of Corollary~\ref{cor:intro_r=(1,n-1)}]
    Applying Theorem~\ref{thm:intro_VI_formula}, and using 
    % the identities in \eqref{eq:e_i()FL(1,n-1,n)} and 
    Lemmas~\ref{lem:FL(1,n-1,n)-solutions} and \ref{lem:J(x)FL(1,n-1,n)} (in particular, the identities in \eqref{eq:e_i()FL(1,n-1,n)}), we obtain that 
    \begin{align*}
        \mathbf{B}_{g,\Br,n,0}^{\ell,\mathbf{m}}(q_1,q_{2})&=n^{\bar{g}}(n-1)^{\bar{g}}\sum_{w}\left(\sum_{\zeta} \zeta^\ell\cdot \prod_{i=1}^{n-2}\left(\frac{\zeta}{w}\right)^{im_i}\cdot (q_{2}w)^{m_{n-1}}\cdot  (q_{2}w\zeta^{n-2})^{\bar{g}} \right)\\
  & = n^{\bar{g}}(n-1)^{\bar{g}}q_{2}^{m_{n-1}+\bar{g}} \sum_{w} w^{m_{n-1}+\bar{g}-\sum_{i=1}^{n-2}im_i}
    \left(\sum_{\zeta} \zeta^{\ell +\sum_{i=1}^{n-2}im_i+(n-2)\bar{g}} \right).
\end{align*}
The sum is taken over all roots of the equation $w^n=q_1/q_{2}$ and subsequently all roots of $\zeta^{n-1} = (q_1(1+w^{-1}))$.
 Substitute $$\sum_{i=1}^{n-2}im_i = (n-1)(d-m_{n-1})-\ell-(2n-3)\bar{g}$$ to the above expression, we obtain that \(\mathbf{B}_{g,\Br,n,0}^{\ell,\mathbf{m}}(q_1,q_{2})\) equals
\begin{align*}
    &n^{\bar{g}}(n-1)^{\bar{g}}q_{2}^{m_{n-1}+\bar{g}} \sum_{w} w^{nm_{n-1}+\ell-(n-1)d+(2n-2)\bar{g}}
    \left(\sum_{\zeta} \zeta^{(n-1)(d-m_{n-1}-\bar{g})} \right)\\
    =&n^{\bar{g}}(n-1)^{g}q_1^{d-m_{n-1}-\bar{g}}q_{2}^{m_{n-1}+\bar{g}} \sum_{w} w^{nm_{n-1}+\ell-(n-1)d+(2n-2)\bar{g}}
    (1+w^{-1})^{(d-m_{n-1}-\bar{g})}\\
    =&n^{\bar{g}}(n-1)^{g}q_1^{d-\bar{g}}q_{2}^{\bar{g}} \sum_{w} w^{\ell-(n-1)d+(2n-2)\bar{g}}
    (1+w^{-1})^{(d-m_{n-1}-\bar{g})} \\
    =&n^{\bar{g}}(n-1)^{g}q_1^{d-\bar{g}}q_{2}^{\bar{g}} \sum_{w} w^{m_{n-1}+\ell+2n\bar{g}-\bar{g}-nd}
     (1+w)^{(d-m_{n-1}-\bar{g})} \\
    =&n^{\bar{g}}(n-1)^{g}q_1^{\bar{g}}q_{2}^{d-\bar{g}} \sum_{w} w^{m_{n-1}+\ell-\bar{g}}
     (1+w)^{(d-m_{n-1}-\bar{g})}. 
\end{align*}
To simplify the above, we note that 
\begin{align*}
\sum_{w^n=q_1/q_{2}}w^{k}=
\begin{cases}
n(q_1/q_{2})^{k/n}& n\mid k,\\
0,& n\nmid k.
\end{cases}
\end{align*}
By the binomial theorem,
\[
\sum_{w^n={q_1}/{q_{2}}} w^{\ell+m_{n-1}-\bar{g}} (1+w)^{d-\bar{g}-m_{n-1}}
=
\sum_{\substack{j\in\mathbb{Z} \\ 0 \le jn -m_{n-1}- \ell + \bar{g}}}
\binom{d-\bar{g}-m_{n-1}}{jn - \ell-m_{n-1} + \bar{g}} \cdot n
\left( \frac{q_1}{q_{2}} \right)^{j}.
\]
Substituting this identity into the previous expression completes the proof.
 \end{proof}

\subsection{Counting maximal subsheaves}
If $V$ is a general stable bundle $V$ of degree $e$ and $\vdim \quot_{d}(C,r,V) = 0$, the Quot scheme $\quot_{d}(C,r,V)$ is a smooth zero-dimensional scheme consisting of all the rank-$r$ subbundles of $V$ having maximal slope \cite{holla}. Let $$m(n,e,r,g) = \deg[\quot_{d}(C,r,V)]$$ be the number of such subbundles, which is explicitly computed by the Vafa-Intriligator formula for Quot schemes. Recall the iterative description of the Hyperquot scheme $\Hquot_\Bd(C,\Br,V)$ using relative Quot schemes
\[
\quot_{d_1-d_2}(C,r_1,\CE_2)\xrightarrow{\phi_1}\cdots\xrightarrow{\phi_{k-2}}
\quot_{d_{k-1}-d_k}(C,r_{k-1},\CE_k)\xrightarrow{\phi_{k-1}}
\quot_{d_k}(C,r_k,V)\xrightarrow{\phi_k} \{\pt\}.
\]
\begin{corollary}\label{cor:stable}
Let $V$ be a vector bundle on $C$ of rank $n$ and degree $e$, and assume that $g\geq 2$.
Fix $\Br = (r_1, \dots, r_k)$ and $\Bd = (d_1, \dots, d_k)$ so that $\vdim \phi_k = 0$. For any rank-$r_k$ vector bundle $E_k$ of degree $e-d_k$, we have
\[
\int_{[\Hquot_{\Bd}(C,\Br,V)]^{\vir}} \prod_{j=1}^{k-1}\prod_{i=1}^{r_j} c_i(\CE_{j|p}^\vee)^{m_{i,j}}
\;=\;
m(n,e,r_k,g)\cdot
\int_{[\Hquot_{\Bd'}(C,\Br',E_k)]^{\vir}}
\prod_{j=1}^{k-1}\prod_{i=1}^{r_j} c_i(\CE_{j|p}^\vee)^{m_{i,j}},
\]
where $\Bd'=(d_1-d_k,\dots,d_{k-1}-d_k)$ and $\Br'=(r_1,\dots,r_{k-1})$.
\end{corollary}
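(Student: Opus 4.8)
The plan is to present $\Hquot_{\Bd}(C,\Br,V)$ as a family over the Quot scheme $B\coloneqq\quot_{d_k}(C,r_k,V)$ whose fibres are Hyperquot schemes of shorter flag length, and to exploit that for a generic choice of $V$ the base $B$ is a finite reduced scheme with $m(n,e,r_k,g)$ points. First I would reduce to the case where $V$ is general stable: any rank-$n$ degree-$e$ bundle embeds into $H\coloneqq L^{\oplus N}$ for $\deg L\gg 0$ and $N\gg 0$, so $V$ and a general stable bundle of the same rank and degree both occur as fibres of the universal subsheaf over the irreducible Quot scheme $\quot_{\deg H-e}(C,n,H)$ (irreducible by \cite{popa_stable_maps}), whence Lemma~\ref{lem:deformation_invariance} shows the left-hand side of the asserted identity is unchanged under this replacement. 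Applying the same lemma to a Quot scheme parametrizing rank-$r_k$ degree-$(e-d_k)$ subsheaves of a sufficiently positive bundle, one also sees that $\int_{[\Hquot_{\Bd'}(C,\Br',E_k)]^{\vir}}\prod_{j,i}c_i(\CE_{j|p}^\vee)^{m_{i,j}}$ does not depend on the bundle $E_k$ of that rank and degree; this is used at the very end.

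Assume now that $V$ is general stable. Since $\phi_k$ is the structure morphism of $B$, the hypothesis $\vdim\phi_k=0$ says $\vdim B=0$, and \cite{holla} gives that $B$ is smooth of dimension zero, i.e. a disjoint union of $m\coloneqq m(n,e,r_k,g)$ reduced points $b_1,\dots,b_m$ corresponding to rank-$r_k$ degree-$(e-d_k)$ subbundles $\CE^{(i)}\coloneqq(\CE_k)_{b_i}\subset V$. By the tower construction of Section~\ref{sec:virtual_class}, the composition $g\colon\Hquot_{\Bd}(C,\Br,V)\to B$ of $\phi_1,\dots,\phi_{k-1}$ is the relative Hyperquot scheme $\Hquot_{\Bd'}(C,\Br',\CE_k)\to B$ with $\Bd'=(d_1-d_k,\dots,d_{k-1}-d_k)$ and $\Br'=(r_1,\dots,r_{k-1})$; because $B$ is a disjoint union of reduced points, this is literally a decomposition into open and closed subschemes
\[
\Hquot_{\Bd}(C,\Br,V)=\bigsqcup_{i=1}^{m}Z_i,\qquad Z_i\coloneqq\Hquot_{\Bd'}(C,\Br',\CE^{(i)}),
\]
under which $\CE_1,\dots,\CE_{k-1}$ restrict to the universal subsheaves of $Z_i$ and $\CE_k|_{Z_i\times C}=\pi_C^*\CE^{(i)}$.

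The crux, and the step I expect to require the most care, is the compatibility of virtual classes: I would show that the obstruction complex $\BE$ of \eqref{monavari_ricolfi_pot} restricts on each $Z_i$ to the obstruction complex of $\Hquot_{\Bd'}(C,\Br',\CE^{(i)})$, so that $[\Hquot_{\Bd}(C,\Br,V)]^{\vir}=\sum_i (j_i)_*[Z_i]^{\vir}$ for the open and closed immersions $j_i\colon Z_i\hookrightarrow\Hquot_{\Bd}(C,\Br,V)$. Because $\CE_k|_{Z_i\times C}$ is pulled back from $C$, the two summands of \eqref{monavari_ricolfi_pot} indexed by $i=k$ restrict to $R\Gamma(C,\CE^{(i)\vee}\otimes\CE^{(i)})\otimes\CO_{Z_i}\xrightarrow{\,\alpha_k\,}R\Gamma(C,\CE^{(i)\vee}\otimes V)\otimes\CO_{Z_i}$, whose cone is $R\Gamma(C,\CE^{(i)\vee}\otimes(V/\CE^{(i)}))\otimes\CO_{Z_i}$; this cone has Euler characteristic $\chi(\CE^{(i)},V/\CE^{(i)})=\vdim\phi_k=0$, while smoothness of $B$ at $b_i$ forces the tangent space $\mathrm{Hom}(\CE^{(i)},V/\CE^{(i)})$ to vanish and hence $\mathrm{Ext}^1(\CE^{(i)},V/\CE^{(i)})=0$ as well. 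Thus the cone is acyclic, $\alpha_k$ is a quasi-isomorphism, and a two-term Gaussian elimination removes the $i=k$ source and target together with the precomposition arrow $\beta_k$ linking them to the $i=k-1$ term, leaving precisely the obstruction complex of $Z_i$; that this equivalence is compatible with the maps to $\BL_{\Hquot_{\Bd}}|_{Z_i}=\BL_{Z_i}$ would be checked exactly as in the proofs of Propositions~\ref{prop:compatibility_twisting} and~\ref{prop:elementary_mod}. Granting this, and using that $\prod_{j\le k-1}\prod_{i\le r_j}c_i(\CE_{j|p}^\vee)^{m_{i,j}}$ restricts to the corresponding tautological class on each $Z_i$, the integral over $[\Hquot_{\Bd}(C,\Br,V)]^{\vir}$ equals $\sum_{i=1}^m\int_{[Z_i]^{\vir}}\prod_{j,i}c_i(\CE_{j|p}^\vee)^{m_{i,j}}$, and by the $E_k$-independence noted in the first paragraph every summand equals $\int_{[\Hquot_{\Bd'}(C,\Br',E_k)]^{\vir}}\prod_{j,i}c_i(\CE_{j|p}^\vee)^{m_{i,j}}$, producing the factor $m(n,e,r_k,g)$ and completing the argument.
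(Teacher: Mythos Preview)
Your proof is correct and follows the same high-level strategy as the paper: reduce to a general stable $V$, use that $\quot_{d_k}(C,r_k,V)$ is then a finite reduced scheme with $m(n,e,r_k,g)$ points, decompose $\Hquot_\Bd$ over these points, and conclude by independence of the fibrewise integral from the choice of $E_k$.

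There are two noteworthy differences. First, for the independence of the integrals from $V$ and from $E_k$, the paper simply invokes Theorem~\ref{thm:intro_VI_formula}, whose explicit right-hand side visibly depends only on $g,n,e,\Br,\Bd$; you instead appeal directly to deformation invariance (Lemma~\ref{lem:deformation_invariance} and the Quot-scheme argument of Proposition~\ref{prop:reduction_split_bundles}). Your route is more self-contained, since it does not require the main formula, while the paper's route is shorter once Theorem~\ref{thm:intro_VI_formula} is available. Second, you spell out the compatibility of virtual classes on each component $Z_i$ by showing that the $i=k$ piece of the cone in \eqref{monavari_ricolfi_pot} becomes acyclic (because $\mathrm{Hom}(\CE^{(i)},V/\CE^{(i)})=\mathrm{Ext}^1(\CE^{(i)},V/\CE^{(i)})=0$ when $B$ is smooth of dimension $0=\vdim\phi_k$), whence the obstruction complex restricts to that of the shorter Hyperquot scheme; the paper's proof leaves this step implicit in the pushforward identity \eqref{eq:pushforward}. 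Your argument here is sound: since only $\Hom_\pi(\CE_k,\CE_k)$ maps into $\Hom_\pi(\CE_k,\CE_{k+1})$, the pair forms a sub-two-term complex of $\gamma$, and after passing to the quotient one recovers precisely the map $\gamma$ for the length-$(k-1)$ flag.
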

\begin{proof}
    The formula of Theorem~\ref{thm:intro_VI_formula} shows that, for fixed monomial in the Chern classes, the virtual integral on the Hyperquot scheme depends only on $g,n, e, \Br$ and $\Bd$. If $g\geq 2$, there exists a stable vector bundle of degree $e$ and rank $n$, hence we can assume that $V$ is a general stable bundle and that $\quot_{d_k}(C,r_k,V) = \lbrace V_1, \dots, V_m \rbrace$ with $m\coloneqq  m(n,e,r_k,g)$. In this case, we can compute the pushforward to $\quot_{d_k}(C,r_k,V)$ as    \begin{align}\label{eq:pushforward}
        (\phi_{k-2} \circ \cdots \circ \phi_1)_\ast  \prod_{j=1}^{k-1}\prod_{i=1}^{r_j} c_i(\CE_{j|p}^\vee)^{m_{i,j}} = \sum_{j=1}^m [V_j] \int_{[\Hquot_{\Bd'}(C,\Br',V_j)]^{\vir}}
        \prod_{j=1}^{k-1}\prod_{i=1}^{r_j} c_i(\CE_{j|p}^\vee)^{m_{i,j}}.
    \end{align}
    Since the integrals on the right only depend on $g,\Bd$ and $\Br$, we can assume that $V_j = E_k$ for all $j$, and the formula follows by pushing forward (\ref{eq:pushforward}) to a point, which algebraically consists in setting $[V_j]$ to $1$ for all $j$.
\end{proof}
\begin{remark}
Recall the coefficient $6^{13}$ of $q_1^{10}q_2^8$ in the polynomial computed in Remark \ref{rmk:dim_zero_polynomial} for a curve of genus $13$ and a rank-$3$ bundle $V$ of degree zero. Corollary~\ref{cor:stable} gives a geometric interpretation of this number: When $d_1=10$ and $d_2=8$, the virtual integral
    \begin{align*}
        \int_{[\Hquot_{(10,8)}(C,(1,2),V)]^\vir}1 = m(3,0,2,13) \cdot m(2,-8,1
        ,13) = 3^{13} \cdot 2^{13} = 6^{13},
    \end{align*}
    where the numbers of maximal subbundles are computed by \cite[Corollary 4.5]{holla}.
    Indeed one can easily check that $\vdim\ \phi_1=\vdim\ \phi_2=0$ in this case.
\end{remark}
\begin{remark}
    Note that the hypothesis $g\geq 2$ is needed to ensure the existence of a stable bundle of rank $n$ and degree $e$. Strictly speaking, such stable bundles also exist for $g=1$ in the coprime case $\gcd(n,e)=1$ by work of Atiyah (see \cite[Thm 1 and below]{tu_semistable_bundles}), but in this case the Quot scheme has virtual dimension $\vdim \quot_d(C,r,V) = n d + (r-n)e$, which is never zero since $n$ and $e$ are coprime.
\end{remark}

\bibliographystyle{amsalpha}
	\bibliography{biblio}

@article{marian2025short,
  title={A short way of counting maps to hypersurfaces in Grassmannians},
  author={Marian, Alina and Sinha, Shubham},
  journal={arXiv preprint arXiv:2506.10593},
  year={2025}
}

@article {marian25,
    AUTHOR = {Marian, Alina},
     TITLE = {The {S}egre-{V}erlinde correspondence for the moduli space of
              stable bundles on a curve},
   JOURNAL = {Comm. Math. Phys.},
  FJOURNAL = {Communications in Mathematical Physics},
    VOLUME = {406},
      YEAR = {2025},
    NUMBER = {1},
     PAGES = {Paper No. 5, 14},
      ISSN = {0010-3616,1432-0916},
   MRCLASS = {14H60},
  MRNUMBER = {4837216},
MRREVIEWER = {Francesco\ Bottacin},
       DOI = {10.1007/s00220-024-05171-8},
       URL = {https://doi.org/10.1007/s00220-024-05171-8},
}

@article{SinhaZhang1,
       author = {{Sinha}, Shubham and {Zhang}, Ming},
        title = "{Quantum $K$-invariants via Quot schemes I}",
      journal = {arXiv e-prints},
     keywords = {Mathematics - Algebraic Geometry, 14N35, 14N10, 19E08, 14M15},
         year = 2024,
        month = jun,
          eid = {arXiv:2406.12191},
        pages = {arXiv:2406.12191},
          doi = {10.48550/arXiv.2406.12191},
archivePrefix = {arXiv},
       eprint = {2406.12191},
 primaryClass = {math.AG},
       adsurl = {https://ui.adsabs.harvard.edu/abs/2024arXiv240612191S},
      adsnote = {Provided by the SAO/NASA Astrophysics Data System}
}

@article{SinhaZhang2,
       author = {{Sinha}, Shubham and {Zhang}, Ming},
        title = "{Quantum $K$-invariants via Quot schemes II}",
      journal = {arXiv e-prints},
     keywords = {Mathematics - Algebraic Geometry, Mathematics - Representation Theory, 14N35, 14N10, 19E08, 14M15},
         year = 2024,
        month = oct,
          eid = {arXiv:2410.23486},
        pages = {arXiv:2410.23486},
          doi = {10.48550/arXiv.2410.23486},
archivePrefix = {arXiv},
       eprint = {2410.23486},
 primaryClass = {math.AG},
       adsurl = {https://ui.adsabs.harvard.edu/abs/2024arXiv241023486S},
      adsnote = {Provided by the SAO/NASA Astrophysics Data System}
}

@ARTICLE{Gautam_Lin_Sinha,
       author = {{Gautam}, Ajay and {Lin}, Feiyang and {Sinha}, Shubham},
        title = "{A Borel--Weil--Bott theorem for Quot schemes on $\mathbb{P}^1$}",
      journal = {arXiv e-prints},
     keywords = {Algebraic Geometry, 14H60, 14M15, 14D20 14F08},
         year = 2025,
        month = nov,
          eid = {arXiv:2511.03519},
        pages = {arXiv:2511.03519},
          doi = {10.48550/arXiv.2511.03519},
archivePrefix = {arXiv},
       eprint = {2511.03519},
 primaryClass = {math.AG},
       adsurl = {https://ui.adsabs.harvard.edu/abs/2025arXiv251103519G},
      adsnote = {Provided by the SAO/NASA Astrophysics Data System}
}

@article {Cela,
    AUTHOR = {Cela, Alessio},
     TITLE = {Quantum {E}uler class and virtual {T}evelev degrees of {F}ano
              complete intersections},
   JOURNAL = {Ark. Mat.},
  FJOURNAL = {Arkiv f\"or Matematik},
    VOLUME = {61},
      YEAR = {2023},
    NUMBER = {2},
     PAGES = {301--322},
      ISSN = {0004-2080,1871-2487},
   MRCLASS = {14N35},
  MRNUMBER = {4666989},
MRREVIEWER = {Henry\ Liu},
       DOI = {10.4310/arkiv.2023.v61.n2.a3},
       URL = {https://doi.org/10.4310/arkiv.2023.v61.n2.a3},
}

@ARTICLE{Cela_Lian,
       author = {{Cela}, Alessio and {Lian}, Carl},
        title = "{Fixed-domain curve counts for blow-ups of projective space}",
      journal = {arXiv e-prints},
     keywords = {Mathematics - Algebraic Geometry},
         year = 2023,
        month = mar,
          eid = {arXiv:2303.03433},
        pages = {arXiv:2303.03433},
          doi = {10.48550/arXiv.2303.03433},
archivePrefix = {arXiv},
       eprint = {2303.03433},
 primaryClass = {math.AG},
       adsurl = {https://ui.adsabs.harvard.edu/abs/2023arXiv230303433C},
      adsnote = {Provided by the SAO/NASA Astrophysics Data System}
}

@article {kim,
    AUTHOR = {Kim, Bumsig},
     TITLE = {Quantum cohomology of partial flag manifolds and a residue
              formula for their intersection pairings},
   JOURNAL = {Internat. Math. Res. Notices},
  FJOURNAL = {International Mathematics Research Notices},
      YEAR = {1995},
    NUMBER = {1},
     PAGES = {1--15},
      ISSN = {1073-7928,1687-0247},
   MRCLASS = {58D10 (14M15 58D27)},
  MRNUMBER = {1317639},
MRREVIEWER = {Serguei\ Piunikhin},
       DOI = {10.1155/S1073792895000018},
       URL = {https://doi.org/10.1155/S1073792895000018},
}

@article {bertram,
    AUTHOR = {Bertram, Aaron},
     TITLE = {Towards a {S}chubert calculus for maps from a {R}iemann
              surface to a {G}rassmannian},
   JOURNAL = {Internat. J. Math.},
  FJOURNAL = {International Journal of Mathematics},
    VOLUME = {5},
      YEAR = {1994},
    NUMBER = {6},
     PAGES = {811--825},
      ISSN = {0129-167X,1793-6519},
   MRCLASS = {14M15 (14C17 32G13)},
  MRNUMBER = {1298995},
MRREVIEWER = {Aigli\ Papantonopoulou},
       DOI = {10.1142/S0129167X94000401},
       URL = {https://doi.org/10.1142/S0129167X94000401},
}

@article {bertram-deskalopoulos-wentworth,
    AUTHOR = {Bertram, Aaron and Daskalopoulos, Georgios and Wentworth,
              Richard},
     TITLE = {Gromov invariants for holomorphic maps from {R}iemann surfaces
              to {G}rassmannians},
   JOURNAL = {J. Amer. Math. Soc.},
  FJOURNAL = {Journal of the American Mathematical Society},
    VOLUME = {9},
      YEAR = {1996},
    NUMBER = {2},
     PAGES = {529--571},
      ISSN = {0894-0347},
   MRCLASS = {14N10 (14D99 14M15 58D10)},
  MRNUMBER = {1320154},
MRREVIEWER = {Jun Li},
       DOI = {10.1090/S0894-0347-96-00190-7},
       URL = {https://doi.org/10.1090/S0894-0347-96-00190-7},
}

@ARTICLE{marian_negut1,
       author = {{Marian}, Alina and {Negu{\c{t}}}, Andrei},
        title = "{The cohomology of the Quot scheme on a smooth curve as a Yangian representation}",
      journal = {arXiv e-prints},
     keywords = {Mathematics - Algebraic Geometry, Mathematics - Representation Theory},
         year = 2023,
        month = jul,
          eid = {arXiv:2307.13671},
        pages = {arXiv:2307.13671},
          doi = {10.48550/arXiv.2307.13671},
archivePrefix = {arXiv},
       eprint = {2307.13671},
 primaryClass = {math.AG},
       adsurl = {https://ui.adsabs.harvard.edu/abs/2023arXiv230713671M},
      adsnote = {Provided by the SAO/NASA Astrophysics Data System}
}

@ARTICLE{marian_negut2,
       author = {{Marian}, Alina and {Negu{\c{t}}}, Andrei},
        title = "{Derived categories of Quot schemes on smooth curves and tautological bundles}",
      journal = {arXiv e-prints},
     keywords = {Mathematics - Algebraic Geometry, Mathematics - Representation Theory},
         year = 2024,
        month = nov,
          eid = {arXiv:2411.08695},
        pages = {arXiv:2411.08695},
          doi = {10.48550/arXiv.2411.08695},
archivePrefix = {arXiv},
       eprint = {2411.08695},
 primaryClass = {math.AG},
       adsurl = {https://ui.adsabs.harvard.edu/abs/2024arXiv241108695M},
      adsnote = {Provided by the SAO/NASA Astrophysics Data System}
}

@article {holla,
    AUTHOR = {Holla, Yogish I.},
     TITLE = {Counting maximal subbundles via {G}romov-{W}itten invariants},
   JOURNAL = {Math. Ann.},
  FJOURNAL = {Mathematische Annalen},
    VOLUME = {328},
      YEAR = {2004},
    NUMBER = {1-2},
     PAGES = {121--133},
      ISSN = {0025-5831,1432-1807},
   MRCLASS = {14H60 (14N35)},
  MRNUMBER = {2030371},
MRREVIEWER = {Vicente\ Mu\~noz},
       DOI = {10.1007/s00208-003-0475-0},
       URL = {https://doi.org/10.1007/s00208-003-0475-0},
}

@article {bertram2,
    AUTHOR = {Bertram, Aaron},
     TITLE = {Quantum {S}chubert calculus},
   JOURNAL = {Adv. Math.},
  FJOURNAL = {Advances in Mathematics},
    VOLUME = {128},
      YEAR = {1997},
    NUMBER = {2},
     PAGES = {289--305},
      ISSN = {0001-8708},
   MRCLASS = {14M15 (14N10)},
  MRNUMBER = {1454400},
MRREVIEWER = {Sara C. Billey},
       DOI = {10.1006/aima.1997.1627},
       URL = {https://doi.org/10.1006/aima.1997.1627},
}

@article {oprea_pandharipande,
    AUTHOR = {Oprea, Dragos and Pandharipande, Rahul},
     TITLE = {Quot schemes of curves and surfaces: virtual classes,
              integrals, {E}uler characteristics},
   JOURNAL = {Geom. Topol.},
  FJOURNAL = {Geometry \& Topology},
    VOLUME = {25},
      YEAR = {2021},
    NUMBER = {7},
     PAGES = {3425--3505},
      ISSN = {1465-3060,1364-0380},
   MRCLASS = {14C17 (14C05 14D20 14J28 14J80)},
  MRNUMBER = {4372634},
MRREVIEWER = {Kieran\ G.\ O'Grady},
       DOI = {10.2140/gt.2021.25.3425},
       URL = {https://doi.org/10.2140/gt.2021.25.3425},
}

@article {Oprea-Shubham,
    AUTHOR = {Oprea, Dragos and Sinha, Shubham},
     TITLE = {Euler characteristics of tautological bundles over {Q}uot
              schemes of curves},
   JOURNAL = {Adv. Math.},
  FJOURNAL = {Advances in Mathematics},
    VOLUME = {418},
      YEAR = {2023},
     PAGES = {Paper No. 108943, 45},
      ISSN = {0001-8708},
   MRCLASS = {14D20 (14C35 14J60)},
       DOI = {10.1016/j.aim.2023.108943},
       URL = {https://doi.org/10.1016/j.aim.2023.108943},
}

@article {chen,
    AUTHOR = {Chen, Linda},
     TITLE = {Quantum cohomology of flag manifolds},
   JOURNAL = {Adv. Math.},
  FJOURNAL = {Advances in Mathematics},
    VOLUME = {174},
      YEAR = {2003},
    NUMBER = {1},
     PAGES = {1--34},
      ISSN = {0001-8708,1090-2082},
   MRCLASS = {14N35 (05E15 14N15)},
  MRNUMBER = {1959889},
MRREVIEWER = {Andreas\ Gathmann},
       DOI = {10.1016/S0001-8708(02)00049-X},
       URL = {https://doi.org/10.1016/S0001-8708(02)00049-X},
}

@article {cf,
    AUTHOR = {Ciocan-Fontanine, Ionu{\c t}},
     TITLE = {The quantum cohomology ring of flag varieties},
   JOURNAL = {Trans. Amer. Math. Soc.},
  FJOURNAL = {Transactions of the American Mathematical Society},
    VOLUME = {351},
      YEAR = {1999},
    NUMBER = {7},
     PAGES = {2695--2729},
      ISSN = {0002-9947,1088-6850},
   MRCLASS = {14N35 (14M15)},
  MRNUMBER = {1487610},
       DOI = {10.1090/S0002-9947-99-02230-8},
       URL = {https://doi.org/10.1090/S0002-9947-99-02230-8},
}

@article{ontani2025vafa,
  title={{A Vafa-Intriligator formula for semi-positive quotients of linear spaces}},
  author={Ontani, Riccardo},
  journal={arXiv preprint arXiv:2505.17845},
  year={2025}
}

@article {cf2,
    AUTHOR = {Ciocan-Fontanine, Ionu{\c t}},
     TITLE = {On quantum cohomology rings of partial flag varieties},
   JOURNAL = {Duke Math. J.},
  FJOURNAL = {Duke Mathematical Journal},
    VOLUME = {98},
      YEAR = {1999},
    NUMBER = {3},
     PAGES = {485--524},
      ISSN = {0012-7094,1547-7398},
   MRCLASS = {14N35 (14M15 14N15)},
  MRNUMBER = {1695799},
MRREVIEWER = {Laura\ Costa},
       DOI = {10.1215/S0012-7094-99-09815-0},
       URL = {https://doi.org/10.1215/S0012-7094-99-09815-0},
}

@article {ckm,
    AUTHOR = {Ciocan-Fontanine, Ionu{\c t} and Kim, Bumsig and Maulik,
              Davesh},
     TITLE = {Stable quasimaps to {GIT} quotients},
   JOURNAL = {J. Geom. Phys.},
  FJOURNAL = {Journal of Geometry and Physics},
    VOLUME = {75},
      YEAR = {2014},
     PAGES = {17--47},
      ISSN = {0393-0440,1879-1662},
   MRCLASS = {14D23 (14L24 14M25)},
  MRNUMBER = {3126932},
MRREVIEWER = {Yu-Jong\ Tzeng},
       DOI = {10.1016/j.geomphys.2013.08.019},
       URL = {https://doi.org/10.1016/j.geomphys.2013.08.019},
}

@article {gessel,
    AUTHOR = {Gessel, Ira M.},
     TITLE = {A combinatorial proof of the multivariable {L}agrange
              inversion formula},
   JOURNAL = {J. Combin. Theory Ser. A},
  FJOURNAL = {Journal of Combinatorial Theory. Series A},
    VOLUME = {45},
      YEAR = {1987},
    NUMBER = {2},
     PAGES = {178--195},
      ISSN = {0097-3165,1096-0899},
   MRCLASS = {05A15},
  MRNUMBER = {894817},
MRREVIEWER = {Dennis\ Stanton},
       DOI = {10.1016/0097-3165(87)90013-6},
       URL = {https://doi.org/10.1016/0097-3165(87)90013-6},
}

@incollection {kim_oh_ueda_yoshida,
    AUTHOR = {Kim, Bumsig and Oh, Jeongseok and Ueda, Kazushi and Yoshida,
              Yutaka},
     TITLE = {Residue mirror symmetry for {G}rassmannians},
 BOOKTITLE = {Schubert calculus and its applications in combinatorics and
              representation theory},
    SERIES = {Springer Proc. Math. Stat.},
    VOLUME = {332},
     PAGES = {307--365},
 PUBLISHER = {Springer, Singapore},
      YEAR = {[2020] \copyright 2020},
      ISBN = {978-981-15-7451-1; 978-981-15-7450-4},
   MRCLASS = {14J33 (14M15 14N35)},
  MRNUMBER = {4167521},
       DOI = {10.1007/978-981-15-7451-1\_12},
       URL = {https://doi.org/10.1007/978-981-15-7451-1_12},
}

@article {mo,
    AUTHOR = {Marian, Alina and Oprea, Dragos},
     TITLE = {Virtual intersections on the {Q}uot scheme and
              {V}afa-{I}ntriligator formulas},
   JOURNAL = {Duke Math. J.},
  FJOURNAL = {Duke Mathematical Journal},
    VOLUME = {136},
      YEAR = {2007},
    NUMBER = {1},
     PAGES = {81--113},
      ISSN = {0012-7094,1547-7398},
   MRCLASS = {14N35 (14C17)},
  MRNUMBER = {2271296},
MRREVIEWER = {Hsian-Hua\ Tseng},
       DOI = {10.1215/S0012-7094-07-13613-5},
       URL = {https://doi.org/10.1215/S0012-7094-07-13613-5},
}

@article{mr,
       author = {{Monavari}, Sergej and {Ricolfi}, Andrea T.},
        title = "{Hyperquot schemes on curves: virtual class and motivic invariants}",
      journal = {arXiv e-prints},
     keywords = {Algebraic Geometry},
         year = 2024,
        month = apr,
          eid = {arXiv:2404.17942},
        pages = {arXiv:2404.17942},
          doi = {10.48550/arXiv.2404.17942},
archivePrefix = {arXiv},
       eprint = {2404.17942},
 primaryClass = {math.AG},
       adsurl = {https://ui.adsabs.harvard.edu/abs/2024arXiv240417942M},
      adsnote = {Provided by the SAO/NASA Astrophysics Data System}
}

@article {mr2,
    AUTHOR = {Monavari, Sergej and Ricolfi, Andrea T.},
     TITLE = {On the motive of the nested {Q}uot scheme of points on a
              curve},
   JOURNAL = {J. Algebra},
  FJOURNAL = {Journal of Algebra},
    VOLUME = {610},
      YEAR = {2022},
     PAGES = {99--118},
      ISSN = {0021-8693,1090-266X},
   MRCLASS = {14C35},
  MRNUMBER = {4463016},
MRREVIEWER = {Remke\ Kloosterman},
       DOI = {10.1016/j.jalgebra.2022.07.011},
       URL = {https://doi.org/10.1016/j.jalgebra.2022.07.011},
}

@incollection {Siebert,
    AUTHOR = {Siebert, Bernd},
     TITLE = {An update on (small) quantum cohomology},
 BOOKTITLE = {Mirror symmetry, {III} ({M}ontreal, {PQ}, 1995)},
    SERIES = {AMS/IP Stud. Adv. Math.},
    VOLUME = {10},
     PAGES = {279--312},
 PUBLISHER = {Amer. Math. Soc., Providence, RI},
      YEAR = {1999},
      ISBN = {0-8218-1193-2},
   MRCLASS = {14N35 (14D21 53D45)},
  MRNUMBER = {1673112},
MRREVIEWER = {Ravi\ D.\ Vakil and Richard\ P.\ Thomas},
       DOI = {10.1090/amsip/010/11},
       URL = {https://doi.org/10.1090/amsip/010/11},
}

@article{rasul:sabastian,
       author = {{Rasul}, Parvez and {Sebastian}, Ronnie},
        title = "{Irreducibility of some nested Quot schemes}",
      journal = {arXiv e-prints},
     keywords = {Mathematics - Algebraic Geometry, 14H60},
         year = 2024,
        month = apr,
          eid = {arXiv:2404.05532},
        pages = {arXiv:2404.05532},
          doi = {10.48550/arXiv.2404.05532},
archivePrefix = {arXiv},
       eprint = {2404.05532},
 primaryClass = {math.AG},
       adsurl = {https://ui.adsabs.harvard.edu/abs/2024arXiv240405532R},
      adsnote = {Provided by the SAO/NASA Astrophysics Data System}
}

@article{behrend_fantechi,
  AUTHOR={Behrend, Kai and Fantechi, Barbara},
  TITLE={The intrinsic normal cone},
  JOURNAL={Inventiones mathematicae},
  VOLUME={128},
  NUMBER={1},
  PAGES={45--88},
  YEAR={1997},
  PUBLISHER={Springer}
}

@article {graber-pandharipande,
    AUTHOR = {Graber, Thomas and Pandharipande, Rahul},
     TITLE = {Localization of virtual classes},
   JOURNAL = {Invent. Math.},
  FJOURNAL = {Inventiones Mathematicae},
    VOLUME = {135},
      YEAR = {1999},
    NUMBER = {2},
     PAGES = {487--518},
      ISSN = {0020-9910},
   MRCLASS = {14C17 (14D20 14N10 14N35)},
  MRNUMBER = {1666787},
MRREVIEWER = {Paolo Aluffi},
       DOI = {10.1007/s002220050293},
       URL = {https://doi-org.ezproxy.library.ubc.ca/10.1007/s002220050293},
}

@incollection {stromme,
    AUTHOR = {Str{\o}mme, Stein Arild},
     TITLE = {On parametrized rational curves in {G}rassmann varieties},
 BOOKTITLE = {Space curves ({R}occa di {P}apa, 1985)},
    SERIES = {Lecture Notes in Math.},
    VOLUME = {1266},
     PAGES = {251--272},
 PUBLISHER = {Springer, Berlin},
      YEAR = {1987},
      ISBN = {3-540-18020-6},
   MRCLASS = {14H10 (14C22 14F05 14F45 14M15)},
  MRNUMBER = {908717},
MRREVIEWER = {Rafael\ Hernandez},
       DOI = {10.1007/BFb0078187},
       URL = {https://doi.org/10.1007/BFb0078187},
}

@article {sinha,
    AUTHOR = {Sinha, Shubham},
     TITLE = {The virtual intersection theory of isotropic quot schemes},
   JOURNAL = {J. Geom. Phys.},
  FJOURNAL = {Journal of Geometry and Physics},
    VOLUME = {198},
      YEAR = {2024},
     PAGES = {Paper No. 105128, 39},
      ISSN = {0393-0440,1879-1662},
   MRCLASS = {14N35 (14D20 53D05)},
  MRNUMBER = {4701350},
MRREVIEWER = {Hsian-Hua\ Tseng},
       DOI = {10.1016/j.geomphys.2024.105128},
       URL = {https://doi.org/10.1016/j.geomphys.2024.105128},
}

@article {thaddeus,
    AUTHOR = {Thaddeus, Michael},
     TITLE = {Conformal field theory and the cohomology of the moduli space
              of stable bundles},
   JOURNAL = {J. Differential Geom.},
  FJOURNAL = {Journal of Differential Geometry},
    VOLUME = {35},
      YEAR = {1992},
    NUMBER = {1},
     PAGES = {131--149},
      ISSN = {0022-040X,1945-743X},
   MRCLASS = {14D20 (14F25 14H60 32L10 81T40)},
  MRNUMBER = {1152228},
MRREVIEWER = {P.\ E.\ Newstead},
       URL = {http://projecteuclid.org/euclid.jdg/1214447808},
}

@article{AB:moment,
  title={The moment map and equivariant cohomology},
  author={Atiyah, Michael and Bott, Raoul},
  journal={Topology},	
  volume={23},
  number={1},
  pages={1--28},
  year={1984},
  publisher={Elsevier}
}

@book{fulton:IT,
	Address = {Berlin},
	Author = {Fulton, William},
	Publisher = {Springer-Verlag},
	Title = {Intersection theory},
	Year = {1984}}

@book{kim1996gromov,
  title={{Gromov-Witten Invariants for Flag Manifolds}},
  author={Kim, Bumsig},
  MRCLASS = {Thesis},
  publisher={University of California, Berkeley},
year={1996}
}

@article{bp2022vafa,
  title={{A Vafa--intriligator formula for Fano varieties}},
  author={Buch, Anders S and Pandharipande, Rahul},
  year={2022},
  note = {\url{https://sites.math.rutgers.edu/~asbuch/talks/buch-vafaint.pdf}},
}

@article {gu2023quantum,
    AUTHOR = {Gu, Wei and Mihalcea, Leonardo and Sharpe, Eric and Xu,
              Weihong and Zhang, Hao and Zou, Hao},
     TITLE = {Quantum {K} theory rings of partial flag manifolds},
   JOURNAL = {J. Geom. Phys.},
  FJOURNAL = {Journal of Geometry and Physics},
    VOLUME = {198},
      YEAR = {2024},
     PAGES = {Paper No. 105127, 30},
      ISSN = {0393-0440,1879-1662},
   MRCLASS = {81T60 (14M15 19M05)},
  MRNUMBER = {4701783},
       DOI = {10.1016/j.geomphys.2024.105127},
       URL = {https://doi.org/10.1016/j.geomphys.2024.105127},
}

@MISC{StacksProj,
    AUTHOR = "Authors, The Stacks Project",
    TITLE = "Stacks Project",
    URL = "https://stacks.math.columbia.edu/"
}

@article {Krug,
    AUTHOR = {Krug, Andreas},
     TITLE = {Extension groups of tautological bundles on punctual {Q}uot
              schemes of curves},
   JOURNAL = {J. Math. Pures Appl. (9)},
  FJOURNAL = {Journal de Math\'ematiques Pures et Appliqu\'ees. Neuvi\`eme
              S\'erie},
    VOLUME = {189},
      YEAR = {2024},
     PAGES = {Paper No. 103600, 30},
      ISSN = {0021-7824,1776-3371},
   MRCLASS = {14D20 (14C05 14F08)},
  MRNUMBER = {4784002},
MRREVIEWER = {Izzet\ Coskun},
       DOI = {10.1016/j.matpur.2024.103600},
       URL = {https://doi.org/10.1016/j.matpur.2024.103600},
}

@article {Gorbounov:2014,
    AUTHOR = {Gorbounov, Vassily and Korff, Christian},
     TITLE = {Quantum integrability and generalised quantum {S}chubert
              calculus},
   JOURNAL = {Adv. Math.},
  FJOURNAL = {Advances in Mathematics},
    VOLUME = {313},
      YEAR = {2017},
     PAGES = {282--356},
      ISSN = {0001-8708},
   MRCLASS = {14M15 (05E05 14F43 19L47 55N20 55N22 82B23)},
  MRNUMBER = {3649227},
MRREVIEWER = {Xin Fang},
       DOI = {10.1016/j.aim.2017.03.030}
}

@book {eisenbud:CAbook,
    AUTHOR = {Eisenbud, David},
     TITLE = {Commutative algebra},
    SERIES = {Graduate Texts in Mathematics},
    VOLUME = {150},
 PUBLISHER = {Springer-Verlag, New York},
      YEAR = {1995},
     PAGES = {xvi+785},
      ISBN = {0-387-94268-8; 0-387-94269-6},
   MRCLASS = {13-01 (14A05)},
  MRNUMBER = {1322960},
MRREVIEWER = {Matthew Miller},
       DOI = {10.1007/978-1-4612-5350-1},
       URL = {https://doi.org/10.1007/978-1-4612-5350-1}
}

@article {siebert.tian:on,
    AUTHOR = {Siebert, Bernd and Tian, Gang},
     TITLE = {On quantum cohomology rings of {F}ano manifolds and a formula
              of {V}afa and {I}ntriligator},
   JOURNAL = {Asian J. Math.},
  FJOURNAL = {Asian Journal of Mathematics},
    VOLUME = {1},
      YEAR = {1997},
    NUMBER = {4},
     PAGES = {679--695},
      ISSN = {1093-6106},
   MRCLASS = {14N10 (14J45)},
  MRNUMBER = {1621570},
MRREVIEWER = {Andreas Gathmann},
       DOI = {10.4310/AJM.1997.v1.n4.a2},
       URL = {https://doi.org/10.4310/AJM.1997.v1.n4.a2},
}

@book {bourbaki,
    AUTHOR = {Bourbaki, Nicolas},
     TITLE = {Elements of mathematics. {C}ommutative algebra},
      NOTE = {Translated from the French},
 PUBLISHER = {Hermann, Paris; Addison-Wesley Publishing Co., Reading, MA},
      YEAR = {1972},
     PAGES = {xxiv+625},
   MRCLASS = {13-XX},
  MRNUMBER = {360549},
}

@article {GuKa,
    AUTHOR = {Gu, Wei and Kalashnikov, Elana},
     TITLE = {A rim-hook rule for quiver flag varieties},
   JOURNAL = {Selecta Math. (N.S.)},
  FJOURNAL = {Selecta Mathematica. New Series},
    VOLUME = {30},
      YEAR = {2024},
    NUMBER = {3},
     PAGES = {Paper No. 51, 30},
      ISSN = {1022-1824,1420-9020},
   MRCLASS = {14N35 (14L24 14M15)},
  MRNUMBER = {4746159},
       DOI = {10.1007/s00029-024-00936-4},
       URL = {https://doi.org/10.1007/s00029-024-00936-4},
}

@article{buch2021tevelev,
  title={{Tevelev degrees in Gromov-Witten theory}},
  author={Buch, Anders S and Pandharipande, Rahul},
  journal={arXiv preprint arXiv:2112.14824},
  year={2021}
}

@article {chaput.manivel.perrin,
    AUTHOR = {Chaput, Pierre-Emmanuel and Manivel, Laurent and Perrin, Nicolas},
     TITLE = {Quantum cohomology of minuscule homogeneous spaces {III}.
              {S}emi-simplicity and consequences},
   JOURNAL = {Canad. J. Math.},
  FJOURNAL = {Canadian Journal of Mathematics. Journal Canadien de
              Math\'ematiques},
    VOLUME = {62},
      YEAR = {2010},
    NUMBER = {6},
     PAGES = {1246--1263},
      ISSN = {0008-414X,1496-4279},
   MRCLASS = {14N35 (14M15 14N15)},
  MRNUMBER = {2760657},
MRREVIEWER = {Andrew\ Kresch},
       DOI = {10.4153/CJM-2010-050-9},
       URL = {https://doi.org/10.4153/CJM-2010-050-9},
}

@book{arbarello_et_al_1,
    AUTHOR={Arbarello, Enrico and Cornalba, Maurizio and Griffiths, Phillip A. and Harris, Joe},
	TITLE={Geometry of Algebraic Curves: Volume I},
	ISBN={9781475753233},
	SERIES={Grundlehren der mathematischen Wissenschaften},
	YEAR={2013},
	PUBLISHER={Springer New York}
}

@inproceedings{kuhn_atiyah,
  title={The Atiyah class on algebraic stacks},
  author={Kuhn, Nikolas},
  booktitle={Forum of Mathematics, Sigma},
  volume={12},
  pages={e100},
  year={2024},
  organization={Cambridge University Press}
}

@article{gillam_deformation,
  title={Deformation of quotients on a product},
  author = {Gillam, William D},
  journal={arXiv preprint arXiv:1103.5482},
  year={2011}
}

@article{manolache_virtual_pull,
  title={Virtual pull-backs},
  author={Manolache, Cristina},
  journal={Journal of Algebraic Geometry},
  volume={21},
  number={2},
  pages={201--245},
  year={2011},
  publisher={American Mathematical Society}
}

@article{manolache_virtual_push,
  title={Virtual push-forwards},
  author={Manolache, Cristina},
  journal={Geometry \& Topology},
  volume={16},
  number={4},
  pages={2003--2036},
  year={2012},
  publisher={Mathematical Sciences Publishers}
}

@article{popa_stable_maps,
  title={Stable maps and Quot schemes},
  author={Popa, Mihnea and Roth, Mike},
  journal={Inventiones mathematicae},
  volume={152},
  number={3},
  pages={625--663},
  year={2003},
  publisher={Springer}
}

@article{tu_semistable_bundles,
title = {Semistable Bundles over an Elliptic Curve},
journal = {Advances in Mathematics},
volume = {98},
number = {1},
pages = {1-26},
year = {1993},
issn = {0001-8708},
doi = {https://doi.org/10.1006/aima.1993.1011},
url = {https://www.sciencedirect.com/science/article/pii/S000187088371011X},
author = {Tu, Loring W.},
}

@ARTICLE{SzenesVergne,
	title={Toric reduction and a conjecture of Batyrev and Materov},
	author={Andras Szenes and Mich{\`e}le Vergne},
	volume={158},
	number={3},
	journal={Inventiones Mathematicae},
	pages={453},
	year={2004}
}

@article{scattareggia2018perfect,
  title={A perfect obstruction theory for moduli of coherent systems},
  author={Scattareggia, Giorgio},
  journal={arXiv preprint arXiv:1803.00869},
  year={2018}
}

\end{document}